\DeclareFontFamily{OMS}{rsfs}{\skewchar\font'60}
\DeclareFontShape{OMS}{rsfs}{m}{n}{<-5>rsfs5 <5-7>rsfs7 <7->rsfs10 }{}
\DeclareSymbolFont{rsfs}{OMS}{rsfs}{m}{n}
\DeclareSymbolFontAlphabet{\scr}{rsfs}
\newcommand{\sP}{\scr P}
\newcommand{\sS}{\scr S}
\let\catsymbfont\mathcal 
\newcommand{\aA}{{\catsymbfont{A}}}
\newcommand{\aC}{{\catsymbfont{C}}}
\newcommand{\aF}{{\catsymbfont{F}}}
\newcommand{\aM}{{\catsymbfont{M}}}
\newcommand{\aS}{{\catsymbfont{S}}}
\newcommand{\aT}{{\catsymbfont{T}}}
\newcommand{\aU}{{\catsymbfont{U}}}
\let\iso\cong
\let\sma\wedge
\renewcommand{\to}{\mathchoice{\longrightarrow}{\rightarrow}{\rightarrow}{\rightarrow}}
\newcommand{\Th}{\operatorname{TH}}
\newcommand{\bA}{{\mathbb{A}}}
\newcommand{\bB}{{\mathbb{B}}}
\newcommand{\bC}{{\mathbb{C}}}
\newcommand{\bL}{{\mathbb{L}}}
\newcommand{\bP}{{\mathbb{P}}}
\newcommand{\bR}{{\mathbb{R}}}
\newcommand{\bS}{{\mathbb{S}}}
\newcommand{\bT}{{\mathbb{T}}}
\newcommand{\bU}{{\mathbb{U}}}
\newcommand{\thp}{\ltimes}   
\newcommand{\htp}{\simeq}
\newcommand{\Sma}{\bigwedge}
\newcommand{\Z}{{\mathbb{Z}}}
\newcommand{\Sp}{\textit{Sp}}
\newcommand{\Spin}{\textit{Spin}}
\newcommand{\co}{\colon\thinspace}
\newcommand{\cy}{\text{cy}}
\newcommand{\xr}{\xrightarrow}
\newcommand{\xl}{\xleftarrow}
\newcommand{\sL}{\mathcal{L}}
\newcommand{\I}{\mathcal I}
\newcommand{\U}{\aU}
\newcommand{\lprod}{\boxtimes_{\sL}}
\newcommand{\lmap}{F_{\boxtimes_{\sL}}}
\newcommand{\sfree}[1]{* \boxtimes (\sL(1) \times #1)}
\def\quickop#1{\expandafter\DeclareMathOperator\csname
#1\endcsname{#1}}
\newcommand{\hocolim}{\operatornamewithlimits{hocolim}}
\newcommand{\colim}{\operatornamewithlimits{colim}}
\renewcommand{\T}{T}
\newcommand{\Br}{\mathit{Br}}
\newcommand{\GL}{\mathit{GL}}
\newcommand{\Top}{\mathit{Top}}
\newtheorem{maintheorem}{Theorem}
\newtheorem{theorem}{Theorem}[section]
\newtheorem{corollary}[theorem]{Corollary}
\newtheorem{lemma}[theorem]{Lemma}
\newtheorem{proposition}[theorem]{Proposition}
\theoremstyle{definition}
\newtheorem{definition}[theorem]{Definition}
\theoremstyle{remark}
\newtheorem*{remark}{Remark}
\numberwithin{equation}{section}
\newdimen\wordsquish\wordsquish=1ex
\newdimen\redsquish\redsquish=0ex
\begin{document}

\title[Topological Hochschild homology of Thom spectra]
{Topological Hochschild homology of Thom spectra and the free loop space}
\author{A. J. Blumberg}
\address{Department of Mathematics\\Stanford University\\Stanford, CA 94305}
\email{blumberg@math.stanford.edu}
\author{R. L. Cohen}
 \address{Department of Mathematics \\Stanford University\\Stanford, CA 94305}
\email{ralph@math.stanford.edu}
\author{C. Schlichtkrull}
\address{Department of Mathematics, University of Bergen, Johannes
  Brunsgate 12, 5008 Bergen, Norway}
\email{krull@math.uib.no}
\thanks{The first author was partially supported by an NSF postdoctoral fellowship}\date{\today}
\thanks{The second author was partially supported by a grant from the
  NSF}

\begin{abstract}
We describe the topological Hochschild homology of ring spectra that arise as 
Thom spectra for loop maps $f\co X \rightarrow BF$, where $BF$ denotes the 
classifying space for stable spherical fibrations. To do this, we consider symmetric 
monoidal models of the category of spaces over $BF$ and corresponding strong
symmetric monoidal Thom spectrum functors. Our main result identifies the topological Hochschild homology as the Thom spectrum of a certain
stable bundle over the free loop space $L(BX)$. This leads to explicit calculations of the topological Hochschild homology for a large class of ring spectra, including all of the classical cobordism spectra $MO$, $MSO$, $MU$, etc., and the Eilenberg-Mac Lane spectra 
$H\mathbb Z/p$ and $H\mathbb Z$.
\end{abstract}

\maketitle

\section{Introduction}
Many interesting ring spectra arise naturally as Thom spectra.  It is
well-known that one may associate a Thom spectrum $\T(f)$ to any map
$f\co X\to BF$, where $BF$ denotes the classifying space for stable
spherical fibrations. This construction is homotopy invariant in the sense that the stable homotopy type of $\T(f)$ only depends on the homotopy class of $f$, see
\cite{lewis-may-steinberger}, \cite{mahowald}. Furthermore, if $f$ is a loop map, then it follows from a result of Lewis \cite{lewis-may-steinberger} that $\T(f)$ is an $A_{\infty}$ ring spectrum. In this case the topological Hochschild homology spectrum $\Th(T(f))$ is defined.  For
example, all of the Thom spectra $MG$ representing the classical
cobordism theories (where $G$ denotes one of the stabilized Lie groups
$O$, $SO$, $\Spin$, $U$, or $\Sp$) arise from canonical infinite loop maps $BG\to BF$.
In this paper we provide an explicit description of the
topological Hochschild homology of such a ring spectrum $T(f)$ in terms of the
Thom spectrum of a certain stable bundle over the free loop space.  In
order to state our main result we begin by recalling some elementary
results about the free loop space $L(B)$ of a connected space $B$.
Fixing a base point in $B$, we have the usual fibration sequence
\[
\Omega(B)\to L(B)\to B 
\]  
obtained by evaluating a loop at the base point of $S^1$.  This
sequence is split by the inclusion $B \to L(B)$ as the constant
loops.  When $B$ has the structure of a homotopy associative and
commutative H-space, $L(B)$ also has such a structure and the
composition  
\begin{equation}\label{freeloopdecomposition}
\Omega(B)\times B\to L(B)\times L(B) \to L(B)
\end{equation}
is an equivalence of H-spaces.  If we assume that $B$ has the homotopy
type of a CW complex, then the same holds for $L(B)$ and inverting the
above equivalence specifies a well-defined homotopy class
$L(B)\stackrel{\sim}{\to}\Omega(B)\times B$.  
Applying this to the delooping $B^2F$ of the infinite loop space $BF$, 
we obtain a splitting
\[
L(B^2F)\simeq\Omega(B^2F)\times B^2F\simeq
BF\times B^2F. 
\]
Let $\eta:S^3\to S^2$ denote the unstable Hopf map and also in mild
abuse of notation the map obtained by precomposing as follows,
\[
\eta:B^2F\simeq \Map_*(S^2,B^4F)\stackrel{\eta^*}{\to}
\Map_*(S^3,B^4F)\simeq BF.
\]

The following result is the main theorem of the paper.

\begin{maintheorem}\label{maintheorem}
Let $f\co X\to BF$ be the loop map associated to a map of connected based 
spaces $Bf\co BX\to B^2F$. Then there is a natural stable equivalence
\[
\Th(\T(f))\simeq \T(L^{\eta}(Bf)),
\]
where $L^{\eta}(Bf)$ denotes the composite
\[
L^{\eta}(Bf)\co L(BX)\stackrel{L(Bf)}{\to}L(B^2F)\simeq BF\times
B^2F\stackrel{\text{id}\times\eta}{\to} BF\times 
BF\to BF.
\] 
Here the last arrow represents multiplication in the H-space $BF$.
\end{maintheorem}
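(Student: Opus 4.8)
The plan is to work throughout in a symmetric monoidal model of the category of spaces over $BF$ together with its strong symmetric monoidal Thom spectrum functor $\T$. Since $f\co X\to BF$ is the loop map associated to $Bf$, the space $X$ is grouplike and $f$ is a monoid in spaces over $BF$, where the monoidal structure is induced by the homotopy‑associative, homotopy‑commutative multiplication on $BF$; hence $\T(f)$ is an $A_\infty$ ring spectrum and $\Th(\T(f))$ is the realization of the cyclic bar construction $[q]\mapsto\T(f)^{\sma(q+1)}$. Because $\T$ is strong symmetric monoidal there are isomorphisms $\T(f)^{\sma(q+1)}\iso\T\bigl(f^{\otimes(q+1)}\bigr)$, where $f^{\otimes(q+1)}\co X^{q+1}\to BF$ applies $f$ in each coordinate and then multiplies in $BF$; these are compatible with the cyclic structure maps, using that $f$ is a monoid, so — since $\T$ also preserves geometric realizations — $\Th(\T(f))$ is $\T$ applied to $|B^{\cy}\subdot(f)|$, the realization of the cyclic bar construction of $f$ formed in spaces over $BF$. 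The theorem thus becomes the statement that $|B^{\cy}\subdot(f)|$, as a space over $BF$, is $L^{\eta}(Bf)$.

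The underlying space $|B^{\cy}\subdot X|$ is naturally equivalent to $L(BX)$ because $X$ is grouplike, and the reference map to $BF$ is the composite
\[
L(BX)\htp|B^{\cy}\subdot X|\xrightarrow{\ |B^{\cy}\subdot f|\ }|B^{\cy}\subdot BF|\htp L(B^2F)\xrightarrow{\ \mu\subdot\ }BF,
\]
where $|B^{\cy}\subdot f|\htp L(Bf)$ by naturality and $\mu\subdot$ is induced by the total multiplication maps $BF^{q+1}\to BF$, which assemble into a map of cyclic spaces from $B^{\cy}\subdot(BF)$ to the constant cyclic space $BF$ since $BF$ is homotopy associative and commutative. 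It therefore suffices to prove the $BF$‑level claim that, under the splitting $L(B^2F)\htp BF\times B^2F$ of (\ref{freeloopdecomposition}), the map $\mu\subdot$ agrees with $\mu\circ(\id\times\eta)$. Both of these are H‑maps $L(B^2F)\to BF$, so by multiplicativity of the splitting it suffices to compare their restrictions to the two factors. On the factor $BF\htp\Omega(B^2F)$ both restrict to the identity; for $\mu\subdot$ this holds because that factor is identified with the image of the $0$‑simplices $B^{\cy}_0(BF)=BF$, on which total multiplication is the identity.

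The remaining and crucial point is the key lemma that the restriction of $\mu\subdot$ to the constant‑loop factor $B^2F\hookrightarrow L(B^2F)$ is the map $\eta$. Writing $M$ for a grouplike commutative model of $BF$, one can analyze this restriction through the evaluation fibration $L(BM)\to BM$, whose fibre is $\Omega(BM)\htp M$: the constant‑loop section and the ``algebraic'' section given simplicially by $(m_1,\dots,m_q)\mapsto\bigl((m_1\cdots m_q)^{-1},m_1,\dots,m_q\bigr)$ are two H‑map sections of this fibration; total multiplication annihilates the second and restricts to the identity on the fibre, so $\mu\subdot$ restricted to the constant loops equals the difference class $BM\to\Omega(BM)\htp M$ of the two sections. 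This difference class is natural in $M$ and an H‑map, hence is classified by an element of $\pi^{s}_{1}\iso\bZ/2\{\eta\}$, and the heart of the matter is to show that this element is nonzero — i.e.\ that the discrepancy between these two sections is the stable Hopf map. I expect this identification, extracting $\eta$ from the cyclic bar construction, to be the main obstacle. Once it is in place the H‑map argument above identifies $\mu\subdot$ with $\mu\circ(\id\times\eta)$, and applying $\T$ gives the natural stable equivalence $\Th(\T(f))\htp\T(L^{\eta}(Bf))$, naturality being inherited from that of the intermediate steps.
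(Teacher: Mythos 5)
Your overall strategy — commute $\T$ past the cyclic bar construction by strong symmetric monoidality, identify $|B^{\cy}\subdot(X)|$ with $L(BX)$ since $X$ is grouplike, and reduce the theorem to identifying the total‑multiplication map $\mu\subdot\co L(B^2F)\to BF$ with $\mu\circ(\id\times\eta)$ — is exactly the ``intuitive strategy'' the paper runs, and your matrix/section analysis correctly locates the whole issue in the restriction of $\mu\subdot$ to the constant‑loop factor. But there are two genuine gaps.

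First, you quietly assume a symmetric monoidal model of spaces over $BF$ in which $f$ becomes an honest monoid morphism and $\T$ is strong symmetric monoidal ``on the nose,'' and you call the monoidal structure ``induced by the homotopy‑associative, homotopy‑commutative multiplication on $BF$.'' This is precisely what cannot be done naively: $BF$ is not a generalized Eilenberg--Mac Lane space, so it admits no commutative topological monoid model, and the classical product comparison $T(f\times g)\simeq T(f)\wedge T(g)$ only exists after passing to the stable homotopy category, which is not adequate for forming the cyclic bar construction. Supplying such a model (via $\mathcal L(1)$-spaces/$S$-modules or $\I$-spaces/symmetric spectra), proving the Thom spectrum functor is strong symmetric monoidal and preserves realizations, choosing flat/cofibrant replacements so that $B^{\cy}$ computes $\Th$, and rectifying loop-map data to strict monoid morphisms (the functor $R$ and the appendix comparing loop maps to operadic maps) is the bulk of the paper — the axiomatic framework \textbf{A1}--\textbf{A6} and Sections 4--7. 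This cannot simply be posited.

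Second, and as you yourself flag, the key lemma that $\mu\subdot$ restricted to the constant loops $B^2F\hookrightarrow L(B^2F)$ is multiplication by $\eta$ is left unproved. Your reduction — two sections of $L(BM)\to BM$, the ``constant'' one and the ``algebraic'' one killed by total multiplication, whose difference class is a natural infinite-loop self-map of degree $-1$ hence lives in $\pi_1^s\cong\bZ/2\{\eta\}$ — is a sensible framing, but the hard content (nonvanishing, i.e. that this class is $\eta$ rather than $0$) is exactly what must be proved, and ``I expect this identification...to be the main obstacle'' is not a proof. The paper closes this gap by reformulating the situation in terms of $\Gamma$-spaces: $BG_{\mathcal A}'$ gives a very special $\Gamma$-space $UZ$ with $UZ(S^1)\simeq B^2G$, $UZ(S^1_+)\simeq B^{\cy}$, $UZ(S^0)\simeq BG$, and then applies Proposition 7.3 of \cite{schlichtkrull-units}, which identifies the composite $UZ(S^1)\to L(UZ(S^1))\xleftarrow{\sim}UZ(S^1_+)\to UZ(S^0)$ with multiplication by $\eta$. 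Until you either prove this or cite it, the proposal does not establish Theorem \ref{maintheorem}.
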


When $f$ is the constant map, $\T(f)$ is equivalent to the spherical
group ring $\Sigma^{\infty}\Omega(BX)_+$, where $+$ indicates a
disjoint base point.  In this case we recover the stable equivalence
of B\"okstedt and Waldhausen, 
\[
\Th(\Sigma^{\infty}\Omega(BX)_+)\simeq \Sigma^{\infty}L(BX)_+.
\]

The real force of Theorem~\ref{maintheorem} comes from the fact that
the Thom spectrum $T(L^{\eta}Bf)$ can be analyzed effectively in many
cases.  We will say that $f$ is an $n$-fold loop map if there exists
an $(n-1)$-connected based space $B^nX$ and a homotopy commutative
diagram of the form 
\[
\begin{CD}
\Omega^n (B^nX)@>\Omega^n(B^nf)>> \Omega^n(B^{n+1}F)\\
@AA\sim A @AA \sim A\\
X@>f>> BF,
\end{CD}
\]
where the vertical maps are equivalences as indicated. When $X$ is a 2-fold loop space, the product decomposition in (\ref{freeloopdecomposition}) can be applied to $L(BX)$. We shall then prove the following.

\begin{maintheorem}\label{2foldtheorem}
If $f$ is a 2-fold loop map, then there is a stable equivalence
$$
\Th(\T(f))\simeq \T(f)\wedge \T(\eta\circ Bf),
$$
where $T(\eta\circ Bf)$ denotes the Thom spectrum of
$BX\stackrel{Bf}{\to} B^2F\stackrel{\eta}{\to}BF$.
\end{maintheorem}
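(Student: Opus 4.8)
The plan is to deduce Theorem~\ref{2foldtheorem} from the main theorem by unwinding the map $L^{\eta}(Bf)$ through the product decomposition \eqref{freeloopdecomposition} of the free loop space. The crucial point, and the place where the \emph{2-fold} hypothesis enters, is that when $f$ is a $2$-fold loop map the delooping $Bf\co BX\to B^2F$ is itself a loop map: identifying $BX\simeq\Omega(B^2X)$ and $B^2F\simeq\Omega(B^3F)$, the map $Bf$ is homotopic to $\Omega(B^2f)$, hence an $H$-map for the loop-concatenation structures (whereas for a mere loop map no such structure on $Bf$ is available, which is why Theorem~\ref{maintheorem} cannot be simplified in that generality). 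In particular $BX$ is a connected homotopy associative $H$-space, so \eqref{freeloopdecomposition} applies with $B=BX$ and gives an equivalence $L(BX)\simeq\Omega(BX)\times BX\simeq X\times BX$; likewise $L(B^2F)\simeq BF\times B^2F$. Since the splitting map $\Omega(B)\times B\to L(B)$, $(\gamma,b)\mapsto(t\mapsto\gamma(t)\cdot b)$, is natural with respect to $H$-maps, and all spaces in sight have the homotopy type of CW complexes, the map $L(Bf)$ corresponds under these equivalences to $\Omega(Bf)\times Bf\simeq f\times Bf\co X\times BX\to BF\times B^2F$ (using $f\simeq\Omega(Bf)$).

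Granting this, I would chase the definition of $L^{\eta}(Bf)$: composing $f\times Bf$ with $BF\times B^2F\xrightarrow{\mathrm{id}\times\eta}BF\times BF$ and then with the $H$-space multiplication $\mu\co BF\times BF\to BF$ exhibits $L^{\eta}(Bf)\co L(BX)\to BF$ as homotopic, via the equivalence $L(BX)\simeq X\times BX$, to the composite
\[
X\times BX\xrightarrow{\,f\times(\eta\circ Bf)\,}BF\times BF\xrightarrow{\ \mu\ }BF .
\]
This composite is, by construction, the monoidal product of the object $(f\co X\to BF)$ with the object $(\eta\circ Bf\co BX\to BF)$ in the symmetric monoidal category of spaces over $BF$. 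Since the Thom spectrum functor is strong symmetric monoidal and its stable homotopy type depends only on the homotopy class of the structure map, applying $\T$ and invoking Theorem~\ref{maintheorem} gives
\[
\Th(\T(f))\simeq\T(L^{\eta}(Bf))\simeq\T\bigl(\mu\circ(f\times(\eta\circ Bf))\bigr)\simeq\T(f)\wedge\T(\eta\circ Bf),
\]
which is the assertion.

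The only real work lies in the first paragraph: one must check that the naturality of the free loop space splitting with respect to the $H$-map $Bf$ holds, that the $H$-space structures on $BX$ and $B^2F$ used there are the ones for which \eqref{freeloopdecomposition} was established, and that the resulting equivalence $L(BX)\simeq X\times BX$ can be chosen compatibly, up to homotopy, with the maps down to $BF$, so that homotopy invariance of Thom spectra may be applied. All of this amounts to bookkeeping with the explicit formula for the splitting; no ingredient beyond Theorem~\ref{maintheorem} and the formal properties of $\T$ is required, and I expect the main obstacle to be organizing these compatibilities cleanly rather than any substantive difficulty.
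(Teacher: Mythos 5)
Your proposal is correct and follows essentially the same route as the paper: deduce the statement from Theorem~\ref{maintheorem} by using the 2-fold loop hypothesis to endow $BX$ with an $H$-space structure, so that the splitting $L(BX)\simeq X\times BX$ of (\ref{freeloopdecomposition}) applies, then check that under this splitting and the corresponding one for $L(B^2F)$ the map $L^{\eta}(Bf)$ becomes the monoidal product of $f$ with $\eta\circ Bf$, and finally apply the multiplicativity and homotopy invariance of the Thom spectrum functor. The paper records this as a single homotopy commutative diagram; your version spells out why the diagram commutes, but the argument is the same.
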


For 3-fold loop maps we can describe $\Th(T(f))$ without reference to $\eta$.
 
\begin{maintheorem}\label{3foldtheorem}
If $f$ is a 3-fold loop map, then there is a stable equivalence
$$
\Th(\T(f))\simeq \T(f)\wedge BX_+.
$$
\end{maintheorem}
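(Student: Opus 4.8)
The plan is to derive Theorem~\ref{3foldtheorem} from Theorem~\ref{2foldtheorem}. A $3$-fold loop map is in particular a $2$-fold loop map, and if $f$ is a $3$-fold loop map then $BX\simeq\Omega^2(B^3X)$ is a double loop space, hence a connected homotopy associative $H$-space, so the free loop space splitting underlying Theorem~\ref{2foldtheorem} is available. Applying Theorem~\ref{2foldtheorem} we obtain a stable equivalence $\Th(\T(f))\simeq\T(f)\wedge\T(\eta\circ Bf)$, so the whole statement reduces to identifying the second smash factor with the trivial Thom spectrum, i.e.\ to showing $\T(\eta\circ Bf)\simeq\Sigma^{\infty}BX_+$. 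Since the Thom spectrum of a map $Z\to BF$ is stably equivalent to $\Sigma^{\infty}Z_+$ as soon as that map is null-homotopic (the classified stable spherical fibration being then fiber homotopy trivial), everything comes down to the assertion: \emph{if $f$ is a $3$-fold loop map, then the composite $\eta\circ Bf\co BX\to B^2F\to BF$ is null-homotopic}.

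To attack this, unwind the $3$-fold loop structure. Choose a $2$-connected space $B^3X$ and a map $B^3f\co B^3X\to B^4F$ realizing $f$ as $\Omega^3(B^3f)$, so that $Bf\simeq\Omega^2(B^3f)$. By definition the map $\eta\co B^2F\to BF$ is, under the identifications $B^2F\simeq\Map_*(S^2,B^4F)$ and $BF\simeq\Map_*(S^3,B^4F)$, precomposition with the unstable Hopf map $\eta\co S^3\to S^2$. Precomposition in the source sphere commutes with postcomposition along $B^3f$, so $\eta\circ Bf$ is homotopic to the composite
\[
BX\simeq\Map_*(S^2,B^3X)\overto{\eta^*}\Map_*(S^3,B^3X)\simeq X\overto{f}BF ,
\]
where the last two identifications use $\Omega^3(B^3X)\simeq X$ and $\Omega^3(B^3f)\simeq f$. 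Hence it suffices to prove that $f\circ\eta^*$ is null-homotopic, where $\eta^*\co\Omega^2(B^3X)\to\Omega^3(B^3X)$ is the map induced by the Hopf map and $B^3X$ is $2$-connected.

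This last step is the heart of the proof and the main obstacle. One cannot argue that $\eta^*$ itself is trivial — it is not, being already essential when $B^3X=S^3$ — so the nullity must be extracted from the postcomposition with $f$, which is the triple loop restriction of a map into the highly connected space $B^4F$. The approach I would take is an obstruction argument up the Postnikov tower, exploiting that $2$-connectivity of $B^3X$ puts the relevant parts of $\Omega^2(B^3X)$ and $\Omega^3(B^3X)$ in the stable range: there $\eta^*$ agrees with multiplication by the stable Hopf class $\eta\in\pi_1^{s}$, so $f\circ\eta^*$ becomes multiplication by $\eta$ on the image of $f_*$ in $\pi_*(BF)$, and the successive obstructions to a null-homotopy of $f\circ\eta^*$ are all of this form; they vanish by the relations governing $\eta$-multiplication on the homotopy of $BF=B(\GL_1(\bS))$. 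The genuinely non-formal point is to control all these obstructions uniformly rather than degree by degree; I would do this by observing that in the stable range $\eta^*$ factors through the stabilization map, so that $f\circ\eta^*$ is detected by a single stable map obtained from multiplication by $\eta$ on the spectrum underlying $BF$, and the triviality of that map in the pertinent range is the one computation on which the argument ultimately rests.
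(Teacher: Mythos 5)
Your reduction to Theorem~\ref{2foldtheorem} and the identification $\eta\circ Bf\simeq f\circ\eta^*$ (with $\eta^*\co BX\to X$ induced by the Hopf map) are both correct, and the latter is precisely the ingredient the paper's proof uses. The gap is the claim that $\eta\circ Bf$ is null-homotopic when $f$ is a 3-fold loop map. This is false, and the very first example in the Corollary shows it: take $f\co BO\to BF$ the canonical infinite loop map, so $BX=BBO$. On $\pi_2$, the map $\eta^*\co BBO\to BO$ sends $\pi_2(BBO)=\pi_0(O)=\mathbb Z/2$ isomorphically to $\pi_2(BO)=\pi_1(O)=\mathbb Z/2$ (this is $\eta$-multiplication $\pi_1(ko)\to\pi_2(ko)$, nonzero since $\eta^2\ne 0$ in $\pi_*KO$), and then $f_*\co\pi_2(BO)\to\pi_2(BF)=\pi_1^s=\mathbb Z/2$ is the $J$-homomorphism $\pi_1(O)\to\pi_1^s$, which is onto. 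Hence $(\eta\circ Bf)_*\ne 0$ on $\pi_2$ and the map is essential; the first obstruction to your intended null-homotopy already fails to vanish. The closing claim that the obstructions vanish ``by the relations governing $\eta$-multiplication on the homotopy of $BF$'' is not correct — $\eta\cdot 1$ generates $\pi_1^s$, so $\eta$-multiplication on $\pi_*^s$ is certainly not trivial.

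What is true is only the relative statement $\T(f)\wedge\T(\eta\circ Bf)\simeq\T(f)\wedge BX_+$, a Thom isomorphism internal to $\T(f)$-modules rather than an absolute triviality of $\T(\eta\circ Bf)$. The paper obtains it by a shear, using the very Hopf construction $\eta\co BX\to X$ that you write down: define the self-equivalence
\[
\Phi=\left[\begin{array}{cc}\id&\eta\\0&\id\end{array}\right]\co X\times BX\xr{\sim}X\times BX.
\]
Because $f$ is a 3-fold loop map, $f\times Bf$ intertwines $\Phi$ with the corresponding shear of $BG_U\times B^2G_U$, and that shear converts the composite $(\id\times\eta)$ followed by multiplication into $(\id\times *)$ followed by multiplication. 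Thus $L^{\eta}(Bf)$ is, up to the source equivalence $\Phi$, the map $X\times BX\to X\xr{f}BG_U$, and homotopy invariance of the Thom spectrum functor gives $\T(L^{\eta}(Bf))\simeq\T(f\times *)\simeq\T(f)\wedge BX_+$. In short, the $\eta\circ Bf$ factor is cancelled by absorbing it into the $f$ factor via a coordinate change on $X\times BX$, not by being zero on its own; your proposal assembles the right pieces but then asks for an absolute nullity that does not hold.
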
 

If $f$ is an infinite loop map one can realize the stable equivalence in Theorem \ref{3foldtheorem} as an equivalence of $E_{\infty}$ ring spectra. This is carried out in 
\cite{blumberg-THH-thom-einf} and \cite{schlichtkrull-higher}, working respectively with the $S$-module approach \cite{ekmm} and the symmetric spectrum approach \cite{hovey-shipley-smith} \cite{mmss} to structured ring spectra. The operadically sophisticated reader will note that the term ``n-fold loop maps'' is a device-independent way of describing maps that are structured by $E_n$ operads (that is, operads equivalent to the little $n$-cubes operad).  We have chosen this elementary description since this is the kind of input data one encounters most often in the applications and since many of the examples in the literature (such as the ones in \cite{mahowald}) are formulated in this language. 
For the technical part of our work it will be important to pass back and forth between loop maps and maps structured by operads and we explain how to do this in Appendix \ref{loopappendix}. It is known by \cite{Brun-Fiedorowicz-Vogt} and \cite{Mandell} that if $T$ is an $E_n$ ring spectrum, then $\Th(T)$ is an $E_{n-1}$ ring spectrum. We expect the following strengthening of Theorem \ref{3foldtheorem} to hold: if $X$ is a grouplike $E_n$ space and $f\co X\to BF$ an $E_n$ map, then the equivalence in Theorem \ref{3foldtheorem} is an equivalence of $E_{n-1}$ ring spectra. 

\subsection*{The classical cobordism spectra}
Let $G$ be one of the stabilized Lie groups $O$, $SO$, $\Spin$, $U$, or $\Sp$.  Then the Thom spectrum $MG$ arises from an infinite loop map $BG\to BF$ and so Theorem
\ref{3foldtheorem} applies to give a stable equivalence: 
\[
\Th(MG)\simeq MG\wedge BBG_+.
\]
Spelling this out using the Bott periodicity theorem, we get the
following corollary. 
\begin{corollary}
There are stable equivalences of spectra
\begin{align*}
&\Th(MO)\simeq MO\wedge (U/O)\langle 1\rangle_+\\
&\Th(MSO)\simeq MSO\wedge (U/O)\langle2\rangle_+\\
&\Th(MSpin)\simeq M\Spin\wedge(U/O)\langle3\rangle_+\\
&\Th(MU)\simeq MU\wedge SU_+\\
&\Th(M\Sp)\simeq M\Sp\wedge(U/\Sp)\langle1\rangle_+,
\end{align*}
where here $(U/O)\langle n\rangle$ and $(U/\Sp)\langle n\rangle$
denote the $n$-connected covers of $U/O$ and $U/\Sp$ respectively.
\end{corollary}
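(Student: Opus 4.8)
The plan is to apply Theorem~\ref{3foldtheorem} and then unwind the resulting Thom spectrum using the Bott periodicity theorem. For each $G\in\{O,SO,\Spin,U,\Sp\}$ the spectrum $MG$ is the Thom spectrum $\T(f)$ of the canonical infinite loop map $f\co BG\to BF$ discussed above; in particular $f$ is a $3$-fold loop map (take $B^{3}X=B^{4}G$ and $B^{4}F$ in the definition, both of which are at least $2$-connected). Hence Theorem~\ref{3foldtheorem}, applied with $X=BG$ and $BX=B^{2}G=B(BG)$, gives a natural stable equivalence
\[
\Th(MG)\simeq MG\wedge (B^{2}G)_{+}.
\]
It then remains only to identify the double classifying space $B^{2}G$ in each of the five cases.

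For that step I would unwind Bott periodicity. Each of $BG$ and $B^{2}G$ is an infinite loop space --- the zeroth space of a connective cover of a suspension of one of the connective real, complex, or symplectic $K$-theory spectra --- and for $0$-connected spectra passing from $\Omega^{\infty}E$ to its classifying space amounts to replacing $E$ by $\Sigma E$. Bott periodicity, in the forms $\Omega(U/O)\simeq\mathbb Z\times BO$, $\Omega(SU)\simeq BU$, and $\Omega(U/\Sp)\simeq\mathbb Z\times B\Sp$ (together with their $8$-fold periodic companions), then pins these spectra down. Concretely, $B^{2}O$ is connected with $\Omega(B^{2}O)\simeq BO$, the basepoint component of $\Omega(U/O)\simeq\mathbb Z\times BO$, so $B^{2}O\simeq(U/O)\langle 1\rangle$; since $SO$ is the identity component of $O$ and $\Spin$ the universal cover of $SO$, the spaces $B^{2}(SO)$ and $B^{2}(\Spin)$ are the $2$- and $3$-connected covers of $B^{2}O$, which gives $B^{2}(SO)\simeq(U/O)\langle 2\rangle$ and $B^{2}(\Spin)\simeq(U/O)\langle 3\rangle$; likewise $B^{2}U\simeq SU$ (from $\Omega(SU)\simeq BU$) and $B^{2}(\Sp)\simeq(U/\Sp)\langle 1\rangle$. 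I would make all of this precise at the level of connective covers of spectra, where each identification is forced by a homotopy-group comparison using $\pi_{*}(U/O)=\pi_{*-2}(O)$, $\pi_{*}(U/\Sp)=\pi_{*-2}(\Sp)$, $\pi_{*}(SU)=\pi_{*-2}(U)$ (in the relevant range) and the known homotopy of the stable classical groups; one notes, for instance, that $(U/O)\langle 3\rangle$ is actually $4$-connected because $\pi_{4}(U/O)=0$, matching the $4$-connectivity of $B^{2}(\Spin)$. Substituting these identifications into $\Th(MG)\simeq MG\wedge (B^{2}G)_{+}$ produces the five displayed equivalences.

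I do not anticipate a genuine obstacle: the content of the corollary lies entirely in Theorem~\ref{3foldtheorem}, and the remainder is a routine unwinding of Bott periodicity. The one place demanding care is the mismatch between the customary statement of Bott periodicity, phrased in terms of the disconnected spaces $\mathbb Z\times BO$ and $\mathbb Z\times B\Sp$, and the connected spaces $B^{2}G$ that actually occur here; keeping track of which path component and which connected cover one lands in --- exactly what the connective-spectrum bookkeeping above is designed to organize --- is where a slip would be easiest to make.
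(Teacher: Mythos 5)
Your argument is exactly the paper's: apply Theorem~\ref{3foldtheorem} to the infinite loop map $BG\to BF$ to obtain $\Th(MG)\simeq MG\wedge (BBG)_+$, and then identify $BBG$ case-by-case via Bott periodicity as $(U/O)\langle 1\rangle$, $(U/O)\langle 2\rangle$, $(U/O)\langle 3\rangle$, $SU$, $(U/\Sp)\langle 1\rangle$. Your identifications and connectivity checks are correct, so the proposal matches the paper's proof.
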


These results also admit a cobordism interpretation.

\begin{corollary}\label{bordisminterpretation}
Let $G$ be one of the stabilized Lie groups considered above, and let
$\Omega^G_*$ denote the corresponding $G$-bordism theory. Then there
is an isomorphism 
\[
\pi_*\Th(MG)\simeq\Omega_*^G(BBG).
\]
\end{corollary}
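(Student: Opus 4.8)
The plan is to read off Corollary~\ref{bordisminterpretation} from Theorem~\ref{3foldtheorem} together with the classical Pontryagin--Thom identification of the homology theory represented by a Thom spectrum with geometric bordism. First I would note that for each $G\in\{O,SO,\Spin,U,\Sp\}$ the canonical map $BG\to BF$ is an infinite loop map: it factors as the infinite loop map $BG\to BO$ classifying the universal stable vector bundle followed by the $J$-homomorphism $BO\to BF$. In particular it is a 3-fold loop map in the sense of the paper, so Theorem~\ref{3foldtheorem} applies and yields a stable equivalence
\[
\Th(MG)\simeq MG\wedge BBG_+.
\]
Passing to homotopy groups gives $\pi_*\Th(MG)\cong\pi_*(MG\wedge BBG_+)$, and the right-hand side is by definition the value $MG_*(BBG)$ of the generalized homology theory represented by $MG$ on the space $BBG$, which has the homotopy type of a CW complex.

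Next I would invoke the theorem of Thom that, for such a space $Y$, the homology theory $Y\mapsto MG_*(Y)=\pi_*(MG\wedge Y_+)$ is naturally isomorphic to the geometric $G$-bordism group $\Omega_*^G(Y)$, i.e.\ bordism classes of closed manifolds equipped with a $G$-structure on the stable normal bundle together with a reference map to $Y$. Applying this with $Y=BBG$ and combining with the previous display yields the asserted isomorphism $\pi_*\Th(MG)\cong\Omega_*^G(BBG)$; naturality in $G$ is automatic.

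There is no genuine obstacle here: all the content sits in Theorem~\ref{3foldtheorem}, and what remains is the textbook translation between the spectrum-level smash product $MG\wedge BBG_+$ and geometric bordism. The one point worth flagging is that this translation uses that $MG$ is the Thom spectrum of an honest stable vector bundle --- available through the factorization $BG\to BO\to BF$ --- which is precisely why the corollary is restricted to the classical Lie groups rather than stated for an arbitrary loop map $f\co X\to BF$.
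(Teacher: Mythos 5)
Your proposal is correct and matches the paper's intended argument: the paper derives Corollary~\ref{bordisminterpretation} precisely by applying Theorem~\ref{3foldtheorem} (using that $BG\to BF$ is an infinite loop map) to obtain $\Th(MG)\simeq MG\wedge BBG_+$, then reading off $\pi_*(MG\wedge BBG_+)=MG_*(BBG)\cong\Omega_*^G(BBG)$ via the classical Pontryagin--Thom identification. No divergence from the paper's route.
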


There are many other examples of cobordism spectra for which Theorem
\ref{3foldtheorem} applies, see \cite{stong}. In the case of the
identity map $BF\to BF$ we get the spectrum $MF$, and we again have a stable
equivalence 
\[
\Th(MF)\simeq MF\wedge BBF_+.
\]

\subsection*{The Eilenberg-Mac Lane spectra $H\mathbb Z/p$ and $H\mathbb Z$}
Another application of our results is to the calculation of the topological Hochschild homology of the Eilenberg-Mac Lane spectra $H\mathbb Z/p$ and 
$H\mathbb Z$. These calculations are originally due to B\"okstedt \cite{bokstedt2} using a very different approach. Our starting point here is the fact that these spectra can be realized as Thom spectra. For $H\mathbb Z/2$ this is a theorem of Mahowald \cite{mahowald}; if 
$f\co\Omega^2(S^3)\to BF$ is an extension of the generator of $\pi_1BF$ to a 2-fold loop map, then $T(f)$ is equivalent to $H\mathbb Z/2$, see also \cite{cohen-may-taylor}. In general, given a connected space $X$ and a map $f\co X\to BF$, the associated Thom spectrum has $\pi_0T(f)$ equal to $\mathbb Z$ or $\mathbb Z/2$, depending on whether $T(f)$ is oriented or not. Hence 
$H\mathbb Z/p$ cannot be realized as a Thom spectrum for a map to $BF$ when $p$ is odd. However, by an observation due to Hopkins \cite{thomified}, if one instead  consider the classifying space 
$BF_{(p)}$ for $p$-local spherical fibrations, then $H\mathbb Z/p$ may be realized as the $p$-local Thom spectrum associated to a certain 2-fold loop map $f\co\Omega^2(S^3)\to BF_{(p)}$. We recall the definition of $BF_{(p)}$ and the $p$-local approach to Thom spectra in Section \ref{p-localsection}. Our methods work equally well in the $p$-local setting and we shall prove that the $p$-local version of Theorem \ref{2foldtheorem} applies to give the following result.    

\begin{theorem}\label{Z/p-theorem}
There is a stable equivalence 
$$
\Th(H\mathbb Z/p)\simeq H\mathbb Z/p\wedge \Omega(S^3)_+
$$
for each prime $p$.
\end{theorem}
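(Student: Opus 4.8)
The plan is to deduce the theorem from the $p$-local form of Theorem~\ref{2foldtheorem}, after exhibiting $H\mathbb{Z}/p$ as a Thom spectrum, and then to trivialize the remaining Thom spectrum factor by a mod~$p$ Thom isomorphism.

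First I would recall, following Mahowald~\cite{mahowald} for $p=2$ and Hopkins~\cite{thomified} for $p$ odd (see Section~\ref{p-localsection}), that $H\mathbb{Z}/p$ is equivalent to the $p$-local Thom spectrum $\T(f)$ of a 2-fold loop map $f\co\Omega^2(S^3)\to BF_{(p)}$; translating this loop-map data into the $E_2$-structure consumed by the Thom spectrum machinery is exactly what Appendix~\ref{loopappendix} provides. Since $\Omega^2(S^3)$ is connected, $B(\Omega^2 S^3)\simeq\Omega(S^3)$, so $Bf$ is a map $\Omega(S^3)\to B^2F_{(p)}$. Feeding $f$ into the $p$-local version of Theorem~\ref{2foldtheorem} then produces a natural stable equivalence
\[
\Th(H\mathbb{Z}/p)\;\simeq\; H\mathbb{Z}/p\wedge \T(\eta\circ Bf),
\]
where $\eta\circ Bf$ is the composite $\Omega(S^3)\xrightarrow{Bf}B^2F_{(p)}\xrightarrow{\eta}BF_{(p)}$.

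It remains to identify $H\mathbb{Z}/p\wedge\T(\eta\circ Bf)$. I would \emph{not} try to show that $\eta\circ Bf$ is nullhomotopic, which need not hold; instead I would use that $\Omega(S^3)$ is simply connected, so that $\pi_1$ acts trivially on the mod~$p$ homology of the fibre of the $p$-local spherical fibration classified by $\eta\circ Bf$, and hence that fibration is $H\mathbb{Z}/p$-orientable. (For $p=2$ this is automatic; for $p$ odd it genuinely uses $\pi_1(\Omega S^3)=0$, as otherwise the action of $\pi_0 F$ on $\mathbb{Z}/p$ through $-1$ could obstruct it.) Combining a choice of Thom class with the Thom diagonal of $\eta\circ Bf$ then yields an equivalence
\[
H\mathbb{Z}/p\wedge\T(\eta\circ Bf)\;\simeq\;H\mathbb{Z}/p\wedge\Sigma^{\infty}\Omega(S^3)_+ ,
\]
and splicing this with the previous display gives $\Th(H\mathbb{Z}/p)\simeq H\mathbb{Z}/p\wedge\Omega(S^3)_+$, consistent with B\"okstedt's original calculation~\cite{bokstedt2}.

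The substantive ingredients here are imported, not proved: the realization of $H\mathbb{Z}/p$ as a 2-fold loop Thom spectrum over $BF_{(p)}$, and the fact (Section~\ref{p-localsection}) that the symmetric monoidal Thom spectrum functors, and with them Theorem~\ref{2foldtheorem}, go through verbatim with $BF_{(p)}$ in place of $BF$. Within the argument itself the only point needing care is bookkeeping: one must realize the mod~$p$ Thom isomorphism compatibly with the point-set model of the Thom spectrum functor, so that the equivalence it produces is natural and composes correctly with the equivalence of Theorem~\ref{2foldtheorem}. I do not expect a genuine obstacle there.
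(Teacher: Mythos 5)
Your proposal is correct and matches the paper's own argument: apply the $p$-local form of Theorem~\ref{2foldtheorem} to Mahowald/Hopkins' realization of $H\mathbb{Z}/p$ as $T(f)$ for the 2-fold loop map $f\co\Omega^2(S^3)\to BF_{(p)}$, and then identify $H\mathbb{Z}/p\wedge T(\eta\circ Bf)$ via the $H\mathbb{Z}/p$ Thom isomorphism, which applies because $\Omega(S^3)$ is simply connected (so the orientation obstruction $\Omega(S^3)\to B(\mathbb{Z}/p)^*$ is null). Your remark that one should not expect $\eta\circ Bf$ itself to be nullhomotopic, and that the Thom isomorphism rather than a trivialization of the bundle is the right tool, is exactly the point the paper is making.
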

On the level of homotopy groups this implies that
$$
\pi_*\Th(H\mathbb Z/p)= H_*(\Omega(S^3),\mathbb Z/p)= \mathbb Z/p[x], 
$$
where $x$ has degree 2, see e.g., \cite{whitehead}, Theorem 1.18. In the case of the Eilenberg-Mac Lane spectrum $H\mathbb Z_{(p)}$ for the $p$-local integers we instead considers the 2-fold loop space $\Omega^2(S^3\langle 3\rangle)$ where $S^3\langle 3\rangle$ is the 3-connected cover of $S^3$. Arguing as in \cite{cohen-may-taylor} it follows that the $p$-local Thom spectrum of the 2-fold loop map
$$
\Omega^2(S^3\langle 3\rangle)\to\Omega^2(S^3)\to BF_{(p)}
$$
is equivalent to $H\mathbb Z_{(p)}$. Using this we show that the $p$-local version of Theorem \ref{2foldtheorem} applies to give a stable equivalence
$$
\Th(H\mathbb Z_{(p)})\simeq H\mathbb Z_{(p)}\wedge \Omega(S^3\langle 3\rangle)_+
$$
for each prime $p$. Since topological Hochschild homology commutes with localization this has the following consequence for the integral Eilenberg-Mac Lane spectrum. 
\begin{theorem}\label{Z-theorem}
There is a stable equivalence
$$
\Th(H\mathbb Z)\simeq H\mathbb Z\wedge \Omega(S^3\langle 3\rangle)_+
$$
where $S^3\langle 3\rangle$ denotes the 3-connected cover of $S^3$.
\end{theorem}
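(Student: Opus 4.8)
The plan is to prove Theorem~\ref{Z-theorem} one prime at a time, using that topological Hochschild homology commutes with localization, and then to reassemble via the arithmetic fracture square; the real content is already isolated in the $p$-local statements recorded above, which I first recall. By the Hopkins--Mahowald construction (arguing as in \cite{cohen-may-taylor}) one has $H\mathbb Z_{(p)}\simeq\T(f_p)$, where $f_p=f'\circ\Omega^2(c)\co\Omega^2(S^3\langle 3\rangle)\to BF_{(p)}$ is a $2$-fold loop map, $c\co S^3\langle 3\rangle\to S^3$ being the $3$-connected cover and $f'\co\Omega^2(S^3)\to BF_{(p)}$ the map with $\T(f')\simeq H\mathbb Z/p$. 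The $p$-local form of Theorem~\ref{2foldtheorem} then applies and gives $\Th(H\mathbb Z_{(p)})\simeq H\mathbb Z_{(p)}\wedge\T(\eta\circ Bf_p)$ with $Bf_p=Bf'\circ\Omega(c)\co\Omega(S^3\langle 3\rangle)\to B^2F_{(p)}$. Hence $\eta\circ Bf_p$ is the composite $\Omega(S^3\langle 3\rangle)\xrightarrow{\Omega(c)}\Omega(S^3)\xrightarrow{\eta\circ Bf'}BF_{(p)}$, so the fibration it classifies is the pullback along $\Omega(c)$ of the one classified by $\eta\circ Bf'$; since (as one sees from the proof of Theorem~\ref{Z/p-theorem}) one has $\T(\eta\circ Bf')\simeq\Sigma^\infty\Omega(S^3)_+$, i.e.\ that last fibration is stably trivial, pulling the trivialization back along $\Omega(c)$ yields $\T(\eta\circ Bf_p)\simeq\Sigma^\infty\Omega(S^3\langle 3\rangle)_+$ and therefore $\Th(H\mathbb Z_{(p)})\simeq H\mathbb Z_{(p)}\wedge\Omega(S^3\langle 3\rangle)_+$ for every prime $p$.

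To descend to the integral statement I would use that $\Th(-)$ is built from smash powers and geometric realizations, so that the smashing localization at $S_{(p)}$ passes through it: $\Th(R)_{(p)}\simeq\Th(R_{(p)})$ naturally in $R$. With $R=H\mathbb Z$ this gives
\[
\Th(H\mathbb Z)_{(p)}\simeq\Th(H\mathbb Z_{(p)})\simeq H\mathbb Z_{(p)}\wedge\Omega(S^3\langle 3\rangle)_+\simeq\bigl(H\mathbb Z\wedge\Omega(S^3\langle 3\rangle)_+\bigr)_{(p)}
\]
for all $p$. The same reasoning rationally gives $\Th(H\mathbb Z)_{\mathbb Q}\simeq\Th(H\mathbb Q)$, which is $H\mathbb Q$ because $H\mathbb Q\wedge H\mathbb Q\simeq H\mathbb Q$ collapses the bar construction; and the right-hand side rationalizes to $H\mathbb Q$ as well, since $S^3\langle 3\rangle$ has trivial rational homotopy, so $\Omega(S^3\langle 3\rangle)$ is rationally contractible. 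Both $\Th(H\mathbb Z)$ and $H\mathbb Z\wedge\Omega(S^3\langle 3\rangle)_+$ are connective and of finite type --- the first by B\"okstedt's computation \cite{bokstedt2}, the second because $H_*(\Omega(S^3\langle 3\rangle);\mathbb Z)$ is finitely generated in each degree (and torsion in positive degrees). Finally, all the equivalences in sight are unital, hence the identity on $\pi_0=\mathbb Z$; and after rationalization (and after rationalizing the profinite completion) these spectra are concentrated in degree $0$, so the resulting square of equivalences commutes already on $\pi_0$ and hence up to homotopy. The arithmetic fracture square therefore upgrades the $p$-local and rational equivalences to a single equivalence $\Th(H\mathbb Z)\simeq H\mathbb Z\wedge\Omega(S^3\langle 3\rangle)_+$.

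The hard step is the $p=2$ input imported from Theorem~\ref{Z/p-theorem}: the stable triviality of the $\eta$-twisted bundle $\eta\circ Bf'$ over $\Omega(S^3)$ underlying Mahowald's realization of $H\mathbb Z/2$. This is a genuine unstable computation with that bundle and the Hopf map, not a formality (for $p$ odd the analogous assertion is easy, the relevant obstructions being $2$-torsion and hence $p$-locally trivial). Everything else --- the reduction via Theorem~\ref{2foldtheorem}, the rational calculation, and the passage from $p$-local to integral --- is routine.
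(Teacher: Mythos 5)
Your descent from $p$-local to integral is fine in spirit and in fact a bit more laborious than what the paper does: the paper simply observes that $\Th(H\mathbb Z)$ is an $H\mathbb Z$-module, hence a generalized Eilenberg--Mac~Lane spectrum, so its homotopy type is determined by its homotopy groups, which the $p$-local computations pin down; your fracture-square argument works too (modulo the finite-type and compatibility bookkeeping you sketch), it just isn't needed.

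The real issue is in your $p$-local step. You claim that the proof of Theorem~\ref{Z/p-theorem} shows $\T(\eta\circ Bf')\simeq\Sigma^\infty\Omega(S^3)_+$, i.e.\ that the bundle classified by $\eta\circ Bf'$ over $\Omega(S^3)$ is stably trivial, and you then pull this putative trivialization back along $\Omega(c)$. But the paper establishes no such thing. Its proof of Theorem~\ref{Z/p-theorem} only produces the equivalence \emph{after smashing with $H\mathbb Z/p$}, namely $H\mathbb Z/p\wedge\T(\eta\circ Bf')\simeq H\mathbb Z/p\wedge\Omega(S^3)_+$, via the $H\mathbb Z/p$-Thom isomorphism --- the relevant hypothesis being that $\eta\circ Bf'$ is $H\mathbb Z/p$-orientable (the composite $\Omega(S^3)\to BF_{(p)}\to B(\mathbb Z/p)^*$ is null because $\Omega(S^3)$ is simply connected), not that $\eta\circ Bf'$ itself is null. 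So the ``trivialization'' you propose to pull back has not been produced, and the ``hard unstable computation'' you flag at $p=2$ is a red herring created by this overreach. Fortunately the detour is also unnecessary: since $\Omega(S^3\langle 3\rangle)$ is simply connected, the $H\mathbb Z_{(p)}$-Thom isomorphism applies directly to $\T(\eta\circ Bf_p)$ and gives $H\mathbb Z_{(p)}\wedge\T(\eta\circ Bf_p)\simeq H\mathbb Z_{(p)}\wedge\Omega(S^3\langle 3\rangle)_+$, which in combination with Theorem~\ref{2foldtheorem} is exactly what is needed; this is precisely the paper's argument, and it sidesteps any claim about the stable triviality of the underlying bundle.
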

This gives that
$$
\pi_i\Th(H\mathbb Z)=H_i(\Omega(S^3\langle 3\rangle),\mathbb Z)=
\begin{cases}
\mathbb Z,&i=0,\\
\mathbb Z/(\frac{i+1}{2}),&\text{for $i>0$ odd},\\
0,&\text{for $i>0$ even}.
\end{cases}
$$
The last isomorphism is easily obtained by applying the Serre spectral sequence to the fibration sequence
$$
S^1\to \Omega(S^3\langle 3\rangle)\to \Omega(S^3).
$$
An alternative approach to the calculation of $\Th(H\mathbb Z)$ has been developed by the first author in \cite{blumberg-THH-thom-einf}. One may also ask if a similar approach can be used for the Eilenberg-Mac Lane spectra $H\mathbb Z/p^{n}$ in general. This turns out not to be the case since these spectra cannot be realized as $A_{\infty}$ Thom spectra for $n>1$, see 
\cite{blumberg-THH-thom-einf} for details. 

\subsection*{Thom spectra arising from systems of groups}
A more geometric starting point for the construction of Thom spectra is to consider systems of groups $G_n$ equipped with compatible homomorphisms to the orthogonal groups $O(n)$. We write $MG$ for the associated Thom spectrum whose $n$th space is the Thom space of the vector bundle represented by $BG_n\to BO(n)$. If the groups $G_n$ come equipped with suitable associative pairings $G_m\times G_n\to G_{m+n}$, then $MG$ inherits a multiplicative structure. 
For example, we have the commutative symmetric ring spectra $M\Sigma$ and 
$M\GL(\mathbb Z)$ associated to the symmetric groups $\Sigma_n$ and the general linear groups $\GL_n(\mathbb Z)$. In Section \ref{groupthom} we show how to deduce the following results from Theorem \ref{3foldtheorem}. 

\begin{theorem}\label{Msigma}
There is a stable equivalence
$$
\Th(M\Sigma)\simeq M\Sigma\wedge \tilde Q(S^1)_+
$$  
where $\tilde Q(S^1)$ denotes the homotopy fiber of the canonical map $Q(S^1)\to S^1$.
\end{theorem}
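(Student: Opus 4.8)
The plan is to recognize $M\Sigma$ as the Thom spectrum $\T(f)$ of an infinite loop map $f\co X\to BF$ and then quote Theorem~\ref{3foldtheorem}. First I would reorganize the definition of $M\Sigma$: the permutation representations $\Sigma_n\to O(n)$ are compatible with the block-sum pairings, so passing to classifying spaces, stabilizing into $BO$, and composing with the canonical virtual-dimension-zero map $BO\to BF$ produces an $E_\infty$-map $B\Sigma_\infty\to BF$ (here $B\Sigma_\infty=\colim_n B\Sigma_n$ with the stabilized block-sum structure), and since the symmetric monoidal Thom spectrum functor of the earlier sections preserves the relevant colimit it carries this map to $M\Sigma$ with its commutative multiplication. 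Next, the group completion $B\Sigma_\infty\to B\Sigma_\infty^{+}$ is an acyclic map, hence a twisted-homology isomorphism; the Thom spectrum functor therefore sends it to a homology isomorphism of connective spectra, which is a stable equivalence by the stable Hurewicz theorem, so $\T(B\Sigma_\infty\to BF)\simeq\T(B\Sigma_\infty^{+}\to BF)$ as commutative ring spectra. Finally, the Barratt--Priddy--Quillen theorem identifies $B\Sigma_\infty^{+}$ with $Q_0(S^0)$, the basepoint component of $\Omega^\infty\bS$; so in the end $M\Sigma\simeq\T(f)$ for an infinite loop map $f\co X\to BF$ with $X\simeq Q_0(S^0)$.

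Granting this, the remaining steps are formal. An infinite loop map is in particular a $3$-fold loop map — take $B^3X$ to be the third delooping of $X$, which is $2$-connected since $X$ is connected — so Theorem~\ref{3foldtheorem} applies and gives
$$
\Th(M\Sigma)\simeq\Th(\T(f))\simeq\T(f)\wedge BX_{+}\simeq M\Sigma\wedge BX_{+},
$$
where $BX$ denotes the first delooping of $X\simeq Q_0(S^0)$. To identify $BX$ I would apply the first delooping to the fibration sequence of infinite loop spaces
$$
Q_0(S^0)\to Q(S^0)\to\mathbb Z,
$$
in which $\mathbb Z\simeq\Omega^\infty H\mathbb Z$ is the group of components. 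Since $B\mathbb Z\simeq S^1$ and $BQ(S^0)\simeq\Omega^\infty\Sigma\bS=Q(S^1)$, this yields a fibration sequence $BX\to Q(S^1)\to S^1$ whose second map deloops the truncation $Q(S^0)\to\pi_0Q(S^0)$ and is therefore the canonical one; hence $BX$ is the homotopy fiber of $Q(S^1)\to S^1$, namely $\tilde Q(S^1)$, and the theorem follows.

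The main obstacle is the first paragraph — identifying the hand-built symmetric ring spectrum $M\Sigma$, whose $n$th level is the Thom space of the permutation bundle over $B\Sigma_n$, with the Thom spectrum of a single infinite loop map to $BF$. This requires care about the symmetric monoidal Thom spectrum functor of the earlier sections: one must check that it reproduces $M\Sigma$ together with its multiplication when applied to the stabilized $E_\infty$-map $B\Sigma_\infty\to BF$, and that passing from $B\Sigma_\infty$ to its plus construction changes neither the underlying Thom spectrum nor its ring structure, for which the acyclicity of the plus construction and the stable Hurewicz theorem suffice. Once that identification is in place, the deloopings of the second paragraph are routine.
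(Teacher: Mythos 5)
Your proposal is correct and takes essentially the same approach as the paper: replace $B\Sigma_\infty\to BO\to BF$ by the infinite loop map $B\Sigma_\infty^+\to BO$ using acyclicity of the plus construction, invoke Barratt--Priddy--Quillen to identify $B\Sigma_\infty^+$ with $Q_0(S^0)$, and apply Theorem~\ref{3foldtheorem}. You are a bit more explicit than the paper about identifying the delooping $BX\simeq\tilde Q(S^1)$ by delooping the fibration $Q_0(S^0)\to Q(S^0)\to\mathbb Z$, but the argument is the same one the paper has in mind.
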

Here $Q(S^1)$ denotes the infinite loop associated to the suspension spectrum 
$\Sigma^{\infty}(S^1)$. 
\begin{theorem}\label{MGL}
There is a stable equivalence
$$
\Th(M\GL(\mathbb Z))\simeq M\GL(\mathbb Z)\wedge B(B\GL(\mathbb Z)^+)_+ 
$$
where $B(B\GL(\mathbb Z)^+)$ denotes the first space in the 0-connected cover of the algebraic K-theory spectrum for $\mathbb Z$. 
\end{theorem}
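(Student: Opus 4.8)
The plan is to realize $M\GL(\mathbb{Z})$ as the Thom spectrum of an infinite loop map and then to invoke Theorem \ref{3foldtheorem}. The homomorphisms $\GL_n(\mathbb{Z})\to\GL_n(\mathbb{R})$, together with a compatible choice of the equivalences $\GL_n(\mathbb{R})\simeq O(n)$, produce tautological vector bundles over the spaces $B\GL_n(\mathbb{Z})$ whose Thom spaces, glued by block sum of matrices, form the commutative symmetric ring spectrum $M\GL(\mathbb{Z})$. The first step is to show that this levelwise construction is stably equivalent to $\T(f)$ for an infinite loop map $f\co X\to BF$, where $X$ is the base-point component of the group completion of the $E_\infty$ space $\coprod_n B\GL_n(\mathbb{Z})$ and $f$ is induced by $\coprod_n B\GL_n(\mathbb{Z})\to\coprod_n BO(n)$ followed by the $J$-map $BO\to BF$. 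By the group-completion theorem and Quillen's plus-construction description of algebraic $K$-theory, the group completion of $\coprod_n B\GL_n(\mathbb{Z})$ is $\Omega^\infty K(\mathbb{Z})\simeq\mathbb{Z}\times B\GL(\mathbb{Z})^+$, so $X\simeq B\GL(\mathbb{Z})^+$, which is grouplike. This identification of $M\GL(\mathbb{Z})$ with the Thom spectrum of a group-completed infinite loop map is the general comparison for Thom spectra of systems of groups that is carried out in Section \ref{groupthom}.

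Granting this, an infinite loop map is in particular a 3-fold loop map, so Theorem \ref{3foldtheorem} applies and gives a stable equivalence
\[
\Th(M\GL(\mathbb{Z}))\simeq\T(f)\wedge BX_+\simeq M\GL(\mathbb{Z})\wedge B(B\GL(\mathbb{Z})^+)_+ .
\]
To finish, one identifies $B(B\GL(\mathbb{Z})^+)$ with the first space of the $0$-connected cover of the $K$-theory spectrum: since $\Omega^\infty K(\mathbb{Z})\simeq\mathbb{Z}\times B\GL(\mathbb{Z})^+$, the $0$-connected cover $K(\mathbb{Z})\langle0\rangle$ has $0$th space $B\GL(\mathbb{Z})^+$ and hence first space the delooping $B(B\GL(\mathbb{Z})^+)$ with respect to the infinite loop structure coming from block sum. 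This yields the asserted equivalence.

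The main obstacle is the first step, namely proving that the levelwise spectrum $M\GL(\mathbb{Z})$ really does agree, up to stable equivalence and compatibly with the ring structure, with the Thom spectrum of $f\co B\GL(\mathbb{Z})^+\to BF$. This requires (a) rigidifying the homotopy equivalences $\GL_n(\mathbb{R})\simeq O(n)$ so that they are compatible with the pairings $\GL_m(\mathbb{Z})\times\GL_n(\mathbb{Z})\to\GL_{m+n}(\mathbb{Z})$, so that the comparison is multiplicative, and (b) a group-completion argument showing that, for the strong symmetric monoidal Thom spectrum functor, replacing the base $\coprod_n B\GL_n(\mathbb{Z})$ by its group completion does not alter the stable homotopy type of the associated Thom spectrum. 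Once this comparison is in hand, Theorem \ref{3foldtheorem} and Quillen's theorem make the remainder of the argument formal.
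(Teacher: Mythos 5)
Your proposal follows the same route as the paper: realize $M\GL(\mathbb Z)$ as the Thom spectrum of an infinite loop map with source $B\GL(\mathbb Z)^+$ and invoke Theorem~\ref{3foldtheorem}, then identify $B(B\GL(\mathbb Z)^+)$ with the first space of $K(\mathbb Z)\langle 0\rangle$. The one place where you and the paper part ways is the comparison step you rightly flag as the main obstacle. You propose a group-completion argument to relate the levelwise spectrum to the Thom spectrum over the $K$-theory space. The paper instead passes through the stable colimit $B\GL_\infty(\mathbb Z)$: the Thom spectrum of the $\I$-space $n\mapsto B\GL_n(\mathbb Z)$ is (essentially by construction) the Thom spectrum of $B\GL_\infty(\mathbb Z)\to BO$, and since $B\GL_\infty(\mathbb Z)\to B\GL_\infty(\mathbb Z)^+$ is a homology isomorphism, the homology invariance of the Thom spectrum functor gives a stable equivalence to the Thom spectrum of $B\GL_\infty(\mathbb Z)^+\to BO$. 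This is cleaner than a direct group-completion comparison, because the group-completion map $\coprod_n B\GL_n(\mathbb Z)\to\mathbb Z\times B\GL_\infty(\mathbb Z)^+$ is not a homology isomorphism but only becomes one after localizing $\pi_0$, and the Thom spectrum of the monoid $\coprod_n B\GL_n(\mathbb Z)$ is a wedge of finite-level pieces rather than the spectrum $M\GL(\mathbb Z)$. So your instinct that this step needs care is right, but the argument the paper has in mind (the homology acyclicity of the plus-construction map, applied after first stabilizing along $n$) is a little more direct than the one you sketch. The rest of your argument, including the identification of the delooping with the first space in the connected cover of $K(\mathbb Z)$ and the multiplicative rigidification via compatible maps $\GL_n(\mathbb Z)\to O(n)$, matches the paper's intent.
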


There is a plethora of examples of this kind, involving for instance braid groups, automorphism groups of free groups, and general linear groups. 

We finally mention an application of our results in connection with the analysis of quasi-symmetric functions. Let $\mathbb CP^{\infty}\to BU$ be the canonical map obtained by identifying $\mathbb CP^{\infty}$ with $BU(1)$ and let 
$\xi\co \Omega\Sigma(\mathbb C P^{\infty})\to BU$ be the extension to a loop map. The point of view in \cite{baker-richter} is that $\Omega\Sigma(\mathbb C P^{\infty})$ is a topological model for the ring of quasi-symmetric functions in the sense that the latter may be identified with the integral cohomology ring $H^*(\Omega\Sigma(\mathbb C P^{\infty}))$. On cohomology $\xi$ then corresponds to the inclusion of the symmetric functions in the ring of quasi-symmetric functions. 
Based on Theorem \ref{maintheorem}, the authors deduce in \cite{baker-richter} that 
$\Th(T(\xi))$ may be identified with the Thom spectrum of the map
$$
L(\Sigma\mathbb C P^{\infty})\to L(BBU)\simeq BU\times BBU \xr{\mathrm{proj}}BU.
$$   
This uses that $\eta\co BBU\to BU$ is null homotopic. In particular, the spectrum homology of $\Th(T(\xi))$ is isomorphic to the homology of $L(\Sigma \mathbb C P^{\infty})$

\subsection*{The strategy for analyzing $\Th(T(f))$}
We here begin to explain the ideas and constructions going into the
proof of the main results. Let $(\mathcal A,\boxtimes, 1_\mathcal A)$
be a symmetric monoidal category.  Recall that if $A$ is a monoid in
$\mathcal A$, then the cyclic bar construction is the cyclic object 
$B^{\cy}_{\bullet}(A)\co [k]\mapsto A^{(k+1)\boxtimes}$, 
with face operators
\[
d_i(a_0\boxtimes\dots\boxtimes a_k)=
\begin{cases}
a_0\boxtimes\dots\boxtimes a_ia_{i+1}\boxtimes \dots\boxtimes
a_k,&i=0,\dots k-1,\\ 
a_ka_0\boxtimes\dots\boxtimes a_{k-1},&i=k,
\end{cases}
\]
degeneracy operators
\[
s_i(a_0\boxtimes\dots\boxtimes a_k)=a_0\boxtimes\dots a_{i}\boxtimes
1_{\mathcal C}\boxtimes a_{i+1}\dots\boxtimes a_k,\quad i=0,\dots,k, 
\]
and cyclic operators
\[
t_i(a_0\boxtimes\dots\boxtimes a_k)=a_k\boxtimes
a_0\boxtimes\dots\boxtimes a_{k-1}.
\]
Here the notation is supposed to be self-explanatory.  We denote the
geometric realization of this object (when this notion makes sense) as
$B^{\cy}(A)$.
When $\mathcal A$ is one of the modern symmetric monoidal categories of spectra and $T$ is a ring spectrum (i.e.\ a monoid in $\mathcal A$), then $B^{\cy}(T)$ is a model of the topological Hochschild homology provided that $T$ satisfies suitable cofibrancy conditions, see 
\cite{ekmm} and \cite{shipleyTHH}. 

Suppose now that $f\co A\to BF$ is a loop map that has been rectified to a map of topological monoids and imagine temporarily that $BF$ could be realized as a commutative topological
monoid.  Then associated to $f$ we would have the simplicial map
\[
B^{\cy}_{\bullet}(f)\co B^{\cy}_{\bullet}(A)\to
B^{\cy}_{\bullet}(BF)\to BF,
\]
where $BF$ is viewed as a constant simplicial space and the last map is given by levelwise multiplication. The intuitive picture underlying our results is the notion that the Thom spectrum functor should take the cyclic bar construction in spaces to the cyclic bar construction in spectra in the sense that $T(B^{\cy}(f))$ should be stably equivalent to 
$B^{\cy}(T(f))$. Ignoring issues of cofibrancy, this exactly gives a description of $\Th(T(f))$ in terms of a Thom spectrum.
This picture makes contact with the free loop space in the following fashion: When $A$ is a topological monoid $B^{\cy}(A)$ inherits an action of the circle group $\bT$ from the
cyclic structure.  This gives rise to the composite map
\[
\bT\times B^{\cy}(A)\to B^{\cy}(A)\to B(A),
\]
where $B(A)$ is the realization of the usual bar construction and the
second map is the realization of the simplicial map  
\[
(a_0,\dots,a_k)\mapsto (a_1,\dots,a_k).
\]
The adjoint map $B^{\cy}(A)\to L(B(A))$ fits in a commutative diagram
of spaces 
\begin{equation}\label{BcyLdiagram}
\xymatrix{
A \ar[r] \ar[d] & B^{\cy}(A) \ar[r] \ar[d] & B(A) \ar@{=}[d] \\
\Omega(B(A)) \ar[r] & L(B(A)) \ar[r] & B(A),\\
}
\end{equation}
where the vertical map on the left is the usual group-completion. 
Standard results on realizations of simplicial quasifibrations imply
that if $A$ is grouplike ($\pi_0A$ is a group) and well-based 
(the inclusion of the unit is a Hurewicz cofibration), then the upper sequence is a fibration sequence up to homotopy and the vertical maps are weak homotopy equivalences.  This
suggests that we should be able to connect the description in terms of
the cyclic bar construction to the free loop space. Furthermore, if $A$ is an infinite loop space, then there is a canonical splitting $B^{\cy}(A)\to A$ of the upper sequence and it is proved in 
\cite{schlichtkrull-units} that the composition 
$$
B(A)\to L(B(A))\xl{\sim}B^{\cy}(A)\to A
$$
represents multiplication by the Hopf map $\eta$. Applied to $BF$ this is the essential reason why $\eta$ appears in the statement of our main theorem.  

However, there are formidable technical impediments to making this
intuitive picture precise.  For one thing, $BF$ cannot be realized as a
commutative topological monoid since it is not a generalized Eilenberg-Mac Lane space. Moreover, the classical comparison of the Thom spectrum of a cartesian product to
the smash product of the Thom spectra is insufficiently rigid; one
obtains a simplicial map relating $B_{\bullet}^{\cy}(T(f))$ and $T(B_{\bullet}^{\cy}(f))$ 
only after passage to the stable homotopy category. This is not
sufficient for computing the topological Hochschild homology.

A major part of this paper is concerned with developing suitable technical
foundations to carry out the program above.  Our approach is as
follows: We introduce a symmetric monoidal category $(\mathcal{A},
\boxtimes, 1_{\mathcal{A}})$ which is a refined model of the category of spaces 
in the sense that $E_{\infty}$ objects can be realized as strictly commutative 
$\boxtimes$-monoids. In particular,  $BF$ will admit a model as a
commutative $\boxtimes$-monoid $BF_{\mathcal{A}}$.  In this setting we show that
the Thom spectrum functor can be refined to a strong symmetric
monoidal functor $\T_{\mathcal A}\co\mathcal A/BF_{\mathcal A}\to \mathcal S$,    
where $\mathcal S$ is a suitable symmetric monoidal category of spectra. 
Here $\mathcal A/BF_{\mathcal A}$ denotes the category of objects in
$\mathcal A$ over $BF_{\mathcal A}$ with the symmetric monoidal structure inherited
from $\mathcal{A}$: given two objects $f\co X\to BF_{\mathcal A}$ and 
$g\co Y\to BF_{\mathcal A}$, the monoidal product is defined by 
\[
f\boxtimes g\co X\boxtimes Y\to BF_{\mathcal A}\boxtimes BF_{\mathcal
  A}\to BF_{\mathcal A}.
\]
This is symmetric monoidal precisely because $BF_{\mathcal A}$ is
commutative. That $\T_{\mathcal A}$ is strong monoidal means that there is
a natural isomorphism 
\[
\T_{\mathcal A}(f)\sma \T_{\mathcal A}(g)\xr{\cong}\T_{\mathcal A}(f\boxtimes g)
\]
and this implies that we can directly implement the intuitive strategy discussed
above.  Of course, there is significant technical work necessary to
retain homotopical control over the quantities involved in the formula
above, but the basic approach does become as simple as indicated.

We construct two different realizations of the category
$\mathcal{A}$.  In a precise sense, our constructions herein are
space-level analogues of the constructions of the modern symmetric
monoidal categories of spectra.  Just as there is an operadic
approach to a symmetric monoidal category of spectra given by \cite{ekmm} and
a ``diagrammatic'' approach given by (for example) symmetric spectra 
\cite{hovey-shipley-smith} \cite{mmss}, we have operadic and diagrammatic 
approaches to producing $\mathcal{A}$. Since there are several good choices 
for the category $\mathcal A$, we
shall in fact give an axiomatic description of the properties needed
to prove Theorem~\ref{maintheorem}.  The point is that even though
these settings are in some sense equivalent, the natural input for 
the respective Thom spectrum functors is very different.  Working in 
an axiomatic setting gives a flexible framework for adapting the constructions 
to fit the input provided in particular cases. Both of our realizations are based in 
part on the earlier constructions of May-Quinn-Ray \cite{may-quinn-ray} and 
Lewis-May \cite{lewis-may-steinberger}.

\subsection*{$\mathcal L(1)$-spaces and $S$-modules}
Our first construction is intimately related to the $S$-modules of
\cite{ekmm}.  Let $\sL(n)$ denote the space of linear isometries
$\sL(U^n, U)$, for a fixed countably infinite-dimensional real inner
product space $U$.  The object $\sL(1)$ is a monoid under composition,
and we consider the category of $\sL(1)$-spaces.  Following the
approach of \cite{kriz-may}, we construct an ``operadic smash
product'' on this category of spaces defined as the coequalizer
\[X \boxtimes Y = \sL(2) \times_{\sL(1) \times \sL(1)} (X \times Y).\]
This product has the property that an $A_\infty$ space is a monoid and
an $E_\infty$ space is a commutative monoid. Therefore $BF$ is a 
commutative monoid with respect to the $\boxtimes$ product, and so we can 
adapt the Lewis-May Thom spectrum functor to construct a Thom
spectrum functor from a certain subcategory of $\sL(1)$-spaces over $BF$ to
$S$-modules which is strong symmetric monoidal.  The observation that
one could carry out the program of \cite{ekmm} in the setting of
spaces is due to Mike Mandell, and was worked out in the thesis of the
first author \cite{blumberg-thesis}.

\subsection*{{$\mathcal I$}-spaces and symmetric spectra}
\label{preI-spaces}
Our second construction is intimately related to the symmetric spectra
of \cite{hovey-shipley-smith} and \cite{mmss}.  Let $\mathcal I$ be the category with objects 
the finite sets $\mathbf n=\{1,\dots,n\}$ and morphisms the injective maps.  
The empty set $\mathbf 0$ is an initial object.  The usual concatenation 
$\mathbf m\sqcup \mathbf n$ of finite ordered sets makes this a symmetric
monoidal category with symmetric structure maps $\mathbf
m\sqcup\mathbf n\to \mathbf n\sqcup\mathbf m$ given by the obvious
shuffles. By definition, an \emph{${\mathcal I}$-space} is a
functor $X\co\mathcal I\to\mathcal U$ and we write
$\mathcal I\mathcal U$ for the category of such functors.  Just as for
the diagrammatic approach to the smash product of spectra, this category 
inherits a symmetric monoidal structure from $\I$ via the left Kan extension
\[
(X \boxtimes Y)(n)=\colim_{\mathbf n_1 \sqcup \mathbf n_2 \rightarrow \mathbf n}
X(n_1) \times Y(n_2)
\] 
along the concatenation functor $\sqcup\co \I^2\to \I$. The unit for the monoidal product is the constant $\I$-space $\I(\mathbf 0,-)$ which we denote by $*$. As we recall in Section \ref{Lewissection}, the correspondence $BF\co \mathbf n\mapsto BF(n)$ defines a commutative monoid in 
$\I\mathcal U$. We can therefore adapt the usual levelwise Thom space functor to construct a strong symmetric monoidal Thom spectrum functor from the category $\I\mathcal U/BF$ to the category of symmetric spectra. This point of view on Thom spectra has been worked out in detail by the third author \cite{schlichtkrull-thom}. A similar construction applies to give a strong symmetric monoidal Thom spectrum functor with values in orthogonal spectra. 

\subsection*{Acknowledgments}
The authors are grateful to a number of mathematicians for helpful conversations related to this work, including Gunnar Carlsson, Mike Hopkins, Wu-chung Hsiang, Ib Madsen, Mike Mandell, and Peter May.

\subsection*{Organization of the paper}
In  Section \ref{thomspectrumsection} we begin by reviewing the construction of the
Lewis-May Thom spectrum functor. We then set up an axiomatic framework which specifies 
the properties of a rigid Thom spectrum functor needed to prove Theorem~\ref{maintheorem}.  Following this, in Section~\ref{proofsection}, we show how our main
theorems can be deduced from these axioms. The rest of the paper is
devoted to implementations of the axiomatic framework. We collect the relevant
background material for the $S$-module approach in
Section~\ref{Lprelims} and verify the axioms in this setting in
Section~\ref{L-spacesection}.  In Section~\ref{secBF} we discuss
modifications needed to work in the context of universal
quasifibrations.  We then switch gears and consider the setting of
$\I$-spaces and symmetric spectra. In Section~\ref{symmetricpreliminaries} we collect and
formulate some background material on symmetric spectra and we verify
the axioms in this setting in Section~\ref{I-spacesection}. Finally, we discuss the technical details of the passage from loop maps to maps structured by operads in Appendix 
\ref{loopappendix}.

\section{Thom spectrum functors}\label{thomspectrumsection}
In this section we formalize the properties of a rigid Thom spectrum functor needed in order to prove our main theorems. We begin by reviewing the details of the underlying Thom spectrum functor, roughly following Lewis \cite[IX]{lewis-may-steinberger}. Our rigid Thom spectrum functors are built on this foundation and this construction provides the ``reference'' homotopy type we expect from a Thom spectrum functor.
\subsection{The Lewis-May Thom spectrum functor}\label{Lewissection}
We work in the categories $\mathcal U$ and $\mathcal T$ of based and unbased compactly generated weak Hausdorff spaces. By a spectrum $E$ we understand a sequence of based spaces $E_n$ for $n\geq 0$, equipped with a sequence of structure maps $S^1\wedge E_n\to E_{n+1}$ (this is what is called a prespectrum in \cite{lewis-may-steinberger}). We write 
$\Sp$ for the category of spectra in which a morphism is a sequence of maps that strictly commute with the structure maps.  Let $F(n)$ be the topological monoid of base point preserving homotopy equivalences of $S^n$. Following Lewis we use the notation $BF(n)$ for the usual bar construction $B(*,F(n),*)$ and $EF(n)$ for the one-sided bar construction $B(*,F(n),S^n)$.  The map $EF(n)\to BF(n)$ induced by the projection $S^n\to *$ is a quasifibration with fiber $S^n$ over each point in the base, and the inclusion of the basepoint in $S^n$ defines a section which is a Hurewicz cofibration. We refer the reader to  \cite{lewis-may-steinberger} and \cite{may-class} for more details of these constructions. The Thom space of a map $f\co X\to BF(n)$ is defined to be the quotient space 
$$
T(f)=f^*EF(n)/X,
$$   
where $f^*EF(n)$ is the pullback of $EF(n)$ and $X$ is viewed as a subspace via the induced section. Let $BF$ be the colimit of the spaces 
$BF(n)$ under the natural inclusions. Given a map $f\co X\to BF$, we define a filtration of $X$ by letting $X(n)$ be the closed subspace $f^{-1}BF(n)$. The Thom spectrum $T(f)$ then has as its $n$th space the Thom space $T(f_n)$, where $f_n$ denotes the restriction of $f$ to a map $X(n)\to BF(n)$. The structure maps are induced by the pullback diagrams
$$
\begin{CD}
S^1\bar\wedge f^*_nEF(n)@>>> f^*_{n+1}EF(n+1)\\\
@VVV @VVV \\
X(n)@>>> X(n+1),
\end{CD}
$$
where $\bar\wedge$ is the fibre-wise smash product. 
This construction defines our Thom spectrum functor
$$
T\co \mathcal U/BF\to \Sp.
$$
We next recall how to extend this construction to a functor with values in the category $\sS$ of coordinate-free spectra from \cite{lewis-may-steinberger}. Thus, we consider spectra indexed on the finite dimensional subspaces $V$ of an inner product space $U$ of countable infinite dimension. Let $F(V)$ be the topological monoid of base point preserving homotopy equivalences of the one-point compactification $S^V$. As in the case of $S^n$ there is an associated quasifibration $EF(V)\to BF(V)$ with fiber $S^V$. We again write $BF$ for the colimit of the spaces $BF(V)$ and observe that this is homeomorphic to the space $BF$ consider above. Given a map $f\co X\to BF$, let $X(V)$ be the closed subset $f^{-1}BF(V)$ and let $T(f)(V)$ be the Thom space of the induced map $f_V\co X(V)\to BF(V)$. This defines an object $T(f)$ in the category $\sP$ of coordinate-free prespectra from \cite{lewis-may-steinberger} and composition with the spectrification functor $L\co\sP\to \sS$ gives the Thom spectrum functor
$$
T_{\sS}\co \mathcal U/BF\to\sS
$$
from \cite{lewis-may-steinberger}. 
It is an important point of the present paper that these Thom spectrum functors can be rigidified to strong monoidal functors by suitable modifications of the domains and targets. This is based on the fact \cite{Boardman-Vogt}, \cite{may-quinn-ray}, that the correspondence 
$V\mapsto BF(V)$ defines a (lax) symmetric monoidal functor (see e.g.\ \cite{maclane}) from the category 
$\mathcal V$ of finite dimensional inner product spaces and linear isometries to the category 
$\mathcal U$ of spaces. The monoidal structure maps
$$
BF(V)\times BF(W)\to BF(V\oplus W)
$$ 
are induced by the monoid homomorphisms that map a pair of equivalences to their smash product. Here we implicitly use that the bar construction preserves products. There are now two ways in which this multiplicative structure leads to a ``representation'' of the space $BF$ as a strictly commutative monoid. On the one hand it follows from \cite[1.1.6]{may-quinn-ray} that 
$BF$ has an action of the linear isometries operad and we shall see in Section \ref{Lprelims} that this implies that it defines a commutative monoid in the weakly symmetric monoidal category of 
$\mathcal L(1)$-spaces. On the other hand, the category of functors from $\mathcal V$ to $\mathcal U$ is itself a symmetric monoidal category and the (lax) symmetric monoidal structure of the functor $V\mapsto BF(V)$ exactly corresponds to a commutative monoid structure in this functor category. Composing with the functor $\I\to\mathcal V$ that maps a finite set to the vector space it generates we get a ``representation'' of $BF$ as a commutative monoid in the category of $\I$-spaces. 

More generally, one may consider $\mathcal V$-diagrams of topological monoids 
$V\mapsto G(V)$ equipped with a unital, associative, and commutative natural pairing 
$$
G(V)\times G(W)\to G(V\oplus W).
$$ 
After applying the bar construction this gives a (lax) symmetric monoidal functor $V\mapsto BG(V)$.  For instance, we have the (lax) symmetric monoidal functors $V\mapsto BO(V)$ defined by the orthogonal groups and $V\mapsto B\Top(V)$ defined by the groups $\Top(V)$ of base point preserving homeomorphisms of $S^V$. We write $BG$ for the colimit of the spaces $BG(V)$ and as in the case of $BF$ this can be ``represented'' as a strictly commutative monoid both in an ``operadic'' and a ``diagrammatic'' fashion. Given a natural transformation of $\mathcal V$-diagrams of monoids $G(V)\to F(V)$, we define the Thom spectrum functor on $\U/BG$ to be the composition  
$$
T\co \U/BG\xr{}\U/BF\xr{T}\Sp,\text{\  respectively \ \ }
T_{\sS}\co \U/BG\xr{} \U/BF\xr{T_{\sS}} \sS.
$$ 

Finally, we must comment on homotopy invariance.  Due to the fact that quasifibrations and cofibrations are not in general preserved under pullback, the Thom spectrum functor is not a homotopy functor on the whole category $\mathcal U/BF$. A good remedy for this is to functorially replace an object $f$ by a Hurewicz fibration $\Gamma(f)\co \Gamma_f(X)\to BF$ in the usual way.
It then follows from \cite[IX, 4.9]{lewis-may-steinberger} that the composite functor 
\[
T\Gamma\co \mathcal U/BF\xr{\Gamma} \mathcal U/BF\xr{T}\Sp
\]    
is a homotopy functor in the sense that it takes weak homotopy equivalences over $BF$ to stable equivalences. Given a map $f\co X\to BF$, there is a natural homotopy equivalence $X\to \Gamma_f(X)$, which we may view as a natural transformation from the identity functor on $\mathcal U/BF$ to $\Gamma$. We think of $\T(f)$ as representing the ``correct'' homotopy type if the induced map $\T(f)\to \T(\Gamma(f))$ is a stable equivalence, and in this case we say that 
$f$ is \emph{$T$-good}. It follows from the above discussion that the restriction of $T$ to the full subcategory of $T$-good objects is a homotopy functor. Thus, for the statement in Theorem 
\ref{maintheorem} to be homotopically meaningful we have tacitly chosen $T$-good representatives before applying the Thom spectrum functor. 
We say that a $\mathcal V$-diagram of monoids $V\mapsto G(V)$ is \emph{group valued} if each of the monoids $G(V)$ is a group. In this case the homotopical analysis of the Thom spectrum functor simplifies since $\U/BG$ maps into the subcategory of $T$-good objects in $\U/BF$. The Thom spectrum functor on $\mathcal U/BG$ is therefore a homotopy functor if $G$ is group valued. 

In the following we describe in an axiomatic manner the properties required of a rigid Thom spectrum functor in order to prove Theorem \ref{maintheorem}. 
For simplicity we only formulate these axioms for maps over $BG$ when $G$ is group valued.  One may also formulate such axioms in the general case by repeated use of the functor 
$\Gamma$, but we feel that this added technicality would obscure the presentation. In the implementation of the axioms in Section \ref{L-spacesection} and Section 
\ref{I-spacesection} we discuss how to modify the constructions so as to obtain Theorem \ref{maintheorem} in general.

\subsection{Rigid Thom spectrum functors}\label{axiomssection}

Let $\mathcal S$ denote a symmetric monoidal category of ``spectra''. The reader should have in mind for instance the categories of $S$-modules \cite{ekmm} or topological symmetric spectra \cite{mmss}.  Formally, we require that $\mathcal S$
be a symmetric monoidal topological category and that there is a
continuous functor $U\co \mathcal S\to\Sp$, which we think of as a
forgetful functor.  A morphism in $\mathcal S$ is said to be a
\emph{weak equivalence} if the image under $U$ is a stable equivalence
of spectra.  We further require that $\mathcal S$ be cocomplete and
tensored over unbased spaces.  As we recall in Section
\ref{realizationsection}, this implies that there is an internal
notion of geometric realization for simplicial objects in $\mathcal
S$.  We also assume that the category of monoids in $\mathcal S$ comes
equipped with a full subcategory whose objects we call \emph{flat
  monoids}.  Given a flat monoid in $\mathcal S$, we define its
topological Hochschild homology to be the geometric realization of the
cyclic bar construction. (In the implementations, the flat objects are
sufficiently ``cofibrant'' for this to represent the correct homotopy
type. We recall that for an ordinary discrete ring, flatness is
sufficient for its Hochschild homology to be represented by the cyclic
bar construction).

We write  $\mathcal A$ for our refined category of spaces. Formally,
we require that $\mathcal A$ be a closed symmetric monoidal
topological category with monoidal product $\boxtimes$ and unit
$1_{\mathcal A}$, and we  assume that there is a continuous functor
$U\co\mathcal A\to \mathcal U$ which we again think of as a forgetful
functor. A morphism in $\mathcal A$ is said to be a \emph{weak
  equivalence} if the image under $U$ is a weak homotopy equivalence
of spaces. We also require that $\mathcal A$ be cocomplete and
tensored over unbased spaces and that $U$ preserves colimits and
tensors.  

The following list of axioms {\bf A1}--{\bf A6} specifies the properties we require of a rigid Thom spectrum functor. Here $BG$ denotes the colimit of a (lax) symmetric monoidal $\mathcal V$-diagram $V\mapsto BG(V)$ as discussed in Section \ref{Lewissection} and we assume that $G$ is group valued.
\begin{description}
\item[A1]
There exists a commutative monoid $BG_{\mathcal A}$ in $\mathcal A$
and a weak homotopy equivalence  
$
\zeta\co BG_{U}\xr{\sim} BG,
$
where $BG_{U}$ denotes the image of $BG_{\mathcal A}$ under the
functor $U$. We further assume that $BG_{\mathcal A}$ is augmented in
the sense that there is a map of monoids $BG_{\mathcal A}\to
1_{\mathcal A}$.
\end{description}

\begin{description}
\item[A2]
There exists a strong symmetric monoidal ``Thom spectrum'' functor
\[
\T_{\mathcal A}\co \mathcal A/BG_{\mathcal A}\to \mathcal S
\]
that preserves weak equivalences, and commutes with colimits and
tensors with unbased spaces.   We require that $\T_{\mathcal A}$ be a
lift of the Lewis-May Thom spectrum functor on $\mathcal U/BG$ in the sense that the two compositions in the diagram   
\[
\xymatrix{
\relax\mathcal A/BG_{\mathcal A} \ar[d]^U \ar[r]^{\T_{\mathcal A}} &
\ar[d]^U \mathcal S\\
\mathcal U/BG_U \ar[r]^{\T} & \Sp \\
}
\]
are related by a chain of natural stable equivalences. Here $\T$
denotes the composite functor 
\[
\mathcal U/BG_{U}\xr{\zeta_*}\mathcal U/BG\xr{\T} \Sp.
\]
\end{description}

These two axioms already guarantee that we can carry out the argument 
sketched in the introduction.  Let $\alpha \co A \rightarrow
BG_{\mathcal A}$ be a monoid morphism, and let 
$B^{\cy}(\alpha)$ be the realization of the simplicial map  
\[
B^{\cy}_{\bullet}(\alpha)\co B^{\cy}_{\bullet}(A)\to
B^{\cy}_{\bullet}(BG_{\mathcal A}) \to BG_{\mathcal A},
\]
where we view $BG_{\mathcal A}$ as a constant simplicial object.

\begin{theorem}\label{Thomrigid}
Let $\alpha \co A \to BG_{\mathcal A}$ be a monoid morphism  in
$\mathcal A$.  Then $\T_{\mathcal A}(\alpha)$ is a  monoid in $\mathcal S$ and there
is an isomorphism 
$$
B^{\cy}(\T_{\mathcal A}(\alpha)) \cong \T_{\aA} (B^{\cy}(\alpha)).
$$
Furthermore, there is a stable equivalence 
$$
U \T_{\mathcal A}(B^{\cy} (\alpha)) \xr{\sim} \T(UB^{\cy}(\alpha)).
$$
\end{theorem}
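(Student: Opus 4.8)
The plan is to observe that $\alpha$, viewed as an object of the slice category $\mathcal A/BG_{\mathcal A}$, is itself a monoid there, and then to transport this structure through the strong symmetric monoidal functor $\T_{\mathcal A}$. Since $BG_{\mathcal A}$ is a commutative $\boxtimes$-monoid, $\mathcal A/BG_{\mathcal A}$ is symmetric monoidal with unit the unit map $1_{\mathcal A}\to BG_{\mathcal A}$, and a monoid morphism $\alpha\co A\to BG_{\mathcal A}$ is precisely the data of a monoid object of $\mathcal A/BG_{\mathcal A}$: the multiplication is the multiplication $A\boxtimes A\to A$ of $A$, which is a morphism over $BG_{\mathcal A}\boxtimes BG_{\mathcal A}\to BG_{\mathcal A}$ because $\alpha$ is multiplicative, and the unit is the unit of $A$, which lies over the unit of $BG_{\mathcal A}$ for the same reason. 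Applying $\T_{\mathcal A}$, which is strong symmetric monoidal, then yields that $\T_{\mathcal A}(\alpha)$ is a monoid in $\mathcal S$, with multiplication the composite of the structure isomorphism $\T_{\mathcal A}(\alpha)\sma\T_{\mathcal A}(\alpha)\xr{\cong}\T_{\mathcal A}(\alpha\boxtimes\alpha)$ with $\T_{\mathcal A}$ of the multiplication of $\alpha$, and unit induced from the identification of $\T_{\mathcal A}$ of the unit of $\mathcal A/BG_{\mathcal A}$ with the unit of $\mathcal S$.

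For the isomorphism, I would first recall that a strong symmetric monoidal functor carries the cyclic bar construction of a monoid to the cyclic bar construction of its image. In simplicial degree $k$ the structure isomorphisms give $\T_{\mathcal A}(\alpha^{(k+1)\boxtimes})\cong\T_{\mathcal A}(\alpha)^{(k+1)\sma}$, and under these the faces, degeneracies, and cyclic operators of $B^{\cy}_{\bullet}(\alpha)$ — which are assembled from the multiplication and unit of $\alpha$ and from the symmetry of $\boxtimes$ — are matched with the corresponding operators of $B^{\cy}_{\bullet}(\T_{\mathcal A}(\alpha))$; this is a routine check using naturality of the structure isomorphism together with the associativity, unit, and hexagon coherences. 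The result is an isomorphism of cyclic objects $\T_{\mathcal A}(B^{\cy}_{\bullet}(\alpha))\cong B^{\cy}_{\bullet}(\T_{\mathcal A}(\alpha))$ in $\mathcal S$. To pass to geometric realizations I would use that the forgetful functor $\mathcal A/BG_{\mathcal A}\to\mathcal A$ creates colimits, so — as $BG_{\mathcal A}$ is a constant simplicial object — the realization $B^{\cy}(\alpha)$ is computed in $\mathcal A$, and that $\T_{\mathcal A}$ commutes with geometric realization, since, as recalled in Section~\ref{realizationsection}, realization is a colimit of tensors with the spaces $\Delta^{n}$ and $\T_{\mathcal A}$ commutes with colimits and such tensors by {\bf A2}. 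Combining, $\T_{\mathcal A}(B^{\cy}(\alpha))\cong|\T_{\mathcal A}(B^{\cy}_{\bullet}(\alpha))|\cong|B^{\cy}_{\bullet}(\T_{\mathcal A}(\alpha))|=B^{\cy}(\T_{\mathcal A}(\alpha))$.

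The stable equivalence is then just {\bf A2} evaluated at the object $B^{\cy}(\alpha)$ of $\mathcal A/BG_{\mathcal A}$: the chain of natural stable equivalences relating the functors $U\,\T_{\mathcal A}(-)$ and $\T(U(-))$ on $\mathcal A/BG_{\mathcal A}$ specializes to the asserted zig-zag of stable equivalences between $U\,\T_{\mathcal A}(B^{\cy}(\alpha))$ and $\T(UB^{\cy}(\alpha))$, where (using that $U\co\mathcal A\to\mathcal U$ preserves colimits and tensors, hence realizations) $UB^{\cy}(\alpha)$ is the realized map $|UB^{\cy}_{\bullet}(\alpha)|\to BG_U$ fed into $\T$ after composing with $\zeta\co BG_U\to BG$. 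I expect the only genuine work to be the coherence bookkeeping of the second step and the deferred verification (Section~\ref{realizationsection}) that $\T_{\mathcal A}$ commutes with realization; once one observes that $\alpha$ is a monoid in $\mathcal A/BG_{\mathcal A}$ the rest is formal. Note that neither the augmentation in {\bf A1} nor any flatness or cofibrancy hypothesis plays a role here — those are needed only later, when $B^{\cy}(\T_{\mathcal A}(\alpha))$ is to be identified with $\Th$ of an honest Thom spectrum.
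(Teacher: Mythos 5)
Your proposal is correct and takes essentially the same approach as the paper's proof: the paper likewise observes that strong symmetric monoidality of $\T_{\mathcal A}$ yields an isomorphism of cyclic objects $B^{\cy}_{\bullet}(\T_{\mathcal A}(\alpha))\cong\T_{\mathcal A}(B^{\cy}_{\bullet}(\alpha))$, passes to realizations via Lemma~\ref{realizationlemma} using that $\T_{\mathcal A}$ preserves colimits and tensors, and deduces the stable equivalence directly from {\bf A2}. Your write-up is more explicit about the coherence bookkeeping and about why the realization in $\mathcal A/BG_{\mathcal A}$ is computed in $\mathcal A$, but those are details the paper treats as routine.
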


The simplicity of this result, once we have set up the framework of
the two axioms, is very satisfying. However, since we are really
interested in topological Hochschild homology, we must be able to
represent our Thom spectra as flat monoids in $\mathcal S$ and for
this reason we introduce the functor $C$ below. This should be thought
of as a kind of cofibrant replacement functor and for the application
of Theorem \ref{Thomrigid} it is essential that this ``replacement''
takes place in the category $\mathcal A$ before applying $T$. Adapting
the usual convention for topological monoids to our setting, we say
that a monoid in $\mathcal A$ is \emph{well-based} if the unit
$1_{\mathcal A}\to A$ has the homotopy extension property, see Section
\ref{realizationsection} for details. We write $\mathcal A[\bT]$
for the category of monoids in $\mathcal A$.

\begin{description}
\item[A3]
There exists a functor 
$$
C\co \mathcal A[\bT]\to\mathcal A[\bT],\quad A\mapsto A^c,
$$
and a natural weak equivalence $A^c\to A$ in $\mathcal A[\bT]$. We require that the monoid $A^c$ be well-based and that the
composite functor 
$$
\mathcal A[\bT]/BG_{\mathcal A}\xr{C}\mathcal A[\bT]/BG_{\mathcal A}\xr{\T_{\mathcal A}}\mathcal S,\quad \alpha\mapsto \T_{\mathcal A}(A^c\to A\xr{\alpha} 
BG_{\mathcal A}) 
$$
takes values in the full subcategory of flat monoids in $\mathcal S$. 
\end{description}

As explained earlier, we think of the symmetric monoidal category
$\mathcal A$ as a refined model of the category of spaces in which we
can represent $E_{\infty}$ monoids by strictly commutative
monoids. Whereas the functor $U$ should be thought of as a forgetful
functor,  the functor $Q$ introduced below encodes the relationship
between the monoidal product $\boxtimes$ and the cartesian  product of
spaces.

\begin{description}
\item[A4]
There exists a strong symmetric monoidal functor $Q\co \mathcal A\to
\mathcal U$  that preserves colimits and tensors with unbased
spaces. We further assume that there is a natural transformation
$U\to Q$ that induces a weak homotopy equivalence 
$$
U(A^c_1\boxtimes \dots\boxtimes A^c_k)\to Q(A^c_1\boxtimes\dots\boxtimes A^c_k)
$$
for all $k\geq 0$ and all $k$-tuples of monoids $A_1,\dots,A_k$.
\end{description}
For $k=0$, the last requirement amounts to the condition that $U(1_{\mathcal A})\to *$ be a weak homotopy equivalence.
Until now we have not made any assumptions on the homotopical behavior of 
$BG_{\mathcal A}$  with respect to the monoidal structure. The next axiom ensures that we may replace $BG_{\mathcal A}$ by a commutative monoid which is in a certain sense well-behaved.
\begin{description}
\item[A5]
There exists a well-based commutative monoid $BG_{\mathcal A}'$ in $\mathcal A$ and a weak equivalence of monoids $BG_{\mathcal A}'\to BG_{\mathcal A}$. We assume that the canonical map (induced by the augmentation)
$$
U\big(\underbrace{BG'_{\mathcal A}\boxtimes\dots\boxtimes BG'_{\mathcal A}}_{k}\big)\to
\underbrace{UBG'_{\mathcal A}\times\dots\times UBG'_{\mathcal A}}_{k}
$$
is an equivalence for all $k$.
\end{description}
Since the functor $Q$ is monoidal it takes monoids in $\mathcal A$ to ordinary topological monoids. We say that a monoid $A$ in $\mathcal A$ is \emph{grouplike} if the topological monoid  $QA^c$ is grouplike in the ordinary sense. Now let 
$\alpha\co A\to BG_{\mathcal A}$ be a monoid morphism and let us write $X=UA$ and $f=U\alpha$. 
We define $BX$ and $B^2G_U$ to be the realizations of the simplicial spaces 
$UB_{\bullet}(A^c)$ and $UB_{\bullet}(BG^c_{\mathcal A})$, and we define $Bf$ to be the realization of the simplicial map induced map $\alpha$, that is,  
\begin{equation}\label{Bf}
Bf \co BX=|UB_{\bullet}(A^c)|\to |UB_{\bullet}(BG^c_{\mathcal A})|=B^2G_U. 
\end{equation}
We shall see in Section \ref{axiomconsequences} that {\bf A3} and {\bf A4} imply that this is a delooping of $f$ if $A$ is grouplike. 

\begin{theorem}\label{BcyLtheorem}
Suppose that {\bf A1}--{\bf A5} hold and that $A$ is grouplike. Then there is a stable equivalence
$$
T(UB^{\cy}(\alpha^c))\simeq T(L^{\eta}(Bf)),
$$ 
where $L^{\eta}(Bf)$ is the map 
$$
L(BX)\stackrel{L(Bf)}{\to}L(B^2G_U)\simeq BG_U\times
B^2G_U\stackrel{\text{id}\times\eta}{\to} BG_U\times 
BG_U\to BG_U,
$$
defined as in Theorem \ref{maintheorem}.
\end{theorem}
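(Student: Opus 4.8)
The plan is to combine the ``rigid'' comparison of Theorem \ref{Thomrigid} with the homotopical identifications of the various bar constructions provided by axioms {\bf A1}--{\bf A5}, and then to use the splitting of the free loop space of an H-space together with the appearance of $\eta$ as described in the introduction. First I would apply Theorem \ref{Thomrigid} to the monoid morphism $\alpha^c\co A^c\to BG_{\mathcal A}$ (using {\bf A3} to replace $A$ by a well-based, flat-yielding $A^c$): this gives a stable equivalence $U\T_{\mathcal A}(B^{\cy}(\alpha^c))\xrightarrow{\sim}\T(UB^{\cy}(\alpha^c))$, so it suffices to produce a stable equivalence between $U\T_{\mathcal A}(B^{\cy}(\alpha^c))$ and $\T(L^{\eta}(Bf))$. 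By {\bf A2} the left-hand side agrees up to a chain of stable equivalences with $\T$ applied (over $BG_U$, via $\zeta$) to $UB^{\cy}(\alpha^c)$, so the entire problem reduces to identifying the map of spaces $UB^{\cy}(\alpha^c)\to BG_U$ with $L^{\eta}(Bf)\co L(BX)\to BG_U$ up to the kind of equivalence that $\T\Gamma$ carries to a stable equivalence — i.e.\ a weak homotopy equivalence over $BG$ after passing to $T$-good/fibrant replacements.

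Next I would set up the space-level comparison. Using the natural transformation $U\to Q$ of {\bf A4} and the monoidality of $Q$, the simplicial space $UB^{\cy}_{\bullet}(A^c)$ maps to $QB^{\cy}_{\bullet}(A^c)=B^{\cy}_{\bullet}(QA^c)$, and {\bf A4} guarantees this is a levelwise weak equivalence (each level is a $\boxtimes$-product of the $A^c_i$), hence a weak equivalence on realizations; similarly {\bf A5} lets us replace $BG_{\mathcal A}$ by $BG'_{\mathcal A}$ and compare $UB^{\cy}_\bullet(BG^c_{\mathcal A})$ with the honest cyclic (resp.\ one-sided bar) construction on the topological monoid $QBG'^{c}_{\mathcal A}$, which maps by a weak equivalence to $BG$. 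So on the level of spaces over $BG$ we have replaced the $\mathcal A$-side cyclic bar construction by the classical cyclic bar construction $B^{\cy}(QA^c)$ of the grouplike, well-based topological monoid $QA^c$, mapping to $BG$ via the classical cyclic-bar map induced by $Q\alpha^c$ followed by levelwise multiplication in $QBG'^{c}_{\mathcal A}\simeq BG$ (here ``multiplication'' makes sense because $Q$ is strong symmetric monoidal, so $BG'_{\mathcal A}$ commutative gives $QBG'^{c}_{\mathcal A}$ a commutative multiplication up to the weak equivalence to $BG$). This is precisely the situation of diagram (\ref{BcyLdiagram}): since $QA^c$ is grouplike and well-based, the standard quasifibration results identify $B^{\cy}(QA^c)\xrightarrow{\sim}L(B(QA^c))=L(BX)$ compatibly with the map to $B(QA^c)=BX$, and under this equivalence the composite to $BG$ is the realization of the levelwise-multiplied map, i.e.\ it factors as $L(BX)\xrightarrow{L(Bf)}L(B^2G_U)\to BG_U\times B^2G_U\to\cdots$. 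The appearance of $\eta$ on the $B^2G_U$-summand is exactly the computation recalled from \cite{schlichtkrull-units}: the composite $B(BG'^{c})\to L(B(BG'^{c}))\xleftarrow{\sim}B^{\cy}(BG'^{c})\to BG'^{c}$ (the constant-loop splitting, available because $BG$ is an infinite loop space) represents multiplication by the Hopf map $\eta\co B^2G_U\to BG_U$; feeding this into the H-space splitting $L(B^2G_U)\simeq BG_U\times B^2G_U$ and the H-space multiplication on $BG_U$ yields precisely $L^{\eta}(Bf)$.

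The key steps, in order, are: (1) reduce via Theorem \ref{Thomrigid} and {\bf A2} to a statement about $\T$ (or $\T\Gamma$) applied to a map of spaces over $BG$; (2) use {\bf A3}, {\bf A4}, {\bf A5} to replace the $\mathcal A$-side constructions by the classical cyclic bar construction of the grouplike topological monoid $QA^c$ mapping to the classical (commutative-up-to-equivalence) model $QBG'^{c}_{\mathcal A}\simeq BG$, compatibly with the maps; (3) apply diagram (\ref{BcyLdiagram}) and the simplicial quasifibration results to identify $B^{\cy}(QA^c)\simeq L(BX)$ over $BX=B^2$-input, tracking that $Bf$ as defined in (\ref{Bf}) is genuinely a delooping of $f$ (this uses {\bf A3}--{\bf A4}, as promised in the paragraph after (\ref{Bf})); (4) invoke the \cite{schlichtkrull-units} computation of the $\eta$-term to recognize the composite-to-$BG$ as $L^{\eta}(Bf)$; (5) check everything is compatible enough with $T$-goodness/fibrant replacement that $\T$ (via $\T\Gamma$) carries the resulting zig-zag of weak equivalences over $BG$ to a stable equivalence. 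The main obstacle I expect is step (2)--(3): keeping strict track, through all the replacements $A\rightsquigarrow A^c\rightsquigarrow QA^c$ and $BG_{\mathcal A}\rightsquigarrow BG'_{\mathcal A}\rightsquigarrow QBG'^{c}_{\mathcal A}\rightsquigarrow BG$, that the \emph{map to $BG$} (not just the source) is carried along by a compatible zig-zag of weak equivalences, and that the resulting map is literally the levelwise-multiplied cyclic-bar map so that the classical free-loop-space identification and the \cite{schlichtkrull-units} $\eta$-computation apply verbatim. The homotopy-invariance subtleties (quasifibrations not preserved by pullback, needing $\Gamma$, needing $T$-goodness) are the technical price, but they are exactly what the axioms were designed to pay.
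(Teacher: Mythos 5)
Your plan matches the paper's proof in both structure and ingredients: reduce to comparing the maps $UB^{\cy}(\alpha^c)$ and $L^{\eta}(Bf)$ of spaces over $BG_U$; use {\bf A3}–{\bf A4} (via $U\to Q$ and the goodness of the simplicial objects) to replace the $\mathcal A$-side cyclic/one-sided bar constructions by the classical ones for the grouplike topological monoid $QA^c$, yielding $UB^{\cy}(A^c)\simeq L(UB(A^c))=L(BX)$; use {\bf A5} to pass to the well-based commutative model $BG'_{\mathcal A}$ so that the associated $\Gamma$-space is very special; and invoke the \cite{schlichtkrull-units} computation (Proposition \ref{Schprop}) to recognize the residual map on the $B^2G_U$-factor as multiplication by $\eta$, with homotopy invariance of $T$ finishing the job. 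The one inessential detour is your opening appeal to Theorem \ref{Thomrigid} and {\bf A2} — since both sides of the asserted equivalence are already Lewis-May Thom spectra of maps of spaces over $BG_U$, the paper works directly at the space level and only invokes Theorem \ref{Thomrigid} afterwards when assembling the proof of the main theorem; your version has to pass through $U\T_{\mathcal A}(B^{\cy}(\alpha^c))$ and back, which is harmless but adds no leverage here.
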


Combining this result with Theorem \ref{Thomrigid}, we get a stable equivalence
$$
UB^{\cy}(\T_{\mathcal A}(\alpha^c))\simeq \T(L^{\eta}(Bf))
$$
and since $\T_{\mathcal A}(\alpha^c)$ is a flat replacement of 
$\T_{\mathcal A}(\alpha)$ by assumption, this gives an abstract
version of Theorem \ref{maintheorem}. In order to obtain the latter,
we must be able to lift space level data to $\mathcal A$. This is the
purpose of our final axiom.  Here $\mathcal C_{\mathcal A}$ denotes an
$A_{\infty}$ operad and $\mathcal U[\mathcal C_{\mathcal A}]$ is the
category of spaces with $\mathcal C_{\mathcal A}$-action.

\begin{description}
\item[A6]
There exists an $A_{\infty}$ operad $\mathcal C_{\mathcal A}$ that acts on $BG_U$ and a functor
$$
R\co\mathcal U[\mathcal C_{\mathcal A}]/BG_U\to\mathcal A[\bT]/BG_{\mathcal A}, 
\quad (X\xr{f} BG_U)\mapsto (R_f(X)\xr{R(f)}BG_{\mathcal A}),
$$
such that $R(\mathit{id})\co R_{\mathit{id}}(BG_U)\to BG_{\mathcal A}$ is a weak equivalence and the composite functor 
$$
\mathcal U[\mathcal C_{\mathcal A}]/BG_U\xr{R}\mathcal A[\bT]/BG_{\mathcal A}\to\mathcal A[\bT]\xr{C}\mathcal A[\bT]\xr{Q}\mathcal U[\bT]\to\mathcal U [\mathcal C_{\mathcal A}]
$$
is related to the forgetful functor by a chain of natural weak homotopy equivalences in 
$\mathcal U[\mathcal C_{\mathcal A}]$. 
\end{description}

The second arrow represents the forgetful functor and the last arrow
represents the functor induced by the augmentation from the
$A_{\infty}$ operad $\mathcal C_{\mathcal A}$ to the associa\-tivity operad, 
see \cite{may-geom}. It follows from \textbf{A3}, \textbf{A4}
and \textbf{A6} that there is a chain of weak homotopy equivalences
relating $X$ to $UR_f(X)$. Thus, in this sense $R$ is a partial right
homotopy inverse of $U$.  We shall later see that if $X$ is grouplike,
then the conditions in \textbf{A6} ensure that the delooping of $f$
implied by the operad action is homotopic to the map defined in
(\ref{Bf}).

\section{Proofs of the main results from the axioms}\label{proofsection}
We first recall some background material on tensored categories and
geometric realization.

\subsection{Simplicial objects and geometric realization}\label{realizationsection}
Let $\mathcal A$ be a cocomplete topological category. Thus, we assume
that $\mathcal A$ is enriched over $\mathcal U$ in the sense that the
morphism sets $\mathcal A(A,B)$ are topologized and the composition
maps continuous. The category $\mathcal A$ is tensored over unbased
spaces if there exists a continuous functor  
$
\otimes\co \mathcal A\times \mathcal U\to \mathcal A,
$
together with a natural homeomorphism   
$$
\Map(X,\mathcal A(A,B))\cong \mathcal A(A\otimes X,B),
$$
where $A$ and $B$ are objects in $\mathcal A$ and $X$ is a space. For the category $\mathcal U$ itself, the tensor is given by the cartesian product, and in $\Sp$ the tensor of a spectrum $A$ with an (unbased) space $X$ is the levelwise smash product $A\wedge X_+$. 
Assuming that $\mathcal A$ is tensored, there is an internal notion of geometric realization of simplicial objects.
Let $[p]\mapsto \Delta^p$ be the usual cosimplicial space  used to define the geometric realization. Given a simplicial object $A_{\bullet}$ in $\mathcal A$, we define the realization $|A_{\bullet}|$ to be the coequalizer of the diagram
$$
\coprod_{[p]\to[q]}A_q\otimes \Delta^p\rightrightarrows \coprod_{[r]}A_r\otimes \Delta^r,
$$  
where the first coproduct is over the morphisms in the simplicial category and the two arrows are defined as for the realization of a simplicial space. Notice that if we view an object $A$ as a constant simplicial object, then its realization is isomorphic to $A$. In the case where $\mathcal A$ is the category $\mathcal U$,  the above construction gives the usual geometric realization of a simplicial space and if $A_{\bullet}$ is a simplicial spectrum, then $|A_{\bullet}|$ is the usual levelwise realization.
The following lemma is an immediate consequence of the definitions. 
\begin{lemma}\label{realizationlemma}
Let $\mathcal A$ and $\mathcal B$ be cocomplete topological categories that are tensored over unbased spaces and let $\Psi\co \mathcal A\to\mathcal B$ be a continuous functor that preserves colimits and tensors. Then $\Psi $ also preserves realization of simplicial objects in the sense that there is a natural isomorphism $|\Psi A_{\bullet}|\cong\Psi |A_{\bullet}|$. \qed    
\end{lemma}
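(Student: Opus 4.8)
The plan is to unwind the definition of geometric realization as an explicit colimit and then invoke the cocontinuity of $\Psi$. Recall that, by construction, $|A_{\bullet}|$ is the coequalizer in $\mathcal{A}$ of the two maps
\[
\coprod_{[p]\to[q]}A_q\otimes \Delta^p\rightrightarrows \coprod_{[r]}A_r\otimes \Delta^r,
\]
where the parallel arrows are built from the simplicial structure maps of $A_{\bullet}$ on one side and the cosimplicial structure maps of $[p]\mapsto\Delta^p$ on the other, together with the canonical maps into the coproducts. This presentation exhibits $|A_{\bullet}|$ as a colimit assembled entirely out of coproducts, a coequalizer, and tensors of objects of $\mathcal{A}$ with the spaces $\Delta^p$.

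First I would apply $\Psi$ to this coequalizer diagram. Since $\Psi$ preserves colimits it carries the coequalizer and the two coproducts to the corresponding constructions in $\mathcal{B}$; since $\Psi$ preserves tensors with unbased spaces there are natural isomorphisms $\Psi(A_r\otimes\Delta^r)\cong \Psi(A_r)\otimes\Delta^r$, and similarly for the terms $A_q\otimes\Delta^p$. These isomorphisms are natural in the object variable, hence compatible with the maps induced by the face and degeneracy operators of $A_{\bullet}$, and they are compatible with the cosimplicial structure of $\Delta^{\bullet}$ since that structure enters only through the space variable. Consequently $\Psi$ takes the coequalizer diagram above, term by term, to the diagram
\[
\coprod_{[p]\to[q]}\Psi(A_q)\otimes \Delta^p\rightrightarrows \coprod_{[r]}\Psi(A_r)\otimes \Delta^r,
\]
whose coequalizer is by definition $|\Psi A_{\bullet}|$. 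As $\Psi$ also carries the coequalizer of the first diagram to the coequalizer of the second, this produces the asserted isomorphism $\Psi|A_{\bullet}|\cong |\Psi A_{\bullet}|$.

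For naturality in $A_{\bullet}$, I would observe that a map $A_{\bullet}\to A'_{\bullet}$ of simplicial objects induces a compatible map of the two coequalizer presentations, and that the comparison isomorphism above is assembled from the structural isomorphisms expressing preservation of coproducts, of the coequalizer, and of tensors, each of which is natural; continuity of $\Psi$ is used only to ensure that $\Psi$ interacts correctly with the enriched structure that makes the tensor $\otimes$, and hence the realization, meaningful. The only point demanding any care — and the reason the lemma is ``immediate'' rather than purely formal — is to check that the isomorphisms $\Psi(A_r\otimes\Delta^r)\cong\Psi(A_r)\otimes\Delta^r$ genuinely intertwine \emph{both} of the parallel arrows, i.e.\ that they are natural simultaneously in the $\mathcal{A}$-variable and in the space variable; this is exactly the content of $\Psi$ preserving tensors, so there is no real obstacle.
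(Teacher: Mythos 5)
Your argument is exactly the ``immediate consequence of the definitions'' the paper intends: unwind the realization as the coequalizer $\coprod_{[p]\to[q]}A_q\otimes \Delta^p\rightrightarrows \coprod_{[r]}A_r\otimes \Delta^r$, then use that $\Psi$ preserves coproducts, coequalizers, and tensors to identify $\Psi$ of this coequalizer with the one defining $|\Psi A_{\bullet}|$. This matches the paper's (unwritten) proof in both substance and route.
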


Following \cite{mmss}, we say that a morphism $U\to V$ in $\mathcal A$ is an 
\emph{$h$-cofibration} if the induced morphism from the mapping cylinder 
$$
V\cup_UU\otimes I\to V\otimes I
$$ 
admits a retraction in $\mathcal A$. This generalizes the usual notion
of a Hurewicz cofibration in $\mathcal U$, that is, of a map having the
homotopy extension property. Using the terminology from the
space level setting as in \cite{segal}, Appendix A, we say that a
simplicial object in $\mathcal A$ is \emph{good} if the degeneracy
operators are $h$-cofibrations.  We observe that a functor that
preserves colimits and tensors as in Lemma~\ref{realizationlemma} also
preserves $h$-cofibrations. It therefore also preserves the goodness
condition for simplicial objects.  If $\mathcal A$ has a monoidal
structure, then we say that a monoid $A$ is \emph{well-based} if the
unit $1_{\mathcal A}\to A$ is an $h$-cofibration.
  
\begin{lemma}\label{goodlemma}
Let $\mathcal A$ be a closed symmetric monoidal topological category
that is cocomplete and tensored over unbased spaces. If $A$ is a
well-based monoid in $\mathcal A$, then the simplicial objects  
$B_{\bullet}(A)$ and $B^{\cy}_{\bullet}(A)$ are good.
\end{lemma}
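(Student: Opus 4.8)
The plan is to deduce both goodness assertions from one structural fact: for every object $W$ of $\mathcal A$ the functor $(-)\boxtimes W\co\mathcal A\to\mathcal A$ preserves $h$-cofibrations (and hence so does $W\boxtimes(-)$, by the symmetry isomorphism). Granting this, I would argue as follows. Each degeneracy operator of $B_{\bullet}(A)$ and of $B^{\cy}_{\bullet}(A)$ inserts the unit $1_{\mathcal A}$ between two (possibly empty) $\boxtimes$-strings of copies of $A$; up to the coherence isomorphisms of the monoidal structure such a degeneracy is therefore the image of the unit map $u\co 1_{\mathcal A}\to A$ under a functor of the form $A^{\boxtimes i}\boxtimes(-)\boxtimes A^{\boxtimes j}$, which is a composite of functors of the types $(-)\boxtimes W$ and $W\boxtimes(-)$. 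Since $A$ is well-based, $u$ is an $h$-cofibration by definition, so each degeneracy is an $h$-cofibration and the simplicial objects are good.

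It then remains to prove the structural fact. Recall from Section~\ref{realizationsection} that $j\co U\to V$ is an $h$-cofibration precisely when the canonical map $V\cup_U(U\otimes I)\to V\otimes I$ admits a retraction in $\mathcal A$. Because $\mathcal A$ is closed symmetric monoidal, $(-)\boxtimes W$ has the internal hom $\underline{\mathcal A}(W,-)$ as a right adjoint, hence preserves all colimits, in particular the pushout defining the mapping cylinder. So I would reduce the claim to checking that $(-)\boxtimes W$ commutes with the tensoring over spaces, i.e.\ that there is a natural isomorphism $(U\otimes X)\boxtimes W\iso(U\boxtimes W)\otimes X$ for a space $X$. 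This I would obtain from the enriched adjunctions by a Yoneda argument: for every object $B$ there are natural homeomorphisms
\begin{align*}
\mathcal A\big((U\otimes X)\boxtimes W,B\big)
&\iso\mathcal A\big(U\otimes X,\underline{\mathcal A}(W,B)\big)\\
&\iso\Map\big(X,\mathcal A(U,\underline{\mathcal A}(W,B))\big)\\
&\iso\Map\big(X,\mathcal A(U\boxtimes W,B)\big)\\
&\iso\mathcal A\big((U\boxtimes W)\otimes X,B\big).
\end{align*}
Applying $(-)\boxtimes W$ to a retraction of $V\cup_U(U\otimes I)\to V\otimes I$ and transporting along these isomorphisms produces a retraction of $(V\boxtimes W)\cup_{U\boxtimes W}\big((U\boxtimes W)\otimes I\big)\to(V\boxtimes W)\otimes I$, exhibiting $j\boxtimes W$ as an $h$-cofibration.

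The step requiring the most care is the compatibility isomorphism $(U\otimes X)\boxtimes W\iso(U\boxtimes W)\otimes X$ relating the monoidal product $\boxtimes$ to the tensoring over $\mathcal U$; morally this is part of the standard axiomatics of a closed symmetric monoidal topological category tensored over spaces, and in each of the concrete models for $\mathcal A$ constructed later in the paper it should be transparent from the definitions, so I do not anticipate a genuine obstacle — the whole statement is the evident enriched analogue of the classical fact (see \cite{segal}, Appendix~A) that the (cyclic) bar construction of a well-based topological monoid is good.
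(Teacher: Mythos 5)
Your proof is correct and takes essentially the same route as the paper's: reduce to showing $(-)\boxtimes W$ preserves $h$-cofibrations, and deduce this from the closedness of $\mathcal A$ together with the compatibility $(U\otimes X)\boxtimes W\cong(U\boxtimes W)\otimes X$. You have merely spelled out, via the Yoneda argument, the compatibility isomorphism that the paper asserts without proof.
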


\begin{proof}
We claim that if $U\to V$ is an $h$-cofibration in $\mathcal A$, then
the induced morphism $U\boxtimes W\to V\boxtimes W$ is again an
$h$-cofibration for any object $W$. In order to show this we use that
$\mathcal A$ is closed to establish a canonical isomorphism 
$$
(V\cup_UU\otimes I)\boxtimes W\cong 
V\boxtimes W\cup_{V\boxtimes W}U\boxtimes W\otimes I.
$$ 
Similarly, we may identify $(V\otimes I)\boxtimes W$ with $(V\boxtimes
W)\otimes I$ and the claim follows. The assumption that $A$ be well-based therefore
implies the statement of the lemma.
\end{proof}
 
\subsection{Consequences of the axioms}\label{axiomconsequences}
Now let  $\mathcal A$ and $\mathcal S$ be as in Section
\ref{axiomssection}, and assume that the axioms
\textbf{A1}--\textbf{A6} hold. We shall prove the consequences of
the axioms stated in Section \ref{axiomssection}. 

\medskip
\noindent\textit{Proof of Theorem \ref{Thomrigid}.}
If $\alpha\co A\to BF_{\mathcal A}$ is a monoid morphism in $\mathcal A$, then the assumption that $T_{\mathcal A}$ be strong symmetric monoidal implies that we have an isomorphism of cyclic objects $B^{\cy}_{\bullet}(T_{\mathcal A}(\alpha))\cong T_{\mathcal A} 
(B^{\cy}_{\bullet}(\alpha))$. The first statement therefore follows from
Lemma \ref{realizationlemma} since we have assumed that $T_{\mathcal
  A}$ preserves colimits and tensors. The second statement follows
from the assumption that the diagram in  \textbf{A2} commutes up to
stable equivalence.  \qed 

\bigskip
The following lemma is an immediate consequence of Lemma \ref{goodlemma} and the assumption that $U$ and $Q$ preserve colimits and tensors. 
\begin{lemma}\label{goodspacelemma}
If $A$ is a well-based monoid in $\mathcal A$, then the simplicial objects $B_{\bullet}(A)$ and $B^{\cy}_{\bullet}(A)$ are good and so are the simplicial spaces obtained by applying $U$ and 
$Q$. \qed 
\end{lemma}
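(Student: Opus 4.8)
The plan is to reduce the statement entirely to Lemma~\ref{goodlemma} together with the observation recorded just after Lemma~\ref{realizationlemma}, namely that a functor preserving colimits and tensors with unbased spaces also preserves $h$-cofibrations and hence the goodness condition for simplicial objects. So there is essentially nothing new to prove; the work is to point at the two ingredients and note that the hypotheses on $U$ and $Q$ make them applicable.

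Concretely, I would first invoke Lemma~\ref{goodlemma}: since $A$ is a well-based monoid in $\mathcal A$, the simplicial objects $B_{\bullet}(A)$ and $B^{\cy}_{\bullet}(A)$ are good, i.e.\ each degeneracy operator $s_i$ is an $h$-cofibration in $\mathcal A$. Next I would recall that $U\co\mathcal A\to\mathcal U$ preserves colimits and tensors with unbased spaces by the standing hypotheses on $U$, and that $Q\co\mathcal A\to\mathcal U$ does so by axiom~\textbf{A4}. By the remark after Lemma~\ref{realizationlemma}, both functors therefore carry $h$-cofibrations of $\mathcal A$ to $h$-cofibrations of $\mathcal U$, that is, to maps of spaces with the homotopy extension property. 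Applying $U$ (respectively $Q$) to a simplicial object of $\mathcal A$ means applying it levelwise, and goodness is a condition on the degeneracy maps only; hence the degeneracies of $UB_{\bullet}(A)$, $QB_{\bullet}(A)$, $UB^{\cy}_{\bullet}(A)$, and $QB^{\cy}_{\bullet}(A)$ are Hurewicz cofibrations, so all four simplicial spaces are good in the sense of \cite{segal}, Appendix~A. The same sentence covers $B^{\cy}_{\bullet}$ and $B_{\bullet}$ simultaneously.

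I do not expect any real obstacle: the substantive content has already been isolated in Lemma~\ref{goodlemma} and in the preservation-of-$h$-cofibrations remark, and the present lemma is just the instance of those two facts for the functors $U$ and $Q$. The only point worth flagging is notational, not mathematical: ``the simplicial space obtained by applying $U$'' denotes the levelwise composite $[p]\mapsto U\big(B_p(A)\big)$ with structure maps $U(d_i)$ and $U(s_i)$, and one should not conflate it with the bar construction of the topological monoid $UA$ (which would additionally require $U$ to be monoidal), since goodness concerns only the degeneracies.
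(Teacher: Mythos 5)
Your proposal is correct and follows exactly the paper's intended argument: the lemma is stated as an immediate consequence of Lemma~\ref{goodlemma} combined with the observation following Lemma~\ref{realizationlemma} that a colimit- and tensor-preserving functor preserves $h$-cofibrations and hence the goodness condition. The clarifying remark about ``applying $U$'' meaning the levelwise composite, not the bar construction of $UA$, is a helpful and accurate caution.
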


If $Z_{\bullet}$ is a simplicial object in $\mathcal A$ with internal
realization $Z$, then it follows from Lemma \ref{realizationlemma}
that $UZ$ is homeomorphic to the realization of the simplicial space
$UZ_{\bullet}$ obtained by applying $U$ degree-wise. If $Z_{\bullet}$
is a cyclic object, then $UZ_{\bullet}$ is a cyclic space and $UZ$
inherits an action of the circle group $\bT$. Recall that a monoid $A$
in $\mathcal A$ is said to be grouplike if the topological monoid
$QA^c$ is grouplike in the ordinary sense.

\begin{proposition}\label{deloopingproposition} 
If $A$ is grouplike, then $UB(A^c)$ is a delooping of $UA$.
\end{proposition}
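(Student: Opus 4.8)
The plan is to reduce the statement to the classical fact that for a grouplike well-based topological monoid $M$, the bar construction $B(M)$ is a delooping of $M$, i.e.\ the canonical map $M\to\Omega B(M)$ is a weak homotopy equivalence. The point is to transport this along the functors $U$ and $Q$ while keeping track of which simplicial spaces are ``good'' in the sense of Segal, so that the group-completion/quasifibration argument applies.

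First I would observe that by Lemma \ref{goodspacelemma}, applied to the well-based monoid $A^c$ supplied by \textbf{A3}, the simplicial objects $B_\bullet(A^c)$ and $B^{\cy}_\bullet(A^c)$ are good, and so are the simplicial spaces $UB_\bullet(A^c)$ and $QB_\bullet(A^c)$ obtained by applying $U$ and $Q$. Recall that $BX=UB(A^c)=|UB_\bullet(A^c)|$ by definition \eqref{Bf}. Since $Q$ is strong symmetric monoidal it preserves products, so $QB_\bullet(A^c)$ is precisely the ordinary bar construction $B_\bullet(QA^c)$ of the topological monoid $QA^c$; and since $Q$ preserves colimits and tensors, Lemma \ref{realizationlemma} gives $QB(A^c)=|B_\bullet(QA^c)|=B(QA^c)$. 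By hypothesis $A$ is grouplike, which by definition means exactly that $QA^c$ is a grouplike topological monoid, and it is well-based because $Q$ preserves $h$-cofibrations. Hence the classical result for topological monoids applies: the canonical map $QA^c\to\Omega B(QA^c)$ is a weak homotopy equivalence, i.e.\ $B(QA^c)$ is a delooping of $QA^c$.

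Next I would pull this back along the natural transformation $U\to Q$ from \textbf{A4}. Taking $k=1$ in \textbf{A4} gives a weak homotopy equivalence $UA^c\to QA^c$; combined with the natural weak equivalence $A^c\to A$ in $\mathcal A[\bT]$ from \textbf{A3}, and the fact that $U$ preserves weak equivalences, we get a chain of weak homotopy equivalences $UA\xleftarrow{\sim} UA^c\xrightarrow{\sim} QA^c$. On the level of bar constructions, the natural transformation $U\to Q$ induces a levelwise map of good simplicial spaces $UB_\bullet(A^c)\to QB_\bullet(A^c)=B_\bullet(QA^c)$, which by \textbf{A4} (using that $B_k(A^c)$ is an iterated $\boxtimes$-product of copies of $A^c$) is a levelwise weak homotopy equivalence. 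Since both simplicial spaces are good, the induced map on realizations $BX=UB(A^c)\to B(QA^c)$ is a weak homotopy equivalence. Chaining these equivalences together identifies $UA$ with $QA^c$ and $UB(A^c)$ with $B(QA^c)$ compatibly, so $UB(A^c)$ is a delooping of $UA$.

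The main obstacle is the bookkeeping around goodness: one must ensure that each levelwise equivalence of simplicial spaces in sight is between good simplicial spaces before passing to realizations, since without this the realization of a levelwise equivalence need not be an equivalence. This is handled by the systematic use of Lemma \ref{goodspacelemma} together with the observation (from Lemma \ref{realizationlemma} and the remark following it) that $U$ and $Q$, preserving colimits and tensors, preserve $h$-cofibrations and hence the goodness condition. A secondary, more minor point is verifying that the identification $QB_\bullet(A^c)\cong B_\bullet(QA^c)$ is one of simplicial topological monoids-over-nothing respecting all face and degeneracy maps, which is immediate from $Q$ being strong monoidal; once this is in place the classical delooping theorem for topological monoids does the real work.
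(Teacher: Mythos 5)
Your proof is correct and follows essentially the same route as the paper's: identify $QB(A^c)$ with the classifying space of the grouplike topological monoid $QA^c$ via strong monoidality of $Q$, compare $UB(A^c)$ with $QB(A^c)$ by realizing the levelwise equivalence of good simplicial spaces coming from \textbf{A4}, and close the loop with the chain $UA \simeq UA^c \simeq QA^c$ from \textbf{A3} and \textbf{A4}. You spell out the goodness bookkeeping a bit more explicitly than the paper does, but the argument is the same.
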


\begin{proof}
The natural transformation in \textbf{A4} gives rise to a map of
simplicial spaces $UB_{\bullet}(A^c)\to Q B_{\bullet}(A^c)$ which is a
levelwise weak homotopy equivalence by assumption. Since by Lemma
\ref{goodspacelemma} these are good simplicial spaces, it follows that
the topological realization is also a weak homotopy
equivalence. Furthermore, since $Q$ is strong symmetric monoidal,
$QB(A^c)$ is isomorphic to the classifying space of the grouplike
topological monoid $QA^c$, hence is a delooping of the latter. Thus,
we have a chain of weak homotopy equivalences
$$
\Omega(UB(A^c))\simeq \Omega(QB(A^c))\simeq\Omega(B(QA^c)) 
\simeq QA^c\simeq UA^c\simeq UA,
$$
where the two last equivalences are implied by \textbf{A3} and \textbf{A4} .
\end{proof}

Suppose now that $A$ and therefore also $A^c$ is augmented over the unit $1_{\mathcal A}$. We then have the following analogue of  (\ref{BcyLdiagram}),
$$
\bT\times UB^{\cy}(A^c)\to UB^{\cy}(A^c)\to UB(A^c),
$$
where the last arrow is defined using the augmentation.

\begin{proposition}\label{BcyLproposition}
If $A$ is grouplike, then the adjoint map 
$$
UB^{\cy}(A^c)\to L(UB(A^c))
$$
is a weak homotopy equivalence.
\end{proposition}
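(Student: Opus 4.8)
The plan is to reduce the statement to the classical fact about realizations of simplicial quasifibrations, applied not to the simplicial objects in $\mathcal{A}$ directly but to the simplicial spaces obtained after applying $U$. Concretely, I would first record that $A^c$ is a well-based monoid in $\mathcal{A}$ by \textbf{A3}, so by Lemma~\ref{goodspacelemma} the simplicial spaces $UB^{\cy}_{\bullet}(A^c)$, $UB_{\bullet}(A^c)$ are good, and the simplicial space $[k]\mapsto U(A^c)^{(k+1)\boxtimes}$ mapping down to $[k]\mapsto U(A^c)^{k\boxtimes}$ (the latter being $UB_{\bullet}(A^c)$) forms, in each simplicial degree, a split surjection with "fibre" $UA^c$ in degree $0$. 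The first task is therefore to identify this degreewise map: using that $U$ is only lax symmetric monoidal we do not have $U((A^c)^{(k+1)\boxtimes})\cong (UA^c)^{\times(k+1)}$ on the nose, but \textbf{A4} supplies a natural weak equivalence $U((A^c)^{(k+1)\boxtimes})\xrightarrow{\sim}Q((A^c)^{(k+1)\boxtimes})\cong(QA^c)^{\times(k+1)}$, the last isomorphism because $Q$ is strong symmetric monoidal. So after applying $Q$ the simplicial map in question becomes, degreewise, the projection $(QA^c)^{\times(k+1)}\to(QA^c)^{\times k}$ underlying the cyclic bar construction of the topological monoid $QA^c$.

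Next I would invoke the standard result (the one already cited in the discussion of diagram~(\ref{BcyLdiagram}) in the introduction): for a grouplike well-based topological monoid $M$, the upper row of
\[
\xymatrix{
M \ar[r] \ar[d] & B^{\cy}(M) \ar[r] \ar[d] & B(M) \ar@{=}[d] \\
\Omega B(M) \ar[r] & L(B(M)) \ar[r] & B(M)
}
\]
is a quasifibration sequence and the vertical maps are weak homotopy equivalences; in particular $B^{\cy}(M)\to L(B(M))$ is a weak equivalence. Applying this with $M=QA^c$ — which is grouplike precisely by the definition of "$A$ grouplike" and well-based because $Q$ preserves $h$-cofibrations, so $QA^c$ is well-based as a topological monoid — gives that $QB^{\cy}(A^c)\to L(QB(A^c))$ is a weak equivalence. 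Here I use Lemma~\ref{realizationlemma} to commute $Q$ past realization, so that $QB^{\cy}(A^c)\cong B^{\cy}(QA^c)$ and $QB(A^c)\cong B(QA^c)$.

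Finally I would compare the $U$-version with the $Q$-version via the natural transformation $U\to Q$ of \textbf{A4}. It induces a commutative square
\[
\xymatrix{
UB^{\cy}(A^c) \ar[r] \ar[d] & L(UB(A^c)) \ar[d] \\
QB^{\cy}(A^c) \ar[r] & L(QB(A^c))
}
\]
whose bottom arrow is a weak equivalence by the previous step. The left vertical arrow is a weak equivalence because it is the realization of a levelwise weak equivalence of good simplicial spaces (using \textbf{A4} and Lemma~\ref{goodspacelemma}). The right vertical arrow is $L$ applied to the realization of the levelwise weak equivalence $UB_{\bullet}(A^c)\to QB_{\bullet}(A^c)$ of good simplicial spaces; since $L(-)=\Map_*(S^1,-)$ preserves weak equivalences between spaces of the homotopy type of CW complexes, and all the spaces in sight have such homotopy type, this is a weak equivalence too. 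By two-out-of-three the top arrow is a weak equivalence, which is the assertion. The main obstacle I anticipate is the very first reduction: one must be careful that the composite "apply $U$, then view the degreewise map $U((A^c)^{(k+1)\boxtimes})\to U((A^c)^{k\boxtimes})$ as (up to the \textbf{A4}-equivalence) the underlying projection of $B^{\cy}_{\bullet}(QA^c)$" is genuinely compatible with the face, degeneracy, and cyclic operators — i.e.\ that the $Q$-comparison is a map of cyclic spaces, not just a degreewise equivalence — and that the adjoint-map/$L$ description of the target is preserved under all these manipulations; once that bookkeeping is in place the homotopical content is entirely the classical quasifibration argument.
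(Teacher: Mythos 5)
Your proposal is correct and follows essentially the same route as the paper's proof: both use the natural transformation $U\to Q$ from \textbf{A4} to build a commutative square comparing $UB^{\cy}(A^c)\to L(UB(A^c))$ with $QB^{\cy}(A^c)\to L(QB(A^c))$, identify the bottom row (via $Q$ strong symmetric monoidal and Lemma~\ref{realizationlemma}) with the classical map $B^{\cy}(QA^c)\to L(B(QA^c))$, invoke the quasifibration result for the grouplike well-based topological monoid $QA^c$, and conclude by two-out-of-three. The extra bookkeeping you flag at the end is automatic because $U\to Q$ is a natural transformation and hence commutes with all simplicial and cyclic structure maps.
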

\begin{proof}
It follows from the proof of Proposition \ref{deloopingproposition} that there is a weak homotopy equivalence $UB(A^c)\to QB(A^c)$ and by a similar argument we get a weak homotopy equivalence $UB^{\cy}(A^c)\to QB^{\cy}(A^c)$. These maps are related by a commutative diagram
$$
\begin{CD}
UB^{\cy}(A^c)@>>> L(UB(A^c))\\
@VV\sim V @VV\sim V\\
Q(B^{\cy}(A^c))@>>> L(QB(A^c)),
\end{CD}
$$
where, replacing $U$ by $Q$, the bottom map is defined as the map in the proposition. Using that $Q$ is strong symmetric monoidal, we can write the latter map in the form 
$$
B^{\cy}(QA^c)\to L(B(QA^c)),
$$
and as discussed in the introduction, this map is a weak homotopy
equivalence since $QA^c$ is grouplike. This implies the result.   
\end{proof}

Now let $\alpha\co A\to BG_{\mathcal A}$ be a monoid morphism in $\mathcal A$ of the form considered in Theorem \ref{BcyLtheorem}. We wish to analyze the map obtained by applying $U$ to the composite morphism 
$$
B^{\cy}(\alpha^c)\co B^{\cy}(A^c)\to B^{\cy}(BG_{\mathcal A}^c)\to B^{\cy}(BG_{\mathcal A})\to BG_{\mathcal A}. 
$$ 
Notice first that we have a commutative diagram
$$
\begin{CD}
UB^{\cy}(A^c)@>>>UB^{\cy}(BG_{\mathcal A}^c)\\
@VV\simeq V @VV\simeq V\\
L(UB(A^c))@>>>L(UB(BG_{\mathcal A}^c)),
\end{CD}
$$
where the vertical maps are weak homotopy equivalences by Proposition
\ref{BcyLproposition}.  Writing $B^2G_U$ for the delooping
$UB(BG_{\mathcal A}^c)$ as usual, we must identify the homotopy class
represented by the diagram 
\begin{equation}\label{LUBequation}
L(B^2G_U)\xl{\sim}UB^{\cy}(BG_{\mathcal A}^c)\to UB^{\cy}(BG_{\mathcal A})\to BG_U.
\end{equation}
We shall do this by applying the results of \cite{schlichtkrull-units}
and for this we need to recall some general facts about
$\Gamma$-spaces.  Consider in general a commutative well-based monoid
$Z$ in $\mathcal A$ that is augmented over the unit $1_{\mathcal
  A}$. Such a monoid gives rise to a $\Gamma$-object in $\mathcal A$,
that is, to a functor $Z\co \Gamma^o\to\mathcal A$, where $\Gamma^o$
denotes the category of finite based sets. It suffices to define $Z$
on the skeleton subcategory specified by the objects $\mathbf
n_+=\{*,1,\dots,n\}$, where we let  
$
Z(\mathbf n_+)=Z^{\boxtimes n}
$    
with an implicit choice of placement of the parenthesis in the iterated monoi\-dal product.
By definition, $Z(\mathbf 0_+)=1_{\mathcal A}$. 
The $\Gamma$-structure is defined using the symmetric monoidal
structure of $\mathcal A$, together with the multiplication and
augmentation of $Z$. (This construction models the tensor of $Z$ with finite based sets in the category of augmented commutative monoids in $\mathcal A$; similar constructions are considered in \cite{Basterra-Mandell} and \cite{schlichtkrull-units}.) 
From this point of view the diagram of simplicial objects 
$$
Z\to B^{\cy}_{\bullet}(Z)\to B_{\bullet}(Z)
$$
may be identified with that obtained by evaluating $Z$ degree-wise on
the cofibration sequence of simplicial sets $S^0\to S^1_{\bullet+}\to
S^1_{\bullet}$, see \cite{schlichtkrull-units}, Section 5.2. Composing
with the functor $U$ we get the $\Gamma$-space $UZ$ and the assumption
that the monoid $Z$ be well-based assures that this construction is
homotopically well-behaved. Notice also that $UZ$ is degree-wise
equivalent to the reduced $\Gamma$-space $\tilde UZ$ defined by the
quotient spaces 
$$
\tilde UZ(\mathbf n_+)=UZ(\mathbf n_+)/U(1_{\mathcal A}).
$$ 
Following Bousfield and Friedlander \cite{bousfield-friedlander}, we
say that $UZ$ is a \emph{special $\Gamma$-space} if the canonical maps
$$
U(\underbrace{Z\boxtimes \dots\boxtimes
  Z}_n)\to\underbrace{UZ\times\dots\times UZ}_n
$$ 
are weak homotopy equivalences for all $n$. In this case the
underlying space $UZ$ inherits a weak H-space structure, and we say
that the $\Gamma$-space is \emph{very special} if the monoid of
components is a group. This is equivalent to the condition that $Z$ be grouplike as a 
monoid in $\mathcal A$.  Consider now the composition  
$$
\bT\times UZ(S^1_+)\to UZ(S^1_+)\to UZ(S^1)
$$
defined in analogy with (\ref{BcyLdiagram}).
The following result is an immediate consequence of
\cite{schlichtkrull-units}, Proposition 7.3.
\begin{proposition}[\cite{schlichtkrull-units}]\label{Schprop}
If $Z$ is a well-based commutative monoid in $\mathcal A$ such that
the $\Gamma$-space $UZ$ is very special, then the adjoint map
$$
UZ(S^1_+)\xr{\sim} L(UZ(S^1))
$$
is a weak homotopy equivalence and the diagram
$$
UZ(S^1)\to L(UZ(S^1))\xl{\sim} UZ(S^1_+)\to UZ(S^0)
$$
represents multiplication by $\eta$. \qed
\end{proposition}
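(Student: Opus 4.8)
The plan is to recognize everything in sight as living in the $U$-image of the internal $\Gamma$-object $Z\co\Gamma^o\to\mathcal A$ recalled above, and then to quote the corresponding statement about reduced $\Gamma$-spaces, which is \cite{schlichtkrull-units}, Proposition~7.3.

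The first step is to set up the identifications. Since $U$ preserves colimits and tensors it preserves geometric realization by Lemma~\ref{realizationlemma}, so the composite $\Gamma^o\xr{Z}\mathcal A\xr{U}\mathcal U$ is a $\Gamma$-space $UZ$ with $UZ(\mathbf n_+)=U(Z^{\boxtimes n})$, and $U$ of the internal evaluation of $Z$ on a based simplicial set is the realization of the corresponding simplicial space $UZ(-)$. Taking the standard simplicial circle $S^1_{\bullet}=\Delta^1/\partial\Delta^1$, for which $S^1_n\cong\mathbf n_+$ and $(S^1_{\bullet+})_n\cong\mathbf{(n+1)}_+$, degreewise evaluation of $Z$ on the cofibre sequence $S^0\to S^1_{\bullet+}\to S^1_{\bullet}$ recovers the simplicial objects $Z\to B^{\cy}_{\bullet}(Z)\to B_{\bullet}(Z)$ in $\mathcal A$, as recalled in \cite{schlichtkrull-units}, Section~5.2. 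Applying $U$ and realizing, this identifies $UZ(S^1_+)$ with $UB^{\cy}(Z)$ and $UZ(S^1)$ with $UB(Z)$; under these identifications the $\bT$-action on $UZ(S^1_+)$ induced by the cyclic structure is the circle action used in the cited proposition, the map $UZ(S^1_+)\to UZ(S^1)$ is the collapse $B^{\cy}(Z)\to B(Z)$, and $UZ(S^1_+)\to UZ(S^0)$ is the projection $B^{\cy}(Z)\to Z$ that splits the unit inclusion.

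The second step is cofibrancy bookkeeping. Since $Z$ is well-based, Lemma~\ref{goodlemma} shows that $B_{\bullet}(Z)$ and $B^{\cy}_{\bullet}(Z)$ are good simplicial objects in $\mathcal A$, hence by Lemma~\ref{goodspacelemma} the simplicial spaces $UB_{\bullet}(Z)$ and $UB^{\cy}_{\bullet}(Z)$ are good. By the same token the $\Gamma$-space $UZ$, being levelwise weakly equivalent to the reduced $\Gamma$-space $\tilde UZ$ obtained by collapsing $U(1_{\mathcal A})$ (which is weakly contractible by \textbf{A4}), may be replaced by $\tilde UZ$ throughout: the relevant levelwise equivalences are equivalences of good simplicial spaces, and so remain equivalences after realization and after applying $L(-)=\Map(S^1,-)$. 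Finally, the hypothesis that $UZ$ be very special in the sense of \cite{bousfield-friedlander} is exactly the input required by the cited proposition, and it is precisely the condition that $Z$ be grouplike.

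With $\tilde UZ$ a very special reduced $\Gamma$-space, \cite{schlichtkrull-units}, Proposition~7.3, gives that the adjoint $\tilde UZ(S^1_+)\to L(\tilde UZ(S^1))$ is a weak equivalence and that the diagram
\[
\tilde UZ(S^1)\to L(\tilde UZ(S^1))\xl{\sim}\tilde UZ(S^1_+)\to \tilde UZ(S^0)
\]
represents multiplication by $\eta$; transporting back along $\tilde UZ\htp UZ$ and the identifications of the first step yields the statement for $UZ$. I expect no genuine difficulty beyond this bookkeeping — the one point that must be checked with care is that passage through $U$ is compatible with the $\Gamma$-space constructions, namely the evaluations on $S^1_{\bullet}$ and $S^1_{\bullet+}$ and the $\bT$-action on realizations of cyclic objects, and that well-basedness of $Z$ supplies exactly the cofibrancy needed both to invoke \cite{schlichtkrull-units}, Proposition~7.3, verbatim for $\tilde UZ$ and to pass between $\tilde UZ$ and $UZ$.
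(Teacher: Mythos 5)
Your proposal is correct and follows essentially the same route as the paper: the paper introduces the $\Gamma$-object $Z\co\Gamma^o\to\mathcal A$, identifies $Z\to B^{\cy}_{\bullet}(Z)\to B_{\bullet}(Z)$ with the degree-wise evaluation on $S^0\to S^1_{\bullet+}\to S^1_{\bullet}$, notes that well-basedness makes $UZ$ homotopically well-behaved and degree-wise equivalent to $\tilde UZ$, and then declares the proposition an immediate consequence of \cite{schlichtkrull-units}, Proposition~7.3. You have simply spelled out that bookkeeping (the $U$-compatibilities, goodness of the simplicial spaces, and the harmless passage to the reduced $\Gamma$-space) in more detail than the paper does.
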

The last map in the above diagram is induced by the retraction
$S^1_+\to S^0$.

\medskip
\noindent\textit{Proof of Theorem \ref{BcyLtheorem}.}
It remains to identify the homotopy class represented by
(\ref{LUBequation}) and as explained in the introduction we have a
splitting
$$
L(B^2G_U)\simeq BG_U\times B^2G_U.
$$ 
We must prove that the homotopy class specified by the diagram
\begin{equation}\label{UBGequation}
B^2G_U\to L(B^2G_U)\xl{\sim} UB^{\cy}(BG_{\mathcal A}^c)\to BG_U
\end{equation}
is multiplication by $\eta$ in the sense of the theorem. Let 
$BG_{\mathcal A}'\to BG_{\mathcal A}$ be as in \textbf{A6} and let the
spaces $BG_U'$ and $B^2G_U'$ be defined as $UBG'_{\mathcal A}$ and
$UB(BG'^c_{\mathcal A})$. We then obtain a diagram
\begin{equation}\label{UBG'equation}
B^2G'_U \to L(B^2G_U')\xl{\sim} UB^{\cy}(BG_{\mathcal A}'^c)\to BG'_U
\end{equation}
which is term-wise weakly homotopy equivalent to (\ref{UBGequation}). 
Writing $Z$ for the commutative monoid $BG_{\mathcal A}'$, the
assumptions in \textbf{A5} ensure that $Z$ gives rise to a very
special $\Gamma$-space $UZ$. We claim that the diagram
(\ref{UBG'equation}) is term-wise weakly equivalent to the diagram in
Proposition \ref{Schprop}. Indeed, it follows from \textbf{A5} that
the natural weak equivalence in \textbf{A3} gives rise to degree-wise
weak homotopy equivalences 
$$
UB_{\bullet}(BG_{\mathcal A}')\to UZ(S^1_{\bullet}), \quad\text{and}\quad 
UB^{\cy}_{\bullet}(BG_{\mathcal A}')\to UZ(S^1_{\bullet+}).
$$
Since these are good simplicial spaces by Lemma \ref{goodspacelemma},
the induced maps of realizations are also weak homotopy
equivalences. The statement of the theorem now follows immediately
from Proposition \ref{Schprop}.\qed

\subsection{Proofs of the main theorems}\label{mainproofsection}
We first recall some general facts about deloopings of $A_{\infty}$
maps from \cite{may-geom}. Thus, let $\mathcal C$ be an $A_{\infty}$
operad with augmentation $\mathcal C\to\mathcal M$ where $\mathcal M$
denotes the associative operad. Let $C$ and $M$ be the associated
monads and consider for a $\mathcal C$-space $X$ the diagram of weak
homotopy equivalences of
$\mathcal C$-spaces
$$
X\xl{\sim}B(C,C,X)\xr{\sim}B(M,C,X)
$$
defined as in \cite[13.5]{may-geom}. The $\mathcal C$-space $B(M,C,X)$ is in fact a topological monoid and we define $B'X$ to be its classifying space, defined by the usual bar construction. If $X$ is grouplike, then $B'X$ is a delooping in the sense that $\Omega(B'X)$ is related to $X$ by a chain of weak homotopy equivalences. This construction is clearly functorial in $X$: given a map of $\mathcal C$-spaces $X\to Y$, we have an induced map $B'X\to B'Y$. 
Now let
$f\co X\to BG_U$ be a $\mathcal C_{\mathcal A}$-map and let $\alpha\co A\to BG_{\mathcal A}$ be the object in $\mathcal A[\bT]/BG_{\mathcal A}$ obtained by applying the functor $R$. We let
$Bf\co BX\to B^2G_U$ be defined as in (\ref{Bf}), where we recall that $BX$ and $B^2G_U$ denote the spaces $UB(A^c)$ and $UB(BG^c_{\mathcal A})$. The first step in the proof of Theorem \ref{maintheorem} is to compare the maps $Bf$ and $B'f$.  

\begin{lemma}\label{Bfcompatible}
There is a commutative diagram 
\[
\begin{CD}
B'X@>\sim >> BX\\
@VV B'f V @VV Bf V\\
B'BG_U@>\sim >> B^2G_U 
\end{CD}
\]
in which the vertical arrows represent chains of compatible weak homotopy equivalences.
\end{lemma}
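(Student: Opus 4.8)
The plan is to compare both deloopings appearing in the lemma with the classifying space $B(QA^{c})$ of the topological monoid $QA^{c}$, where $A=R_{f}(X)$. Write $A_{0}=R_{\mathrm{id}}(BG_{U})$ for the result of applying $R$ to $\mathrm{id}\co BG_{U}\to BG_{U}$, let $C$ and $M$ denote the monads of $\mathcal C_{\mathcal A}$ and of the associativity operad, so that $B'Y=B(B(M,C,Y))$, and let $(-)^{c}$ be the replacement functor of \textbf{A3}. The argument proceeds in two comparisons: a ``simplicial'' one relating $BX$ to $B(QA^{c})$ and an ``operadic'' one relating $B'X$ to $B(QA^{c})$, each natural in $f$, which are then spliced together; the analogous comparisons on the $BG_{U}$ side use $A_{0}$ in place of $A$.

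For the simplicial comparison I would argue exactly as in the proof of Proposition~\ref{deloopingproposition}. The natural transformation $U\to Q$ of \textbf{A4}, the goodness of $B_{\bullet}(A^{c})$ from Lemma~\ref{goodspacelemma}, and the fact that $Q$ is strong symmetric monoidal and commutes with realization (Lemma~\ref{realizationlemma}) together give a natural weak homotopy equivalence $BX=UB(A^{c})\xrightarrow{\sim}QB(A^{c})\cong B(QA^{c})$, and likewise $B^{2}G_{U}=UB(BG^{c}_{\mathcal A})\xrightarrow{\sim}B(Q\,BG^{c}_{\mathcal A})$. Since $\alpha^{c}\co A^{c}\to BG^{c}_{\mathcal A}$ is a morphism of monoids in $\mathcal A$, naturality produces a commutative square with $Bf$ along the top and $B(Q\alpha^{c})\co B(QA^{c})\to B(Q\,BG^{c}_{\mathcal A})$ along the bottom. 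It thus remains to identify $B(Q\alpha^{c})$ with $B'f$ up to compatible equivalences.

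For the operadic comparison I would feed the chains of \textbf{A6} into May's machinery. By \textbf{A6}, applied to $(X,f)$ and to $(BG_{U},\mathrm{id})$, the topological monoids $QA^{c}$ and $QA_{0}^{c}$ --- each regarded as a $\mathcal C_{\mathcal A}$-space by restriction along $\mathcal C_{\mathcal A}\to\mathcal M$ from its monoid structure --- are joined to $X$ and to $BG_{U}$, respectively, by chains of natural weak equivalences of $\mathcal C_{\mathcal A}$-spaces; moreover $R(\mathrm{id})\co A_{0}\to BG_{\mathcal A}$ is a weak equivalence of monoids, so $QA_{0}^{c}\xrightarrow{\sim}Q\,BG^{c}_{\mathcal A}$ is a weak equivalence of well-based topological monoids (here one uses that $(-)^{c}$ preserves weak equivalences, that $A_{0}^{c}$ and $BG^{c}_{\mathcal A}$ are well-based by \textbf{A3}, and that $Q$ preserves $h$-cofibrations). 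Applying $B(M,C,-)$ and then the classifying-space functor to these chains, and using that for a well-based topological monoid $N$ the natural maps $B(M,C,N)\to B(M,M,N)\xrightarrow{\sim}N$ are weak equivalences of monoids --- the second by the usual extra degeneracy and the first because $\mathcal C_{\mathcal A}$ is an $A_{\infty}$ operad, so $C\to M$ is a levelwise equivalence, together with the standard properties of the two-sided bar construction recorded in \cite{may-geom} --- yields natural chains $B'X\simeq B(QA^{c})$ and $B'BG_{U}\simeq B(QA_{0}^{c})\xrightarrow{\sim}B(Q\,BG^{c}_{\mathcal A})$. To splice these with the simplicial comparison, note that $f$, being a $\mathcal C_{\mathcal A}$-map, is a morphism $(X,f)\to(BG_{U},\mathrm{id})$ in $\mathcal U[\mathcal C_{\mathcal A}]/BG_{U}$; applying $R$ gives a monoid map $\rho\co A\to A_{0}$ with $R(\mathrm{id})\circ\rho=\alpha$, so $\alpha^{c}$ factors as $A^{c}\xrightarrow{\rho^{c}}A_{0}^{c}\to BG^{c}_{\mathcal A}$ and $Q\alpha^{c}$ as $QA^{c}\xrightarrow{Q\rho^{c}}QA_{0}^{c}\xrightarrow{\sim}Q\,BG^{c}_{\mathcal A}$. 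Naturality of the \textbf{A6}-chains applied to this morphism then shows that $B'f$ corresponds to $B(Q\rho^{c})$, hence (after composing with $B(QA_{0}^{c})\xrightarrow{\sim}B(Q\,BG^{c}_{\mathcal A})$) to $B(Q\alpha^{c})$, and therefore, by the simplicial comparison, to $Bf$. Assembling the two chains on each side gives the asserted square.

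The main obstacle is the bookkeeping required to see that the several zigzags --- the $U$-to-$Q$ comparison, May's $B(C,C,-)\simeq B(M,C,-)$ comparison, and the chain furnished by \textbf{A6} --- are natural in the appropriate variables and mutually compatible, so that they genuinely assemble into one commuting square rather than into a diagram that commutes only up to uncontrolled homotopy. The only input that is not purely formal is that $B(M,C,-)$ and the classifying-space functor preserve the relevant weak equivalences, for which one relies on the well-basedness supplied by \textbf{A3} (so that $QA^{c}$ and $Q\,BG^{c}_{\mathcal A}$ are well-based), on the $A_{\infty}$ hypothesis on $\mathcal C_{\mathcal A}$ (so that $C\to M$ is a levelwise equivalence), and on the standard properness of the two-sided bar construction from \cite{may-geom}.
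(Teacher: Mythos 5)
Your proof is correct and follows essentially the same route as the paper's: both use \textbf{A6} to produce a chain of $\mathcal C_{\mathcal A}$-equivalences $X\simeq QA^c$ (and likewise for the identity on $BG_U$), feed it through May's $B(M,C,-)$ and the classifying space functor to compare with $B'$, use the $U\to Q$ comparison from \textbf{A4} to identify $B(QA^c)$ with $UB(A^c)=BX$, and invoke the weak equivalence $R(\mathrm{id})\to BG_{\mathcal A}$ to reach $B^2G_U$. Your version is more explicit about the naturality bookkeeping (via the morphism $\rho\co A\to A_0$ obtained from applying $R$ to $f$), where the paper simply asserts that the two rows are compatible ``from the construction''; this is a welcome elaboration but not a different argument.
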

\begin{proof}
By definition, $A$ is the monoid $R_f(X)$ and it follows from \textbf{A6} that there is a chain of weak equivalences of $\mathcal C_{\mathcal A}$-spaces relating $X$ to $QR_f(X)^c$. Applying the bar construction from \cite{may-geom} we obtain a chain of weak equivalences of topological monoids
\[
B(M,C,X)\simeq B(M,C,QR_f(X)^c)\simeq QR_f(X)^c.
\]
The last equivalence comes from the fact that $QR_f(X)^c$ is itself a
topological monoid, see \cite[13.5]{may-geom}.  This chain in turn
gives a chain of equivalences of the classifying spaces and composing
with the equivalence induced by the natural transformation $U\to Q$ we
get the upper row in the diagram. In particular, applied to the
identity on $BG_U$, this construction gives a chain of weak
equivalences relating $B'(BG_U)$ to $UB(R_{\mathit id}(BG_U)^c)$. 
Furthermore, the weak
equivalence $R_{\mathit id}(BG_U)\to BG_{\mathcal A}$ from \textbf{A6}
gives rise to a weak equivalence 
\[
UB(R_{\mathit id}(BG_U)^c)\to UB(BG_{\mathcal A}^c)=B^2G_U
\] 
and composing with this we get the bottom row in the diagram. It is clear
from the construction that the horizontal rows are compatible as
claimed.    
\end{proof}

\medskip
\noindent\textit{Proof of Theorem \ref{maintheorem}.} 
We first reformulate the theorem in a more precise form. As explained in Appendix 
\ref{loopappendix}, we may assume that our loop map $f\co X\to BG_U$ is a $\mathcal C_{\mathcal A}$-map with a delooping of the form $B_1f\co B_1X\to B_1BG_U$, where $B_1$ denotes the May classifying space functor. We again write $\alpha\co A\to BG_{\mathcal A}$ for the monoid morphism in 
$\mathcal A$ obtained by applying the functor $R$. Then $UB^{\cy}(T_{\mathcal A}(\alpha^c))$ represents the topological Hochschild homology spectrum $\Th(T(f))$ and it follows from Theorem \ref{Thomrigid} and Theorem \ref{BcyLtheorem} that there is a stable equivalence
$$
UB^{\cy}(T_{\mathcal A}(\alpha^c))\simeq T(L^{\eta}(Bf))
$$
where $Bf$ is defined as in (\ref{Bf}). We claim that there is a homotopy commutative diagram of the form
$$
\begin{CD}
B_1X@>\sim>> B'X@>\sim >> BX\\
@VVB_1f V @VV B'f V @VV Bf V\\
B_1BG_U@>\sim>> B'BG_U@>\sim>>B^2G_U.
\end{CD}
$$
Indeed, it is proved in \cite{Thomason} that the functors $B_1$ and $B'$ are naturally equivalent which gives the homotopy commutativity of the left hand square while the homotopy commutativity of the right hand square follows from Lemma \ref{Bfcompatible}. The result now follows from the homotopy invariance of the Thom spectrum functor.\qed

\medskip
In preparation for the proof of Theorem \ref{2foldtheorem} we recall that the Thom spectrum functor $T$ is multiplicative in the following sense: given maps $f\co X\to BG_U$ and $g\co Y\to BG_U$, there is a stable equivalence
$$
T(f\times g)\simeq T(f)\wedge T(g),
$$
where $f\times g$ denotes the map
$$
f\times g\co X\times Y\xr{f\times g}BG_U\times BG_U\to BG_U
$$
defined using the $H$-space structure of $BG_U$. We refer to
\cite{lewis-may-steinberger} and \cite{schlichtkrull-thom} for
different accounts of this basic fact. Of course, one of the main
points of this paper is to ``rigidify'' this stable equivalence.

\bigskip
\noindent\textit{Proof of Theorem \ref{2foldtheorem}.} As explained in the introduction, the loop space structure on $BX$ gives rise to a weak homotopy equivalence
$
X\times BX\xr{\sim} L(BX).
$
This fits in a homotopy commutative diagram
$$
\begin{CD}
X\times BX@>f\times Bf>> BG_U\times B^2G_U@>\id\times \eta>> BG_U\times BG_U\\
@VV\sim V @VV \sim V @VVV\\
L(BX)@>L(Bf)>> L(B^2G_U)@>>> BG_U,
\end{CD}
$$
where the composition in the bottom row is the map $L^{\eta}(Bf)$. By homotopy invariance of the Thom spectrum functor we get from this the stable equivalences
$$
T(L^{\eta}(Bf))\simeq T(f\times(\eta\circ Bf))\simeq T(f)\wedge T(\eta\circ Bf)
$$
and the result follows from Theorem \ref{maintheorem}.\qed

\bigskip
\noindent\textit{Proof of Theorem \ref{3foldtheorem}.} 
Notice first that if $X$ is a 3-fold loop map, then the unstable Hopf map $\eta$ gives rise to a map
$$
\eta\co BX\simeq \Map_*(S^2,B^3X)\xr{\eta^*}\Map_*(S^3,B^3X)\simeq X.
$$
Let $\Phi$ be the self homotopy equivalence of $X\times BX$ defined by
$$
\Phi=
\left[\begin{array}{cc}
\id & \eta\\
0 & \id
\end{array}\right]\co
X\times BX\xr{\sim} X\times BX.
$$
Given a 3-fold loop map $f\co X\to BG_U$, we then have a homotopy commutative  
diagram
$$
\begin{CD}
X\times BX@> f\times Bf>> BG_U\times B^2G_U@>\id\times \eta >>BG_U\times BG_U@>>>BG_U\\
@V\sim V\Phi V @V\sim V \Phi V @. @|\\
X\times BX @>f\times Bf >> BG_U\times B^2G_U @>\id\times *>> BG_U\times BG_U@>>> BG_U,
\end{CD}
$$
where $*$ denotes the trivial map. It follows from the proof of Theorem \ref{2foldtheorem} that the composition of the maps in the upper row is weakly homotopy equivalent to $L^{\eta}(Bf)$. Thus, by homotopy invariance of the Thom spectrum functor we get the stable equivalence
$$
T(L^{\eta}(Bf))\simeq T(f\times*)\simeq T(f)\wedge BX_+
$$
and the result again follows from Theorem \ref{maintheorem}.\qed

\subsection{Eilenberg-Mac Lane spectra and $p$-local Thom spectra}\label{p-localsection}
In this section we first collect some background material on $p$-local Thom spectra for a prime $p$. We then complete the proofs of the theorems in the introduction calculating the topological Hochschild homology of certain Eilenberg-Mac Lane spectra. There is a $p$-local version of the Thom spectrum functor 
$$
T\co\mathcal U/BF_{(p)}\to \Sp_{(p)},
$$
defined on the category of spaces over the classifying space $BF_{(p)}$ for $p$-local stable spherical fibrations and with values in the category $\Sp_{(p)}$ of $p$-local spectra. In fact, each of our ``rigid'' Thom spectrum functors has a $p$-local analogue. This is particularly easy to explain in the setting of $\I$-spaces and symmetric spectra: Let $S^1_{(p)}$ be the 
$p$-localization of $S^1$ and let $S^n_{(p)}$ be the $n$-fold smash product. We write $F_{(p)}(n)$ for the topological monoid of homotopy equivalences of $S^n_{(p)}$ and $BF_{(p)}(n)$ for its classifying space. The definition of the symmetric Thom spectrum functor can then readily be modified to a $p$-local version with values in the category of module spectra over the $p$-local sphere spectrum. A similar construction works in the $\mathcal L(1)$-space setting using a continuous localization functor as in \cite{iwase}. 

It follows by inspection of our formal framework and the implementations in Section 
\ref{L-spacesection} and Section \ref{I-spacesection} that there are $p$-local versions of Theorems \ref{maintheorem}, \ref{2foldtheorem} and \ref{3foldtheorem}. The application of this to the calculation of $\Th(H\mathbb Z/p)$ is as follows. Let $BF_{(p)}$ be the colimit of the spaces $BF_{(p)}(n)$ and notice that $\pi_1BF_{(p)}$ is the group $\mathbb Z_{(p)}^*$ of units in $\mathbb Z_{(p)}$. Consider the map $S^1\to BF_{(p)}$ corresponding to the unit $1-p$ and let $f\co\Omega^2(S^3)\to BF_{(p)}$ be the extension to a 2-fold loop map. It is an observation of Hopkins that the associated $p$-local Thom spectrum $T(f)$ is a model of $H\mathbb Z/p$. The argument is similar to that for $H\mathbb Z/2$, using that the Thom space of the map $S^1\to BF_{(p)}(1)$ is a mod $p$ Moore space. 

\medskip
\noindent\textit{Proof of Theorem \ref{Z/p-theorem}.} 
Applying the $p$-local version of Theorem \ref{2foldtheorem} to the 2-fold loop map 
$f\co \Omega^2(S^3)\to BF_{(p)}$, we get that
$$
\Th(H\mathbb Z/p)\simeq \Th(T(f))\simeq H\mathbb Z/p\wedge T(\eta\circ Bf)
$$
and it follows from the $H\mathbb Z/p$ Thom isomorphism theorem that there is a stable equivalence
$$
H\mathbb Z/p\wedge T(\eta\circ Bf)\simeq H\mathbb Z/p\wedge \Omega(S^3)_+.
$$
This is (the $p$-local version of) the homotopy theoretical interpretation of the Thom 
isomorphism \cite{mahowald-ray}, \cite{may-fiberwise}. Indeed, given a map $g\co X\to BF_{(p)}$, the condition for $T(g)$ to be $H\mathbb Z/p$ orientable is that the composite map 
$$
X\xr{g}BF_{(p)}\to B\mathbb Z_{(p)}^*\to B(\mathbb Z/p)^*
$$ 
be null homotopic. 
In our case this holds since $\Omega(S^3)$ is simply connected.\qed

\medskip
In order to realize $H\mathbb Z_{(p)}$ as a Thom spectrum we compose with the canonical map $\Omega^2(S^3\langle 3\rangle)\to\Omega^2(S^3)$ as explained in the introduction. 

\begin{theorem}
There is a stable equivalence
$$
\Th(H\mathbb Z_{(p)})\simeq H\mathbb Z_{(p)}\wedge \Omega(S^3\langle 3\rangle)_+.
$$
\end{theorem}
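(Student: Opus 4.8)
The plan is to realize $H\mathbb{Z}_{(p)}$ as a $p$-local Thom spectrum of a $2$-fold loop map and then invoke the $p$-local version of Theorem \ref{2foldtheorem}, exactly as in the proof of Theorem \ref{Z/p-theorem}. First I would recall from the introduction that the composite $\Omega^2(S^3\langle 3\rangle)\to\Omega^2(S^3)\to BF_{(p)}$, where the second map is the $2$-fold loop extension of the class $1-p\in\pi_1 BF_{(p)}=\mathbb{Z}_{(p)}^*$, has $p$-local Thom spectrum equivalent to $H\mathbb{Z}_{(p)}$; this is the argument of Cohen--May--Taylor \cite{cohen-may-taylor} adapted to the $p$-local setting. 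Call this $2$-fold loop map $g\co \Omega^2(S^3\langle 3\rangle)\to BF_{(p)}$, so that $T(g)\simeq H\mathbb{Z}_{(p)}$ and $BX=\Omega(S^3\langle 3\rangle)$ in the notation of Theorem \ref{2foldtheorem}.

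Applying the $p$-local version of Theorem \ref{2foldtheorem} to $g$ then yields a stable equivalence
\[
\Th(H\mathbb{Z}_{(p)})\simeq\Th(T(g))\simeq T(g)\wedge T(\eta\circ Bg)\simeq H\mathbb{Z}_{(p)}\wedge T(\eta\circ Bg),
\]
where $\eta\circ Bg\co \Omega(S^3\langle 3\rangle)\to B^2F_{(p)}\to BF_{(p)}$. It remains to identify $H\mathbb{Z}_{(p)}\wedge T(\eta\circ Bg)$ with $H\mathbb{Z}_{(p)}\wedge \Omega(S^3\langle 3\rangle)_+$. For this I would apply the $H\mathbb{Z}_{(p)}$ Thom isomorphism theorem in its homotopy-theoretic form, as in the proof of Theorem \ref{Z/p-theorem}: for a map $h\co Y\to BF_{(p)}$, the spectrum $T(h)$ is $H\mathbb{Z}_{(p)}$-orientable precisely when the composite $Y\xr{h}BF_{(p)}\to B\mathbb{Z}_{(p)}^*\to B(\mathbb{Z}/p)^*$ is null-homotopic, and in that case $H\mathbb{Z}_{(p)}\wedge T(h)\simeq H\mathbb{Z}_{(p)}\wedge Y_+$. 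Here $Y=\Omega(S^3\langle 3\rangle)$ is simply connected (indeed $2$-connected), so the orientability obstruction, which factors through $\pi_1 Y$, vanishes automatically. This gives $H\mathbb{Z}_{(p)}\wedge T(\eta\circ Bg)\simeq H\mathbb{Z}_{(p)}\wedge\Omega(S^3\langle 3\rangle)_+$ and completes the proof.

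The one point requiring a little care — and what I expect to be the main obstacle — is verifying that the hypotheses of the $p$-local Theorem \ref{2foldtheorem} genuinely apply to $g$, namely that $g$ is a bona fide $2$-fold loop map landing in the (rigidified) model $BF_{(p)}$ and that $\Omega^2(S^3\langle 3\rangle)$ is grouplike. Grouplikeness is immediate since it is a double loop space. That $g$ is a $2$-fold loop map is the content of the Cohen--May--Taylor-style argument: one extends the map $S^1\to BF_{(p)}(1)$ (whose Thom space is a mod $p$ Moore space, forcing $\pi_0 T(g)=\mathbb{Z}_{(p)}$ after passing to the $3$-connected cover) to a $2$-fold loop map using the free $E_2$-structure on $\Omega^2 S^3$ together with the Bott--Samelson computation of $H_*(\Omega S^3\langle 3\rangle)$. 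The only genuinely new ingredient beyond the mod $p$ case is checking that passing to the $3$-connected cover $S^3\langle 3\rangle$ changes $\pi_0$ of the Thom spectrum from $\mathbb{Z}/p$ to $\mathbb{Z}_{(p)}$; this is a direct homology calculation with the fibration $S^1\to\Omega(S^3\langle 3\rangle)\to\Omega(S^3)$ already invoked in the introduction. Everything after that identification is a formal consequence of the results established earlier in the paper.
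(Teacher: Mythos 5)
Your proposal follows the paper's argument exactly: realize $H\mathbb Z_{(p)}$ as the $p$-local Thom spectrum of the $2$-fold loop map $\Omega^2(S^3\langle 3\rangle)\to\Omega^2(S^3)\to BF_{(p)}$, apply the $p$-local version of Theorem~\ref{2foldtheorem}, and then invoke the $H\mathbb Z_{(p)}$ Thom isomorphism using the simple connectivity of $\Omega(S^3\langle 3\rangle)$. The additional discussion you include on verifying that the map is a genuine $2$-fold loop map and on the $\pi_0$ calculation is a reasonable elaboration of points the paper delegates to the introduction and to \cite{cohen-may-taylor}, and does not represent a different route.
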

\begin{proof}
Let $f\co \Omega^2(S^3\langle 3\rangle)\to BF_{(p)}$ be the composite map. By the $p$-local version of Theorem \ref{2foldtheorem} we have the stable equivalence
$$
\Th(H\mathbb Z_{(p)})\simeq \Th(T(f))\simeq H\mathbb Z_{(p)}\wedge T(\eta\circ Bf)
$$
and since $\Omega(S^3\langle 3\rangle)$ is simply connected,
$$
H\mathbb Z_{(p)}\wedge T(\eta\circ Bf)\simeq H\mathbb Z_{(p)}\wedge \Omega(S^3\langle 3\rangle)_+ 
$$
by the $H\mathbb Z_{(p)}$ Thom isomorphism.
\end{proof}
Using that topological Hochschild homology commutes with localization and that $\Th(H\mathbb Z)$ is a generalized Eilenberg-Mac Lane spectrum, this immediately implies the calculation in Theorem \ref{Z-theorem}

\subsection{The Thom spectra $M\Br$, $M\Sigma$ and $M\GL(\mathbb Z)$}\label{groupthom}
As explained in the introduction, a compatible system of group homomorphisms $G_n\to O(n)$ gives rise to a Thom spectrum $MG$. In particular, we have the systems of groups
$$
\Br_n\to \Sigma_n\to \GL_n(\mathbb Z)\to O(n)
$$
given by the braid groups $\Br_n$, the symmetric groups $\Sigma_n$, and the general linear groups $\GL_n(\mathbb Z)$. Each one of these families has a natural ``block sum'' pairing, inducing a multiplicative structure on the associated Thom spectrum, see \cite{bullett} and 
\cite{cohen}. Except for $M\Br$, these spectra are in fact commutative symmetric ring spectra. In order to fit these examples in our framework we observe that in each case the induced map 
$BG_{\infty}\to BO$ from the stabilized group factors over Quillen's plus-construction to give a loop map $BG_{\infty}^+\to BO$. Furthermore, it follows from the fact that $BG_{\infty}\to BG_{\infty}^+$ induces an isomorphism in homology that $MG$ is equivalent to the corresponding Thom spectrum $T(BG_{\infty}^+)$. For the braid groups, Cohen proves \cite{cohen} that 
$B\Br_{\infty}^+$ is equivalent to $\Omega^2(S^3)$ and combining this with Mahowald's theorem it follows that $M\Br$ is a model of $H\mathbb Z/2$. Thus, in this case we recover the calculation in Theorem \ref{Z/p-theorem} for $p=2$. Notice, that the identification of $M\Br$ with $H\mathbb Z/2$ implies that $M\Sigma$, $M\GL(\mathbb Z)$ and $MO$ are $H\mathbb Z/2$-module spectra, hence generalized Eilenberg-Mac Lane spectra.

\medskip
\noindent\textit{Proof of Theorem \ref{Msigma}.}
By the above remarks, $M\Sigma$ is equivalent to the Thom spectrum of the map 
$B\Sigma_{\infty}^+\to BO$. It follows from the Barratt-Priddy-Quillen Theorem that 
$B\Sigma_{\infty}^+$ is equivalent to the base point component of $Q(S^0)$ and that the map in question is the restriction to the base point component of the infinite loop map 
$Q(S^0)\to BO\times \mathbb Z$. Hence $\tilde Q(S^1)$ is a delooping of $B\Sigma_{\infty}^+$ and the result follows from Theorem \ref{3foldtheorem}.\qed

\medskip
\noindent\textit{Proof of Theorem \ref{MGL}.}
It again follows from the above discussion that $M\GL(\mathbb Z)$ is equivalent to the Thom spectrum of the infinite loop map $B\GL_{\infty}^+\to BO$. Here the domain is the base point component of Quillen's algebraic K-theory space of $\mathbb Z$ and the result follows from Theorem \ref{3foldtheorem}. \qed

\section{Operadic products in the category of spaces}\label{Lprelims}

In this section, we adapt the construction of the ``operadic'' smash
product of spectra from \cite{ekmm} to the context of topological
spaces.  The conceptual foundation of the approach to structured ring
spectra undertaken in \cite{ekmm} is the observation that by
exploiting special properties of the linear isometries operad,
it is possible to define a weakly symmetric monoidal product on a
certain category of spectra such that the (commutative) monoids are
precisely the ($E_\infty$) $A_\infty$ ring spectra.  Since many of the
good properties of this product are a consequence of the nature of the
operad, such a construction can be carried out in other categories ---
for instance, \cite{kriz-may} studied an algebraic version of this
definition.  Following an observation of Mandell, we introduce a
version of this construction in the category of spaces.  Much of this
work appeared in the first author's University of Chicago thesis
\cite{blumberg-thesis}.

\subsection{The weakly symmetric monoidal category of $\sL(1)$-spaces}

Fix a countably infinite-dimensional real inner product space $U$,
and let $\sL(n)$ denote the $n$th space of the linear isometries
operad associated to $U$; recall that this is the space of linear
isometries $\sL(U^n, U)$.  In particular, the space $\sL(1) = \sL(U,
U)$ is a topological monoid.  We begin by considering the category of
$\sL(1)$-spaces: unbased spaces equipped with a map $\sL(1) \times X
\rightarrow X$ which is associative and unital.  We can equivalently
regard this category as the category $\aU[\bL]$ of algebras over the
monad $\bL$ on the category $\aU$ of unbased spaces which takes $X$ to
$\sL(1) \times X$.  

The category $\aU[\bL]$ admits a product $X \lprod Y$ defined in
analogy with the product $\sma_{\sL}$ on the category of $\bL$-spectra
\cite[\S I.5.1]{ekmm}.  Specifically, there is an obvious action of
$\sL(1) \times \sL(1)$ on $\sL(2)$ via the inclusion of $\sL(1) \times
\sL(1)$ in $\sL(2)$.  In addition, there is a natural action of
$\sL(1) \times \sL(1)$ on $X \times Y$ given by the isomorphism 
\[(\sL(1) \times \sL(1)) \times (X \times Y) \cong (\sL(1) \times X)
\times (\sL(1) \times Y).\]
We define $\lprod$ as the balanced product of these two actions.

\begin{definition}
The product $X \lprod Y$ is the coequalizer of the diagram
\[
\xymatrix{
\sL(2) \times \sL(1) \times \sL(1) \times X \times Y \ar[r]<2pt> \ar[r]<-2pt> & \sL(2) \times X \times Y. \\
}
\]
The coequalizer is itself an $\sL(1)$-space via the action of $\sL(1)$
on $\sL(2)$.
\end{definition}

The arguments of \cite[\S I.5]{ekmm} now yield the following proposition.

\begin{proposition}\label{propfree}
\hspace{5 pt}
\begin{enumerate}
\item The operation $\lprod$ is associative.  For any $j$-tuple
  $M_{1},\dotsc , M_{k}$ of $\sL(1)$-spaces there is a canonical and
  natural isomorphism of $\sL(1)$-spaces 
\[
     M_{1}\lprod \dotsb \lprod M_{k} \iso \sL(k)\times_{\sL(1)^{k}}
     M_{1}\times \dotsb \times M_{k} 
\]
where the iterated product on the left is associated in any fashion.
\item The operation $\lprod$ is commutative.  There is a natural
  isomorphism of $\sL(1)$-spaces  
\[
     \tau: X\lprod Y \iso Y\lprod X
\]
with the property that $\tau^{2}=\id$.
\end{enumerate}
\end{proposition}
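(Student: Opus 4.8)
The plan is to mimic the proof of the analogous statements for the operadic smash product $\sma_{\sL}$ of $\bL$-spectra from \cite[\S I.5, \S I.8]{ekmm}, since the category $\aU[\bL]$ is formally very close to that setting (indeed simpler, as we work with honest products of spaces rather than smash products of spectra). First I would establish the associativity isomorphism in part (i). The key input is the formula for the free $\sL(1)$-space: the monad $\bL$ has $\bL X = \sL(1)\times X$, and one checks directly from the coequalizer defining $\lprod$ that there is a natural isomorphism $M_1\lprod M_2 \iso \sL(2)\times_{\sL(1)^2}(M_1\times M_2)$. Iterating this and using the operad structure maps $\gamma\co \sL(k)\times \sL(j_1)\times\dots\times\sL(j_k)\to \sL(j_1+\dots+j_k)$ together with the fact that the spaces $\sL(k)$ are contractible and the $\sL(1)^k$-actions on them are free (this is the special ``nice'' behavior of the linear isometries operad exploited in \cite{ekmm} and traceable to \cite{may-quinn-ray}), one gets for any parenthesization a canonical isomorphism with $\sL(k)\times_{\sL(1)^k}(M_1\times\dots\times M_k)$. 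The point is that the iterated coequalizers collapse exactly as in \cite[I.5.4]{ekmm}: associativity of $\gamma$ makes the two ways of building up the $k$-fold product agree on the nose.

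Next, for part (ii), the symmetry isomorphism $\tau\co X\lprod Y\iso Y\lprod X$ is induced by the evident block-transposition $\sL(2)\to\sL(2)$ coming from the $\Sigma_2$-action on the linear isometries operad, combined with the flip $X\times Y\to Y\times X$; this is compatible with the two $\sL(1)$-actions being balanced out, so it descends to the coequalizers. That $\tau^2=\id$ is immediate since the square of the transposition on $\sL(2)$ is the identity and the square of the flip on $X\times Y$ is the identity. One should also remark that under the canonical isomorphism of part (i) with $\sL(2)\times_{\sL(1)^2}(X\times Y)$, the map $\tau$ corresponds to the map induced by the nontrivial element of $\Sigma_2\subset\Sigma_2\wr\Sigma_1^2$ acting on $\sL(2)$ and simultaneously permuting the factors $X\times Y$, which makes the compatibility with associativity (the coherence needed for the ``weakly symmetric monoidal'' claim) transparent.

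The main obstacle — or rather the main thing one must be careful about — is that this product is only \emph{weakly} symmetric monoidal: there is no strict unit for $\lprod$, exactly as $\sL(1)$ itself fails to be a strict unit for $\sma_{\sL}$ in \cite{ekmm}. So the proposition deliberately asserts only associativity and commutativity (with the coherent isomorphisms), not the full triangle/pentagon package with a unit, and I would not attempt to prove a unit statement here. The honest work is therefore the bookkeeping of the natural isomorphisms: verifying that the canonical isomorphism in (i) is independent of the chosen parenthesization and that $\tau$ is coherent with it, both of which reduce to the associativity and equivariance of the operad structure maps $\gamma$ and the freeness of the $\sL(1)^k$-actions on the $\sL(k)$. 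Since all of this is a direct transcription of \cite[\S I.5]{ekmm} with smash products replaced by cartesian products (which only makes the diagram chases easier), I would present it as such, citing \cite{ekmm} for the diagram-level arguments and only spelling out the points where passing from spectra to spaces simplifies matters.
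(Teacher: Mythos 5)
Your proposal is correct and matches the paper, which proves the proposition simply by citing \cite[\S I.5]{ekmm} and transcribing the arguments there from $\sma_\sL$ on $\bL$-spectra to $\lprod$ on $\sL(1)$-spaces. One small clarification worth making: the collapse of the iterated coequalizers in (i) does not rest on contractibility and freeness of the $\sL(1)^k$-actions per se, but on Hopkins' sharper point-set lemma \cite[I.5.4]{ekmm} that $\gamma$ induces a \emph{homeomorphism} $\sL(2)\times_{\sL(1)\times\sL(1)}(\sL(i)\times\sL(j))\to\sL(i+j)$; contractibility plus freeness alone would give only a weak equivalence, not the on-the-nose identifications your argument uses.
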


There is a corresponding mapping $\sL(1)$-space $\lmap (X,Y)$ which
satisfies the usual adjunction; in fact, the definition is forced by
the adjunctions.

\begin{definition}
The mapping space $\lmap (X,Y)$ is the equalizer of the diagram
\[
\xymatrix{
\Map_{\aU[\bL]}(\sL(2) \times X, Y) \ar[r]<2pt> \ar[r]<-2pt> &
\Map_{\aU[\bL]}(\sL(2) \times \sL(1) \times \sL(1) \times X, Y).\\
}
\]
Here one map is given by the action of $\sL(1) \times \sL(1)$ on
$\sL(2)$ and the other via the adjunction 
\[\Map_{\aU[\bL]}(\sL(2) \times \sL(1) \times \sL(1) \times X, Y) \cong
\Map_{\aU[\bL]}(\sL(2) \times \sL(1) \times X, \Map_{\aU[\bL]}(\sL(1),
Y))\]
along with the action $\sL(1) \times X \to X$ and coaction 
\[Y \to \Map_{\aU[\bL]}(\sL(1),Y).\]
\end{definition}

A diagram chase verifies the following proposition.

\begin{proposition}
Let $X$, $Y$, and $Z$ be $\sL(1)$-spaces.  Then there is an adjunction
homeomorphism
\[\Map_{\aU[\bL]}(X \lprod Y, Z) \cong \Map_{\aU[\bL]}(X,
F^\boxtimes(Y,Z)).
\]
\end{proposition}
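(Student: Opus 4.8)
The plan is to obtain the homeomorphism formally, by turning both sides into equalizer diagrams built from the universal properties of $\lprod$ and $F^\boxtimes$ and then matching those diagrams term by term. First, applying the contravariant hom-functor $\Map_{\aU[\bL]}(-,Z)$ to the defining coequalizer of $X\lprod Y$ converts that colimit into a limit, exhibiting $\Map_{\aU[\bL]}(X\lprod Y, Z)$ as the equalizer of
\[
\Map_{\aU[\bL]}(\sL(2)\times X\times Y, Z)\rightrightarrows\Map_{\aU[\bL]}(\sL(2)\times\sL(1)\times\sL(1)\times X\times Y, Z),
\]
where on each source $\sL(1)$ acts through the $\sL(2)$-factor by post-composition, and the two arrows are restriction along, respectively, the $\sL(1)\times\sL(1)$-action on $\sL(2)$ and the given $\sL(1)$-actions on $X$ and on $Y$. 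Dually, $\Map_{\aU[\bL]}(X,-)$ preserves limits, so applying it to the equalizer defining $F^\boxtimes(Y,Z)$ (which, like all equalizers in $\aU[\bL]$, is created in $\aU$) exhibits $\Map_{\aU[\bL]}(X, F^\boxtimes(Y,Z))$ as the equalizer of
\[
\Map_{\aU[\bL]}(X,\Map_{\aU[\bL]}(\sL(2)\times Y, Z))\rightrightarrows\Map_{\aU[\bL]}(X,\Map_{\aU[\bL]}(\sL(2)\times\sL(1)\times\sL(1)\times Y, Z)).
\]

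The chase proper is the term-by-term comparison of these two diagrams. The only tool needed is the exponential adjunction $\Map_{\aU[\bL]}(\sL(2)\times A, Z)\cong\Map_{\aU[\bL]}(\sL(2),\Map_{\aU}(A,UZ))$, which is valid because $\sL(1)$ acts on $\sL(2)\times A$ only through $\sL(2)$, so the $\aU$-adjoint of an $\sL(1)$-equivariant map is again $\sL(1)$-equivariant for the pointwise action on $\Map_{\aU}(A,UZ)$; combining this with the cartesian closure of $\aU$ one may freely permute and (un)curry the finite products of $X$, $Y$, and the copies of $\sL(1)$. Applying this on both sides identifies the top terms of the two equalizers with one another, and likewise the bottom terms, and naturality in $X$, $Y$, and $Z$ is automatic since every identification used is a canonical adjunction isomorphism.

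The substantive point — the step I expect to absorb essentially all of the work — is verifying that under these identifications the two parallel arrows of the first diagram correspond to the two parallel arrows of the second. Matching the ``restrict along the $\sL(1)\times\sL(1)$-action on $\sL(2)$'' arrows is immediate. Matching the other pair requires unwinding the adjunction $\Map_{\aU[\bL]}(\sL(2)\times\sL(1)\times\sL(1)\times Y, Z)\cong\Map_{\aU[\bL]}(\sL(2)\times\sL(1)\times Y,\Map_{\aU[\bL]}(\sL(1),Z))$ used in the definition of $F^\boxtimes$ and checking that the separate $Y$-slot and $Z$-slot contributions it isolates — via the action $\sL(1)\times Y\to Y$ and the coaction $Z\to\Map_{\aU[\bL]}(\sL(1),Z)$ — are precisely the $\sL(1)$-actions on the $X$- and $Y$-factors along which the $X\lprod Y$ equalizer restricts. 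This is pure point-set bookkeeping, with no homotopy-theoretic content, but one has to keep straight which of the several commuting $\sL(1)$-actions on $\sL(2)=\sL(U\oplus U,U)$ (post-composition, and pre-composition in each of the two summands of $U\oplus U$) is playing which role at each stage. Once the dictionary between the two descriptions of an ``$\sL(1)\times\sL(1)$-balanced bilinear map $X\times Y\to Z$'' is fixed, this compatibility is exactly the relation that the equalizer structure maps of $F^\boxtimes$ were designed to encode, and the proposition follows.
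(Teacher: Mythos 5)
Your proposal is correct and is exactly the diagram chase the paper has in mind — the paper simply asserts "a diagram chase verifies the following proposition" without writing it out, and your outline (contravariant hom turns the $\lprod$-coequalizer into an equalizer, covariant hom preserves the $F^\boxtimes$-equalizer, match term-by-term via the exponential adjunction for $\sL(1)$ acting only through $\sL(2)$, then track the two parallel arrows) is the natural unwinding of that assertion. You have also correctly identified where the real bookkeeping lies, namely in matching the second pair of parallel arrows against the action/coaction structure maps in the definition of $F^\boxtimes$.
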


The natural choice for the unit of the product $\lprod$ is
the point $*$, endowed with the trivial $\sL(1)$-action.  As in
\cite[\S1.8.3]{ekmm}, there is a unit map $* \lprod X \to
X$ which is compatible with the associativity and commutativity
properties of $\lprod$.

\begin{proposition}
Let $X$ and $Y$ be $\sL(1)$-spaces.  There is a natural unit map of
$\sL(1)$-spaces $\lambda \colon * \lprod X \to X$.  The symmetrically
defined map $X \lprod * \to X$ coincides with the composite $\lambda
\tau$.  Under the associativity isomorphism $\lambda \tau \lprod \id
\cong \id \lprod \lambda$, and, under the commutativity isomorphism,
these maps also agree with $* \lprod (X \lprod Y) \to X \lprod Y$. 
\end{proposition}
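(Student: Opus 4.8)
The plan is to mimic the construction of \cite[\S I.8.3]{ekmm}, using the special feature of the linear isometries operad that $\sL(0)=\sL(\{0\},U)$ is a single point, which we write as $0$. Together with $\id_U\in\sL(1)$, the operad structure map $\gamma$ produces canonical maps $\pi_1,\pi_2\co\sL(2)\to\sL(1)$, namely $\pi_2(g)=\gamma(g;0,\id_U)$ and $\pi_1(g)=\gamma(g;\id_U,0)$; concretely, $\pi_i$ restricts an isometry $g\co U\oplus U\to U$ to the $i$th summand, so that $\pi_2(g\circ(\phi_1\oplus\phi_2))=\pi_2(g)\circ\phi_2$ and $\pi_2(h\circ g)=h\circ\pi_2(g)$, and symmetrically for $\pi_1$. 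Identifying $*\lprod X$ with $\sL(2)\times_{\sL(1)^2}(*\times X)\cong\sL(2)\times_{\sL(1)^2}X$ by means of Proposition \ref{propfree}(i), I would define $\lambda$ to be induced by the map $\sL(2)\times X\to X$, $(g,x)\mapsto\pi_2(g)\cdot x$, where $\cdot$ is the $\sL(1)$-action on $X$. The first identity above shows that this coequalizes the two structure maps defining $*\lprod X$, hence descends to $\lambda$; the second shows that $\lambda$ is equivariant for the residual $\sL(1)$-action; and naturality in $X$ is immediate. The symmetric map $\rho\co X\lprod *\to X$ is defined in the same way with $\pi_1$ in place of $\pi_2$.

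To identify $\rho$ with $\lambda\tau$, recall that the commutativity isomorphism $\tau$ of Proposition \ref{propfree}(ii) is induced on representatives by $[g,x,y]\mapsto[g\circ c,y,x]$, where $c\co U\oplus U\to U\oplus U$ is the coordinate transposition. Since $c$ interchanges the two summand inclusions one has $\pi_2(g\circ c)=\pi_1(g)$, so $\lambda(\tau[g,x,*])=\pi_2(g\circ c)\cdot x=\pi_1(g)\cdot x=\rho[g,x,*]$. For the coherence statements, Proposition \ref{propfree}(i) presents each of $(X\lprod *)\lprod Y$, $X\lprod(*\lprod Y)$ and $*\lprod(X\lprod Y)$ as $\sL(3)\times_{\sL(1)^3}(X\times Y)$, compatibly with the associativity isomorphisms and, after the evident relabelling of the summands of $U^3$, with the commutativity isomorphisms. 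Under these identifications $\rho\lprod\id$, $\id\lprod\lambda$ and $\lambda$ are all induced by the ``delete the $*$-slot'' map $\sL(3)\to\sL(2)$ of the form $\gamma(-;\id_U,0,\id_U)$, so the asserted equalities follow from the associativity and equivariance axioms for $\gamma$, exactly as in \cite{ekmm}.

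The argument is purely formal once one has recorded the two facts that $\sL(0)$ is a point and that $\gamma(-;0,\id_U)$ realizes restriction to a summand, so no genuine obstacle arises; the only part requiring care is the slot bookkeeping in the last paragraph, where the associativity isomorphisms of Proposition \ref{propfree}(i) and the transposition defining $\tau$ must be threaded through consistently so that all three composites reduce to the same map $\sL(3)\to\sL(2)$.
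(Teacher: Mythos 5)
Your construction is correct and matches the approach the paper implicitly invokes by reference to \cite[\S I.8.3]{ekmm}: defining $\lambda$ on representatives via the ``restriction to a summand'' map $\pi_2 = \gamma(-;0,\id_U)\colon\sL(2)\to\sL(1)$ (using that $\sL(0)$ is a point), and reducing the coherence assertions to slot-deletion maps $\sL(3)\to\sL(2)$ via the iterated-product description of Proposition~\ref{propfree}(i). The paper gives no explicit proof here, so your account supplies exactly the expected verification; the only blemish is that $\lambda$ on $*\lprod(X\lprod Y)$ is literally $\gamma(-;0,\id_U,\id_U)$ rather than $\gamma(-;\id_U,0,\id_U)$, which is precisely the ``relabelling of the summands of $U^3$'' you already flag.
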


However, just as is the case in the category of $\bL$-spectra, $*$ is
not a strict unit for $\lprod$: the unit map $* \lprod X \rightarrow
X$ is not necessarily an isomorphism.  However, it is always a weak
equivalence.  We omit the proof of this fact, as it is very technical
and essentially similar to the proof of the analogous fact for
$\bL$-spectra \cite[1.8.5]{ekmm}.  We remark only that it is a
consequence of the remarkable point-set properties of the linear
isometries operad and the isomorphism $* \lprod * \rightarrow *$.

\begin{proposition}\label{P:unit}
For any $\sL(1)$-space $X$, the unit map $\lambda \colon * \lprod X
\rightarrow X$ is a weak equivalence of $\sL(1)$-spaces.
\end{proposition}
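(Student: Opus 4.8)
The plan is to import the argument \cite{ekmm} use to establish the analogous unit equivalence for $\bL$-spectra in \cite[\S I.8]{ekmm}; it carries over essentially verbatim, the category of spaces being if anything simpler. The strategy has two stages: first settle the assertion for the free $\sL(1)$-spaces by a direct point-set computation with the linear isometries operad, then bootstrap to an arbitrary $\sL(1)$-space by resolving it by free ones.

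For the first stage, let $\bL Y=\sL(1)\times Y$ be the free $\sL(1)$-space on an unbased space $Y$, and use Proposition~\ref{propfree}(i) to write
\[
*\lprod\bL Y \;\cong\; \sL(2)\times_{\sL(1)\times\sL(1)}\bigl(*\times\sL(1)\times Y\bigr).
\]
Since the first factor of $\sL(1)\times\sL(1)$ acts trivially on the $*$-summand while the second acts by left translation on the $\sL(1)$-summand of $\bL Y$, one copy of $\sL(1)$ cancels against its regular representation and the other descends to the quotient of $\sL(2)$ by the precomposition action on its first variable. Tracing the definition of $\lambda$, one identifies the resulting map $*\lprod\bL Y\to\bL Y$ with the inclusion of the ``corank-infinity'' subspace of $\bL Y=\sL(1)\times Y$ corresponding to those linear isometries $U\to U$ whose image has infinite-dimensional orthogonal complement. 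The essential point-set input --- a manifestation of the remarkable rigidity of the linear isometries of the countably infinite-dimensional $U$, and the reason the isomorphism $*\lprod*\cong*$ of the text holds --- is that this inclusion is a weak equivalence, both spaces being contractible. Hence $\lambda$ is a weak equivalence on every free $\sL(1)$-space.

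For the second stage, let $X$ be an arbitrary $\sL(1)$-space and form the monadic bar resolution $[q]\mapsto\bL^{q+1}X$, a simplicial $\sL(1)$-space whose augmentation to $X$ is a homotopy equivalence of underlying spaces. Its degeneracies are built from the unit $\id\to\bL$, which is the inclusion of the nondegenerate point $\id_U\in\sL(1)$, so the resolution is good; and since $*\lprod(-)$ is a left adjoint it preserves $h$-cofibrations and, by Lemma~\ref{realizationlemma}, geometric realization. Applying $*\lprod(-)$ levelwise and invoking the first stage gives a levelwise weak equivalence $*\lprod\bL^{\bullet+1}X\to\bL^{\bullet+1}X$ of good simplicial $\sL(1)$-spaces, hence a weak equivalence on realizations; comparing with the augmentations --- where one uses the ``Cofibration Hypothesis'' of \cite{ekmm} to control how $*\lprod(-)$ and realization interact with the weak equivalences in play --- identifies $\lambda\colon *\lprod X\to X$ as a weak equivalence.

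The main obstacle, and the reason \cite{ekmm} (and hence this paper) omit the proof, is exactly this point-set bookkeeping: the explicit identification of $*\lprod\bL Y$ and the verification that the comparison map it produces is a weak equivalence, together with the Cofibration Hypothesis keeping the relevant balanced products, coequalizers and geometric realizations homotopically meaningful. These facts turn on the geometry of isometric embeddings of a countably infinite-dimensional Hilbert space and are precisely what is carried out --- for $\bL$-spectra, but by identical arguments for spaces --- in \cite[Ch.~I]{ekmm}.
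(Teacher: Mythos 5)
Your strategy---settle $\lambda$ on free $\sL(1)$-spaces, then resolve a general $X$ by free ones---is the right scaffolding, and it is what the cited EKMM argument aims at; but there are two concrete defects, and they land exactly at the places the paper warns are ``very technical.''

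In Stage~1, the identification of $\lambda \colon * \lprod \bL Y \to \bL Y$ as an \emph{inclusion} is wrong. After cancellation one has $* \lprod (\sL(1) \times Y) \cong (\sL(2)/\sL(1)) \times Y$, with $\sL(1)$ acting on $\sL(2)$ by precomposition on the first $U$-summand, and on the first factor $\lambda$ is $[f] \mapsto f|_{0 \oplus U}$. This does hit precisely the corank-$\infty$ isometries, but it is not injective: two $f, f' \in \sL(2)$ with the same restriction $g$ on $0 \oplus U$ restrict on $U \oplus 0$ to isometric embeddings into $g(U)^{\perp}$ which need not share an image, so $[f] \neq [f']$ while $\lambda[f]=\lambda[f']$. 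Your fallback---source and target are both contractible---is the real argument, but the contractibility of $\sL(2)/\sL(1)$, a quotient by the free action of a \emph{monoid} that is not a group, is exactly the point-set fact you defer to and do not supply; it does not follow formally from contractibility of $\sL(2)$ and $\sL(1)$. In Stage~2, ``comparing with the augmentations'' conceals a circularity: after realizing the levelwise equivalence you still need $* \lprod (-)$ applied to the augmentation $|\bL^{\bullet+1}X| \to X$ to be a weak equivalence, and that is essentially the content of Proposition~\ref{P:unit} itself. The simplicial contraction witnessing $|\bL^{\bullet+1}X| \simeq X$ is built from the monad unit $\eta_X \colon X \to \bL X$, which is \emph{not} an $\sL(1)$-map (it inserts $\mathrm{id}_U$ in the slot on which $\sL(1)$ acts by left translation), so $* \lprod(-)$ does not transport it, and the Cofibration Hypothesis alone does not close this. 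Since the paper offers no proof and simply cites \cite[1.8.5]{ekmm}, the honest assessment is that your sketch captures the intended strategy but not the two genuinely delicate steps it relies on.
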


In summary, the category $\aU[\bL]$ is a closed weak symmetric
monoidal category, with product $\lprod$ and weak 
unit $*$.  Recall that this means that the $\aU[\bL]$, $\lprod$,
$F^\boxtimes(-,-)$, and $*$ satisfy all of the axioms of a closed
symmetric monoidal category except that the unit map is not required
to be an isomorphism \cite[\S II.7.1]{ekmm}.

\subsection{Monoids and commutative monoids for $\lprod$}

In this section, we study $\lprod$-monoids and commutative
$\lprod$-monoids in $\aU[\bL]$; these are defined as algebras over
certain monads, following \cite[\S2.7]{ekmm}.  We show that
$\lprod$-monoids are $A_\infty$ spaces and commutative
$\lprod$-monoids are $E_\infty$ spaces.  One can prove this directly,
as is done in the algebraic setting in \cite[\S V.3.1]{kriz-may}, but
we prefer to follow the categorical approach given for $\bL$-spectra
and $\sma_{\sL}$ in \cite[\S II.4]{ekmm}.

In any weakly symmetric monoidal category, monoids and commutative
monoids can be regarded as algebras over the monads $\bT$ and $\bP$
defined as follows.  Let $X^{\boxtimes j}$ denote the $j$-fold power with
respect to $\lprod$, where $X^0 = *$.  Then we define the monads on
the category of $\sL(1)$-spaces as 
\[
\xymatrix{\bT X = \bigvee_{j \geq 0} X^{\boxtimes j} && \bP X = \bigvee_{j \geq 0}
  X^{\boxtimes j}/\Sigma_j \\
},
\]
where the unit is given by the inclusion of $X$ into the wedge and the
product is induced by the obvious identifications (and the unit map,
if any indices are 0).  We regard $A_\infty$ and $E_\infty$ spaces as
algebras over the monads $\bB$ and $\bC$ on based spaces.  Recall that
these monads are defined as 
\[
\xymatrix{
\bB Y = \bigvee_{j \geq 0} \sL(j) \times X^j && \bC Y = \bigvee_{j
  \geq 0} \sL(j) \times_{\Sigma_j} X^j, \\
}
\]
subject to an equivalence relation which quotients out the basepoint
(where here $X^n$ indicates the iterated cartesian product).  If we
ignore the quotient, we obtain corresponding monads on unbased spaces
which we will denote $\bB^\prime$ and $\bC^\prime$.  The main tool for
comparing these various categories of algebras is the lemma
\cite[\S II.6.1]{ekmm}, which we write out below for clarity.

\begin{lemma}\label{lemmonad}
Let $\bS$ be a monad in a category $\aC$ and let $\bR$ be a monad in
the category $\aC[\bS]$ of $\bS$-algebras.  Then the category
$\aC[\bS][\bR]$ of $\bR$-algebras in $\aC[\bS]$ is isomorphic to the
category $\aC[\bR \bS]$ of algebras over the composite monad $\bR \bS$
in $\aC$.  Moreover, the unit of $\bR$ defines a map $\bS \to \bR \bS$
of monads in $\aC$.  An analogous assertion holds for comonads.
\end{lemma}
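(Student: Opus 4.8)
\medskip
\noindent\emph{Proof proposal.} The plan is to run the standard Eilenberg--Moore argument and then upgrade the comparison functor to an honest isomorphism of categories by hand. Write $F_\bS\dashv U_\bS$ for the free/forgetful adjunction between $\aC$ and $\aC[\bS]$ (so $U_\bS F_\bS=\bS$) and $F_\bR\dashv U_\bR$ for the one between $\aC[\bS]$ and $\aC[\bS][\bR]$ (so $U_\bR F_\bR=\bR$). First I would compose adjunctions to get $F_\bR F_\bS\dashv U_\bS U_\bR$ between $\aC$ and $\aC[\bS][\bR]$, whose associated monad on $\aC$ is $U_\bS U_\bR F_\bR F_\bS=U_\bS\bR F_\bS$; this endofunctor is what the statement abbreviates $\bR\bS$, and it is automatically a monad, being generated by an adjunction. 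For the ``moreover'' clause, the monad map $\bS\to\bR\bS$ is the whiskered unit $U_\bS(\eta^\bR)F_\bS\co U_\bS F_\bS\to U_\bS\bR F_\bS$; its compatibility with units is immediate (it is literally the unit of the composite adjunction), and its compatibility with multiplications is the routine check that a morphism of adjunctions induces a morphism of generated monads.

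\medskip
The core step is to produce mutually inverse functors between $\aC[\bS][\bR]$ and $\aC[\bR\bS]$. One direction is the Eilenberg--Moore comparison functor $K$ of the composite adjunction: an object of $\aC[\bS][\bR]$ is an $\bS$-algebra $(X,\xi)$ equipped with an action $\rho\co\bR(X,\xi)\to(X,\xi)$ in $\aC[\bS]$, and $K$ sends it to $(X,\theta)$, where $\theta\co\bR\bS X\to X$ is $U_\bS$ applied to the composite $\bR F_\bS X\xrightarrow{\bR\epsilon}\bR(X,\xi)\xrightarrow{\rho}(X,\xi)$ with $\epsilon\co F_\bS X\to(X,\xi)$ the counit (its underlying map is $\xi$, which is an $\bS$-algebra map by associativity). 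A diagram chase with the monad axioms for $\bR$ and $\bS$ shows $(X,\theta)$ is an $\bR\bS$-algebra. For the inverse $\Psi$: given an $\bR\bS$-algebra $(X,\theta)$, I would restrict $\theta$ along $\bS\to\bR\bS$ to get an $\bS$-algebra $(X,\xi)$, and then recover the $\bR$-action by using the canonical presentation of $(X,\xi)$ as the reflexive coequalizer of $F_\bS\bS X\rightrightarrows F_\bS X$ (with common section $F_\bS\eta^\bS_X$): applying $\bR$ and using that this coequalizer is preserved, $\theta$ factors uniquely through $\bR\epsilon\co\bR F_\bS X\to\bR(X,\xi)$ to give $\rho\co\bR(X,\xi)\to(X,\xi)$, and the $\bR\bS$-algebra identities for $\theta$ translate into the action identities for $\rho$. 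Checking $\Psi K=\id$ on the $\bS$-structure comes down, via naturality of $\eta^\bR$ and the $\bR$-unit axiom, to the identification $U_\bS\epsilon=\xi$ of the counit with the structure map; the $\bR$-action part of $\Psi K=\id$, and $K\Psi=\id$, follow from uniqueness of the factorization. Naturality of $K$ and $\Psi$ is automatic since neither touches underlying objects or morphisms.

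\medskip
The main obstacle will be exactly this factorization of $\theta$ through $\bR\epsilon$: it requires $\bR$ (equivalently $U_\bS\bR$) to preserve reflexive coequalizers, so that $\bR\epsilon\co\bR F_\bS X\to\bR(X,\xi)$ stays a coequalizer, hence an epimorphism. Without this $K$ is merely a fully faithful embedding onto a replete subcategory --- the familiar fact that a composite of monadic functors need not be monadic --- so this preservation property is precisely what makes ``isomorphic'' literally true. In the setting of this paper it is harmless: the categories are built from $\aU$, which is cartesian closed, and the relevant monads ($\bL$, and the monoid and commutative-monoid monads $\bT$ and $\bP$, assembled from finite products, wedges, and orbit constructions) are continuous functors that preserve reflexive coequalizers. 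Finally, the comonad assertion is the formal dual: replace the free/forgetful adjunctions by cofree/forgetful ones, the composite monad by the composite comonad, the canonical reflexive coequalizer presentation by the canonical reflexive equalizer co-presentation, and coequalizer-preservation by equalizer-preservation.
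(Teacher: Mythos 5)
The paper does not prove this lemma; it states it and cites \cite[\S II.6.1]{ekmm}, so there is no in-paper argument to compare against. Your reconstruction is the standard one and is correct: identify $\bR\bS$ with the monad of the composite adjunction $F_\bR F_\bS\dashv U_\bS U_\bR$, take $K$ to be the Eilenberg--Moore comparison for that adjunction, recover the $\bS$-structure on an $\bR\bS$-algebra by restricting along $\bS\to\bR\bS$, and recover the $\bR$-action by factoring $\theta$ through $\bR\epsilon$ via the canonical presentation of $(X,\xi)$ as a reflexive coequalizer of free $\bS$-algebras. Your checks that $\Psi K=\id$ on the $\bS$-part (via naturality of $\eta^\bR$ and the $\bR$-unit axiom), that $K\Psi=\id$ by uniqueness of the factorization, and that the whiskered unit $U_\bS\eta^\bR F_\bS$ gives the monad map $\bS\to\bR\bS$ are all right, and the comonad case is the formal dual as you say.

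You have also correctly isolated the one real subtlety: the isomorphism claim tacitly assumes that $\bR$ preserves the canonical-presentation coequalizer (for which it suffices that $\bR$ preserve reflexive coequalizers in $\aC[\bS]$); without this, the factorization defining $\Psi$ need not exist, and the familiar failure of composites of monadic functors to be monadic is exactly what is at stake. The statement in \cite{ekmm}, and hence here, elides this hypothesis, but it is harmless in context because every monad the paper actually uses ($\bL$, $\bT$, $\bP$, the operadic monads) preserves reflexive coequalizers, a fact invoked explicitly just after Theorem~\ref{theoremlift}. One small correction to your account of what fails otherwise: without the hypothesis $K$ need not even be full (let alone an embedding onto a replete subcategory), since fullness already requires $\bR\epsilon$ to be an epimorphism in $\aC[\bS]$ in order to cancel it from $h\rho\circ\bR\epsilon=\rho'\,\bR h\circ\bR\epsilon$, and $\bR$ need not preserve epimorphisms. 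What is unconditionally true is that $K$ is faithful, being the identity on underlying morphisms. This does not affect the substance of your argument.
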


Recall that the category of based spaces can be viewed as the category
of algebras over the monad $\bU$ in unbased spaces which adjoins a
disjoint basepoint.  In mild abuse of notation, we will also refer to
the monad on $\sL(1)$-spaces which adjoins a disjoint basepoint with
trivial $\sL(1)$-action as $\bU$.

\begin{proposition}\label{propoperadcomp}
There is an isomorphism of categories between $\lprod$-monoids and
$A_\infty$ spaces structured by the non-$\Sigma$ linear isometries
operad.  Similarly, there is an isomorphism of categories between
commutative $\lprod$-monoids and $E_\infty$ spaces structured by the
linear isometries operad.
\end{proposition}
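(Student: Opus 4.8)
The plan is to identify both categories in question with categories of algebras over a composite monad and then observe that these composite monads coincide. Recall that a $\lprod$-monoid is by definition an algebra in $\aU[\bL]$ over the monad $\bT$, so by Lemma~\ref{lemmonad} it is precisely an algebra in $\aU$ over the composite monad $\bT\bL$. On the other side, an $A_\infty$ space structured by the non-$\Sigma$ linear isometries operad is an algebra in $\aU$ over the monad $\bB'$ (the unbased version of $\bB$, since we are not quotienting out a basepoint). Thus the proposition reduces to producing a natural isomorphism of monads $\bT\bL \cong \bB'$ on $\aU$, and similarly $\bP\bL\cong\bC'$ for the commutative/$E_\infty$ case.

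The key computation is to unwind the formula for $\bT\bL$. By Proposition~\ref{propfree}(i), for an $\sL(1)$-space $M$ we have $M^{\boxtimes j}\cong \sL(j)\times_{\sL(1)^j} M^{\times j}$, so
\[
\bT(\bL X) = \bigvee_{j\ge 0} (\bL X)^{\boxtimes j}
\cong \bigvee_{j\ge 0} \sL(j)\times_{\sL(1)^j}(\sL(1)\times X)^{\times j}
\cong \bigvee_{j\ge 0} \sL(j)\times X^{\times j} = \bB' X,
\]
where the middle isomorphism uses that $\sL(j)\times_{\sL(1)^j}(\sL(1)^{\times j}\times X^{\times j})\cong \sL(j)\times X^{\times j}$ because the $\sL(1)^j$-action on the free $\sL(1)^j$-space $\sL(1)^{\times j}$ lets one cancel the quotient. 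One then checks that this identification is compatible with the monad structure maps: the unit $X\to\bB'X$ matches the composite $X\to \bL X\to \bT\bL X$, and the multiplication $\bB'\bB'\to\bB'$ (built from the operad composition $\sL(k)\times\sL(j_1)\times\dots\times\sL(j_k)\to\sL(j_1+\dots+j_k)$) matches the multiplication on $\bT\bL$ coming from the monad structure on $\bT$ together with the $\bL$-action datum. For the commutative case one repeats the argument with $\Sigma_j$-orbits throughout, using that passing to $\Sigma_j$-coinvariants commutes with the $\sL(1)^j$-quotient, to obtain $\bP\bL\cong\bC'$.

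The main obstacle is not the underlying-object level isomorphism, which is a direct diagram chase from Proposition~\ref{propfree}, but rather the verification that the isomorphism respects \emph{all} the structure, so that it genuinely is an isomorphism of monads (and hence induces an isomorphism of the respective algebra categories, not merely a bijection on objects). In particular one must be careful that the multiplication on $\bT$ in the weakly symmetric monoidal category $\aU[\bL]$ — which a priori involves the non-strict unit and the associativity and commutativity isomorphisms of Proposition~\ref{propfree} — is carried exactly to the strictly associative operadic multiplication of $\bB'$; this is where the good point-set behaviour of the linear isometries operad is used, exactly as in \cite[\S II.4]{ekmm}. Once monad isomorphisms $\bT\bL\cong\bB'$ and $\bP\bL\cong\bC'$ are established, Lemma~\ref{lemmonad} gives $\aU[\bL][\bT]\cong\aU[\bT\bL]\cong\aU[\bB']$ and likewise in the commutative case, which is precisely the assertion of the proposition.
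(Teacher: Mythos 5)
Your proof takes essentially the same route as the paper's: both sides are reduced via Lemma~\ref{lemmonad} to algebras over composite monads on $\aU$, and the heart of the argument is the monad isomorphism $\bT\bL\cong\bB'$ (resp.\ $\bP\bL\cong\bC'$) furnished by Proposition~\ref{propfree}. The one step you elide is that the paper defines $A_\infty$ and $E_\infty$ spaces as algebras over $\bB$ and $\bC$ on \emph{based} spaces, so it first verifies the monad isomorphism $\bB'\cong\bB\bU$ (resp.\ $\bC'\cong\bC\bU$) and invokes Lemma~\ref{lemmonad} a second time to identify $\bB'$-algebras in $\aU$ with $\bB$-algebras in $\aT$; you instead assert this identification outright in your parenthetical. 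That omission is harmless---it is the ``straightforward verification'' the paper alludes to---but it is a step in the argument, not a matter of definition, and is needed to make your final conclusion that $\aU[\bB']$ is literally the category of $A_\infty$ spaces.
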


\begin{proof}
A straightforward verification shows that $\bC^{\prime} \cong \bC \bU$ as
monads on unbased spaces.  Then Lemma~\ref{lemmonad} shows that
$\bC^{\prime}$-algebras 
in unbased spaces are equivalent to $\bC$-algebras in based spaces.
Next, there is an identification of monads on unbased spaces
$\bC^{\prime} \cong \bT \bL$; Proposition~\ref{propfree} implies an
isomorphism of objects, and the comparison of monad structures is
immediate.  Lemma~\ref{lemmonad} then implies that
$\bC^{\prime}$-algebras in unbased spaces are equivalent to
$\bT$-algebras in $\sL(1)$-spaces, and combined with the initial
observation this implies the desired result.  The commutative case is
analogous. 
\end{proof}

\subsection{The symmetric monoidal category of $*$-modules}

In this section we define a subcategory of $\sL(1)$-spaces which forms
a closed symmetric monoidal category with respect to $\lprod$.  This
is necessary for our application to topological Hochschild homology
--- in order to define the cyclic bar construction as a strict simplicial
object, we need a unital product to define the degeneracies.  In fact,
there are two possible approaches to constructing a symmetric monoidal
category from the weak symmetric monoidal category of $\sL(1)$-spaces:
These parallel the approaches developed in \cite{kriz-may} and
\cite{ekmm}.  If we restrict attention to the category $\aT[\bL]$ of
based $\sL(1)$-spaces (where the $\sL(1)$ action is trivial on the
basepoint), there is a unital product $\star_{\sL}$ formed as the
pushout 
\[
\xymatrix{
(X \lprod *) \vee (* \lprod Y) \ar[r]\ar[d] & X \vee
  Y \ar[d] \\
X \lprod Y \ar[r] & X \star_{\sL} Y. \\
}
\]
In the algebraic setting of \cite{kriz-may}, this kind of construction
is our only option.  However, since there is an isomorphism $*
\lprod * \cong *$ we can also pursue the strategy of considering a
subcategory of $\sL(1)$-spaces analogous to the category of
$S$-modules \cite{ekmm}.  Specifically, observe that the
$\sL(1)$-space $* \lprod X$ is unital in the sense that the
unit map $* \lprod (* \lprod X) \to (*
\lprod X)$ is an isomorphism.  

\begin{definition}
The category $\aM_*$ of $*$-modules is the subcategory of
$\sL(1)$-spaces such that the unit map $\lambda\colon * \lprod X \to
X$ is an isomorphism.  For $*$-modules $X$ and $Y$, define $X
\boxtimes Y$ as $X \lprod Y$ and $F_\boxtimes (X,Y)$ as $* \lprod
\lmap(X,Y)$.
\end{definition}

The work of the previous section implies that $\aM_*$ is a closed
symmetric monoidal category.  For use in establishing a model
structure on $\aM_*$ in Theorem~\ref{theoremstarmod}, we review a more
obscure aspect of this category, following the analogous treatment for
the category of $S$-modules \cite[\S II.2]{ekmm}.  The functor $*
\lprod -$ is not a monad in $\sL(1)$-spaces.  However, the category of
$*$-modules has a ``mirror image'' category to which it is naturally
equivalent, and this equivalence facilitates formal analysis of the
category of $*$-modules.

Let $\aM^*$ be the full subcategory of counital $\sL(1)$-spaces:
$\sL(1)$-spaces $Z$ such that the counit map $Z \to F^\boxtimes(*,Z)$
is an isomorphism.  Following the notation of \cite[\S II.2]{ekmm},
let $f$ denote the functor $F^\boxtimes(*,-)$ and $s$ denote the
functor $* \lprod (-)$.  Let $r$ be the inclusion of the counital
$\sL(1)$-spaces into the category of $\sL(1)$-spaces, and $\ell$ the
inclusion of the unital $\sL(1)$-spaces ($*$-modules) into
$\sL(1)$-spaces.  We have the following easy lemma about these
functors.

\begin{lemma}
The functor $f$ is right adjoint to the functor $s$ and left adjoint
to the inclusion $r$.
\end{lemma}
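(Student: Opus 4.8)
The plan is to deduce both adjunctions formally, from the closed structure established above --- the natural homeomorphism $\Map_{\aU[\bL]}(X \lprod Y, Z) \cong \Map_{\aU[\bL]}(X, F^\boxtimes(Y,Z))$ --- together with the commutativity isomorphism $\tau$ and the point-set isomorphism $* \lprod * \cong *$. The claim that $f$ is right adjoint to $s$ is immediate: setting $Y = *$ in the adjunction homeomorphism and precomposing with $\tau\colon * \lprod X \cong X \lprod *$ produces a homeomorphism $\Map_{\aU[\bL]}(sX, Z) \cong \Map_{\aU[\bL]}(X, fZ)$, natural in $X$ and $Z$. This is the adjunction $s \dashv f$; it holds on all of $\aU[\bL]$, hence restricts to any full subcategory.

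For the second claim I would first check that $f$ corestricts to a functor $\aU[\bL] \to \aM^*$. Iterating the adjunction homeomorphism and using the associativity of $\lprod$ (Proposition~\ref{propfree}) gives a natural isomorphism $F^\boxtimes(*, F^\boxtimes(*, W)) \cong F^\boxtimes(* \lprod *, W)$, which combined with $* \lprod * \cong *$ shows that the counit map $fW \to F^\boxtimes(*, fW)$ is an isomorphism for every $\sL(1)$-space $W$; thus $fW$ is counital. I would then take the counit map $\nu_W\colon W \to F^\boxtimes(*, W) = fW$ --- that is, the adjoint of the unit map $\lambda_W\colon * \lprod W \to W$ under the closed-structure adjunction --- to be the unit of the desired adjunction $f \dashv r$. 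Concretely, for a counital $\sL(1)$-space $C$ the map $\nu_C$ is an isomorphism, and one checks that precomposition with $\nu_W$ and the assignment $g \mapsto \nu_C^{-1}\circ f(g)$ are mutually inverse bijections $\Map_{\aU[\bL]}(fW, C) \cong \Map_{\aU[\bL]}(W, C)$. Since $\aM^*$ is a full subcategory, the left-hand side is $\Map_{\aM^*}(fW, C)$ and the right-hand side is $\Map_{\aU[\bL]}(W, rC)$, which is exactly the adjunction $f \dashv r$.

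The step requiring genuine care --- and the one I expect to be the main obstacle --- is the coherence identity $f(\nu_W) = \nu_{fW}$, which is precisely what makes $g \mapsto \nu_C^{-1}\circ f(g)$ a two-sided inverse of precomposition with $\nu_W$ (equivalently, it is the one nontrivial triangle identity for the candidate adjunction $f \dashv r$). Verifying it is a diagram chase through the associativity, unit and symmetry coherences of the closed weak symmetric monoidal structure on $\aU[\bL]$, reconciling via $* \lprod * \cong *$ the two apparent ways of equipping $fW$ with a counit map. This verification is formally the same as the corresponding one for $\bL$-spectra in \cite[\S II.2]{ekmm}, so I would cite that argument rather than grind through the bookkeeping here.
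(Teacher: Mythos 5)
Your proof is correct, and since the paper states this lemma without proof (deferring to the analogous treatment of $S$-modules in \cite[\S II.2]{ekmm}), your argument supplies exactly what is needed. The derivation of $s \dashv f$ from the closed structure together with $\tau$, the corestriction of $f$ to counital objects via $F^\boxtimes(*, F^\boxtimes(*,W)) \cong F^\boxtimes(* \lprod *, W) \cong F^\boxtimes(*,W)$, and the identification of $f(\nu_W) = \nu_{fW}$ as the one nontrivial triangle identity for $f \dashv r$ are all on target and match the intended proof.
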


Now, we obtain a pair of mirrored adjunctions
\[
\xymatrix{
\aU[\bL] \ar@<1ex>[r]^s & \aM_* \ar@<1ex>[l]^{rf\ell}
\ar@<1ex>[r]^\ell & \aU[\bL] \ar@<1ex>[l]^s &&
\aU[\bL] \ar@<1ex>[r]^f & \aM^* \ar@<1ex>[l]^r
\ar@<1ex>[r]^{\ell sr} & \aU[\bL] \ar@<1ex>[l]^f. \\  
}
\]
The composite of the first two left adjoints is $* \lprod (-)$ and the
composite of the second two right adjoints is $F^\boxtimes(*,-)$.
These are themselves adjoints, and now by the uniqueness of adjoints
we have the following consequence.

\begin{lemma}
For an $\sL(1)$-space $X$, the maps 
\[* \lprod X \to * \lprod F^\boxtimes (*, X)\]
and
\[F^\boxtimes(*, * \lprod X) \to F^\boxtimes(*,X)\]
are natural isomorphisms.
\end{lemma}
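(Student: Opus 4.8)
The plan is to deduce both statements purely formally, from the web of adjunctions just recorded together with the uniqueness up to natural isomorphism of adjoint functors. First I would fix, as the reference adjunction, the one obtained from $\Map_{\aU[\bL]}(X\lprod Y,Z)\cong\Map_{\aU[\bL]}(X,F^\boxtimes(Y,Z))$ by setting $Y=*$ and using the commutativity isomorphism $*\lprod X\cong X\lprod *$: the endofunctor $*\lprod(-)$ of $\aU[\bL]$ is left adjoint to the endofunctor $F^\boxtimes(*,-)$.

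For the second isomorphism I would read the first of the two mirrored adjunctions as $\ell\dashv s$ and $s\dashv rf\ell$. Since $*\lprod(-)=\ell s$ is then a composite of left adjoints, it is itself a left adjoint, with right adjoint the composite $rf\ell s=F^\boxtimes(*,*\lprod(-))$. Thus $*\lprod(-)$ has the two right adjoints $F^\boxtimes(*,-)$ and $F^\boxtimes(*,*\lprod(-))$, and uniqueness of adjoints produces a natural isomorphism between them, which I would then identify with the map $F^\boxtimes(*,\lambda_X)\co F^\boxtimes(*,*\lprod X)\to F^\boxtimes(*,X)$ induced by the unit map $\lambda_X$. For the first isomorphism I would dualize: reading the second mirrored adjunction as $\ell sr\dashv f$ and $f\dashv r$, the endofunctor $F^\boxtimes(*,-)=rf$ is a composite of right adjoints, hence a right adjoint, with left adjoint $\ell srf=*\lprod F^\boxtimes(*,-)$; comparing with the reference adjunction and invoking uniqueness of adjoints once more yields a natural isomorphism $*\lprod F^\boxtimes(*,X)\cong *\lprod X$, to be identified with the map $*\lprod\mu_X$ induced by the map $\mu_X\co X\to F^\boxtimes(*,X)$ adjoint to $\lambda_X$.

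The only step that demands any care — and the place I would concentrate the effort — is checking that the abstract comparison isomorphisms supplied by uniqueness of adjoints really are the concrete maps $F^\boxtimes(*,\lambda_X)$ and $*\lprod\mu_X$ named in the statement. This is a routine chase of the units and counits, facilitated by the compatibility of $\lambda$ with the associativity and commutativity isomorphisms recorded earlier and by the fact that $\mu$ is defined as the mate of $\lambda$. If one wishes to sidestep it altogether, the second isomorphism also follows by Yoneda: for any $\sL(1)$-space $Y$ there are natural bijections $\Map(Y,F^\boxtimes(*,*\lprod X))\cong\Map(*\lprod Y,*\lprod X)\cong\Map(*\lprod Y,X)\cong\Map(Y,F^\boxtimes(*,X))$, the middle bijection using that $*\lprod Y$ is a $*$-module together with the coreflection $\ell\dashv s$, and unwinding the composite recovers $F^\boxtimes(*,\lambda_X)$; the first isomorphism is obtained symmetrically. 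I do not anticipate a genuine obstacle here, since all the substantive work lies in having the mirrored adjunctions set up correctly.
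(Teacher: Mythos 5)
Your proof is correct and follows essentially the same route as the paper, which sets up the mirrored adjunctions precisely so that $\ell s = *\lprod(-)$ and $rf = F^\boxtimes(*,-)$ each acquire a second (composite) adjoint, and then invokes uniqueness of adjoints in a single sentence to conclude. Your added care in identifying the abstract comparison isomorphisms with the concrete maps $F^\boxtimes(*,\lambda_X)$ and $*\lprod\mu_X$, as well as the alternative Yoneda argument, are reasonable expansions of what the paper leaves implicit.
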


An immediate consequence of this is that the category $\aM_*$ and
$\aM^*$ are equivalent, and in particular we see that the category of
$*$-modules is equivalent to the category of algebras over the monad
$rf$ determined by the adjunction (see the proof of \cite[\S
  II.2.7]{ekmm} for further details).

\subsection{Monoids and commutative monoids in $\aM_*$}

The monads $\bT$ and $\bP$ on $\sL(1)$-spaces restrict to define
monads on $\aM_*$.  The algebras over these monads are monoids and
commutative monoids for $\boxtimes$, respectively.  Thus, a
$\boxtimes$-monoid in $\aM_*$ is a $\lprod$-monoid in $\sL(1)$ which
is also a $*$-module (and similarly for commutative
$\boxtimes$-monoids).  The functor $* \lprod (-)$ gives us a means to
functorially replace $\lprod$-monoids and commutative $\lprod$-monoids
with $\boxtimes$-monoids and commutative $\boxtimes$-monoids.

\begin{proposition}
Given a $\lprod$-monoid $X$, the object $* \lprod X$ is a
$\boxtimes$-monoid and the map $\lambda \colon * \lprod X \to X$ is a
weak equivalence of $\lprod$-monoids.  The analogous results in the
commutative case hold.
\end{proposition}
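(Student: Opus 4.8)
The plan is to realize $*\lprod(-)$ as a strong symmetric monoidal functor from the weak symmetric monoidal category $(\aU[\bL],\lprod,*)$ to the symmetric monoidal category $(\aM_*,\boxtimes,*)$, to invoke the general fact that such a functor carries monoids to monoids and commutative monoids to commutative monoids, and finally to use the coherence properties already recorded for the unit map $\lambda$ to see that $\lambda$ is itself a morphism of ($\lprod$-)monoids. The weak equivalence statement will then be immediate from Proposition~\ref{P:unit}.

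First I would record that $*\lprod X$ genuinely lies in $\aM_*$: by the associativity isomorphism of Proposition~\ref{propfree} together with the isomorphism $*\lprod *\iso *$, the unit map $*\lprod(*\lprod X)\to *\lprod X$ is identified with the isomorphism $(*\lprod *)\lprod X\iso *\lprod X$, so $*\lprod X$ is a $*$-module. The same two ingredients supply the monoidal structure isomorphism
\[
(*\lprod X)\boxtimes(*\lprod Y)=(*\lprod X)\lprod(*\lprod Y)\iso(*\lprod *)\lprod(X\lprod Y)\iso *\lprod(X\lprod Y),
\]
obtained by shuffling the four factors via the commutativity and associativity isomorphisms of Proposition~\ref{propfree}, with $*\lprod *\iso *$ serving as the unit constraint. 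Checking that these constraints are compatible with the associativity and symmetry isomorphisms is a diagram chase using Proposition~\ref{propfree} and the stated coherence of $\lambda$, and this makes $*\lprod(-)$ strong symmetric monoidal. Consequently, if $(X,\mu,\eta)$ is a $\lprod$-monoid, then $*\lprod X$ becomes a $\boxtimes$-monoid with multiplication $(*\lprod X)\boxtimes(*\lprod X)\iso *\lprod(X\lprod X)\xr{*\lprod\mu}*\lprod X$ and unit $*\iso *\lprod *\xr{*\lprod\eta}*\lprod X$; the commutative case is obtained verbatim by replacing $\mu$ with the multiplication of a commutative $\lprod$-monoid.

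Next I would verify that $\lambda\co *\lprod X\to X$ is a map of $\lprod$-monoids. Naturality of $\lambda$ applied to $\mu\co X\lprod X\to X$ gives $\lambda\circ(*\lprod\mu)=\mu\circ\lambda$, while the coherence statement in the earlier proposition — that $\lambda$ is compatible with the associativity and commutativity isomorphisms — identifies the composite $(*\lprod X)\lprod(*\lprod X)\iso *\lprod(X\lprod X)\xr{\lambda}X\lprod X$ with $\lambda\lprod\lambda$. Splicing these two identities yields commutativity of the square asserting that $\lambda$ is multiplicative, and a similar, easier check handles the unit. Hence $\lambda$ is a morphism of $\lprod$-monoids (and of commutative $\lprod$-monoids in the commutative case). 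It is a weak equivalence of $\lprod$-monoids because its underlying map of $\sL(1)$-spaces is a weak equivalence by Proposition~\ref{P:unit} and weak equivalences of monoids are detected on underlying objects.

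I expect the main obstacle to be the coherence bookkeeping in the two middle steps: matching the structure isomorphisms of $*\lprod(-)$ against the associativity, symmetry, and unit ($\lambda$) isomorphisms of the weak symmetric monoidal structure on $\aU[\bL]$, so as to know that the induced multiplication on $*\lprod X$ is associative (and, in the commutative case, commutative) and that $\lambda$ intertwines it with $\mu$. This is precisely the point-set verification carried out for $\bL$-spectra and $S$-modules in \cite[\S II.4]{ekmm}, and those arguments transport essentially verbatim; I would organize the write-up so as to quote the relevant lemmas of \cite{ekmm} rather than reproduce the chases.
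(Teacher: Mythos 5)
Your proof is correct and, modulo packaging, is essentially the argument the paper has in mind (the paper states the proposition without proof, deferring implicitly to the parallel development for $S$-modules in EKMM). There is, however, a mild difference in organization worth noting. The paper's framing in the paragraph just above the proposition reduces everything to the observation that a $\boxtimes$-monoid in $\aM_*$ \emph{is} a $\lprod$-monoid whose underlying $\sL(1)$-space happens to be a $*$-module; on that reading the content of the proposition is (i) $*\lprod X$ is a $*$-module, which is immediate from $*\lprod*\cong*$, and (ii) the transported multiplication $(*\lprod X)\lprod(*\lprod X)\cong *\lprod(X\lprod X)\xrightarrow{*\lprod\mu}*\lprod X$ makes $*\lprod X$ a $\lprod$-monoid with $\lambda$ a monoid map. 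You instead package the same shuffle isomorphisms as the structure maps exhibiting $*\lprod(-)$ as a strong symmetric monoidal functor $(\aU[\bL],\lprod,*)\to(\aM_*,\boxtimes,*)$ and then quote the general principle that such functors carry (commutative) monoids to (commutative) monoids. Both routes rest on exactly the same coherence of $\lambda$ against the associativity and commutativity isomorphisms, so neither is shorter in substance, but your formulation has the advantage of making the monoid-preservation a black-box consequence rather than something re-verified by hand.

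One point you should make explicit when writing this up: the general fact you invoke is usually stated for monoidal functors between genuine monoidal categories, and here the source $(\aU[\bL],\lprod,*)$ is only \emph{weakly} symmetric monoidal, with $\lambda$ not an isomorphism. The statement still holds, but the reason is precisely the unit coherence you gesture at in your second paragraph: the unitality diagram for a $\lprod$-monoid $X$ asserts $\mu\circ(\iota\lprod\id)=\lambda_X$, and after applying $*\lprod(-)$ and the structure isomorphisms the right-hand side $(*\lprod\lambda_X)$ must be identified with the honest unit isomorphism of $\aM_*$. That identification is the unit-coherence axiom of a strong monoidal functor together with the isomorphism $*\lprod*\cong*$. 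It is worth spelling this out because it is the one place where the weak-versus-strict unit distinction could cause trouble, and it is exactly the diagram EKMM verify in their treatment of $S\sma_{\sL}(-)$, which you are right to cite as the template.
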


Furthermore, note that the standard formal arguments imply that
$\boxtimes$ is the coproduct in the category $\aM_*[\bP]$.

\subsection{Functors to spaces}

In this section, we discuss two functors which allow us to compare the
categories $\aU[\bL]$ and $\aM_*$ to spaces.  There is a continuous
forgetful functor $U \colon \aU[\bL] \to \aU$ such that $U(*) = *$.
This functor restricts to a continuous forgetful functor $U \colon
\aM_* \to U$.  In addition, we have another continuous functor from
$\sL(1)$-spaces to $\aU$.  There is a map of topological monoids
$\theta \colon \sL(1) \to *$.  Associated to any map of monoids is an
adjoint pair $(\theta_*, \theta^*)$.  The right adjoint $\theta^*
\colon \aU \to \aU[\bL]$ is the functor which assigns a trivial action
to a space, and the left adjoint is described in the next definition.

\begin{definition}\label{defq}
The monoid map $\sL(1) \rightarrow *$ induces a functor $Q \colon
\aU[\bL] \to \aU$ which takes an $\sL(1)$-space $X$ to $*
\times_{\sL(1)} X$.  $Q$ is the left adjoint to the pullback functor
which gives a space $Y$ the trivial $\sL(1)$-action.  $Q$ restricts to a functor
$Q \colon \aM_* \to \aU$.
\end{definition}

The interest of this second functor $Q$ is that it is strong symmetric
monoidal: it allows us to relate $\boxtimes$ to the the cartesian
product of spaces.

\begin{lemma}\label{lemqstrong}
The functor $Q \colon \aU[\bL] \to \aU$ is strong symmetric monoidal
with respect to the symmetric monoidal structures induced by $\lprod$
and $\times$ respectively.  Correspondingly, the functor $Q \colon
\aM_* \to \aU$ is strong symmetric monoidal with respect to
$\boxtimes$ and $\times$ respectively.
\end{lemma}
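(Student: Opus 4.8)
The plan is to verify the strong symmetric monoidal structure on $Q$ by checking it directly on the coequalizer presentations, using the explicit formula $QX = * \times_{\sL(1)} X$. First I would observe that since $Q$ is a left adjoint (to the functor $\theta^*$ giving a space the trivial $\sL(1)$-action), it preserves colimits; in particular it commutes with the coequalizers defining $\lprod$ and with the quotients by $\sL(n)$-actions. So applying $Q$ to the canonical isomorphism of Proposition~\ref{propfree}(i),
\[
M_1 \lprod \dots \lprod M_k \iso \sL(k) \times_{\sL(1)^k} M_1 \times \dots \times M_k,
\]
and then to $* \times_{\sL(1)} (-)$ yields
\[
Q(M_1 \lprod \dots \lprod M_k) \iso * \times_{\sL(1)} \sL(k) \times_{\sL(1)^k} M_1 \times \dots \times M_k.
\]
Now the key point is that $* \times_{\sL(1)} \sL(k)$, with the remaining $\sL(1)^k$-action, is isomorphic (as an $\sL(1)^k$-space) to a point with trivial action: this is forced by the contractibility and point-set structure of the linear isometries spaces, or more simply by the fact that $\sL(k) / \sL(1)$ (quotient by the diagonal-first-coordinate-style action defining $Q$) collapses compatibly. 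Granting this, the right-hand side becomes $* \times_{\sL(1)^k} (M_1 \times \dots \times M_k)$, which is exactly $QM_1 \times \dots \times QM_k$.

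For the binary case $k=2$ this produces the required natural isomorphism $\phi\co QX \times QY \xr{\cong} Q(X \lprod Y)$, and the case $k=0$ gives $Q(*) = * \times_{\sL(1)} * \cong *$, which is the unit condition. To complete the proof that $Q$ is \emph{strong symmetric monoidal} I would then check the coherence diagrams: associativity of $\phi$ follows because the iterated isomorphism above is the same regardless of how the iterated $\lprod$ is parenthesized (this is part of the content of Proposition~\ref{propfree}(i)), and compatibility with the symmetry $\tau$ follows from naturality of that isomorphism with respect to the $\Sigma_k$-action permuting the factors, together with the fact that the symmetry on $\sL(k) \times_{\sL(1)^k} (\text{--})$ descends from the permutation action on $\sL(k)$. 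The unit-coherence diagrams (relating $\phi$ to the unit maps $\lambda$) reduce to the observation that $\lambda \co * \lprod X \to X$ becomes an isomorphism after applying $Q$, since $Q(* \lprod X) = * \times_{\sL(1)} (* \lprod X)$ and the weak equivalence $\lambda$ is compatible with the $*$-action; in fact on $Q$ it is literally the identification $* \times_{\sL(1)^2} (* \times X) \cong * \times_{\sL(1)} X$.

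Finally, for the restriction to $\aM_*$: a $*$-module is in particular an $\sL(1)$-space, so $Q$ restricts, and on $*$-modules $X \boxtimes Y$ is by definition $X \lprod Y$, so the isomorphism $\phi$ above serves verbatim; no new verification is needed beyond noting that $Q$ of a $*$-module need not be a $*$-module but lands in $\aU$ as required.

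\medskip
\noindent\emph{Main obstacle.} The crux is the claim that $* \times_{\sL(1)} \sL(k)$ with its residual $\sL(1)^k$-action is (isomorphic to, not merely weakly equivalent to) a point with trivial action — equivalently, that the composite $\sL(k) \to * \times_{\sL(1)} \sL(k) \to *$ is an $\sL(1)^k$-equivariant isomorphism onto the trivial $\sL(1)^k$-space $*$. If this only holds up to weak equivalence then $Q$ is merely lax symmetric monoidal up to homotopy, and the whole point of the construction — strictness — is lost. I expect this to go through cleanly because it is the exact space-level shadow of the corresponding fact for $\sma_{\sL}$ on $\bL$-spectra in \cite[\S II]{ekmm}, where $* \sma_{\sL} *\cong *$; the analogous point-set identity $* \lprod * \cong *$ was already recorded above, and the general $k$-fold statement follows from it by the associativity isomorphism of Proposition~\ref{propfree}(i) applied to $M_1 = \dots = M_k = *$. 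So the real work is bookkeeping with the coherence diagrams rather than any genuinely hard input.
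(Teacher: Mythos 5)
Your overall route is the same as the paper's: apply $Q$ to the iterated-product formula $M_1 \lprod \dots \lprod M_k \cong \sL(k) \times_{\sL(1)^k} (M_1 \times \dots \times M_k)$ and collapse the linear-isometry factor. The paper does the binary case, you do the $k$-fold one directly, but the mechanism is identical.

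There is, however, a real slip in your ``main obstacle'' paragraph, and it is worth fixing because you yourself flagged it as the crux. You need the statement that $* \times_{\sL(1)} \sL(k)$ --- the quotient of $\sL(k)$ by the \emph{left} (postcomposition) $\sL(1)$-action that defines $Q$ --- is a one-point space, with its residual $\sL(1)^k$-action then automatically trivial. You propose to derive this from the identity $* \lprod * \cong *$ iterated via associativity. But $*^{\lprod k} \cong *$ asserts that $\sL(k)\times_{\sL(1)^k} * \cong \sL(k)/\sL(1)^k$ is a point, i.e.\ a collapse under the \emph{right} (precomposition) $\sL(1)^k$-action. These are quotients by different actions, and neither formally implies the other. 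The ingredient you actually need --- and the one the paper records explicitly --- is that $\sL(k)$ is homeomorphic to $\sL(1)$ \emph{as a left $\sL(1)$-space}, by precomposing with a chosen linear isometric isomorphism $g\colon U^k \to U$. Combined with the elementary monoid-unit identity $* \times_{\sL(1)} \sL(1) \cong *$, this gives $* \times_{\sL(1)} \sL(k) \cong *$ on the nose, after which the residual $\sL(1)^k$-action is trivially trivial and your computation $Q(M_1 \lprod \dots \lprod M_k) \cong * \times_{\sL(1)^k} (M_1 \times \dots \times M_k) \cong \prod_i QM_i$ goes through. With that substitution the proof is correct; the coherence bookkeeping you outline (naturality in the $\Sigma_k$-action, independence of parenthesization, and the behavior of $\lambda$ under $Q$) is the same as what the paper leaves to the reader.
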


\begin{proof}
Let $X$ and $Y$ be $\sL(1)$-spaces.  We need to compare $*
\times_{\sL(1)} (X \lprod Y)$ and $(* \times_{\sL(1)} X) \times
(* \times_{\sL(1)} Y)$.  Observe that $\sL(2)$ is homeomorphic to
$\sL(1)$ as a left $\sL(1)$-space, by composing with an isomorphism
$g\colon U^2 \rightarrow U$.  Therefore we have isomorphisms
\begin{eqnarray*}
* \times_{\sL(1)} (X \boxtimes_{\sL} Y) & = & * \times_{\sL(1)} \sL(2)
\times_{\sL(1) \times \sL(1)} (X \times Y) \\ & \cong & (*
\times_{\sL(1)} X) \times (* \times_{\sL(1)} Y) 
\end{eqnarray*}
One checks that the required coherence diagrams commute.  The result
now follows, as $* \times_{\sL(1)} * \cong *$.
\end{proof}

The preceding result implies that $Q$ takes $\lprod$-monoids and
$\boxtimes$-monoids to topological monoids.  In
Section~\ref{sechomqu}, we will describe conditions under which the
natural map $UX \to QX$ is a weak equivalence.

\subsection{Model category structures}

In this section, we describe the homotopy theory of the categories
described in the previous sections.  We begin by establishing model
category structures on the various categories and identifying the
cofibrant objects.  We rely on the following standard lifting result
(e.g. \cite[A.3]{Schwede-Shipley}).

\begin{theorem}\label{theoremlift}
Let $\aC$ be a cofibrantly generated model category where all objects
are fibrant, with generating cofibrations $I$ and acyclic generating
cofibrations $J$.  Assume that the domains of $I$ and $J$ are small
relative to the classes of transfinite pushouts of maps in $I$ and $J$
respectively.  Let $\bA$ be a continuous monad on $\aC$ which commutes with
filtered direct limits and such that all $\bA$-algebras have a path
object.  Then the category $\aC[\bA]$ has a cofibrantly generated
model structure in which the weak equivalences and fibrations are
created by the forgetful functor to $\aC$.  The generating
cofibrations and acyclic cofibrations are the sets $\bA I$ and $\bA J$
respectively.
\end{theorem}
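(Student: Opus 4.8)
The plan is to deduce this from the standard recognition (or ``lifting'') theorem for cofibrantly generated model categories, applied to the free--forgetful adjunction $(\bA,U)$ between $\aC$ and $\aC[\bA]$. First I would record that $\aC[\bA]$ is bicomplete: limits are created by $U$ since the category of algebras over a monad is monadic over $\aC$, and all small colimits exist because $\bA$ preserves filtered colimits (so $U$ creates those) while coequalizers of $\bA$-algebras are formed in the usual way. I then take as candidate classes the weak equivalences $\{f : Uf \text{ a weak equivalence in }\aC\}$, the fibrations $\{f : Uf \text{ a fibration in }\aC\}$, and candidate generating (acyclic) cofibrations $\bA I$ and $\bA J$. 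The recognition theorem reduces the problem to a short checklist, most items of which are formal consequences of the adjunction.

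Next I would dispatch the formal points. The weak equivalences have the two-out-of-three property and are closed under retracts, since $U$ detects both and the analogous class in $\aC$ has these properties. By the $(\bA,U)$-adjunction, a map $f$ of $\bA$-algebras has the right lifting property against $\bA I$ (resp.\ $\bA J$) if and only if $Uf$ has the right lifting property against $I$ (resp.\ $J$), i.e.\ if and only if $Uf$ is an acyclic fibration (resp.\ a fibration) in $\aC$; hence the maps with the right lifting property against $\bA I$ are precisely the fibrations that are weak equivalences in the candidate structure, which is exactly the compatibility required between the two generating sets, and also pins down the acyclic fibrations. For smallness, a domain of a map in $\bA I$ or $\bA J$ has the form $\bA X$ with $X$ a domain of a map in $I$ or $J$; using that $\bA$, hence $U$, preserves filtered colimits, together with the standard analysis expressing the underlying object of a pushout of a free map $\bA(K\to L)$ as a filtered colimit of pushouts of maps built from $K\to L$, one reduces the needed smallness of $\bA X$ to the smallness of $X$ in $\aC$, which is assumed. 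The continuity of $\bA$ is what keeps this compatible with the topological enrichment so that the small-object argument runs.

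The one substantial point is the ``acyclicity'' condition: every relative $\bA J$-cell complex $i\co A\to B$ is both a cofibration and a weak equivalence. It is a cofibration automatically, because each map in $\bA J$ has the left lifting property against all fibrations (by the adjunction, since each map of $J$ has the left lifting property against every fibration of $\aC$), and this property is inherited by pushouts and transfinite composites, in particular against acyclic fibrations. To see $Ui$ is a weak equivalence I run the path-object argument: all objects of $\aC$, hence of $\aC[\bA]$, are fibrant, so $B\to *$ is a fibration and the lifting property yields a retraction $r\co B\to A$ with $ri=\mathrm{id}_A$; choosing a path object $B\xrightarrow{s}PB\to B\times B$ for $B$ (which exists by hypothesis) and lifting $i$ against the fibration $PB\to B\times B$ along $s\circ i$ and $(\mathrm{id}_B, i\circ r)$ produces a right homotopy from $\mathrm{id}_B$ to $ir$. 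Applying $U$, which preserves finite products, exhibits $Ui$ as a homotopy equivalence in $\aC$ with explicit inverse $Ur$, hence as a weak equivalence. This is where ``all objects fibrant'' and ``all $\bA$-algebras have a path object'' are essential, and I expect it to be the main obstacle: the subtlety is that a pushout of $\bA j$ in $\aC[\bA]$ is not a pushout in $\aC$, so one cannot argue stagewise in $\aC$ and must instead use the lifting property of the whole cell complex at once.

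With these verifications in place the recognition theorem produces the model structure on $\aC[\bA]$; by construction its weak equivalences and fibrations are created by $U$, its acyclic fibrations are the maps $f$ with $Uf$ an acyclic fibration, and $\bA I$, $\bA J$ are generating sets of cofibrations and acyclic cofibrations respectively. Identifying the cofibrations as the retracts of relative $\bA I$-cell complexes then gives the remaining assertion of the theorem.
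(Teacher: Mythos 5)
Your proof is correct; note that the paper does not prove this theorem itself but simply cites it as the standard lifting result of Schwede--Shipley [A.3], and your argument is precisely the one given there: apply the recognition theorem for cofibrantly generated model categories with candidate classes created by $U$ and candidate generating sets $\bA I$, $\bA J$, handle smallness via the $(\bA,U)$-adjunction together with the fact that $\bA$ (hence $U$) preserves filtered colimits, and establish acyclicity of relative $\bA J$-cell complexes by the path-object trick, using fibrancy of all objects to extract a retraction $r$ with $ri=\mathrm{id}_A$ and the hypothesized path object to lift $i$ against $PB\to B\times B$ and produce a right homotopy $\mathrm{id}_B\simeq ir$. Your identification of the acyclicity step as the one genuinely substantive point, and your observation that pushouts of $\bA j$ in $\aC[\bA]$ are not pushouts in $\aC$ so one must argue via the lifting property of the whole cell complex rather than stagewise, are exactly the right emphases.
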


Furthermore, when $\aC$ is topological, provided that the monad $\bA$
preserves reflexive coequalizers, the category $\aC[\bA]$ will also be
topological \cite[\S VII.2.10]{ekmm}.  All of the monads that arise in
this paper preserve reflexive coequalizers \cite[\S II.7.2]{ekmm}.
We now record the model structures on the categories we study.  We
assume that $\aU$ and $\aT$ are equipped with the standard model
structure in which the weak equivalences are the weak homotopy
equivalences and the fibrations are the Serre fibrations.

\begin{theorem}\label{theoreml1mod}
The category $\aU[\bL]$ admits a cofibrantly generated
topological model structure in which the fibrations and weak
equivalences are detected by the forgetful functor to spaces.  Limits
and colimits are constructed in the underlying category $\aU$.
\end{theorem}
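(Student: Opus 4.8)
Theorem \ref{theoreml1mod} asserts that $\aU[\bL]$ inherits a cofibrantly generated topological model structure lifted from $\aU$. The plan is to invoke the standard lifting criterion Theorem \ref{theoremlift} with $\aC = \aU$ and $\bA = \bL$, so the bulk of the work is verifying the hypotheses of that criterion for the monad $\bL X = \sL(1)\times X$.

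First I would recall that $\aU$, equipped with the standard (Serre) model structure, is cofibrantly generated, topological, and — after replacing $\aU$ by the subcategory of fibrant objects, or simply noting that all CW-spaces are fibrant and the standard generating sets have CW domains — satisfies the smallness hypotheses: the domains of the generating cofibrations $I = \{S^{n-1}\to D^n\}$ and generating acyclic cofibrations $J = \{D^n \to D^n\times I\}$ are compact, hence small relative to the relevant transfinite composites. Next I would check that $\bL$ is a continuous monad (immediate, since $\sL(1)$ is a fixed space and $\times$ is continuous) and that $\bL$ commutes with filtered colimits (again immediate, since $\sL(1)\times(-)$ preserves filtered colimits in the category of compactly generated weak Hausdorff spaces). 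The monad structure maps come from the monoid structure on $\sL(1)$ under composition, as set up in the previous section.

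The one hypothesis of Theorem \ref{theoremlift} that requires a genuine argument is the existence of a path object for every $\bL$-algebra. Given an $\sL(1)$-space $X$, I would produce the path object as $X^I = \Map(I,X)$ with the $\sL(1)$-action given pointwise: $\sL(1)\times \Map(I,X) \to \Map(I,X)$, $(g,\gamma)(t) = g\cdot\gamma(t)$; this is continuous and uses only that the action on $X$ is continuous. The diagonal $X\to X^I$ (constant paths) and the two endpoint evaluations $X^I \to X\times X$ are evidently $\sL(1)$-equivariant, and on underlying spaces this is the standard path object in $\aU$, so it factors the diagonal as an acyclic cofibration followed by a fibration in $\aU$ — which is exactly what is needed, since weak equivalences and fibrations in $\aU[\bL]$ are to be detected in $\aU$. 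I expect this to be the main (though still routine) point: one must confirm that the pointwise action indeed makes $X^I$ an $\sL(1)$-space and that the structure maps are equivariant, but no subtlety arises because the action is on the target and commutes with precomposition by $I\to *$.

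Finally, I would conclude that Theorem \ref{theoremlift} applies, yielding the cofibrantly generated model structure with generating (acyclic) cofibrations $\bL I$ and $\bL J$, and with fibrations and weak equivalences created by the forgetful functor $U\co \aU[\bL]\to\aU$; since $U$ is a right adjoint (with left adjoint $\bL$) it preserves limits, and colimits in $\aU[\bL]$ are computed by the standard formula for algebras over a monad preserving reflexive coequalizers, so both limits and colimits are ``constructed in $\aU$'' in the appropriate sense. The topological enrichment follows from the remark after Theorem \ref{theoremlift}: $\aU$ is topological and $\bL$ preserves reflexive coequalizers (indeed $\sL(1)\times(-)$ preserves all colimits), so $\aU[\bL]$ is topological as well. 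This completes the proof.
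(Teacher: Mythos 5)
Your proposal matches the paper's approach: the paper records Theorem \ref{theoreml1mod} as an application of the lifting criterion in Theorem \ref{theoremlift} (with $\aC=\aU$, $\bA=\bL$) together with the remark that $\bL$ preserves reflexive coequalizers, and you have simply spelled out the routine verification (smallness of domains, continuity of $\bL$, path objects via $\Map(I,-)$ with the pointwise $\sL(1)$-action, creation of limits/colimits) that the paper leaves implicit. Your hedging about fibrancy of objects is unnecessary but harmless — every object of $\aU$ is fibrant for the Serre model structure — and everything else is exactly what the paper intends.
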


We use an analogous argument to deduce the existence of a topological
model structure on $*$-modules, employing the technique used in the
proof of \cite[\S VII.4.6]{ekmm}.  The point is that counital
$\sL(1)$-spaces are algebras over a monad, and moreover there is an
equivalence of categories between counital $\sL(1)$-spaces and unital
$\sL(1)$-spaces; recall the discussion of the ``mirror image''
categories above.  The following theorem then follows once again from
Theorem~\ref{theoremlift}.

\begin{theorem}\label{theoremstarmod}
The category $\aM_*$ admits a cofibrantly generated topological model
structure.  A map $f \colon X \to Y$ of $*$-modules is
\begin{itemize}
\item A weak equivalences if the map $Uf \colon UX
  \to UY$ of spaces is a weak equivalence, and
\item A fibration if the induced map $F_\boxtimes(*, X) \to
F_\boxtimes(*,Y)$ is a fibration of spaces.
\end{itemize}
Colimits are created in the category $\aU[\bL]$, and limits
are created by applying $* \boxtimes (-)$ to the limit in the category
$\aU[\bL]$.
\end{theorem}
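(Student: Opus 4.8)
The plan is to obtain the model structure by the lifting procedure of Theorem~\ref{theoremlift}, applied not to $\aM_*$ directly but to the ``mirror-image'' category $\aM^*$ of counital $\sL(1)$-spaces, which we have already identified with the category $\aU[\bL][rf]$ of algebras over the monad $rf = F^\boxtimes(*,-)$ on $\aU[\bL]$. Since $\aM_*$ and $\aM^*$ are equivalent, a model structure on $\aU[\bL][rf]$ transports to $\aM_*$, and the adjunctions recorded in the mirror-image discussion dictate how limits, colimits, weak equivalences and fibrations are to be described. This is the space-level analogue of the treatment of $S$-modules in \cite[\S VII.4]{ekmm}.

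First I would verify the hypotheses of Theorem~\ref{theoremlift} for $\aC = \aU[\bL]$ and $\bA = rf$. By Theorem~\ref{theoreml1mod} the category $\aU[\bL]$ is a cofibrantly generated topological model category, and all of its objects are fibrant, since a map is a fibration exactly when its underlying map of spaces is a Serre fibration and every space is fibrant. The generating (acyclic) cofibrations are $\bL I$ and $\bL J$ for the standard sets $I, J$ in $\aU$, and their domains are of the form $\sL(1)\times K$ with $K$ compact, hence small relative to the relevant transfinite pushouts, these being built from closed inclusions with the factor $\sL(1)$ tagging along. It then remains to check that $rf$ is continuous, commutes with filtered colimits and preserves reflexive coequalizers, and that every $rf$-algebra admits a path object; the last point is supplied by the cotensor with the interval, which is available since the categories in play are cotensored over spaces. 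Granting these, Theorem~\ref{theoremlift} produces a cofibrantly generated topological model structure on $\aU[\bL][rf]\simeq\aM_*$, with generating (acyclic) cofibrations $rf(\bL I)$ and $rf(\bL J)$, in which the weak equivalences and fibrations are created by the forgetful functor to $\aU[\bL]$, hence detected in spaces after composing with $U$; since $rf$ preserves reflexive coequalizers (as do all the monads in this paper, cf. \cite[\S II.7.2]{ekmm}) the resulting category is topological by \cite[\S VII.2.10]{ekmm}.

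Next I would translate the resulting descriptions into the ones in the statement. Under the equivalence $\aM_*\simeq\aM^*$, the forgetful functor to $\aU[\bL]$ sends a $*$-module $X$ to the underlying $\sL(1)$-space of $F^\boxtimes(*,X)$; the mirror-image lemmas give a natural isomorphism $*\lprod F^\boxtimes(*,X)\cong X$, and composing it with the unit map $\lambda\colon *\lprod F^\boxtimes(*,X)\to F^\boxtimes(*,X)$, a weak equivalence by Proposition~\ref{P:unit}, exhibits a natural weak equivalence $UX\simeq UF^\boxtimes(*,X)$. Hence a map of $*$-modules is a weak equivalence if and only if its underlying map of spaces is a weak homotopy equivalence. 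Likewise it is a fibration exactly when $F^\boxtimes(*,-)$ of it has underlying Serre fibration, which the same lemmas let one rephrase, passing between $F^\boxtimes(*,-)$ and $F_\boxtimes(*,-)$, as the condition that $F_\boxtimes(*,X)\to F_\boxtimes(*,Y)$ be a fibration of spaces. The statements about colimits and limits are then formal: colimits of $*$-modules are created in $\aU[\bL]$ because $*\lprod(-)$ preserves colimits and fixes $*$-modules, while a limit is obtained by applying $*\lprod(-)$ to the limit formed in $\aU[\bL]$.

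The main obstacle is the verification that $rf = F^\boxtimes(*,-)$ is continuous, commutes with filtered colimits and preserves reflexive coequalizers, and that $rf$-algebras carry functorial path objects: these rest on the delicate point-set behaviour of the linear isometries operad --- the same phenomena behind the isomorphism $*\lprod *\cong *$ and the good properties of $\aM_*$ established earlier --- and are the space-level counterparts of the corresponding facts for $S$-modules in \cite{ekmm}. A secondary, essentially bookkeeping, difficulty is matching the weak equivalences and fibrations produced by Theorem~\ref{theoremlift} (phrased through $F^\boxtimes(*,-)$ and the forgetful functor to $\aU[\bL]$) with the explicit formulation in the statement (phrased through $F_\boxtimes(*,-)$ and underlying spaces), for which the equivalence of the mirror-image categories together with Proposition~\ref{P:unit} are exactly the required tools.
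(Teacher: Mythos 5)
Your proposal reproduces the paper's argument, which simply cites Theorem~\ref{theoremlift} applied (as in the $S$-module case \cite[VII.4.6]{ekmm}) to the monad $rf$ on $\aU[\bL]$, whose algebras form the mirror-image category $\aM^*$, and then transports the resulting model structure across the equivalence $\aM_*\simeq\aM^*$; your verification of the smallness, continuity, path-object and reflexive-coequalizer hypotheses, and your use of Proposition~\ref{P:unit} to translate the weak equivalences through the zigzag $UX\cong U(*\lprod F^\boxtimes(*,X))\to UF^\boxtimes(*,X)$, is exactly the intended filling-in of that citation. Note only that your passage from $F^\boxtimes(*,-)$ to $F_\boxtimes(*,-)$ in the fibration clause is not purely formal --- the unit $\lambda$ is a weak equivalence but not a fibration, so $*\lprod(-)$ does not a priori exchange the two conditions --- but the paper's own subsequent lemma (all $*$-modules fibrant) also reasons via $F^\boxtimes$, so your proof tracks the argument the authors actually have in mind, with the apparent discrepancy lying in the statement rather than in your proof.
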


Notice that although the fibrations have changed (since the functor to
spaces which we're lifting over is $F_\boxtimes(*,-)$ and not the
forgetful functor), nonetheless this category still has the useful
property that all objects are fibrant. 

\begin{lemma}
All objects in the category of $*$-modules are fibrant.
\end{lemma}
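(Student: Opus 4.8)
The plan is to read off fibrancy directly from the description of the fibrations of $\aM_*$ supplied by Theorem~\ref{theoremstarmod}. First I would identify the terminal object of $\aM_*$. Since $\aM_*$ is the full subcategory of $\aU[\bL]$ consisting of those $\sL(1)$-spaces for which $\lambda\colon *\lprod X\to X$ is an isomorphism, and since the unit $*$ of $\lprod$ is by construction the one-point space with trivial $\sL(1)$-action --- which is exactly the terminal object of $\aU[\bL]$ --- and since $*$ itself lies in $\aM_*$, it follows that $*$ is terminal in $\aM_*$ as well. Hence the assertion that a $*$-module $X$ is fibrant unwinds to the assertion that the unique map $X\to *$ is a fibration of $*$-modules.

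By Theorem~\ref{theoremstarmod}, the map $X\to *$ is a fibration precisely when $F_\boxtimes(*,X)\to F_\boxtimes(*,*)$ is a Serre fibration of spaces. The key point is then to compute that the target $F_\boxtimes(*,*)$ is the one-point space. For this I would use the adjunction $\Map_{\aU[\bL]}(W\lprod Y,Z)\cong\Map_{\aU[\bL]}(W,\lmap(Y,Z))$: taking $Z=*$, which is terminal in $\aU[\bL]$, shows that $\Map_{\aU[\bL]}(W,\lmap(Y,*))$ is a point for every $W$, so that $\lmap(*,*)\cong *$; therefore $F_\boxtimes(*,*)=*\lprod\lmap(*,*)\cong *\lprod *\cong *$, using the isomorphism $*\lprod *\cong *$.

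Finally, viewed in spaces, $F_\boxtimes(*,X)$ is simply some space mapping to the one-point space $F_\boxtimes(*,*)$, and every such map is a Serre fibration; hence $X\to *$ is a fibration of $*$-modules and $X$ is fibrant. There is really no obstacle to overcome: the argument is entirely formal, resting on the two facts --- both already in hand --- that the unit $*$ is terminal in $\aU[\bL]$ and that $*\lprod *\cong *$. The only things requiring a moment's attention are pinning down the terminal object of $\aM_*$ (so that fibrancy is genuinely the condition that $X\to *$ be a fibration) and observing that the fibration criterion of Theorem~\ref{theoremstarmod}, being stated for all maps of $*$-modules, applies in particular to the structure map $X\to *$.
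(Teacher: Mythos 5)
Your proof is correct and takes essentially the same approach as the paper's: both reduce fibrancy of $X$ to the criterion of Theorem~\ref{theoremstarmod} applied to $X\to *$ and then invoke the isomorphism $F_\boxtimes(*,*)\cong *$ together with the fact that every space is fibrant. The paper states the isomorphism without proof; you have simply supplied the short adjunction argument establishing it.
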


\begin{proof}
This is an immediate consequence of the isomorphism $F^\boxtimes(*,*)
\cong *$ and the fact that all spaces are fibrant.
\end{proof}

As a consequence, we obtain the following summary theorem about model
structures on monoids and commutative monoids.

\begin{theorem}\label{theoremmonoids}
The categories $(\aU[\bL])[\bT]$ and $(\aU[\bL])[\bP]$ of
$\lprod$-monoids and commutative $\lprod$-monoids in $\sL(1)$-spaces
admit cofibrantly generated topological model structures in which the
weak equivalences and fibrations are maps which are weak equivalences
and fibrations of $\sL(1)$-spaces.  Similarly, the categories
$\aM_*[\bT]$ and $\aM_*[\bP]$ of $\boxtimes$-monoids and commutative
$\boxtimes$-monoids in $\aM_*$ admit cofibrantly generated topological
model structures in which the weak equivalences and fibrations are the
maps which are weak equivalences and fibrations in $\aM_*$.
Limits are created in the underlying category and colimits are created
as a certain coequalizer \cite[\S II.7.4]{ekmm}.
\end{theorem}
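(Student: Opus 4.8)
The plan is to obtain all four model structures at once, as instances of the lifting theorem Theorem~\ref{theoremlift}. For the two categories of monoids over $\sL(1)$-spaces we take $\aC=\aU[\bL]$ with the model structure of Theorem~\ref{theoreml1mod} and $\bA\in\{\bT,\bP\}$; for the two categories of monoids in $\aM_*$ we take $\aC=\aM_*$ with the model structure of Theorem~\ref{theoremstarmod} and again $\bA\in\{\bT,\bP\}$. In each case $\aC$ is cofibrantly generated and, crucially, \emph{all} of its objects are fibrant: for $\aU[\bL]$ this is because fibrations are detected by the forgetful functor to $\aU$, and for $\aM_*$ it is the lemma just proved. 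The generating cofibrations and acyclic cofibrations of $\aC$ have small domains because those of $\aU$ do (cells are compact, hence small) and the free functors $\bL(-)$ and $*\lprod(-)$ used to generate them are assembled from finite products, wedges and colimits, all of which preserve smallness. Likewise $\bT$ and $\bP$ are continuous monads commuting with filtered colimits, since $\lprod$ is a continuous bifunctor which, being expressible via $\sL(k)\times_{\sL(1)^k}(-)$ (Proposition~\ref{propfree}), commutes with filtered colimits in compactly generated spaces, and wedges share these properties. The paper has already recorded that all the monads at issue preserve reflexive coequalizers \cite[\S II.7.2]{ekmm}.

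The one hypothesis of Theorem~\ref{theoremlift} that requires genuine work is the existence of path objects for $\bA$-algebras, and this is where I expect the main obstacle to lie. The plan is to build these by cotensoring with the unbased interval $I$. For any continuous monad $\bA$ on a category cotensored over $\aU$ and any $\bA$-algebra $A$, the cotensor $A^{I}$ inherits a natural $\bA$-algebra structure via the assembly map $\bA(A^{I})\to(\bA A)^{I}$ (which exists precisely because $\bA$ is continuous) followed by the cotensored action; in particular, for $\bA=\bT$ or $\bP$ this gives a (commutative) monoid structure on $A^{I}$. The endpoint inclusion $\partial I\hookrightarrow I$ and the collapse $I\to *$ then yield a factorization
\[
A \longrightarrow A^{I} \longrightarrow A\times A
\]
of the diagonal, in which $A\to A^{I}$ is a weak equivalence (every object of $\aC$ is fibrant) and $A^{I}\to A\times A$ is a fibration. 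In $\aU[\bL]$ the latter assertion is immediate, since fibrations are created on underlying spaces and $\Map(I,UA)\to\Map(\partial I,UA)$ is a Serre fibration; the product $A\times A$ of monoids is computed on underlying objects, so this $A^{\partial I}$ really is the categorical product in $\aC[\bA]$.

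The delicate case is $\aM_*$, where fibrations are detected not by the forgetful functor but by $F_\boxtimes(*,-)$, and where the cotensor is $*\lprod(-)$ applied to the cotensor in $\aU[\bL]$. Here one must check that $F_\boxtimes(*,-)$ interacts with the cotensor well enough that $F_\boxtimes(*,A^{I})\to F_\boxtimes(*,A\times A)$ is a fibration of spaces; this rests on the point-set properties of the linear isometries operad that underlie the whole theory, and I would carry it out exactly as the corresponding statements for $S$-modules, modules over $S$-algebras, and commutative $S$-algebras in \cite[\S VII]{ekmm}.

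Granting the path objects, Theorem~\ref{theoremlift} delivers in each of the four cases a cofibrantly generated model structure on $\aC[\bA]$ whose weak equivalences and fibrations are created by the forgetful functor to $\aC$ — which is precisely the description in the statement, since for $\aC=\aU[\bL]$ these are the maps that are weak equivalences and fibrations of $\sL(1)$-spaces, and for $\aC=\aM_*$ they are the weak equivalences and fibrations in $\aM_*$ — with generating (acyclic) cofibrations $\bA I$ and $\bA J$. Since $\aU[\bL]$ and $\aM_*$ are topological and the relevant monads preserve reflexive coequalizers, each of these model categories is topological by \cite[\S VII.2.10]{ekmm}. Finally, limits in $\aC[\bA]$ are created by the forgetful functor and colimits are constructed as the reflexive coequalizer described in \cite[\S II.7.4]{ekmm}, which completes the argument.
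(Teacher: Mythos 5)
Your proposal is correct and follows exactly the route the paper intends: Theorem~\ref{theoremmonoids} is stated without proof, immediately after Theorem~\ref{theoremlift} and the lemma that all objects of $\aM_*$ are fibrant, precisely because it is meant to be a direct application of the lifting theorem to $\aC\in\{\aU[\bL],\aM_*\}$ and $\bA\in\{\bT,\bP\}$, with the topological enrichment coming from the preservation of reflexive coequalizers already noted in the text. Your explicit identification of path objects as the nontrivial hypothesis, and their construction by cotensoring with $I$ (using continuity of $\bA$ to get the assembly map $\bA(A^{I})\to(\bA A)^{I}$), is the standard way to discharge it, matching the treatment in \cite[\S VII]{ekmm} and \cite{Schwede-Shipley} that the paper is implicitly relying on.
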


Next, in order to work with left derived functors associated to functors
with domain one of these categories, we describe a convenient
characterization of the cofibrant objects.  Let $\aC$ be a
cofibrantly generated model category with generating cofibrations
$\{A_i \to B_i\}$ and let $\bA$ be a continuous monad.  A cellular
object in the category $\aC[\bA]$ is a sequential colimit $X =
\colim_i X_i$, with $X_0 = *$ and $X_{i+1}$ defined as the pushout
\[
\xymatrix{
\bigvee_\alpha \bA A_i \ar[d] \ar[r] & X_i \ar[d] \\
\bigvee_\alpha \bA B_i \ar[r] & X_{i+1},\\
}
\]
where here $\alpha$ is some indexing subset of the indexing set of the
generating cofibrations.  The following result is then a formal
consequence of the proof of Theorem~\ref{theoremlift}.

\begin{proposition}
In the model structures of Theorem~\ref{theoreml1mod},
Theorem~\ref{theoremstarmod}, and Theorem~\ref{theoremmonoids}, the
cofibrant objects are retracts of cellular objects.
\end{proposition}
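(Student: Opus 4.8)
The plan is to read this off from the small object argument that produces the model structures of Theorem~\ref{theoremlift}, exactly as one proves that cofibrant objects are retracts of cell complexes in any cofibrantly generated model category. By repeated use of Lemma~\ref{lemmonad}, each of the categories appearing in Theorems~\ref{theoreml1mod}, \ref{theoremstarmod} and~\ref{theoremmonoids} can be described as the category of algebras over a suitable continuous monad $\bA$ on $\aU$ or $\aT$: for $\aM_*$ one composes $\bL$ with the monad $rf$ of the mirror-image description, and for the monoid categories one composes further with the free monoid or free commutative monoid monad. By Theorem~\ref{theoremlift} the generating cofibrations are then the set $\bA I$, where $I$ is the set of generating cofibrations of the underlying category of spaces. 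With this identification a relative $\bA I$-cell complex issuing from the initial object $*$ is precisely a cellular object in the sense of the statement: the small object argument constructs it as a sequential colimit $X=\colim_i X_i$ with $X_0=*$ and $X_{i+1}$ obtained from $X_i$ by a single pushout along a coproduct of maps in $\bA I$, which is the form quoted above.

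Granting this, the argument is the usual one. First I would take a cofibrant object $X$, so that the map $*\to X$ out of the initial object is a cofibration, and apply to it the functorial factorization supplied by the small object argument, obtaining $*\to\widetilde X\xrightarrow{p} X$ in which $*\to\widetilde X$ is a relative $\bA I$-cell complex of the above shape (hence $\widetilde X$ is a cellular object) and $p$ is an acyclic fibration. Since $*\to X$ is a cofibration and $p$ is an acyclic fibration, the lifting axiom gives a map $r\colon X\to\widetilde X$ with $pr=\id_X$, which exhibits $X$ as a retract of the cellular object $\widetilde X$. The reverse implication is immediate: a cellular object is an $\bA I$-cell complex built from $*$, hence cofibrant, and cofibrant objects are closed under retracts.

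The only thing that needs checking is that the small object argument genuinely terminates after countably many stages and yields a cell complex of exactly the ``one coproduct-pushout per stage'' form in the statement; this is where the hypotheses of Theorem~\ref{theoremlift} are used. The domains of $I$ are small in $\aU$ and $\aT$, and each monad $\bA$ occurring here is continuous and commutes with filtered colimits --- indeed every monad in this paper preserves reflexive coequalizers and sequential colimits --- so the domains of $\bA I$ are small relative to $\bA I$-cell complexes and the construction of \cite{Schwede-Shipley} (as in the proof of Theorem~\ref{theoremlift}) applies verbatim. For $\aM_*$ one uses in addition that colimits there are created in $\aU[\bL]$ (Theorem~\ref{theoremstarmod}) and that, although its fibrations are defined through $F_\boxtimes(*,-)$ rather than the forgetful functor, the lifting theorem still produces the factorizations and lifts used above; the same remarks apply to the categories of monoids over $\aM_*$. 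I therefore expect no genuine obstacle here: the content is entirely the verification that the setup of Theorem~\ref{theoremlift} is in force, which has already been arranged.
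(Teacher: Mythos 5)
Your proof is correct and follows the same route as the paper, which simply notes that the statement is a formal consequence of the proof of Theorem~\ref{theoremlift}: you spell out the standard factor-and-retract argument (factor $*\to X$ as a relative $\bA I$-cell complex followed by an acyclic fibration, then lift against the cofibration $*\to X$ to exhibit $X$ as a retract), together with the observation that each category in question is $\aC[\bA]$ for a suitable composite monad $\bA$ so that the generating cofibrations are exactly of the form $\bA I$.
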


These categories also satisfy appropriate versions of the
``Cofibration Hypothesis'' of \cite[\S VII]{ekmm}.  That is, for a
cellular object the maps $X_n \to X_{n+1}$ are (unbased) Hurewicz
cofibrations and the sequential colimit $X = \colim_i X_i$ can be computed
in the underlying category of spaces.

\subsection{Homotopical analysis of $\lprod$}\label{sechomqu}

In this section, we discuss the homotopical behavior of $\lprod$.  We
will show that the left derived functor of $\lprod$ is the cartesian
product.  The analysis begins with an essential proposition based on a
useful property of $\sL(2)$.

\begin{proposition}\label{propfreeprod}
For spaces $X$ and $Y$, there are isomorphisms of $\sL(1)$-spaces 
\[
(\sL(1) \times X) \lprod (\sL(1) \times Y) \cong \sL(2) \times (X
\times Y) \cong \sL(1) \times (X \times Y).
\]
As a consequence, if $M$ and $N$ are cell $\sL(1)$-spaces then so is
$M \lprod N$.  If $M$ and $N$ are cell $*$-modules, then so is $M
\boxtimes N$.
\end{proposition}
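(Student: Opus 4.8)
The plan is to first prove the two displayed isomorphisms by unwinding the definition of $\lprod$, and then deduce the statement about cell objects from the fact that $\lprod$ preserves colimits separately in each variable. For the isomorphisms, recall from Proposition~\ref{propfree} that
\[
(\sL(1)\times X)\lprod(\sL(1)\times Y)\cong\sL(2)\times_{\sL(1)\times\sL(1)}\bigl((\sL(1)\times X)\times(\sL(1)\times Y)\bigr).
\]
First I would rewrite $(\sL(1)\times X)\times(\sL(1)\times Y)$ as $(\sL(1)\times\sL(1))\times(X\times Y)$ and observe that under this identification the $\sL(1)\times\sL(1)$-action is simply left translation on the $\sL(1)\times\sL(1)$-factor, hence free. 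Since $Z\times_G(G\times W)\cong Z\times W$ for a free $G$-space $G\times W$, the balanced product collapses to $\sL(2)\times(X\times Y)$, and one checks that the residual $\sL(1)$-action is carried to left multiplication on the $\sL(2)$-factor; this gives the first isomorphism. For the second, I would reuse the observation already made in the proof of Lemma~\ref{lemqstrong}: a linear isometric isomorphism $g\colon U^2\to U$ induces, by precomposition with $g^{-1}$, a homeomorphism $\sL(2)=\sL(U^2,U)\cong\sL(U,U)=\sL(1)$ of left $\sL(1)$-spaces, since postcomposition commutes with precomposition, and crossing with the identity of $X\times Y$ yields $\sL(2)\times(X\times Y)\cong\sL(1)\times(X\times Y)$ as $\sL(1)$-spaces.

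For the consequence about cell $\sL(1)$-spaces, the key structural fact is that $\lprod$ has a right adjoint in each variable, namely the mapping-space functor $\lmap$, and therefore preserves all colimits separately in each variable, in particular the coproducts, pushouts, and sequential colimits out of which cell objects are assembled. Writing $M=\colim_p M_p$ and $N=\colim_q N_q$ for the cell filtrations, a double induction over $p$ and $q$ (using that $\lprod$ commutes with coproducts) reduces the claim to showing that for a CW complex $X$ (a sphere $S^{n-1}$ or a disk $D^n$) and a cell $\sL(1)$-space $N$, the object $(\sL(1)\times X)\lprod N$ is a cell $\sL(1)$-space and the map $(\sL(1)\times S^{n-1})\lprod N\to(\sL(1)\times D^n)\lprod N$ is a relative cell complex. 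This in turn I would prove by inducting on the cells of $N$: by the first part of the proposition, applying $(\sL(1)\times X)\lprod(-)$ to a cell attachment $(\sL(1)\times S^{m-1})\to(\sL(1)\times D^m)$ of $N$ produces the attachment of a free cell $\sL(1)\times(X\times D^m)$ along $\sL(1)\times(X\times S^{m-1})$, and decomposing the CW pair $(X\times D^m,X\times S^{m-1})$ into ordinary cells breaks this into attachments of free cells $\sL(1)\times D^k$ along $\sL(1)\times S^{k-1}$. Feeding this back into the outer induction shows that $M\lprod N$ is built from the initial object by attaching such free cells, hence is a cell $\sL(1)$-space.

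Finally, if $M$ and $N$ are $*$-modules then $M\boxtimes N=M\lprod N$ is again a $*$-module, because $*\lprod(M\lprod N)\cong(*\lprod M)\lprod N\cong M\lprod N$ by associativity and the unit isomorphisms for $M$ and $N$. By the previous step $M\lprod N$ is a cell $\sL(1)$-space; applying the colimit-preserving functor $*\lprod(-)$ to that cell structure yields a cell structure on $*\lprod(M\lprod N)$ whose cells are precisely the generating cofibrations $*\lprod(\sL(1)\times S^{n-1})\to*\lprod(\sL(1)\times D^n)$ of $\aM_*$, and since $*\lprod(M\lprod N)\cong M\lprod N$ this exhibits $M\boxtimes N$ as a cell $*$-module. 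I expect the main obstacle to be organizational rather than conceptual: one must arrange the double induction so that every map appearing in the colimit systems is a genuine relative cell complex, so that the iterated pushouts stay cellular. The only genuinely non-formal ingredient is the first part of the proposition, which is precisely what turns the $\lprod$ of two free cells into a single free cell.
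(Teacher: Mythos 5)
Your proof of the two displayed isomorphisms and of the cell $\sL(1)$-space consequence is essentially the paper's argument made explicit: the paper disposes of the first isomorphism as ``immediate from the definitions,'' handles the second by the same choice of linear isometric isomorphism $U^2\to U$, and for the cell statement appeals in one line to ``the analogous result for the cartesian product of cell spaces.'' You have supplied the double induction the paper leaves to the reader, and it is correct.

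Your treatment of the cell $*$-module case, however, contains a gap. You invoke ``the previous step'' to conclude that $M\lprod N$ is a cell $\sL(1)$-space, but that step was established only under the hypothesis that $M$ and $N$ are cell $\sL(1)$-spaces. Here $M$ and $N$ are cell $*$-modules, built from the generating cofibrations $\sfree{S^{n-1}}\to\sfree{D^n}$ of $\aM_*$, which are not the generating cofibrations $\sL(1)\times S^{n-1}\to\sL(1)\times D^n$ of $\aU[\bL]$. You offer no argument that a cell $*$-module is a cell $\sL(1)$-space, and this is far from automatic: the unit $\lambda\colon *\lprod Z\to Z$ is in general only a weak equivalence and not an isomorphism, so $\sfree{X}=*\lprod(\sL(1)\times X)$ need not be the free $\sL(1)$-space $\sL(1)\times X$, and the $*$-module cell filtration of $M$ has no reason to refine to an $\sL(1)$-space cell filtration. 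The repair is to run your double induction directly on the $*$-module cell filtrations, replacing the appeal to the first isomorphism by its consequence $\sfree{X}\boxtimes\sfree{Y}\cong *\lprod((\sL(1)\times X)\lprod(\sL(1)\times Y))\cong\sfree{(X\times Y)}$, which follows from the first isomorphism together with $*\lprod *\cong *$ and the coherence isomorphisms of $\lprod$, and using that $*\lprod(-)$ is a left adjoint and hence preserves the colimits in the cell filtration. This is precisely the reduction the paper uses later in the proof of the pushout-product axiom for $\aM_*$; with it in place your inductive scheme goes through unchanged.
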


\begin{proof}
The first isomorphism is immediate from the definitions.  The second
is a consequence of the fact that $\sL(2)$ is isomorphic to $\sL(1)$
as an $\sL(1)$-space via choice of a linear isometric isomorphism $f
\colon U^2 \to U$; there is then a homeomorphism $\gamma \colon \sL(1)
\times \{f\} \to \sL(2)$.  The last two statements now follow by
induction from the analogous result for the cartesian product of cell
spaces.
\end{proof}

With this in mind, we can prove the following result regarding the
homotopical behavior of $\boxtimes$.

\begin{theorem}
The category $\aM_*$ is a monoidal model category with respect to the
symmetric monoidal product $\boxtimes$ and unit $*$.
\end{theorem}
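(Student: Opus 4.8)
The plan is to check the two conditions defining a monoidal model category: the pushout-product axiom and the unit axiom. The unit axiom is immediate here, since by the definition of $\aM_*$ the canonical map $*\boxtimes X\to X$ is an isomorphism for every $*$-module $X$; thus $*$ is a strict unit for $\boxtimes$ on $\aM_*$, and no cofibrant replacement of the unit is required.

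The content of the statement is the pushout-product axiom: for cofibrations $i\co A\to B$ and $j\co C\to D$ in $\aM_*$, the induced map
\[
i\,\square\,j\co (B\boxtimes C)\cup_{A\boxtimes C}(A\boxtimes D)\to B\boxtimes D
\]
should be a cofibration, and a weak equivalence whenever $i$ or $j$ is. The first step would be to reduce to generators. Because $\aM_*$ is cofibrantly generated and, by the Cofibration Hypothesis, $\boxtimes$ preserves $h$-cofibrations and the relevant transfinite colimits (which are created on the underlying level), the functor $i\,\square\,(-)$ commutes with the colimit operations out of which cofibrations and acyclic cofibrations are built; the standard closure arguments then reduce the axiom to the case in which $\{i,j\}$ is a pair of generating cofibrations, or a generating cofibration together with a generating acyclic cofibration. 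By Theorem~\ref{theoremstarmod} and the subsequent description of cellular $*$-modules, these generators are the images under the free functor
\[
G\co\aU\to\aM_*,\qquad G(Z)=*\boxtimes(\sL(1)\times Z),
\]
of the generating cofibrations and generating acyclic cofibrations of $\aU$.

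The key input is Proposition~\ref{propfreeprod}, which makes $G$ compatible with the two monoidal structures: using associativity and commutativity of $\boxtimes$, the isomorphism $*\boxtimes *\iso *$, and the identification $(\sL(1)\times Z)\lprod(\sL(1)\times W)\iso\sL(1)\times(Z\times W)$, one obtains a natural isomorphism $G(Z)\boxtimes G(W)\iso G(Z\times W)$. Since $G$ preserves colimits, it follows that $G(i')\,\square\,G(j')\iso G(i'\,\square\,j')$, where on the right $\square$ is the pushout-product in $(\aU,\times)$. The classical fact that $\aU$ with the cartesian product is a monoidal model category shows that $i'\,\square\,j'$ is a cofibration of spaces, acyclic if $i'$ or $j'$ is; and $G$, being left Quillen, carries cofibrations to cofibrations and acyclic cofibrations to acyclic cofibrations. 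Applying $G$ would then complete the verification of the pushout-product axiom, and together with the unit axiom this gives the theorem.

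I expect the main obstacle to lie not in this formal outline but in the point-set bookkeeping beneath it --- precisely the kind of work that makes \cite[Ch.~VII]{ekmm} delicate. In particular, identifying the generating (acyclic) cofibrations of $\aM_*$ and verifying that $G$ is left Quillen relies on the ``mirror image'' equivalence $\aM_*\simeq\aM^*$ (used to exhibit the model structure) together with the Cofibration Hypothesis, so that cellular $*$-modules are built from $h$-cofibrations and these colimits --- as well as the product $\boxtimes$ --- are computed correctly on underlying spaces; and the isomorphism $G(Z)\boxtimes G(W)\iso G(Z\times W)$ depends on the exceptional point-set features of the linear isometries operad recorded in Proposition~\ref{propfreeprod}, namely $*\lprod *\iso *$ and $\sL(2)\iso\sL(1)$ as $\sL(1)$-spaces. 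No new ideas beyond those of \cite{ekmm} should be needed, but these verifications must be carried out with care.
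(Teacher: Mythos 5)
Your proof of the pushout-product axiom is essentially the paper's: reduce to the generating (acyclic) cofibrations of $\aM_*$, which are $\sfree{i'}$ for $i'$ a generating (acyclic) cofibration of $\aU$; use Proposition~\ref{propfreeprod} and $*\boxtimes *\cong *$ to show the free functor $G(Z)=\sfree{Z}$ is strong monoidal from $(\aU,\times)$ to $(\aM_*,\boxtimes)$; and conclude from the pushout-product axiom for $(\aU,\times)$ and the fact that $G$ is left Quillen. This is exactly the paper's argument, stated in a slightly more categorical way.

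One small caveat: your remark that the unit axiom ``is immediate \ldots and no cofibrant replacement of the unit is required'' is not quite right as stated. Strictness of the unit isomorphism $*\boxtimes X\cong X$ does not by itself discharge the unit axiom in Hovey's sense, which asks that $Q*\boxtimes X\to *\boxtimes X$ be a weak equivalence for a cofibrant replacement $q\co Q*\to *$ and all cofibrant $X$; that would be automatic only if $*$ were itself cofibrant, which (just as $S$ is not cofibrant in $S$-modules) is not the case in $\aM_*$. This is harmless here because the paper's reference \cite[2.1]{Schwede-Shipley} defines a monoidal model category solely by the pushout-product axiom, so no unit axiom needs to be checked for the theorem as stated; but if you do want the unit axiom one must argue as in EKMM (using the weak equivalence $\lambda\co *\lprod X\to X$ and the behavior of $\boxtimes$ on cell objects), not simply cite strictness of the unit.
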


\begin{proof}
We need to verify that $\aM_*$ satisfies the pushout-product axiom
\cite[2.1]{Schwede-Shipley}.  Thus, for cofibrations $A \to B$ and $X
\to Y$, we must show that the map
\[
(A \boxtimes Y) \coprod_{A \boxtimes X} (B \boxtimes X) \to B
\boxtimes Y
\]
is a cofibration, and a weak equivalence if either $A \to B$ or $X \to
Y$ was.  Since $\aM_*$ is cofibrantly generated, it suffices to check
this on generating (acyclic) cofibrations.  Therefore, we can reduce
to considering a pushout-product of the form
\[
P \to (\sfree{B'}) \boxtimes (\sfree{Y'}),
\]
where $P$ is the pushout
\[
\xymatrix{
\relax \sfree{A'} \boxtimes \sfree{X'} \ar[r] \ar[d] & \relax \sfree{B'}
  \boxtimes \sfree{X'} \ar[d] \\ 
\relax \sfree{A'} \boxtimes \relax \sfree{Y'} \ar[r] & P, \\
}
\]
for $A' \to B'$ and $X' \to Y'$ generating cofibrations in $\aU$.
Using the fact that $(* \boxtimes M) \boxtimes (* \boxtimes N) \cong *
\boxtimes (M \boxtimes N)$ and the fact that $* \boxtimes (-)$ is a
left adjoint, we can bring the $* \boxtimes (-)$ outside.  Similarly,
using Proposition~\ref{propfreeprod} we can bring $\sL(1) \times (-)$
outside and rewrite as
\[
\sfree{\left( A' \times Y' \coprod_{A' \times X'} B' \times X' \to B' \times
  Y' \right)}.
\]
Finally, the pushout-product axiom for $\aU$ implies that it suffices
to show that $\sfree{-}$ preserves cofibrations and weak
equivalences.  By construction of the model structures, it follows
that $\sL(1) \times (-)$ and $* \boxtimes (-)$ preserve cofibrations.
Furthermore, $\sL(1) \times (-)$ evidently preserves weak
equivalences, and the analogous result for $* \boxtimes (-)$ follows
from the fact that the unit map $\lambda$ is a weak equivalence.
\end{proof}

The previous theorem implies that for a cofibrant $*$-module $X$, the
functor $X \boxtimes (-)$ is a Quillen left adjoint.  In particular,
we can compute the derived $\boxtimes$ product by working with
cofibrant objects in $\aM_*$.  Having established the existence of the
derived $\boxtimes$ product, we now will compare it to the cartesian
product of spaces.

The analogues in this setting of \cite[\S I.4.6]{ekmm} and \cite[\S
  II.1.9]{ekmm} yield the following helpful lemma.

\begin{lemma}\label{lemfree}
If $X$ is a cofibrant $\sL(1)$-space, then $X$ is homotopy equivalent
as an $\sL(1)$-space to a free $\sL(1)$-space $\sL(1) \times X'$,
where $X'$ is a cofibrant space.  If $Z$ is a cofibrant $*$-module,
then $Z$ is homotopy equivalent as a $*$-module to a free $*$-module
$* \boxtimes Z'$, where $Z'$ is a cofibrant $\sL(1)$-space.
\end{lemma}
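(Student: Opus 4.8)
The plan is to prove the two statements by the same method, so I will concentrate on the first assertion (about cofibrant $\sL(1)$-spaces) and then explain why the second follows by the mirror-image formalism already set up in this section. The key is that $\aU[\bL]$ is built from the monad $\bL X = \sL(1)\times X$, so the free $\sL(1)$-spaces are precisely the objects of the form $\sL(1)\times X'$, and these are exactly the images of the generating cofibrations $\bL I = \{\sL(1)\times A_i\to\sL(1)\times B_i\}$ under the monad. Thus a cellular object in $\aU[\bL]$ is assembled as a sequential colimit of pushouts along maps of free $\sL(1)$-spaces, and the goal is to show that up to $\sL(1)$-homotopy equivalence the whole colimit can again be taken to be free on a cofibrant space.

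First I would treat the cellular case by induction up the cellular filtration $X=\colim_i X_i$, with $X_0=*=\sL(1)\times *$ and $X_{i+1}$ a pushout of the form displayed in the paragraph on cellular objects, i.e.
\[
\xymatrix{
\bigvee_\alpha \sL(1)\times A_i \ar[d]\ar[r] & X_i\ar[d]\\
\bigvee_\alpha \sL(1)\times B_i\ar[r] & X_{i+1}.
}
\]
The inductive hypothesis gives an $\sL(1)$-homotopy equivalence $X_i\simeq \sL(1)\times X_i'$ with $X_i'$ cofibrant. The point is to identify the pushout of free $\sL(1)$-spaces with a free $\sL(1)$-space: because $\sL(1)\times(-)$ is a left adjoint it preserves colimits, so $X_{i+1}$ is (after transporting the attaching maps along the homotopy equivalence, and using that $\sL(1)\times(-)$ preserves $\sL(1)$-homotopies) $\sL(1)$-homotopy equivalent to $\sL(1)\times X_{i+1}'$, where $X_{i+1}'$ is the corresponding pushout of spaces $\bigvee A_i\to X_i'$, $\bigvee A_i\to \bigvee B_i$; this $X_{i+1}'$ is cofibrant since the cofibrations $A_i\to B_i$ are generating cofibrations in $\aU$ and cofibrancy is closed under such pushouts. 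Here one needs that $\sL(1)\times(-)$ detects and preserves the relevant cofibrations (clear, since the model structure on $\aU[\bL]$ is created from $\aU$ via $\bL$) and that passing to the colimit is harmless (the Cofibration Hypothesis recorded above guarantees the maps $X_i\to X_{i+1}$ are Hurewicz cofibrations and the colimit is computed in spaces, so the colimit of the homotopy equivalences is again an $\sL(1)$-homotopy equivalence). Finally, a retract of a cofibrant object is cofibrant and a retract of a free $\sL(1)$-space is, up to $\sL(1)$-homotopy equivalence, again free on a cofibrant space, so the statement for general cofibrant $X$ follows from the cellular case.

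For the $*$-module statement I would invoke the equivalence already established in this section: by Theorem~\ref{theoremstarmod} colimits in $\aM_*$ are created in $\aU[\bL]$, the generating cofibrations of $\aM_*$ are $\bP I$-style cells of the form $*\boxtimes(\sL(1)\times A_i)\to *\boxtimes(\sL(1)\times B_i)$, and $*\boxtimes(-)$ is a left adjoint that by Proposition~\ref{P:unit} preserves weak equivalences and (being left Quillen) cofibrations. So a cellular $*$-module is $*\boxtimes(-)$ applied to a cellular $\sL(1)$-space, and the first part gives a free model $\sL(1)\times Z''$ with $Z''$ cofibrant in $\aU$; applying $*\boxtimes(-)$ and absorbing the $\sL(1)\times(-)$ using $*\boxtimes(\sL(1)\times Z'')\cong *\boxtimes Z'$ where $Z'=\sL(1)\times Z''$ is a cofibrant $\sL(1)$-space produces the desired free $*$-module $*\boxtimes Z'$. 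The main obstacle, and the step that deserves care rather than being waved away, is the inductive identification of a pushout of free $\sL(1)$-spaces along \emph{arbitrary} attaching maps with a free $\sL(1)$-space: the attaching map $\bigvee_\alpha \sL(1)\times A_i\to X_i$ need not literally be of the form $\sL(1)\times(\text{map of spaces})$, so one must first replace it up to $\sL(1)$-homotopy (using the inductive equivalence $X_i\simeq \sL(1)\times X_i'$ and the adjunction $\Map_{\aU[\bL]}(\sL(1)\times A, \sL(1)\times X')\cong\Map(A, \sL(1)\times X')$, then projecting the target down to $X_i'$ up to homotopy) before the formal colimit manipulation applies. This is the analogue, in the space-level setting, of \cite[\S I.4.6]{ekmm} and \cite[\S II.1.9]{ekmm}, and the argument there transcribes essentially verbatim.
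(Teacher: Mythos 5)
Your cell-by-cell induction is the right argument: the paper leaves this lemma to a citation of EKMM~\cite[I.4.6]{ekmm} and~\cite[II.1.9]{ekmm}, and your transcription is a faithful space-level version of those proofs. The one ingredient that should be named explicitly rather than buried in ``projecting the target down to $X_i'$ up to homotopy'' is the contractibility of $\sL(1)=\sL(U,U)$. That is what makes the key step work: the adjoint $A\to \sL(1)\times X_i'$ of the attaching map is homotopic to one landing in $\{1\}\times X_i'$ precisely because $\sL(1)$ is contractible, and adjointing back replaces the given $\sL(1)$-equivariant attaching map by one $\sL(1)$-homotopic to $\sL(1)\times(\text{map of spaces})$; the pushout then changes only by an $\sL(1)$-homotopy equivalence because the cell inclusions are cofibrations. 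Without that contractibility the lemma is simply false, so it deserves to be the headline of the proof, not a parenthetical. A smaller point: the sentence ``a cellular $*$-module is $*\boxtimes(-)$ applied to a cellular $\sL(1)$-space'' is overstated --- the attaching maps of the $*$-module cells are arbitrary $*$-module maps and need not lie in the image of $*\boxtimes(-)$ --- but running the same cell-by-cell replacement directly in $\aM_*$, using the adjunction involving $s=*\boxtimes(-)$ together with the weak unit map and again the contractibility of $\sL(1)$, does produce the free model $*\boxtimes Z'$, so the conclusion is sound even though the intermediate identification is not literally correct.
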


We now use this to analyze the behavior of the forgetful functor on
$\boxtimes$.  Choosing a linear isometric isomorphism $f \colon U^2
\to U$, for $\sL(1)$-spaces $X$ and $Y$ there is a natural map $\alpha
\colon UX \times UY \to \sL(2) \times (X \times Y) \to X \lprod Y$.

\begin{proposition}\label{propueq}
Let $X$ and $Y$ be cofibrant $\sL(1)$-spaces.  Then the natural map
$\alpha \colon UX \times UY \to U(X \lprod Y)$ is a weak equivalence
of spaces.  Let $X$ and $Y$ be cofibrant $*$-modules.  Then the
natural map $\alpha \colon UX \times UY \to U(X \lprod Y)$ is a
weak equivalence of spaces.
\end{proposition}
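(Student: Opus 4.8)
\textit{Proof proposal.} The plan is to reduce to free $\sL(1)$-spaces by means of Lemma~\ref{lemfree} and then to exploit the contractibility of the spaces $\sL(1)$ and $\sL(2)$ of the linear isometries operad. I first treat the statement for $\sL(1)$-spaces. Since $\lprod$ is a continuous functor, one has a natural homeomorphism $(X\times I)\lprod Y\cong(X\lprod Y)\times I$ (with $I$ carrying the trivial action), so $\lprod$ preserves homotopies in each variable; the forgetful functor $U$ preserves homotopies as well, and $\alpha$ is natural. Hence the homotopy equivalences of $\sL(1)$-spaces $X\simeq\sL(1)\times X'$ and $Y\simeq\sL(1)\times Y'$ provided by Lemma~\ref{lemfree}, with $X'$ and $Y'$ cofibrant spaces, reduce the claim to these free objects. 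For them, Proposition~\ref{propfreeprod} identifies $X\lprod Y$ with $\sL(2)\times(X'\times Y')$, and since the $\sL(1)\times\sL(1)$-action on $(\sL(1)\times X')\times(\sL(1)\times Y')$ is trivial on the $X'$- and $Y'$-coordinates there is a well-defined projection $U(X\lprod Y)\to X'\times Y'$. A direct inspection of the definition of $\alpha$ shows that it fits into a commutative triangle together with this projection and with the projection $UX\times UY=\sL(1)\times X'\times\sL(1)\times Y'\to X'\times Y'$. Both projections are weak equivalences because $\sL(1)$ and $\sL(2)$ are contractible, so $\alpha$ is a weak equivalence by two-out-of-three.

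For the statement about $*$-modules I would run the analogous reduction, now invoking the second half of Lemma~\ref{lemfree} to replace cofibrant $*$-modules $X$ and $Y$ by free ones $*\boxtimes X'$ and $*\boxtimes Y'$ with $X'$ and $Y'$ cofibrant $\sL(1)$-spaces; as before this replacement is compatible with $\alpha$ by naturality. The associativity and commutativity isomorphisms of Proposition~\ref{propfree}, together with $*\lprod *\cong *$, identify $(*\boxtimes X')\boxtimes(*\boxtimes Y')$ with $*\lprod(X'\lprod Y')$, and Proposition~\ref{P:unit} tells us that the unit map $\lambda\colon *\lprod(X'\lprod Y')\to X'\lprod Y'$ is a weak equivalence. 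Naturality of $\alpha$ with respect to the unit maps $*\lprod X'\to X'$ and $*\lprod Y'\to Y'$, combined with the coherence of the unit, yields a commutative square relating $\alpha$ for $(*\boxtimes X',*\boxtimes Y')$ to $\alpha$ for the cofibrant $\sL(1)$-spaces $(X',Y')$; since the vertical maps and, by the previous paragraph, the bottom horizontal map are weak equivalences, so is $\alpha$.

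The only genuine mathematical input here is the contractibility of $\sL(1)$ and $\sL(2)$; everything else is formal. Accordingly, I expect the main obstacle to be the bookkeeping: verifying that $\alpha$ is truly compatible with the three kinds of reduction that occur (the homotopy equivalences of Lemma~\ref{lemfree}, the isomorphism of Proposition~\ref{propfreeprod}, and the unit maps), and in particular checking that the last square in the $*$-module argument commutes, which is where the unit coherence of the weakly symmetric monoidal structure on $\aU[\bL]$ enters.
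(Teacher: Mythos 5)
Your proof is correct and follows the same route as the paper's (which is only a two-sentence sketch): reduce to free objects via Lemma~\ref{lemfree}, apply Proposition~\ref{propfreeprod}, and use the contractibility of $\sL(1)$ and $\sL(2)$ (plus Proposition~\ref{P:unit} in the $*$-module case). You have simply filled in the naturality and two-out-of-three bookkeeping that the paper leaves implicit.
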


\begin{proof}
Lemma~\ref{lemfree} allows us to reduce to the case of free
$\sL(1)$-spaces and free $*$-modules, and the result then follows from
Proposition~\ref{propfreeprod}.
\end{proof}

There is also a map $U(X \lprod Y) \to UX \times UY$ induced by the
universal property of the product; it follows from
Proposition~\ref{propueq} that this map is a weak equivalence as well
under the hypotheses of the proposition.

The analogous result for commutative $\boxtimes$-monoids follows from
a result of \cite[6.8]{Basterra-Mandell}.  They prove that for
cofibrant $\sL$-spaces $X$ and $Y$, the natural map $X \vee Y \to X
\times Y$ is a weak equivalence; since the coproduct in $\aM_*[\bP]$
is precisely $\boxtimes$, this implies the natural map $X \boxtimes Y
\to X \times Y$ is a weak equivalence.  To handle the case of
associative $\boxtimes$-monoids, we exploit the following analysis of
the underlying $\sL(1)$-space of a cell $\boxtimes$-monoid, following 
\cite[\S VII.6.2]{ekmm}.

\begin{proposition}\label{propunderlying}
The underlying $*$-module associated to a cell associative monoid is
cell.
\end{proposition}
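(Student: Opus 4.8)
The plan is to follow the proof of \cite[\S VII.6.2]{ekmm}, reducing the assertion to three facts: (i) the free $\boxtimes$-monoid $\bT M$ on a cell $*$-module $M$ has cell underlying $*$-module; (ii) forming a pushout of associative monoids along a free map $\bT A\to\bT B$, with $A\to B$ a relative cell complex of $*$-modules, yields a relative cell complex of $*$-modules on underlying objects; and (iii) a sequential colimit along such maps is again a cell $*$-module. Granting these, a cell object of $\aM_*[\bT]$ is, by Theorem~\ref{theoremmonoids}, a sequential colimit $X=\colim_i X_i$ with $X_0=*$ and each $X_{i+1}$ the pushout in $\aM_*[\bT]$ of a wedge of generating cofibrations $\bT A\to\bT B$ along a map $\bT A\to X_i$. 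The generating cofibrations of $\aM_*$ are the maps $\sfree{A'}\to\sfree{B'}$ for $A'\to B'$ a generating cofibration of $\aU$, so by Proposition~\ref{propfreeprod} the source and target are cell $*$-modules and $A\to B$ is a relative cell complex of $*$-modules; then (ii) makes each $X_i\to X_{i+1}$ a relative cell complex of $*$-modules, and (iii) completes the induction, the base case being trivial since $X_0=*$.

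For (i), Proposition~\ref{propfreeprod} shows that a $\boxtimes$-product of cell $*$-modules is cell, so each $M^{\boxtimes j}$ is cell and hence $\bT M=\bigvee_{j\geq 0}M^{\boxtimes j}$ is cell, a wedge of cell $*$-modules being cell and $M^{\boxtimes 0}=*$ being cell. For (ii) I would invoke the standard filtration of a pushout of associative monoids, as in \cite[\S VII.6]{ekmm} and \cite{Schwede-Shipley}: writing $R=X_i$, the pushout $R\amalg_{\bT A}\bT B$ is the colimit of a sequence $R=P_0\to P_1\to\cdots$ in $\aM_*$ in which $P_n$ is obtained from $P_{n-1}$ by a pushout along a map built from $\boxtimes$-products of the alternating form $R\boxtimes Z_1\boxtimes R\boxtimes\cdots\boxtimes Z_n\boxtimes R$, with exactly one factor $Z_j$ equal to $A\to B$ and the remaining $Z_j$ equal to $B$. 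Since $\aM_*$ is a monoidal model category under $\boxtimes$ (proved above) and, by Proposition~\ref{propfreeprod}, $\boxtimes$-products of cell objects are cell, each $P_{n-1}\to P_n$ is a relative cell complex of $*$-modules, hence so is the composite $R\to\colim_n P_n$. Fact (iii), together with the assertion that these pushouts and sequential colimits of $*$-modules may be computed on underlying spaces, comes from the Cofibration Hypothesis for $\aM_*$ recorded above, cf.\ \cite[\S VII]{ekmm}.

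The only step that is not a formal transcription of \cite{ekmm} is checking that the pushout-of-monoids filtration really is available in the category $\aM_*$: one must verify that the coequalizer defining $\boxtimes$, the unital (``$*$-module'') condition, and the construction of colimits in $\aM_*[\bT]$ (Theorem~\ref{theoremmonoids}) interact as the filtration demands, and that the iterated $\boxtimes$-powers of $A\to B$ are genuine relative cell complexes of $*$-modules and not merely cofibrations. Using Proposition~\ref{propfreeprod} and Lemma~\ref{lemfree}, both reduce to the corresponding point-set facts for cell spaces under $\times$, which are classical; here the good point-set behaviour of the linear isometries operad --- in particular the isomorphisms $*\boxtimes *\cong *$ and $\sL(2)\cong\sL(1)$ of $\sL(1)$-spaces --- plays exactly the role that the analogous properties of the category of $S$-modules play in \cite[\S VII.6]{ekmm}. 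I expect this bookkeeping to be the only genuine obstacle; everything else is formal.
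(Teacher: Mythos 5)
Your proof is correct, and it arrives at the same conclusion by a genuinely different route at the crucial step. Both you and the paper reduce to showing that a single cell attachment in $\aM_*[\bT]$ (a pushout of $\boxtimes$-monoids along a map $\bT A\to\bT B$ with $A\to B$ a relative cell complex of $*$-modules) preserves the property of having a cell underlying $*$-module, then pass to the sequential colimit via the Cofibration Hypothesis. Where you diverge is in how the monoid pushout is analyzed. The paper observes first that the underlying $*$-module of a \emph{coproduct} of cell $\boxtimes$-monoids is cell (the analogue of \cite[VII.6.1]{ekmm}), uses that $\bT CX\cong *\coprod_{\bT X}(\bT X\otimes I)$, and then identifies the monoid pushout with a double mapping cylinder whose two-sided bar-construction description has degreewise cell $k$-simplices and cellular face/degeneracy maps; the realization is then cell. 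You instead use the Schwede--Shipley sequential filtration $R=P_0\to P_1\to\cdots$ of the monoid pushout, each step of which is a pushout in $\aM_*$ along a pushout-product corner built from $\boxtimes$-powers of $R$ and of $A\to B$, relying on the pushout-product analysis you already established in proving $\aM_*$ is a monoidal model category and on Proposition~\ref{propfreeprod}. Your route avoids the coproduct-of-monoids lemma entirely; the paper's route avoids the pushout-product corner bookkeeping. Both are standard and both work here.

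One small imprecision in your filtration description: the map along which $P_{n-1}\to P_n$ is pushed out is the $n$-fold iterated pushout-product corner (the inclusion into $R\boxtimes B\boxtimes R\boxtimes\cdots\boxtimes B\boxtimes R$ of the sub-object where at least one $B$ is replaced by $A$), not a product with ``exactly one factor $Z_j$ equal to $A\to B$''. This is not a gap in the argument --- the corner map is still a relative cell complex of $*$-modules by iterating the pushout-product axiom and Proposition~\ref{propfreeprod}, which is all you need --- but the description should be corrected if written up.
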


To prove this, we need to briefly recall some facts about simplicial
objects in $\sL(1)$-spaces.  Recall that for the categories we are
studying there are internal and external notions of geometric
realization.  We need the following compatibility result.

\begin{lemma}
Let $X$ be a simplicial object in any of the categories $\aU[\bL]$, $\aM_*$,
$(\aU[\bL])[\bT])$, $\aM_*[\bT]$, $(\aU[\bL][\bP])$, or $\aM_*[\bP]$.
Then there is an isomorphism between the internal realization
$|X_\cdot|$ and the external realization $|UX_\cdot|$.
\end{lemma}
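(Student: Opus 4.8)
The plan is to dispose of the two ``underlying'' categories $\aU[\bL]$ and $\aM_*$ directly from Lemma~\ref{realizationlemma}, and then to bootstrap to the four categories of monoids by factoring the relevant forgetful functors.

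First, the forgetful functor $U\co\aU[\bL]\to\aU$ is continuous, preserves colimits (they are created in $\aU$ by Theorem~\ref{theoreml1mod}), and preserves tensors over spaces, since the tensor of an $\sL(1)$-space $X$ with a space $K$ is $X\times K$ with $\sL(1)$ acting through $X$. So Lemma~\ref{realizationlemma} applies and yields $U|X_\cdot|\iso|UX_\cdot|$. For $\aM_*$: colimits are created in $\aU[\bL]$ (Theorem~\ref{theoremstarmod}), and the tensor $X\otimes K$ of a $*$-module with a space is again $X\times K$; it lies in $\aM_*$ because $*\lprod(-)$ is a left adjoint and hence preserves tensors, so $*\lprod(X\times K)\iso(*\lprod X)\times K\iso X\times K$. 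Thus $U\co\aM_*\to\aU$ also preserves colimits and tensors, Lemma~\ref{realizationlemma} again applies, and moreover internal realization in $\aM_*$ coincides with internal realization in $\aU[\bL]$ of the underlying simplicial $\sL(1)$-space. This settles the cases $\aU[\bL]$ and $\aM_*$.

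For the monoid categories, factor the forgetful functor to spaces as, say, $\aM_*[\bT]\xr{V}\aM_*\xr{U}\aU$ (and analogously with $\bP$ in place of $\bT$ and $\aU[\bL]$ in place of $\aM_*$); the second leg is covered above, so it remains to show that $V$ carries the internal realization in $\aM_*[\bT]$ to the internal realization in $\aM_*$ of the underlying simplicial object. The crux is the commutation
$$
|A_\cdot\boxtimes B_\cdot|\iso|A_\cdot|\boxtimes|B_\cdot|
$$
for simplicial objects in $\aM_*$ (and likewise for $\lprod$ in $\aU[\bL]$). Writing $A_\cdot\boxtimes B_\cdot$ as the balanced product $\sL(2)\times_{\sL(1)\times\sL(1)}(A_\cdot\times B_\cdot)$, this reduces to three facts: geometric realization of simplicial objects preserves colimits (it is a left adjoint), it sends $\sL(2)\times(-)$ applied levelwise to $\sL(2)\times(-)$ (the constant simplicial space $\sL(2)$ realizes to $\sL(2)$), and --- the one nonformal ingredient --- it preserves finite cartesian products, since we work in compactly generated weak Hausdorff spaces. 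Granting this, geometric realization commutes, compatibly with their monad structures, with the monads $\bT X=\bigvee_{j\geq0}X^{\boxtimes j}$ and $\bP X=\bigvee_{j\geq0}X^{\boxtimes j}/\Sigma_j$.

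From here the argument is the standard one, carried out for $S$-algebras in \cite[X.1]{ekmm}, and it transfers with no essential change. Given a simplicial $\bT$-algebra $A_\cdot$, one transports a $\bT$-algebra structure onto $|VA_\cdot|_{\aM_*}$ along $|\bT(VA_\cdot)|_{\aM_*}\iso\bT|VA_\cdot|_{\aM_*}$ and the realized action map, obtaining a $\bT$-algebra $\hat A$; one then identifies $|A_\cdot|_{\aM_*[\bT]}$ with $\hat A$ by means of the monadic bar resolution $[m]\mapsto\bT^{m+1}A_\cdot$ --- which is split over $A_\cdot$ as a simplicial object of $\aM_*$, and, in each resolution degree, is free on a simplicial object of $\aM_*$, so that the commutation above together with the fact that the free-$\bT$-algebra functor preserves colimits computes the underlying $*$-module of its realization explicitly --- combined with an interchange-of-realizations argument. (Equivalently and more slickly: $\bT$ and $\bP$ preserve reflexive coequalizers, as recorded in the excerpt, so $V$ creates them, and internal realization is assembled from reflexive coequalizers of coproducts of tensors.) I expect the main obstacle to be precisely the commutation $|A_\cdot\boxtimes B_\cdot|\iso|A_\cdot|\boxtimes|B_\cdot|$, together with the coherence bookkeeping needed to verify that the transported structure $\hat A$ really is the internal realization in $\aM_*[\bT]$; everything else in the proof is formal.
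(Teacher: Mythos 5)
Your proof is correct and follows essentially the same route as the paper's, though the paper's own proof is terse to the point of being telegraphic: it merely records that $\bL$ and $*\lprod(-)$ commute with geometric realization and then asserts that ``the remaining parts of the lemma follow.'' You supply the structure behind that assertion. Handling $\aU[\bL]$ and $\aM_*$ directly from Lemma~\ref{realizationlemma}, by noting that the forgetful functors preserve colimits and tensors, is clean (for $\aM_*$ this is the same observation the paper phrases as ``the realization of a $*$-module is a $*$-module,'' since colimits and tensors in $\aM_*$ are created in $\aU[\bL]$). And identifying $|A_\cdot\boxtimes B_\cdot|\iso|A_\cdot|\boxtimes|B_\cdot|$ as the crux for the monoid and commutative monoid categories, to be fed into the \cite[X.1]{ekmm}-style bar-resolution/interchange argument, is the right move and matches what the paper is implicitly invoking.

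One caution on the parenthetical ``slicker'' alternative: it does not close the argument as stated. The forgetful functor $V\co\aM_*[\bT]\to\aM_*$ does create reflexive coequalizers, but the coequalizer defining the internal realization in $\aM_*[\bT]$ has as its source and target the coproducts $\coprod A_q\otimes\Delta^p$ formed in $\aM_*[\bT]$ --- and neither these coproducts (which are free products of monoids, computed via the coequalizer of Theorem~\ref{theoremmonoids}) nor the tensors $A\otimes\Delta^p$ in $\aM_*[\bT]$ are created by $V$. So ``$V$ creates reflexive coequalizers and realization is a reflexive coequalizer of coproducts of tensors'' does not by itself yield the claim; one needs the additional input that $\bT$ and $\bT^2$ preserve the realization colimit, which is exactly what the $\boxtimes$-commutation furnishes and what your bar-resolution version actually uses. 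Rely on that version rather than the parenthetical.
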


\begin{proof}
First, assume that $X$ is a simplicial object in $\aU[\bL]$,
$(\aU[\bL])[\bT]$, or $(\aU[\bL])[\bP]$.  In all cases, the argument
is essentially the same; we focus on $\aU[\bL]$.  For any simplicial
space $Z_\cdot$, there is an isomorphism of spaces $\bL|Z| \cong |\bL
Z|$.  Now, since $* \lprod |X_\cdot| \cong |* \lprod X_\cdot|$, the
realization of a $*$-module is a $*$-module.  The remaining parts of
the lemma follow.
\end{proof}

We now begin the proof of Proposition~\ref{propunderlying}.  

\begin{proof}
First, observe that we have the following analogue of \cite[\S
  VII.6.1]{ekmm}, which holds by essentially the same proof: for
$\boxtimes$-monoids $A$ and $B$ which are cell $*$-modules, the
underlying $*$-module of the coproduct in the category of
$\boxtimes$-monoids is a cell $*$-module.

Next, assume that $X$ is a cell $*$-module.  Then $X \boxtimes X
\boxtimes \ldots \boxtimes X$ is a cell $*$-module.  Since $X \to CX$
is cellular, the induced map $\bT X \to \bT CX$ is the inclusion of a
subcomplex.  Let $Y_n$ be a cell $*$-module and consider the pushout of
$\boxtimes$-monoids
\[
\xymatrix{
\bT X \ar[r] \ar[d] & Y_n \ar[d] \\
\bT CX \ar[r] & Y_{n+1}. \\
}
\]
By passage to colimits, it suffices to show that $Y_{n+1}$ is a cell
$*$-module.  We rely on a description of the pushout of
$\boxtimes$-monoids as the realization of a simplicial $*$-module.  By
the argument of \cite[\S XII.2.3]{ekmm}, we know that $\bT$ preserves
Hurewicz cofibrations of $*$-modules.  Moreover, $\bT$ preserves
tensors and pushouts and thus $\bT CX \cong * \coprod_{\bT X} (\bT X
\otimes I)$.  By the argument of \cite[\S VII.3.8]{ekmm}, we can
describe this pushout as the double mapping cylinder $M(\bT CX, \bT X,
*)$, and the argument of \cite[\S VII.3.7]{ekmm} establishes that this
double mapping cylinder is isomorphic to the realization of the
two-sided bar construction with $k$-simplices 
\[[k] \mapsto \bT CX \coprod \underbrace{\bT X \coprod \bT X \coprod
  \ldots \coprod \bT X}_k \coprod *.\]
Since the $k$-simplices of this bar construction are cell $*$-modules
and the face and degeneracy maps are cellular, the realization is
itself a cell $*$-module. 
\end{proof}

Finally, we study the functor $Q$.  Since we wish to use $Q$ to
provide a functorial rectification of associative monoids, we need to
determine conditions under which the natural map $UX \to QX$ is a weak 
equivalence.  Note that we do not expect this map to be a weak
equivalence for commutative monoids, as that would provide a
functorial rectification of $E_\infty$ spaces to commutative
topological monoids.

\begin{proposition}\label{prophomq}
Let $X$ be a cell $\sL(1)$-space.  Then the map $UX \to QX$ is
a weak equivalence of spaces.  Let $X$ be a cell $*$-module.  Then the
map $UX \to QX$ is a weak equivalence of spaces.  Finally, let $X$ be
a $\boxtimes$-monoid in $\aM_*$.  Then the map $UX \to QX$ is a weak
equivalence of spaces.
\end{proposition}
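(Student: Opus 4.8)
The plan is to reduce all three assertions to the contractibility of $\sL(1)$, using the free-resolution results of Lemma~\ref{lemfree} and the strong monoidality of $Q$.

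First I would dispose of the free case. Both $U$ and $Q$ preserve tensors with unbased spaces --- for $Q$ this is immediate, since $Q(A\otimes I)=*\times_{\sL(1)}(A\times I)\cong(*\times_{\sL(1)}A)\times I$ --- hence both preserve homotopies, and therefore $\sL(1)$-homotopy equivalences. For a space $X'$ one has $Q(\sL(1)\times X')=*\times_{\sL(1)}(\sL(1)\times X')\cong X'$, and under this identification the natural map $U(\sL(1)\times X')\to Q(\sL(1)\times X')$ is the projection $\sL(1)\times X'\to X'$. Since $\sL(1)$ is contractible (a standard point-set property of the linear isometries operad, cf.\ \cite{may-quinn-ray}, \cite{lewis-may-steinberger}), this projection is a homotopy equivalence. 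Given a cofibrant $\sL(1)$-space $X$, Lemma~\ref{lemfree} supplies an $\sL(1)$-homotopy equivalence $X\simeq\sL(1)\times X'$; naturality of the transformation $U\to Q$, together with the fact that $U$ and $Q$ carry this equivalence to homotopy equivalences of spaces, then gives the first assertion for every cell (indeed cofibrant) $\sL(1)$-space.

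Next I would treat $*$-modules. By Lemma~\ref{lemfree} a cell $*$-module is homotopy equivalent as a $*$-module to a free $*$-module $*\lprod Z'$ with $Z'$ a cofibrant $\sL(1)$-space, so by the homotopy invariance of $U$ and $Q$ it is enough to treat $X=*\lprod Z'$. Here I would exploit the naturality square of $U\to Q$ along the unit map $\lambda\co*\lprod Z'\to Z'$:
$$
\begin{CD}
U(*\lprod Z') @>>> Q(*\lprod Z')\\
@VU\lambda VV @VVQ\lambda V\\
UZ' @>>> QZ'.
\end{CD}
$$
The bottom arrow is a weak equivalence by the case just proved ($Z'$ is cofibrant, hence a retract of a cell $\sL(1)$-space, and weak equivalences are closed under retracts); the left-hand arrow $U\lambda$ is a weak equivalence by Proposition~\ref{P:unit}; and the right-hand arrow $Q\lambda$ is an \emph{isomorphism}, because $Q$ is strong symmetric monoidal (Lemma~\ref{lemqstrong}) and carries the weak unit $*$ of $\aU[\bL]$ to the strict unit $*$ of $\aU$, so that the unit-coherence identity for $Q$ exhibits $Q\lambda$ as the left unitor isomorphism of $\aU$ composed with the structure isomorphisms of $Q$. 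Two-out-of-three then gives the assertion for cell $*$-modules. Finally, for a cell associative $\boxtimes$-monoid $X$ in $\aM_*$, Proposition~\ref{propunderlying} identifies its underlying $*$-module as a cell $*$-module, and the case just established applies verbatim.

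The step I expect to be the main obstacle is the claim that $Q\lambda$ is an isomorphism. Because $\aU[\bL]$ is only \emph{weakly} symmetric monoidal --- the unit map $\lambda$ itself is not invertible --- one must be careful that ``strong symmetric monoidal'' in Lemma~\ref{lemqstrong} is precisely the statement that $Q$ rectifies the weak unit to the genuine unit of $\aU$, and that the coherence diagram expressing the left unitor of $\aU$ in terms of $Q\lambda$ and the strong-monoidal structure maps of $Q$ does commute (this is the content of the remark ``one checks that the required coherence diagrams commute'' in the proof of Lemma~\ref{lemqstrong}). Once this is in place, everything else is a formal reduction to the contractibility of $\sL(1)$.
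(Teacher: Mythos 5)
Your proof is correct but organizes the argument differently than the paper. For the first case the paper's own proof proceeds by induction along the cellular filtration of $X$: it uses that $U$ and $Q$ both commute with colimits, applies $Q$ to the single-cell-attachment pushout square, invokes $Q(\sL(1)\times A)\cong A$, and then compares the two pushout squares. Your version bypasses the induction entirely by invoking Lemma~\ref{lemfree} to replace $X$ up to $\sL(1)$-homotopy equivalence by a free object $\sL(1)\times X'$, then uses explicitly the contractibility of $\sL(1)$. These amount to the same underlying facts — the paper's gluing step also needs $\sL(1)\times A\to A$ and $\sL(1)\times B\to B$ to be weak equivalences, i.e.\ contractibility of $\sL(1)$, though it does not say so — but your packaging is cleaner, and it pays a small price in requiring (as you note) that $U$ and $Q$ preserve tensors, hence homotopies. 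For the second case the paper again reduces to a free cell $*\lprod(\sL(1)\times Z)$ by induction and then applies the strong-monoidal identification $Q(*\lprod(\sL(1)\times Z))\cong *\times Q(\sL(1)\times Z)$ together with the fact that $\lambda$ is a weak equivalence; your naturality-square/two-out-of-three argument using $Q\lambda$ being an isomorphism and $U\lambda$ being an equivalence (Proposition~\ref{P:unit}) is logically equivalent, and your flagging of the unit-coherence diagram for $Q$ identifies precisely the point that the paper also leans on (under the statement that $Q$ is strong monoidal with respect to a \emph{weakly} monoidal source). The third case is handled identically in both, via Proposition~\ref{propunderlying}. In short: same ingredients, different decomposition — free-object reduction in place of cell-by-cell induction.
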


\begin{proof}
First, assume that $X$ is a cell $\sL(1)$-space.  Since colimits in
$\sL(1)$-spaces are created in $\aU$, $U$ commutes with colimits.
Moreover, since $Q$ is a left adjoint, it also commutes with
colimits.  By naturality and the fact that the maps $X_n \to X_{n+1}$
in the cellular filtration are Hurewicz cofibrations, it suffices to
consider the attachment of a single cell.  Applying $Q$ to the diagram 
\[
\xymatrix{
\sL(1) \times A \ar[r] \ar[d] & X_n \ar[d] \\
\sL(1) \times B \ar[r] & X_{n+1}, \\
}
\]
since $Q(\sL(1) \times A) \cong A$ we obtain the pushout
\[
\xymatrix{
A \ar[r] \ar[d] & QX_n \ar[d] \\
B \ar[r] & QX_{n+1}
}
\]
and therefore the result follows by induction, as $Q(X_0) = Q(*) = *$.
Next, assume that $X$ is a cell $*$-module.  In an analogous fashion,
we can reduce to the consideration of the free cells.  Since $Q$ is
strong symmetric monoidal, $Q(* \lprod (\sL(1) \times Z)) \cong *
\times Q(\sL(1) \times Z)$.  Since the unit map $\lambda$ is always a
weak equivalence, once again we induct to deduce the result.  The case
for $\boxtimes$-monoids now follows from Proposition~\ref{propunderlying}.
\end{proof}

Therefore, for cell objects in $\aM_*[\bT]$, $Q$ provides a functorial
rectification to topological monoids.   

\section{Implementing the axioms for $\sL(1)$-spaces}\label{L-spacesection}

In this section, we will show that the Lewis-May Thom spectrum functor
$\T$ restricted to the category of $*$-modules over $(* \boxtimes BG)$
satisfies our axioms.  We begin by reviewing the essential properties
of the Lewis-May functor.

\subsection{Review of the properties of $\T_{\sS}$}

As discussed in Section~\ref{Lewissection}, the Lewis-May construction of the
Thom spectrum yields a functor $\T_{\sS} \co \aU / BG \to \sS$.  If we
work in the based setting $\aT / BG$, we obtain a functor to $\sS
\backslash S$, spectra under $S$, where for $f \co X \to BG$ the unit
$S \rightarrow \T_{\sS}(f)$ is induced by the inclusion $* \rightarrow X$
over $BG$.  In this section, we review various properties of
$\T_{\sS}$ which we will need in verifying the axioms for the version
of the Thom spectrum functor we will construct in the context of
$\sL(1)$-spaces.

\begin{theorem}\label{thmthomomni}
\hspace{5 pt}
\begin{enumerate}
\item{Let $f\colon * \to BG$ be the basepoint inclusion.  Then $\T_{\sS}(f)
  \cong S$.}
\item{The functor $\T_{\sS} \colon \aU / BG \to \sS$ preserves colimits
  \cite[7.4.3]{lewis-may-steinberger}.}
\item{The functor $\T_{\sS} \colon \aT / BG \to \sS \backslash S$ preserves
  colimits.}
\item{Let $f \colon X \to BG$ be a map and $A$ a space.  Let $g$ be
  the composite 
\[\xymatrix{X \times A \ar[r]^{\pi} & X \ar[r]^{f} & BG},\]
where $\pi$ is the projection away from $A$.  Then 
\[\T_{\sS}(g) \cong A_+ \sma \T_{\sS}(f)\]
\cite[7.4.6]{lewis-may-steinberger}.}
\item{If $f : X \rightarrow BG$ and $g : X^{\prime} \rightarrow BG$
  are $T$-good maps such that there is a weak equivalence $h : X \htp X^{\prime}$
over $BG$, then there is a stable equivalence $Mf \htp Mg$ given by
the map of Thom spectra induced by $h$
\cite[7.4.9]{lewis-may-steinberger}.}
\item{If $f : X \rightarrow BG$ and $g : X \rightarrow BG$ are $T$-good
  maps which are homotopic, then there is a stable equivalence $Mf
  \htp Mg$.  However, the stable equivalence depends on the homotopy
  \cite[7.4.10]{lewis-may-steinberger}.}
\end{enumerate}
\end{theorem}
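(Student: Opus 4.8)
The six assertions are of quite different character, so the plan is to treat them in three groups. Items (2), (4), (5) and (6) are nothing more than the cited results of Lewis, namely \cite[7.4.3, 7.4.6, 7.4.9, 7.4.10]{lewis-may-steinberger}, applied along the composite $\aU/BG \to \aU/BF \xrightarrow{\T_{\sS}} \sS$ determined by the fixed structure map $BG \to BF$; for these I would only need to note that precomposition with $\aU/BG \to \aU/BF$ is harmless, which is immediate, and otherwise quote the references verbatim. The substantive points to record are items (1) and (3), and I would deduce (3) formally from (1) together with (2).

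For item (1) the approach is a direct unwinding of the definition of the coordinate-free Thom prespectrum. For the basepoint inclusion $f\co * \to BG \subseteq BF$ and each finite-dimensional $V \subset U$, the space $X(V) = f^{-1}BF(V)$ is a single point lying over the basepoint of $BF(V)$. Since $EF(V) = B(*,F(V),S^V) \to BF(V)$ is a quasifibration with fibre $S^V$ over every point of the base and with a section induced by the basepoint inclusion $* \to S^V$, pulling back over the basepoint of $BF(V)$ produces $f^*EF(V) \cong S^V$ with the subspace $X(V)$ identified with the basepoint of $S^V$. Hence $T(f)(V) = f^*EF(V)/X(V) = S^V$, with structure maps the canonical identifications $S^W \sma S^V \cong S^{W \oplus V}$; this is precisely the sphere prespectrum in $\sP$, so applying the spectrification functor $L\co \sP \to \sS$ yields $\T_{\sS}(f) \cong S$, the unit of $\sS$. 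I would also record, from the same computation, that for a general $f\co X \to BG$ the map $S \to \T_{\sS}(f)$ appearing in the review is the one induced by $* \to X$ over $BG$, since $\T_{\sS}$ applied to any point of $BG$ again pulls back the fibre $S^V$ with its basepoint section.

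For item (3) the plan is to present $\aT/BG$ as a coslice of $\aU/BG$ (under the one-point based space mapping to the basepoint) and $\sS\backslash S$ as the coslice of $\sS$ under $S$; colimits in these coslice categories are computed from colimits in the ambient categories $\aU/BG$ and $\sS$ together with the canonical structural maps out of $*$, respectively $S$. Since $\T_{\sS}\co \aU/BG \to \sS$ preserves colimits by (2) and carries $* \to X$ over $BG$ to $S \to \T_{\sS}(f)$ by (1), it follows formally that the induced functor $\aT/BG \to \sS\backslash S$ preserves colimits. I do not anticipate a genuine obstacle anywhere in this theorem: it is essentially a bookkeeping statement assembling properties proved in \cite{lewis-may-steinberger}, and the only steps that require any real care are the identification of the pullback of $EF(V)$ over the basepoint in (1) and the routine coslice-category manipulation in (3).
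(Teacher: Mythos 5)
Your proposal is correct and matches what the paper intends: the paper itself gives no proof, treating items (2), (4), (5), (6) as direct citations to \cite{lewis-may-steinberger} and leaving (1) and (3) as evident from the definitions, which is exactly the unwinding you carry out. Your identification of $T(f)(V)\cong S^V$ for the basepoint inclusion and the formal coslice-category deduction of (3) from (1) and (2) are both the right arguments.
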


Notice that taking $A = I$, item (4) of the preceding theorem implies
that the functor $\T_{\sS}$ converts fiberwise homotopy equivalences into
homotopy equivalences in $\sS \backslash S$.  Similarly, item (3) and
(4) imply that $\T_{\sS}$ preserves Hurewicz cofibrations.  The requirement
that the maps $X \to BG$ be $T$-good that appears in the homotopy
invariance results suggest that when dealing with spaces over $BF$, a
better functor to consider might be the composite $\T_{\sS}\Gamma$.
Unfortunately, the interaction of $\Gamma$ with some of the
constructions we are interested in (notably extended powers) is
complicated; see Section~\ref{secBF} for further discussion.

Next, we review the multiplicative properties of this version of the
Thom spectrum functor.  Based spaces with actions by the linear
isometries operad $\sL$ can be regarded as algebras with respect to an
associated monad $\bC$ and spectra in $\sS \backslash S$ which are
$E_\infty$-ring spectra structured by the linear isometries operad can
be regarded as algebras with respect to an analogous monad we will
also denote $\bC$.  In order to understand the interaction of the Thom
spectrum functor with these monads, we need to describe the role of
the twisted half-smash product.  Given a map $\chi \colon X \to
\sL(U^j,U)$ and maps $f_i \colon Y_i \to BG$, $1 \leq i \leq j$, we
define a map $\chi \times \Pi_i f_i$ as the composite 
\[
\xymatrix{
\Pi_i X_i \ar[r]^{\Pi_i f_i} & \Pi_i BG \ar[r]^{\chi_*} & BG \\
}
\]
where here $\chi_*$ denotes the map induced on $BG$ by $\chi$.  Given
a subgroup $\pi \subset \Sigma_n$ such that $X$ is a $\pi$-space and
$\chi$ is a $\pi$-map, then $\chi \times \Pi_i f_i$ is a $\pi$-map
(letting $\pi$ act trivially on $BG$) and $\T_{\sS}(\chi \times \Pi_i f_i)$
is a spectrum with $\pi$-action.

The following theorem is \cite[7.6.1]{lewis-may-steinberger}.

\begin{theorem}\label{thmthomextended}
\hspace{5 pt}
\begin{enumerate}
\item{In the situation above, 
\[\T_{\sS}(\chi \times \Pi_i f_i) \cong X \thp
  (\bar{\Sma_i} \T_{\sS}(f_i)).\]}
\item{Passing to orbits, there is an isomorphism
  \[\T_{\sS}(\chi \times_{\Sigma_n} \Pi_i f_i) \cong X \thp_{\Sigma_n}
  (\bar{\Sma_i} \T_{\sS}(f_i)).\]}
\end{enumerate}
Here $\bar{\sma}$ denotes the external smash product.
\end{theorem}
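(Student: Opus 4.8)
\noindent\textit{Proof strategy.} The plan is to establish both assertions as honest point--set isomorphisms --- so that the $T$-goodness conditions attached to the homotopy invariance of $\T_{\sS}$ play no role --- by first proving the statement for prespectra and then spectrifying, paralleling Lewis's treatment in \cite[IX]{lewis-may-steinberger}.

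The first step is to record how the levelwise Thom--space functor interacts with external products. Using that $V\mapsto BF(V)$ is a (lax) symmetric monoidal functor with structure maps $BF(V)\times BF(W)\to BF(V\oplus W)$ induced by smash product of equivalences, one gets a compatible map of universal quasifibrations $EF(V)\,\bar\wedge\,EF(W)\to EF(V\oplus W)$ covering it, where $\bar\wedge$ denotes fibrewise smash over $BF(V)\times BF(W)$; on Thom spaces this yields a natural homeomorphism identifying the Thom space of $X(V)\times Y(W)\to BF(V)\times BF(W)\to BF(V\oplus W)$ with $T(f_V)\wedge T(g_W)$. Iterating $j$ times and letting $V$ range over the finite--dimensional subspaces of $U^j$, the $j$-fold external product of maps $f_i\co Y_i\to BF$ indexed on $U$ produces, level by level, exactly the external smash product $\bar{\Sma_i}\,T(f_i)$ of prespectra indexed on $U^j$. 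Here one uses, via \cite[1.1.6]{may-quinn-ray}, that the linear isometries operad action on $BF$ is built from precisely these lax monoidal structure maps, so that $\chi_*\co X\times BF^j\to BF$ is, over a point $x\in X$, the map obtained by multiplying in $BF$ and then reindexing the result along the isometry $\chi(x)\co U^j\to U$.

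The heart of the argument is to match the twisting. For $x\in X$ the isometry $\chi(x)$ reindexes a spectrum indexed on $U^j$ to one indexed on $U$; performing this fibrewise over $X$ and collapsing along the basepoint section is, by definition, the twisted half--smash product $X\thp(-)$. On the Thom--space level, pulling the universal quasifibration over $BF$ back along $\chi_*$ and forming the Thom space is, after the identification of the previous paragraph, exactly this fibrewise reindexing of $\bar{\Sma_i}\,T(f_i)$ followed by the half--smash collapse; this gives a natural isomorphism of prespectra $T(\chi\times\Pi_i f_i)\cong X\thp\big(\bar{\Sma_i}\,T(f_i)\big)$. Applying the spectrification functor $L\co\sP\to\sS$ and invoking its compatibility with the external smash product and with the twisted half--smash product \cite[VI]{lewis-may-steinberger} then yields part~(1). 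When a subgroup $\pi\subset\Sigma_n$ acts on $X$ through $\chi$-maps and permutes a common family $f_i=f$, every construction above is $\pi$-equivariant --- the external smash is permuted by $\pi$ in the evident way and $X\thp(-)$ depends naturally on $\chi$ --- so the isomorphism of part~(1) is an isomorphism of spectra with $\pi$-action.

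Part~(2) is then formal: the orbit map $\chi\times_{\Sigma_n}\Pi_i f_i$ is the coequalizer, in $\aU/BF$, of the $\Sigma_n$-action on $X\times\prod_i Y_i$, and since $\T_{\sS}$ preserves colimits by Theorem~\ref{thmthomomni}(2)--(3) while $X\thp_{\Sigma_n}(-)$ is by definition the $\Sigma_n$-orbits of $X\thp(-)$, applying $(-)/\Sigma_n$ to the equivariant isomorphism of part~(1) gives the asserted isomorphism $\T_{\sS}(\chi\times_{\Sigma_n}\Pi_i f_i)\cong X\thp_{\Sigma_n}\big(\bar{\Sma_i}\,\T_{\sS}(f_i)\big)$. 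I expect the real obstacle to lie in matching the twisting: checking that the operadic structure on the spaces $BF(V)$ is arranged precisely enough that ``reindexing a Thom spectrum along $\chi(x)$'' is literally implemented by pulling the universal quasifibration back along $\chi_*$, and that this point--set identification survives spectrification. Once that compatibility is pinned down, the external--smash computation and the passage to $\Sigma_n$-orbits are routine bookkeeping. (In the text this comparison is \cite[7.6.1]{lewis-may-steinberger}.)
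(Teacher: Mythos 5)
The paper offers no proof of its own here---the statement is quoted verbatim as \cite[IX.7.6.1]{lewis-may-steinberger}, which you flag yourself at the end of your sketch. Your outline is a faithful reconstruction of Lewis's argument: the levelwise identification of Thom spaces via the lax monoidal structure on $V\mapsto BF(V)$, the matching of the $\chi$-twist with the definition of the twisted half-smash product, compatibility with spectrification, and the formal passage to $\Sigma_n$-orbits using preservation of colimits are all as in Lewis, and you correctly isolate the genuinely delicate step (the point-set match between pulling back along $\chi_*$ and fibrewise reindexing along $\chi(x)$) as the heart of the matter.
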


The following corollary is an immediate consequence.

\begin{corollary}\label{corltol}
For $f \colon X \to BG$ a map of spaces, let $\bL f$ denote the
composite 
\[\sL(1) \times X \to \sL(1) \times BG \to BG,\]
where the last map is given by the $\sL$-space structure of $BG$.
Then $\T_{\sS}(\bL f) \cong \bL \T_{\sS}(f)$.
\end{corollary}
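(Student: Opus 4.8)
The plan is to deduce the statement from Theorem~\ref{thmthomextended}(1) by specializing to $j=1$. First I would take the twisting space to be $\sL(1)$ together with the identity map $\chi=\id\colon \sL(1)\to\sL(U,U)=\sL(1)$, let $\pi=\Sigma_1$ be trivial, and take the single map $f\colon X\to BG$. With these choices the map $\chi\times\Pi_i f_i$ of the preamble to Theorem~\ref{thmthomextended} is the composite $\sL(1)\times X\xr{\id\times f}\sL(1)\times BG\to BG$, whose last arrow is the map on $BG$ induced by $\chi=\id$; the only thing to check is that this induced map is exactly the action map $\sL(1)\times BG\to BG$ coming from the $\sL$-space structure on $BG$. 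This is immediate from the way that structure is produced from the lax symmetric monoidal functor $V\mapsto BG(V)$ via \cite[1.1.6]{may-quinn-ray} (cf.\ Section~\ref{Lewissection}): under that correspondence, $\chi_*$ for $\chi$ the identity is precisely the operadic structure map. Hence $\chi\times f=\bL f$ in the notation of the corollary.

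Granting this identification, Theorem~\ref{thmthomextended}(1) yields a natural isomorphism
\[
\T_{\sS}(\bL f)\iso \sL(1)\thp\T_{\sS}(f),
\]
where the external smash product over the single index $i=1$ on the right has collapsed to $\T_{\sS}(f)$ itself. It then remains only to recognize the right-hand side as $\bL\T_{\sS}(f)$, and this is a matter of definition: the monad $\bL$ governing $\sL(1)$-spectra is, by construction, $\bL E=\sL(1)\thp E$ (the free $\sL(1)$-spectrum on $E$; see \cite{ekmm}). Naturality in $f$ is inherited from that in Theorem~\ref{thmthomextended}(1).

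There is essentially no computational content here, so there is no real obstacle; the work is entirely bookkeeping. The one point that requires care is the first step — verifying that the ``map induced on $BG$ by $\chi$'' appearing in the setup of Theorem~\ref{thmthomextended}, evaluated at $\chi=\id$, agrees on the nose with the $\sL(1)$-action on $BG$ used to define $\bL f$. Once the operad-action conventions of \cite{may-quinn-ray} and \cite{lewis-may-steinberger} are lined up, everything else is forced.
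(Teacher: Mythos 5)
Your argument is correct and is precisely the argument the paper has in mind: the paper declares Corollary~\ref{corltol} ``an immediate consequence'' of Theorem~\ref{thmthomextended}, and your specialization ($j=1$, twisting space $\sL(1)$, $\chi=\id$, the identification of $\chi_*$ with the action map, and recognizing $\sL(1)\thp(-)$ as the monad $\bL$) is exactly the unpacking of that assertion.
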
 

Theorem~\ref{thmthomextended} is the foundation of the essential technical result that
describes the behavior of this Thom spectrum functor in the presence
of operadic multiplications.  Given a map $f \colon X \to BG$, there is an
induced monad $\bC_{BG}$ on $\aT / BG$ specified by defining $\bC_{BG}
f$ as the composite $\bC X \to \bC BG \to BG$.  Lewis proves the
following \cite[7.7.1]{lewis-may-steinberger}.

\begin{theorem}
Given a map $f : X \rightarrow BF$, there is an isomorphism
$\bC \T_{\sS}(f) \cong \T_{\sS}(\bC_{BG} f)$, and this isomorphism is coherently
compatible with the unit and multiplication maps for these monads.
\end{theorem}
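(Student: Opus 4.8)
The plan is to reduce the statement to two inputs already in hand: the computation of the Thom spectrum of an extended power in Theorem~\ref{thmthomextended}, and the fact that $\T_{\sS}$ preserves colimits (Theorem~\ref{thmthomomni}(2)--(3)). The monad structure maps will then be matched by exploiting naturality of the isomorphisms of Theorem~\ref{thmthomextended} in the twisting data.

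First I would unwind the definition of $\bC_{BG}$. The monad $\bC$ on based spaces splits as a wedge, $\bC X=\bigvee_{j\ge 0}\sL(j)\times_{\Sigma_j}X^{j}$ (with the usual basepoint identification), and on the $j$-th summand the map $\bC X\to\bC BG\to BG$ defining $\bC_{BG}f$ is the composite
$$
\sL(j)\times_{\Sigma_j}X^{j}\xrightarrow{\ \prod_i f\ }\sL(j)\times_{\Sigma_j}BG^{j}\xrightarrow{\ (\chi_j)_*\ }BG,
$$
where $\chi_j\co\sL(j)\to\sL(U^{j},U)$ is the identity map. This is exactly the map $\chi_j\times_{\Sigma_j}\prod_{i=1}^{j}f$ of Theorem~\ref{thmthomextended}, so part (2) of that theorem gives, summand by summand, natural isomorphisms
$$
\T_{\sS}\big(\bC_{BG}f\big)\big|_{j}\ \cong\ \sL(j)\thp_{\Sigma_j}\T_{\sS}(f)^{(j)},
$$
where $\T_{\sS}(f)^{(j)}$ is the $j$-fold external smash power. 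Since $\T_{\sS}\co\aT/BG\to\sS\backslash S$ preserves colimits, hence wedges and the basepoint quotient, assembling over $j$ yields
$$
\T_{\sS}(\bC_{BG}f)\ \cong\ \bigvee_{j\ge 0}\sL(j)\thp_{\Sigma_j}\T_{\sS}(f)^{(j)}\ =\ \bC\,\T_{\sS}(f),
$$
the sphere spectrum appearing on both sides for $j=0$ by Theorem~\ref{thmthomomni}(1).

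It then remains to check that this isomorphism intertwines the units and multiplications. For the unit, the space-level map $X\to\bC_{BG}f$ is the inclusion of the $j=1$ summand along the operad unit $\ast\to\sL(1)$; using naturality of Theorem~\ref{thmthomextended} in $\chi$ (applied to $\ast\to\sL(1)$), the identification $\T_{\sS}(f)\cong\ast\thp\T_{\sS}(f)$, and Corollary~\ref{corltol}, one sees this is carried to the spectrum-level unit $\T_{\sS}(f)\to\bC\,\T_{\sS}(f)$. For the multiplication, $\mu\co\bC\bC\to\bC$ is built from the operad composition maps $\gamma\co\sL(k)\times\sL(j_1)\times\dots\times\sL(j_k)\to\sL(j_1+\dots+j_k)$ together with the coherence isomorphisms for iterated twisted half-smash products; since the very same data define $\mu$ on the spectrum-level monad, the compatibility follows from a diagram chase matching the two descriptions. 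One also uses here that the $\sL$-action on $BG$ (hence each $(\chi)_*$ above) is assembled from the lax monoidal maps $BG(V)\times BG(W)\to BG(V\oplus W)$ in a way compatible with $\gamma$.

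I expect the multiplicative coherence in the last paragraph to be the main obstacle: one must track, through several nested twisted half-smash products, how $\gamma$ interacts with the associativity and unit isomorphisms for $\thp$ and with the $\sL$-space structure maps of $BG$. This is formal but bookkeeping-intensive, and is precisely the content of Lewis's argument for \cite[7.7.1]{lewis-may-steinberger}; the efficient route is to invoke the coherence properties of $\thp$ from \cite[VI]{lewis-may-steinberger} (and \cite{ekmm}) rather than re-deriving them. Everything else reduces, via Theorem~\ref{thmthomextended} and colimit-preservation, to unwinding definitions.
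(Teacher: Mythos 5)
Your proposal is correct and follows essentially the strategy of Lewis's cited proof of \cite[IX.7.7.1]{lewis-may-steinberger}, which the paper does not reproduce but simply quotes: decompose $\bC_{BG}f$ as a wedge of extended powers, apply Theorem~\ref{thmthomextended}(2) summand-wise, reassemble using colimit preservation of $\T_{\sS}$, and then deduce monadic compatibility from naturality of the extended-power isomorphism in the twisting data together with the coherence of the twisted half-smash product. You correctly identify the multiplicative coherence (tracking the operad composition $\gamma$ through nested $\thp$-associativity isomorphisms and the $\sL$-structure on $BG$) as the only nontrivial point, and the deferral to the coherence theory of $\thp$ in \cite[VI]{lewis-may-steinberger} is exactly how Lewis handles it.
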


This result has the following corollary; the first part of
this is one of the central conclusions of Lewis' thesis, and the
second part is a consequence explored at some length in the first
author's thesis and forthcoming paper
\cite{blumberg-thesis,blumberg-THH-thom-einf}.

\begin{theorem}\label{thmcorcolim}
\hspace{5 pt}
\begin{enumerate}
\item{The functor $\T_{\sS}$ restricts to a functor 
\[\T_{(\sS\backslash
    S)[\bC]}\colon (\aT / BG)[\bC_{BG}] \to (\sS\backslash S)[\bC].\]}
\item{The functor \[\T_{(\sS\backslash S)[\bC]} \colon (\aT /
  BG)[\bC_{BG}] \to \sS[\bC]\] preserves colimits and tensors with
  unbased spaces.}
\end{enumerate}
\end{theorem}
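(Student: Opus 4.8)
The plan is to obtain both parts formally from the preceding theorem \cite[7.7.1]{lewis-may-steinberger} --- the natural isomorphism $\bC\,\T_{\sS}(f)\cong\T_{\sS}(\bC_{BG}f)$ that is coherently compatible with the unit and multiplication maps of the two monads --- together with the elementary properties of $\T_{\sS}$ recorded in Theorem~\ref{thmthomomni}. For part (i) the argument is a structure transport: given a $\bC_{BG}$-algebra $(f\co X\to BG,\ \xi\co\bC_{BG}f\to f)$, I would equip $\T_{\sS}(f)$ with the $\bC$-action given by the composite $\bC\,\T_{\sS}(f)\xr{\cong}\T_{\sS}(\bC_{BG}f)\xr{\T_{\sS}(\xi)}\T_{\sS}(f)$ and check the unit and associativity identities by feeding the corresponding identities for $\xi$ through the coherence of the isomorphism with the monad structure maps; functoriality is just naturality. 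Since the underlying functor of the resulting $\T_{(\sS\backslash S)[\bC]}$ is $\T_{\sS}$ itself, its value on $f$ has underlying spectrum $\T_{\sS}(f)$ with the canonical unit $S\to\T_{\sS}(f)$, so the target is indeed $(\sS\backslash S)[\bC]$. This is essentially the conclusion of Lewis' thesis and I expect no difficulty here.

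For part (ii) the strategy is to express colimits and tensors with unbased spaces in the algebra categories in terms of constructions that $\T_{\sS}$ already preserves on underlying objects. I would first recall that all the monads in play preserve reflexive coequalizers \cite[\S II.7.2]{ekmm}, so that $(\aT/BG)[\bC_{BG}]$ and $(\sS\backslash S)[\bC]$ are cocomplete with colimits created as reflexive coequalizers of free algebras \cite[\S II.7.4]{ekmm}: for a diagram $D$ of $\bC_{BG}$-algebras the colimit is the reflexive coequalizer, formed in $(\aT/BG)[\bC_{BG}]$, of a map $\bC_{BG}(Y)\rightrightarrows\bC_{BG}(Z)$ between free algebras in which $Y$ and $Z$ are built from the underlying objects of the $D_j$ by applying $\bC_{BG}$ and forming coproducts in $\aT/BG$, and the tensor of a $\bC_{BG}$-algebra with an unbased space $A$ admits the same kind of presentation with tensors with $A$ in $\aT/BG$ replacing the coproducts. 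I would then invoke and assemble three facts: $\T_{\sS}\co\aT/BG\to\sS\backslash S$ preserves colimits and tensors with unbased spaces by Theorem~\ref{thmthomomni}(3) and~(4); reflexive coequalizers in both algebra categories are created by the forgetful functors to $\aT/BG$ and $\sS\backslash S$; and $\T_{\sS}$ carries the free $\bC_{BG}$-algebra $\bC_{BG}(g)$ to the free $\bC$-algebra $\bC\,\T_{\sS}(g)$ via the isomorphism of the preceding theorem. Combining these, $\T_{(\sS\backslash S)[\bC]}$ carries the reflexive-coequalizer presentation of a colimit (respectively of a tensor) on the source side to the corresponding presentation on the target side, and hence preserves colimits and tensors with unbased spaces.

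The step I expect to be the main obstacle is bookkeeping rather than anything conceptual: making precise that, for a monad preserving reflexive coequalizers, colimits and tensors with spaces in the algebra category are functorially assembled from the monad, the coproducts and tensors of the base category, and reflexive coequalizers, and then checking that the natural isomorphism of the preceding theorem is compatible with each ingredient of that assembly --- in particular that it intertwines the free-algebra adjunctions and not merely the composite endofunctors $\bC$ and $\bC_{BG}$. The one point-set subtlety, that $\T_{\sS}$ preserves the particular reflexive coequalizers occurring in these presentations, is immediate from its preserving all colimits in Theorem~\ref{thmthomomni}(3).
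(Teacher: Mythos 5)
The paper does not supply its own proof of this theorem: it cites Lewis's thesis for part (i) and the first author's thesis and \cite{blumberg-THH-thom-einf} for part (ii), so there is no in-paper argument to compare against. Your formal derivation from the coherent natural isomorphism $\bC\,\T_{\sS}(f)\cong\T_{\sS}(\bC_{BG}f)$ is correct and supplies exactly what those citations defer to. For (i), the structure transport works because ``coherently compatible with the unit and multiplication maps'' is precisely the assertion that the isomorphism is a morphism of monads along $\T_{\sS}$, which is what makes the transported composite satisfy the algebra identities; that $\T_{\sS}$ on based objects lands in $\sS\backslash S$ is already noted in the paper. For (ii), the monadic presentation of colimits and tensors via reflexive coequalizers of free algebras is available since the monads preserve reflexive coequalizers \cite[\S II.7.2, \S II.7.4]{ekmm}, reflexive coequalizers in the algebra categories are created by the forgetful functors, and $\T_{\sS}$ respects every ingredient of the presentation by Theorem~\ref{thmthomomni} together with the identification of free algebras. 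The subtlety you flag at the end --- that the isomorphism must intertwine the free-algebra adjunctions and not merely the underlying endofunctors --- is genuine, but it is exactly what the coherence clause of the preceding theorem asserts, so no gap remains.
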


\subsection{Verification of the axioms}

We begin by studying the behavior of $\T_{\sS}$ in
the context of $\sL(1)$-spaces.  The $E_\infty$ space $BG$ constructed
as the colimit over the inclusions of the $\mathcal V$-space $V
\mapsto BG(V)$ is a commutative monoid for $\lprod$.  Therefore, the
category $\aU[\bL] / BG$ has a weakly symmetric monoidal product:
Given $f \colon X \to BG$ and $g \colon Y \to BG$, the product $f
\lprod g$ is defined as 
\[f \lprod g \colon X \lprod Y \to BG \lprod BG
\to BG.\]
The unit is given by the trivial map $* \to BG$.  We define
\[\T_{\sL} \colon \aU[\bL]/BG \to \sS[\bL]\]
as the Lewis-May Thom spectrum functor $\T_{\sS}$ restricted to $\aU[\bL]/
BG$: Corollary~\ref{corltol} implies that $\T_{\sS}$ takes values in
$\bL$-spectra on $\aU[\bL]/BG$.  Moreover, $\T_{\sL}$ is strong
symmetric monoidal (up to unit).

\begin{proposition}\label{propstrong}
Given $f \colon X \to BG$ and $g \colon Y \to BG$, there is a
coherently associative isomorphism $\T_{\sL}(f \boxtimes g) \cong
\T_{\sL}(f) \sma_{\sL} \T_{\sL}(g)$.
\end{proposition}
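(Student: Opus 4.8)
The plan is to present both $f\lprod g$ and $\T_{\sL}(f)\sma_{\sL}\T_{\sL}(g)$ as coequalizers of diagrams built out of the space $\sL(2)$, and then transport one to the other by combining the fact that the Lewis--May functor preserves colimits (Theorem~\ref{thmthomomni}(2)) with the twisted half-smash formula of Theorem~\ref{thmthomextended}(1). First I would unwind the definitions on the space level. By construction $X\lprod Y$ is the coequalizer of
\[
\sL(2)\times\sL(1)\times\sL(1)\times X\times Y\rightrightarrows\sL(2)\times X\times Y,
\]
and, since the commutative monoid structure on $BG$ is the one induced by its action by the linear isometries operad (Section~\ref{Lprelims}), the structure map $f\lprod g\colon X\lprod Y\to BG\lprod BG\to BG$ is the map induced on this coequalizer by
\[
\mu\colon\sL(2)\times X\times Y\to BG,\qquad(\ell,x,y)\mapsto\ell\cdot(f(x),g(y)),
\]
where $\cdot$ denotes the $\sL$-action on $BG$. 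That $\mu$ really coequalizes the two parallel arrows is precisely the operad identity $\gamma(\ell;\sigma,\tau)\cdot(b_1,b_2)=\ell\cdot(\sigma b_1,\tau b_2)$ together with the hypothesis that $f$ and $g$ are $\sL(1)$-maps, so $f\lprod g$ is exhibited as a coequalizer in $\aU[\bL]/BG$, hence in $\aU/BG$.

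Next I would apply $\T_{\sS}$, which preserves colimits and hence preserves this coequalizer. The right-hand object is the map $\id_{\sL(2)}\times(f\times g)$, of the form $\chi\times\Pi_i f_i$ with $\chi=\id_{\sL(U^2,U)}$ and $j=2$, so Theorem~\ref{thmthomextended}(1) gives a natural isomorphism $\T_{\sS}(\mu)\cong\sL(2)\thp(\T_{\sS}(f)\bar\sma\T_{\sS}(g))$. The left-hand object, taken over $BG$, can likewise be written in the form $\chi\times\Pi_i f_i$ over $\sL(2)$ in two compatible ways --- with $\chi$ the operad composition $\gamma\colon\sL(2)\times\sL(1)\times\sL(1)\to\sL(2)$, and with $\chi$ the projection onto $\sL(2)$ together with $f,g$ replaced by $\bL f,\bL g$ --- so that both parallel arrows of the coequalizer become maps over $\sL(2)$. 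Then Theorem~\ref{thmthomextended}(1), together with Corollary~\ref{corltol} (which supplies $\T_{\sS}(\bL f)\cong\bL\T_{\sS}(f)$), identifies $\T_{\sS}$ of the left-hand object with $(\sL(2)\times\sL(1)\times\sL(1))\thp(\T_{\sS}(f)\bar\sma\T_{\sS}(g))$ and identifies $\T_{\sS}$ of the two parallel arrows with the two structure maps
\[
(\sL(2)\times\sL(1)\times\sL(1))\thp\bigl(\T_{\sS}(f)\bar\sma\T_{\sS}(g)\bigr)\rightrightarrows\sL(2)\thp\bigl(\T_{\sS}(f)\bar\sma\T_{\sS}(g)\bigr)
\]
whose coequalizer is by definition the balanced product $\sL(2)\thp_{\sL(1)\times\sL(1)}(-)$ computing $\sma_{\sL}$ on $\bL$-spectra \cite[\S I.5]{ekmm}. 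Passing to coequalizers then yields the natural isomorphism $\T_{\sL}(f\lprod g)\cong\T_{\sL}(f)\sma_{\sL}\T_{\sL}(g)$. For the coherent associativity I would rerun this argument for the $k$-fold product, using the identification $X_1\lprod\cdots\lprod X_k\cong\sL(k)\times_{\sL(1)^k}(X_1\times\cdots\times X_k)$ of Proposition~\ref{propfree} and the $j=k$ case of Theorem~\ref{thmthomextended}(1) to get $\T_{\sL}(f_1\lprod\cdots\lprod f_k)\cong\sL(k)\thp_{\sL(1)^k}(\T_{\sL}(f_1)\bar\sma\cdots\bar\sma\T_{\sL}(f_k))$; the associativity constraints on both sides are induced by the structure maps $\gamma$ of the linear isometries operad and by the coherence isomorphisms of the twisted half-smash product, so they correspond under this identification.

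I expect the genuine work to lie entirely in the bookkeeping of the second step: checking that $\mu$ coequalizes the two arrows and that its descent is $f\lprod g$ for the monoidal structure fixed on $\aU[\bL]/BG$, and --- the more delicate point --- verifying that under Theorem~\ref{thmthomextended}(1) the two space-level parallel arrows go over to exactly the two structure maps defining the balanced twisted half-smash product computing $\sma_{\sL}$. Once that compatibility is pinned down the conclusion is forced, since $\T_{\sS}$ preserves all colimits; and there are no homotopy-theoretic subtleties, as the asserted isomorphism is a point-set isomorphism that never touches the (non-strict) unit $*$.
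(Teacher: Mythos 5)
Your proposal is correct and follows essentially the same route as the paper's proof: describe $f\boxtimes g$ as the map out of the defining coequalizer for $X\boxtimes Y$, use Theorem~\ref{thmthomomni}(2) to commute $\T_{\sS}$ past the coequalizer, apply Theorem~\ref{thmthomextended}(1) to the two terms, and observe that the resulting coequalizer is exactly the one defining $\sma_{\sL}$. The only differences are expository — you make the ``inspection of the maps'' step explicit, invoke Corollary~\ref{corltol} by name, and spell out the $k$-fold version for the coherence of associativity, all of which the paper leaves implicit.
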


\begin{proof}
We can describe $f \boxtimes g \colon X \boxtimes Y \to BG$ as the
natural map to $BG$ associated to the coequalizer describing $X
\boxtimes Y$.  Theorem~\ref{thmthomomni} implies that
$\T_{\sL}$ commutes with this coequalizer and
Theorem~\ref{thmthomextended} implies that 
\[\T_{\sL}(\sL(2) \times \sL(1) \times \sL(1) \times (X \times Y))
\cong (\sL(2) \times \sL(1) \times \sL(1)) \thp (X \bar{\sma} Y)\]
and
\[\T_{\sL}(\sL(2) \times (X \times Y)) \cong \sL(2) \thp (X
\bar{\sma} Y).\]
Inspection of the maps then verifies that the resulting coequalizer is
precisely the the coequalizer defining $\T_{\sL}(f) \sma_{\sL}
\T_{\sL}(g)$.
\end{proof}

Now we restrict to the subcategory $\aM_*$.  For a $\boxtimes$-monoid
in $\aM_*$ with multiplication $\mu$, unitality implies that there is
a commutative diagram 
\[
\xymatrix{
X \boxtimes * \ar[r] \ar[dr]_{\lambda} & X \boxtimes X \ar[d]^\mu \\
& X. \\
}
\]  
In conjunction with Proposition~\ref{propstrong}, this diagram implies
that given a map $f \colon X \to BG$, there is an isomorphism 
\[\T_{\sL}(* \lprod X \to * \lprod BG \to BG) \cong S \sma_{\sL}
\T_{\sL}(f),\]
which implies that the following definition is sensible.  We let
$\aM_S$ denote the category of $S$-modules from \cite{ekmm}; this is
our symmetric monoidal category $\aS$ of spectra in this setting.

\begin{definition}\label{defnAthom}
The category $\aA$ is simply $M_*$.  Define $BG_{\aA}$ to be $* \lprod
BG$.  We define a Thom spectrum functor 
\[\T_\aA \colon \aA/BG_{\aA} \to \aM_S\]
given $f \colon X \to BG_{\aA}$ by applying $\T_{\sL}$ to the
composite 
\[
\xymatrix{
X \ar[r]^f & BG_{\aA} \ar[r]^\lambda & BG. \\
}
\]
\end{definition}

It follows immediately from Proposition~\ref{propstrong} and the
observation preceding the definition that $\T_{\aA}$ is a strong
symmetric monoidal functor from $\aM_* / BG_{\aA}$ to $\aM_S$.  Since
Theorem~\ref{thmcorcolim} implies that $\T_{\aA}$ commutes with
colimits and tensors, $\T_{\aA}$ commutes with geometric realization
and therefore the pair $\aA$, $\T_{\aA}$ satisfies axioms {\bf A1}
and {\bf A2}.

Next, we use Theorem~\ref{theoremmonoids} to choose a cofibrant
replacement functor $c$ on the category $\aM_*[\bT]$ such that for any
object $X$ in $\aA$, $cX$ is a cell monoid \cite{Hirschhorn}.  Note
that cell monoids are well-based; one proves this by an inductive
argument using the ``Cofibration Hypothesis'' (see \cite[3.22]{ABGHR}
for details).  Then given an object $f \colon X \to BG_{\aA}$, we
define $Cf$ to be the composite 
\[\xymatrix{cX \ar[r]_{\htp} & X \ar[r] & BG_{\aA}}.\] 

We will work with the notion of flatness encapsulated in the
definition of the class of $S$-modules $\bar{\aF_S}$
\cite[9.6]{basterra} (see also 
\cite[VII.6.4]{ekmm}).  Let $\aF_S$ denote the collection of modules
of the form $S \sma_{\sL} \sL(j) \thp_G K$ where $K$ is a $G$-spectrum
(for $G \subset \Sigma_i$) which has the homotopy type of a $G$-CW
spectrum.  Then $\bar{\aF_S}$ is the closure of $\aF_S$ under finite
$\sma$, wedges, pushouts along cofibrations, colimits of countable
sequences of cofibrations, homotopy equivalences, and
``stabilization'' (in which if $\Sigma M$ is in $\bar{\aF_S}$ then so
is $M$).  The point of this definition is that for $S$-modules $M$ and
$N$ in the class $\bar{\aF_S}$, the point-set smash product $M \sma N$
represents the derived smash product \cite[VII.6.7]{ekmm},
\cite[9.5]{basterra}.

\begin{lemma}\label{lemflat}
Let $f \colon X \to BG_{\aA}$ be an object of $\aM_*[\bT] / BG_{\aA}$
such that $X$ is a cell monoid.  Then the underlying $S$-module of the
$S$-algebra $\T_{\aA}(f)$ is in the class $\bar{\aF_S}$.
\end{lemma}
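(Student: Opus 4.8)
The plan is to build $\T_{\aA}(f)$ up from cells, as in the proof of Proposition~\ref{propunderlying} and the treatment of cell algebras in \cite[\S VII.6]{ekmm}, tracking membership in $\bar{\aF_S}$ by repeatedly invoking its closure properties together with the facts that $\T_{\aA}=\T_{\sL}$ is strong symmetric monoidal, commutes with colimits and tensors with spaces (Theorem~\ref{thmcorcolim}, Lemma~\ref{realizationlemma}), and preserves Hurewicz cofibrations (Theorem~\ref{thmthomomni}). Since $X$ is a cell $\boxtimes$-monoid we may write $X=\colim_n X_n$ with $X_0=\ast$ and each $X_{n+1}$ obtained from $X_n$ by attaching a single free cell, that is, as a pushout of $\boxtimes$-monoids along a map $\bT(\sfree{A'})\to\bT(\sfree{B'})$ for a generating cofibration $A'\to B'$ of $\aU$ (refining the given filtration if necessary). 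The maps $X_n\to X_{n+1}$ are Hurewicz cofibrations by the Cofibration Hypothesis, so $\T_{\aA}$ carries $X=\colim_n X_n$ to a sequential colimit along Hurewicz cofibrations of spectra; by closure of $\bar{\aF_S}$ under countable sequential colimits of cofibrations it therefore suffices to treat the base case $\T_{\aA}(\ast\to BG_{\aA})$ and a single free-cell attachment.

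For the base case, Definition~\ref{defnAthom} and Theorem~\ref{thmthomomni}(1) give $\T_{\aA}(\ast\to BG_{\aA})\iso S$, which lies in $\aF_S$. For a free cell, given a monoid map $\bT(\sfree{B'})\to BG_{\aA}$ I would use that $\T_{\aA}$ commutes with the wedge $\bT(\sfree{B'})=\bigvee_{j\ge 0}(\sfree{B'})^{\boxtimes j}$, that Proposition~\ref{propfreeprod} together with the isomorphism $(\ast\boxtimes M)\boxtimes(\ast\boxtimes N)\iso\ast\boxtimes(M\boxtimes N)$ identifies $(\sfree{B'})^{\boxtimes j}$ with $\ast\boxtimes\big(\sL(j)\times(B')^j\big)$, and that on this summand the structure map has the form $\chi\times\prod_i g$ with $\chi=\id\co\sL(j)\to\sL(U^j,U)$ and $g\co B'\to BG$ the restriction of the given map; Theorem~\ref{thmthomextended}(1) then gives
\[
\T_{\aA}\big((\sfree{B'})^{\boxtimes j}\to BG_{\aA}\big)\iso S\sma_{\sL}\big(\sL(j)\thp\bar{\Sma_i}\T_{\sS}(g)\big).
\]
Since $B'$ is a CW complex, $\T_{\sS}(g)$ has the homotopy type of a CW spectrum, so each such summand lies in $\aF_S$, and therefore $\T_{\aA}(\bT(\sfree{B'})\to BG_{\aA})\in\bar{\aF_S}$; likewise for $\bT(\sfree{A'})$.

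For the single free-cell attachment I would follow the proof of Proposition~\ref{propunderlying}: by the arguments of \cite[\S VII.3.7, \S VII.3.8]{ekmm} the pushout of $\boxtimes$-monoids defining $X_{n+1}$ is the geometric realization of a good simplicial $\boxtimes$-monoid whose $k$-simplices are coproducts in $\aM_*[\bT]$ of $X_n$, $\bT(\sfree{A'})$ and $\bT(\sfree{B'})$; and, as in that proof, the analogue of \cite[\S VII.6.1]{ekmm} expresses the underlying $\ast$-module of such a coproduct of well-based $\boxtimes$-monoids as built from $X_n$, $\bT(\sfree{A'})$, $\bT(\sfree{B'})$ and $\ast$ by $\boxtimes$-products, wedges, and pushouts along Hurewicz cofibrations. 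Applying $\T_{\aA}$ --- which is strong symmetric monoidal, commutes with colimits and (Lemma~\ref{realizationlemma}) with geometric realization, and preserves Hurewicz cofibrations --- converts this description into one of $\T_{\aA}(X_{n+1}\to BG_{\aA})$ as built from $\T_{\aA}(X_n\to BG_{\aA})$, $\T_{\aA}(\bT(\sfree{A'})\to BG_{\aA})$, $\T_{\aA}(\bT(\sfree{B'})\to BG_{\aA})$ and $S$ by $\sma_S$-products, wedges, pushouts along cofibrations, and realization of a good simplicial $S$-module. By the inductive hypothesis and the base case these spectra all lie in $\bar{\aF_S}$, and $\bar{\aF_S}$ is closed under finite $\sma$, wedges, pushouts along cofibrations, and --- through the skeletal filtration of a good simplicial object --- geometric realization; hence $\T_{\aA}(X_{n+1}\to BG_{\aA})\in\bar{\aF_S}$, which completes the induction.

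The main obstacle is the point-set bookkeeping rather than the homotopy theory. One must check at each stage that the maps of the cellular filtration and the latching maps of the simplicial objects produced above are genuine Hurewicz cofibrations --- this is the Cofibration Hypothesis of \cite[\S VII]{ekmm}, verified exactly as in the proof of Proposition~\ref{propunderlying} --- so that the point-set colimits, pushouts and realizations represent the corresponding homotopy colimits and the closure properties of $\bar{\aF_S}$ really do apply; and one must make the coproduct-of-monoids decomposition of \cite[\S VII.6.1]{ekmm} precise in $\aM_*$ and verify that the strong symmetric monoidal structure of $\T_{\aA}$ intertwines it with the analogous decomposition of coproducts of $S$-algebras. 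Once Theorems~\ref{thmthomomni}, \ref{thmthomextended} and \ref{thmcorcolim} and Propositions~\ref{propfreeprod} and \ref{propunderlying} are in hand, everything else is formal.
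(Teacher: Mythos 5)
Your proof is correct and follows the same inductive strategy as the paper's: reduce along the cellular filtration to the attachment of a single free cell, identify $\T_{\aA}(\bT(\sfree{Z}))$ for a CW complex $Z$ via Proposition~\ref{propfree} and Theorem~\ref{thmthomextended}, and use closure properties of $\bar{\aF_S}$. You are somewhat more explicit than the paper at the step where the pushout of $\boxtimes$-monoids is unwound as the realization of a good simplicial bar construction (the paper compresses this into ``since both $\T_{\aA}$ and $\bT$ preserve Hurewicz cofibrations, it suffices to show\dots'', implicitly leaning on the argument of Proposition~\ref{propunderlying}); your version makes that dependence explicit, which is an improvement in clarity rather than a different route.
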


\begin{proof}
We proceed by induction.  Since $\T_{\aA}$ commutes with
colimits and preserves Hurewicz cofibrations, we can reduce to
consideration of a pushout square of the form 
\[
\xymatrix{
\T_{\aA}(\bT (\sfree{A})) \ar[r] \ar[d] & \T_{\aA}(X_n) \ar[d] \\
\T_{\aA}(\bT (\sfree{B})) \ar[r] & \T_{\aA}(X_{n+1}) \\
}
\]
where $X_n$ can be assumed to be in the class $\bar{\aF_S}$ and $A$
and $B$ are CW-complexes.  Here we are suppressing the maps from $A$,
$B$, $X_n$, and $X_{n+1}$ to $BG_{\aA}$ in our notation.  Next, since
both $\T_{\aA}$ and $\bT$ preserves Hurewicz cofibrations, it suffices
to show that for any CW-complex $Z$, $\T_{\aA}(\bT (\sfree Z))$ is in
the class $\bar{\aF_S}$.  Since $\T(Z)$ has the homotopy type of a
CW-complex (see the proof of \cite[7.5.6]{lewis-may-steinberger}), the
result follows from Proposition~\ref{propfree} and
Theorem~\ref{thmthomextended}.
\end{proof}

This completes the verification of {\bf A3}.  For {\bf A4}, we use the
functor $Q$ of Definition~\ref{defq}; we have already verified that it
has the desired properties in Proposition~\ref{prophomq}.  For {\bf
  A5}, we choose a cofibrant replacement functor on the category of
commutative monoids provided by the model structure of
Theorem~\ref{theoremmonoids} and use this to define $BG_{\aA}'$.
(Recall from the discussion prior to Proposition~\ref{propunderlying}
that $BG_{\aA}'$ has the desired properties.)

Finally, rectification is very straightforward in this context; since
Proposition~\ref{propoperadcomp} tells us that a map $X
\to BG_{\aA}$ over the non-$\Sigma$ linear isometries operad specifies
the data of 
a monoid map in $(\aU[\bL] / BG_{\aA})[\bT]$, by applying $* \boxtimes 
(-)$ we obtain a monoid map in $*$-modules.  To complete the
verification of {\bf A6}, we use Proposition~\ref{prophomq}.  Given a
topological monoid $M$, we can regard this as an $A_\infty$ space over
the non-$\Sigma$ linear isometries operad by pulling back along the
augmentation to the associative operad.  Equivalently, $M$ regarded as
an $\sL(1)$-space with trivial action is a $\lprod$-monoid with
multiplication induced from the monoid multiplication $M \times M \to
M$ and the fact that $M \lprod M \cong M \times M$ by the argument of
Lemma~\ref{lemqstrong}.

Thus, let $X$ be an $A_\infty$ space structured by the non-$\Sigma$
linear isometries operad. 
Since $Q$ is left adjoint to the functor which assigns the trivial
$\sL(1)$-action, the unit of the adjunction induces a map of
$\sL(1)$-spaces $X \lprod X \to Q(X \lprod X) \cong QX \times QX$.  To
show that axiom {\bf A6} holds, it suffices to show that this is a map
of $\lprod$-monoids; this map constructs the homotopy commutative
diagram of the axiom.  But since $QX \times QX \cong QX \lprod QX$
this map is a map of $\lprod$-monoids by the definition of the
multiplication on $QX$.

\section{Modifications when working over $BF$}\label{secBF}

In this section, we discuss the situation when working over $BF$:
there are technical complications which arise from the fact that the
projection
\[\pi_n: B(*, F(n), S^n) \rightarrow B(*, F(n), *)\]
is a universal quasifibration, with section a Hurewicz cofibration.
Quasifibrations are not preserved under pullback, and in general the
pullback of the section will not be a Hurewicz cofibration.  If the
section of $\pi_n$ could be shown to be a fiberwise cofibration,
pullback along any map would provide a section which was a fiberwise
cofibration.  Unfortunately, this seems difficult: the proof that the
section is a Hurewicz cofibration depends on the facts that that the
spaces in question are LEC and retractions between LEC spaces are
cofibrations.

\subsection{A review of the properties of $\Gamma$}

The standard solution to these issues (pioneered by Lewis) is to use
an explicit functor $\Gamma$ which replaces a map by a Hurewicz
fibration.  Since various properties of $\Gamma$ play an essential
role in our work in this section, we will review relevant details
here (see \cite[\S7.1]{lewis-may-steinberger} for a more comprehensive
treatment).

\begin{definition}
Given a map $f : X \rightarrow B$, define 
\[\Pi B = \{(\theta, r) \in B^{[0,\infty]} \times [0,\infty] \, | \,
  \theta(t) = \theta(r), t > r\}.\]
The end-point projection $\nu : \Pi B \rightarrow B$ is defined as $(\theta,
r) \mapsto \theta(r)$.  There is also the evaluation map $e_0 : \Pi B
\rightarrow B$ defined as $(\theta,r) \mapsto \theta(0)$.  Define
$\Gamma X$ to be the pullback
\[
\xymatrix{
\Gamma X \ar[d] \ar[r] & \Pi B \ar[d]^-{e_0} \\
X \ar[r]^-f & B. \\
}
\]
Let $\Gamma f$ denote the the induced map 
\[
\xymatrix{
\Gamma X \ar[r] & \Pi B \ar[r]^-{\nu} & BF.\\
}
\]
There is a map $\delta : X \rightarrow \Gamma X$ specified by taking
$x \in X$ to the pair $(x, \zeta_x)$ where $\zeta_x$ is the path of
length zero at $x$.
\end{definition}

The map $\Gamma f$ is a Hurewicz fibration.

\begin{lemma}
The maps $\delta$ specify a natural transformation $\id \rightarrow
\Gamma$, and $\delta : X \rightarrow \Gamma X$ is a homotopy
equivalence (although not a homotopy equivalence over $B$).
\end{lemma}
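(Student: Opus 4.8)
The plan is to exhibit an explicit deformation retraction of $\Gamma X$ onto the image of $\delta$; naturality is then immediate from the formulas, and the parenthetical becomes a short remark at the end.

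First, naturality. A morphism $g\colon X\to X'$ over $B$ (so $f'g=f$) induces $\Gamma g\colon\Gamma X\to\Gamma X'$ by $(x,(\theta,r))\mapsto(g(x),(\theta,r))$, which is well defined since $f'(g(x))=f(x)=\theta(0)=e_0(\theta,r)$. Both $\Gamma g\circ\delta_X$ and $\delta_{X'}\circ g$ send $x$ to $(g(x),\zeta_x)$, so $\delta$ is a natural transformation $\id\to\Gamma$ on $\mathcal U/B$ (one can also track the dependence on $B$, which is equally routine and not needed here).

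Next, the homotopy equivalence. Let $p\colon\Gamma X\to X$ be the projection $(x,(\theta,r))\mapsto x$; then $p\circ\delta=\id_X$ literally, so it remains to produce a homotopy $\delta\circ p\htp\id_{\Gamma X}$. The idea is to shrink each Moore path to the constant path at its initial point: set
\[
H\big((x,(\theta,r)),s\big)=\big(x,(\theta_s,(1-s)r)\big),\qquad
\theta_s(t)=\theta\big(\min(t,(1-s)r)\big).
\]
One verifies that $(\theta_s,(1-s)r)$ again lies in $\Pi B$, that $\theta_s(0)=\theta(0)=f(x)$ so the pair lies in the pullback $\Gamma X$, and that $H$ is continuous; at $s=0$ it is the identity (using that $\theta$ is already constant beyond $r$) and at $s=1$ it is $(x,\zeta_x)=\delta(p(x,(\theta,r)))$. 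Hence $\delta$ is a homotopy equivalence with inverse $p$. Equivalently, one may note that $e_0\colon\Pi B\to B$ is an acyclic Hurewicz fibration (the same shrinking exhibits $\Pi B\htp B$), so the pullback $p\colon\Gamma X\to X$ is one too and $\delta$ is one of its sections.

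Finally, the parenthetical. Although $\delta$ is itself a map over $B$ — one computes $\Gamma f\circ\delta=f$ — neither $p$ nor $H$ is over $B$, since $p$ remembers only the starting point $\theta(0)$ of a path whereas $\Gamma f$ records its endpoint $\theta(r)$; and no inverse in $\mathcal U/B$ can exist in general, because $\Gamma f$ is a Hurewicz fibration while $f$ need not be (for a non-surjective $f$ the total space $\Gamma X$ still maps onto all of $B$, so $X$ and $\Gamma X$ cannot be isomorphic in the homotopy category over $B$). There is no substantial obstacle anywhere in the argument; the only real content is writing down $H$ and the routine check that it is well defined and continuous, i.e.\ that the shrinking operation respects both the Moore-path constraint and the pullback condition.
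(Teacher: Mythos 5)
Your argument is correct and is the standard proof; the paper states this as a review lemma with no proof of its own, deferring to \cite[\S IX.7.1]{lewis-may-steinberger}, and the Moore-path shrinking homotopy $H$ you write down is exactly the expected deformation retraction of $\Gamma X$ onto $\delta(X)$ (with $p\circ\delta=\id$ strict). One minor imprecision in your closing remark: $\Gamma f$ does not in general surject onto all of $B$, only onto the path components of $B$ meeting $f(X)$, and ``$\Gamma f$ is a fibration but $f$ need not be'' is not by itself an obstruction to fiberwise equivalence; but your example still works, since a map $\Gamma X\to X$ over $B$ (a prerequisite for a homotopy inverse over $B$) would force the image of $f$ to contain that of $\Gamma f$, which fails for, say, a constant map $f$ into a nontrivial path-connected $B$.
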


Moreover, $\Gamma$ has very useful properties in terms of interaction
with the naive model structures on $\aU / B$ and $\aT / B$
\cite[7.1.11]{lewis-may-steinberger}.

\begin{proposition}
The functor $\Gamma$ on spaces over $B$ takes cofibrations to
fiberwise cofibrations and homotopy equivalences over $B$ to fiberwise
homotopy equivalences.  As a functor on ex-spaces, it takes ex-spaces
with sections which are cofibrations to ex-spaces with sections which
are fiberwise cofibrations.
\end{proposition}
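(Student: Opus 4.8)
The plan is to handle the three assertions separately, using only the explicit model $\Gamma X = X\times_B\Pi B$ --- the pullback of the evaluation $e_0\co\Pi B\to B$ along $f\co X\to B$, with $\Gamma f$ the composite $\Gamma X\to\Pi B\xr{\nu}B$ --- together with two elementary features of $\Pi B$. First, $e_0\co\Pi B\to B$ is a Hurewicz fibration with contractible fibers (the Moore paths issuing from a fixed point), so it has the right lifting property against all Hurewicz cofibrations; pulling this back along $f$ shows that $\mathrm{pr}_X\co\Gamma X\to X$ is a Hurewicz fibration with the same lifting property, of which $\delta$ is a section, so $\delta\co X\to\Gamma X$ is a homotopy equivalence. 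Second, $\Gamma f\co\Gamma X\to B$ is a Hurewicz fibration (as recorded just above) with a canonical path lifting function: a path in $B$ issuing from $\Gamma f(x,\theta,r)=\theta(r)$ is lifted by concatenating it onto the right-hand end of the Moore path $\theta$. The crucial point is that this lifting is strictly natural in $X$, as it never touches the $X$-coordinate.

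The assertion about homotopy equivalences is the easiest. If $h\co X\to X'$ is a homotopy equivalence over $B$, then $\Gamma h\co\Gamma X\to\Gamma X'$ is a map over $B$, because it commutes with the projections to $\Pi B$ and hence with $\nu$; and since $\mathrm{pr}_{X'}\circ\Gamma h=h\circ\mathrm{pr}_X$ with $\mathrm{pr}_X$, $\mathrm{pr}_{X'}$ and $h$ all homotopy equivalences, $\Gamma h$ is a homotopy equivalence of total spaces. As $\Gamma f$ and $\Gamma f'$ are Hurewicz fibrations, Dold's theorem --- a fiberwise map of Hurewicz fibrations over $B$ that is an ordinary homotopy equivalence is a fiberwise homotopy equivalence --- upgrades $\Gamma h$ to a fiberwise homotopy equivalence.

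For the cofibration statement, let $i\co A\to X$ be a Hurewicz cofibration over $B$. Since $e_0$ is a Hurewicz fibration, the pullback $\Gamma i = i\times_B\mathrm{id}_{\Pi B}\co\Gamma A\to\Gamma X$ is again a Hurewicz cofibration; the remaining task is to upgrade it to a fiberwise cofibration over $B$, i.e.\ to produce a retraction of $\Gamma X\times I$ onto $\Gamma X\times\{0\}\cup\Gamma A\times I$ that lies over $B$ (equivalently, a fiberwise NDR presentation of the pair $(\Gamma X,\Gamma A)$). I would start from an ordinary such retraction, which exists because $\Gamma i$ is a plain Hurewicz cofibration, and then correct it over $B$ by transporting along the canonical path lifting of $\Gamma f$: since that lifting acts only on the Moore-path coordinate, the correction is compatible with $\Gamma i$ and equals the identity on $\Gamma X\times\{0\}\cup\Gamma A\times I$ up to the standard reparametrization of Moore paths, the residual discrepancy being absorbed by one further application of the homotopy extension property. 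The ex-space statement is the special case $i=s$ of the same construction: one first checks that $\Gamma f\circ(\delta\circ s)=\mathrm{id}_B$, so that $\delta\circ s$ is a section of the ex-space $\Gamma X$, and that $\delta\co X\to\Gamma X$ is itself a Hurewicz cofibration --- its image $\{(x,\theta,r)\in\Gamma X : r=0,\ \theta\text{ constant}\}$ is a closed strong deformation retract of $\Gamma X$ under the evident shrinking of Moore paths, yielding an NDR presentation.

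The conceptual skeleton above is formal: it rests only on Dold's theorem and on the standard interaction of Hurewicz cofibrations with fibrations. The real work --- and the step I expect to be the main obstacle --- is the point-set bookkeeping in the cofibration case, namely the explicit construction of the fiberwise retraction, where the reparametrization ambiguities inherent in Moore paths must be handled with care. This is precisely why the Moore-path model $\Pi B$, with its strictly natural ``free-end'' path lifting, is the right construction to use, and I would follow Lewis's treatment in \cite[\S7.1]{lewis-may-steinberger} rather than reproduce those details here.
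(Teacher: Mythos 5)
The paper does not prove this proposition; it is recalled verbatim from Lewis--May--Steinberger \cite[IX.1.11]{lewis-may-steinberger} (the numbering in the source is \texttt{7.1.11}, referring to chapter IX). There is therefore no internal proof to compare against --- you are in effect supplying one that the paper deliberately omits.

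Of the three pieces, the homotopy-equivalence assertion is handled correctly: $\mathrm{pr}_X\co\Gamma X\to X$ is a fibration with contractible fibers and section $\delta$, so $\Gamma h$ is a homotopy equivalence of total spaces by two-out-of-three, and Dold's theorem upgrades it to a fiberwise homotopy equivalence since $\Gamma f$ and $\Gamma f'$ are Hurewicz fibrations. That part stands on its own.

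The cofibration assertion, however, has a genuine gap --- one you candidly flag yourself, but which deserves to be named precisely. Starting from an ordinary (non-fiberwise) retraction of $(\Gamma X\times I,\ \Gamma X\times\{0\}\cup\Gamma A\times I)$ and ``transporting along the canonical path lifting of $\Gamma f$'' produces a map $\rho'$ that \emph{is} over $B$ but \emph{fails} to be the identity on the subspace: on $\Gamma A\times I$ it sends $((a,\theta,r),t)$ to $((a,\theta*c_{\theta(r)}^{\,t},r+t),t)$, prepending a constant Moore path of length $t$. This is exactly the reparametrization discrepancy you anticipate, but the proposed fix --- ``absorbed by one further application of the homotopy extension property'' --- does not obviously close the loop: the ordinary HEP for the pair $(\Gamma X\times I,\ \Gamma X\times\{0\}\cup\Gamma A\times I)$ produces an extension of the correcting homotopy with no control over $\Gamma f$, so the resulting map need not lie over $B$; using \emph{fiberwise} HEP would be circular. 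Lewis's actual argument constructs the fiberwise NDR data directly, manipulating the Moore-path length and the given NDR function in tandem rather than trying to repair a non-fiberwise retraction after the fact. The ex-space case inherits the same gap, and it also slightly conflates $\Gamma s\co\Pi B\to\Gamma X$ (which is what the cofibration case literally produces for $i=s$) with the actual section $\delta\circ s\co B\to\Gamma X$; you do correctly identify $\delta\circ s$ as the section, and the observation that $\delta$ itself is a Hurewicz cofibration is a good one, but the upgrade to a \emph{fiberwise} cofibration still needs the unsupplied argument. Your instinct to defer to \cite[IX.1]{lewis-may-steinberger} for these details is the right one, and is what the paper itself does.
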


There is a useful related lemma.

\begin{lemma}
If $X \rightarrow X^{\prime}$ is a weak equivalence over $BF$, $\Gamma
X \rightarrow \Gamma X^{\prime}$ is a weak equivalence.
\end{lemma}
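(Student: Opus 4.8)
The plan is to deduce the lemma from the natural homotopy equivalence $\delta\co X\to\Gamma X$ recorded in the previous lemma, together with the two-out-of-three property of weak homotopy equivalences. The key observation is that $\delta$ is a natural transformation $\id\Rightarrow\Gamma$ \emph{on the category of spaces over $B$}: given a map $h\co X\to X'$ over $B$, the induced map $\Gamma h$ sends $(x,\theta,r)$ to $(h(x),\theta,r)$, so $\Gamma h\circ\delta_X=\delta_{X'}\circ h$. Thus $\delta$ fits any $h$ into a commutative square, and since the two vertical maps in that square are homotopy equivalences (hence weak homotopy equivalences), the horizontal maps are related by two-out-of-three.

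Concretely, given a weak equivalence $h\co X\to X'$ over $B$, naturality of $\delta$ gives the commutative square
\[
\begin{CD}
X @>h>> X'\\
@V\delta_X VV @VV\delta_{X'}V\\
\Gamma X @>\Gamma h>> \Gamma X'.
\end{CD}
\]
Three of its four edges, namely $h$, $\delta_X$, and $\delta_{X'}$, are weak homotopy equivalences: $h$ by hypothesis, and $\delta_X$, $\delta_{X'}$ because they are homotopy equivalences by the previous lemma. Two-out-of-three for weak homotopy equivalences then forces $\Gamma h\co\Gamma X\to\Gamma X'$ to be a weak homotopy equivalence, which is the assertion.

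I do not anticipate any real obstacle here; the only points that need to be checked are the naturality of $\delta$ with respect to maps over $B$ (immediate from the pullback description of $\Gamma$, as noted above) and the standard fact that a homotopy equivalence of spaces is a weak homotopy equivalence. One could equally run the argument using the projections $\Gamma X\to X$ and $\Gamma X'\to X'$, which are natural homotopy equivalences (with $\delta$ as a homotopy inverse), and apply two-out-of-three to the resulting square; we prefer the formulation via $\delta$ since that transformation has already been set up.
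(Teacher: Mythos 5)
Your proof is correct, and it is exactly the argument one would give here: the paper states this lemma without proof, treating it as an immediate consequence of the preceding lemma asserting that $\delta\colon\id\Rightarrow\Gamma$ is a natural transformation with each $\delta_X$ a homotopy equivalence. Your commutative square plus two-out-of-three is precisely what makes that consequence immediate, so the approach matches what the paper has in mind.
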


There are two possible ways we might use $\Gamma$ to resolve the
problems with $BF$; we could replace $\pi_n$ with $\Gamma \pi_n$,
which will be a Hurewicz fibration and will have section a fiberwise
cofibration, or we could replace a given map $f : X \rightarrow BF$
with a Hurewicz fibration via $\Gamma$.  The latter approach will
yield a homotopically well-behaved Thom spectrum construction, since
the pullback of a quasifibration along a Hurewicz fibration is a
quasifibration and the pullback of a section which is a cofibration
will be a cofibration.  Moreover, Lewis shows that the two approaches
yield stably equivalent Thom spectra.  Since the second approach is
much more felicitous for the study of multiplicative structures, we
will employ it exclusively.

$\Gamma$ behaves well with respect to colimits and unbased tensors
\cite[7.1.9]{lewis-may-steinberger}.

\begin{proposition}
As a functor on $\aU / BF$ and $\aT / BF$, $\Gamma$ commutes with
colimits.
\end{proposition}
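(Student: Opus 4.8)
The plan is to reduce the statement to a single point-set fact about pullback along the fixed map $e_0\co\Pi B\to B$ (with $B=BF$), everything else being formal. First I would observe that colimits in $\aU/B$ and in $\aT/B$ are created by the forgetful functors to $\aU$ and $\aT$, and that these forgetful functors themselves preserve colimits; hence an endofunctor of $\aU/B$ is cocontinuous precisely when its composite with the forgetful functor is. This lets me forget structure maps and work with the underlying-space functor $X\mapsto\Gamma X=X\times_B\Pi B\in\aU$, so the goal becomes: for a diagram $\{f_i\co X_i\to B\}$ with colimit $f\co X\to B$ in $\aU/B$, the canonical map $\colim_i(X_i\times_B\Pi B)\to X\times_B\Pi B$ is a homeomorphism. (The based case is identical, carrying the basepoint along.)

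Next I would package the essential content as the assertion that pullback along $e_0\co\Pi B\to B$ is cocontinuous. The reduction runs as follows: $\Gamma X$ is the subspace of $X\times\Pi B$ (closed, since $B$ is weak Hausdorff) on which $f\circ\mathrm{pr}_X$ and $e_0\circ\mathrm{pr}_{\Pi B}$ agree; since $\aU$ is cartesian closed, $(-)\times\Pi B$ is cocontinuous, so $X\times\Pi B\cong\colim_i(X_i\times\Pi B)$, and one checks that the comparison map $\colim_i\Gamma X_i\to\Gamma X$ is a continuous bijection (a point of the colimit is visible at some stage $i$, where the defining equality can be read off). It then remains to upgrade this bijection to a homeomorphism, and since every colimit is assembled from coproducts, pushouts, and filtered colimits, it suffices to treat those. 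Coproducts are immediate because $(-)\times_B\Pi B$ distributes over disjoint unions. For the filtered colimits that actually occur here---sequential colimits along Hurewicz cofibrations---I would invoke that $\Gamma$ carries cofibrations to fiberwise (hence absolute) cofibrations, recorded above, together with the fact that in $\aU$ finite limits commute with sequential colimits along closed inclusions. For a pushout $X=X_1\cup_{X_0}X_2$ along a cofibration $X_0\hookrightarrow X_1$: applying the cocontinuous functor $(-)\times\Pi B$ presents $X\times\Pi B$ as $(X_1\times\Pi B)\cup_{X_0\times\Pi B}(X_2\times\Pi B)$, while $\Gamma$ sends $X_0\hookrightarrow X_i$ to closed cofibrations; one then identifies $\Gamma X_1\cup_{\Gamma X_0}\Gamma X_2$ with the closed subspace of $X\times\Pi B$ cut out by the agreement of $f\circ\mathrm{pr}_X$ and $e_0\circ\mathrm{pr}_{\Pi B}$, which is exactly $\Gamma X$.

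The step I expect to be the main obstacle is precisely this last, point-set one: checking that pullback along $e_0$ meshes correctly with pushouts (and with colimits along non-cofibrations, if one wants the statement in full generality) in $\aU$. This is the only place where more than the cartesian closure of $\aU$ is needed; it rests on the explicit description of $\Pi B$---in particular on its admitting the section $\delta$ and being fiberwise shrinkable onto it over $B$ via the end-point projection---and is the content of \cite[7.1.9]{lewis-may-steinberger}, which I would cite for the details. Finally, I would note that the based case $\aT/B$ follows verbatim with basepoints carried along, and that the companion statement for unbased tensors, $\Gamma(X\otimes K)\cong\Gamma(X)\otimes K$, is immediate since the adjoined factor $K$ does not interact with $B$.
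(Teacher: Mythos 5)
The paper gives no proof of this proposition; it simply cites Lewis--May--Steinberger \cite[7.1.9]{lewis-may-steinberger}, so your sketch is not a reconstruction of anything printed in the text. Your framing --- reducing to cocontinuity of pullback along $e_0\co\Pi B\to BF$, using cartesian closure of $\aU$ for $(-)\times\Pi B$, checking that the comparison map $\colim_i\Gamma X_i\to\Gamma X$ is a continuous bijection, and disposing of coproducts --- is sound and a reasonable skeleton for the argument.

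The substantive issue is that the proposition asserts $\Gamma$ commutes with \emph{all} colimits, while your treatment of pushouts and sequential colimits is confined from the outset to maps that are cofibrations, with the general case (arbitrary coequalizers, pushouts along non-cofibrations) handed off to the very \cite[7.1.9]{lewis-may-steinberger} that the paper cites. So you have not gone logically beyond the paper's reference; your sketch is a partial wrapper around it. I would also resist your diagnosis that the crux is the section $\delta$ together with fiberwise shrinkability of $\Pi B$ onto it: that feature governs the \emph{homotopical} behavior of $\Gamma$ (e.g.\ that $\delta\co X\to\Gamma X$ is a homotopy equivalence and that $\Gamma$ carries cofibrations to fiberwise cofibrations), not its point-set interaction with colimits. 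What is actually at issue is whether the subspace topology on $\Gamma X\subset X\times\Pi B$ agrees with the colimit topology on $\colim_i\Gamma X_i$ for an arbitrary diagram; that rests on a quite different property of the map $e_0$ (for instance that $e_0$ is exponentiable, so that $e_0^*$ admits a right adjoint and is automatically cocontinuous) and cannot be deduced from contractibility of its fibers alone.
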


There is a related result for tensors with unbased spaces (although
note however that this is false for based tensors).

\begin{proposition}
As a functor on $\aU / BF$ and $\aT / BF$, $\Gamma$ commutes with the
tensor with an unbased space $A$.
\end{proposition}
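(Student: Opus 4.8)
The plan is to read both assertions off the pullback that defines $\Gamma$, reducing everything to the elementary fact that forming a pullback of spaces commutes with multiplying a factor by a fixed space.

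Recall that for $f\co X\to B$ we have $\Gamma X=\{(x,(\theta,r))\in X\times\Pi B : f(x)=\theta(0)\}$ with $\Gamma f\co(x,(\theta,r))\mapsto\theta(r)$; equivalently $\Gamma=\nu_{!}\,e_{0}^{*}$, pullback along $e_{0}\co\Pi B\to B$ followed by composition with $\nu\co\Pi B\to B$. In $\aU/B$ the tensor of $(f\co X\to B)$ with an unbased space $A$ is $X\times A$ with structure map $f\circ\pi_{X}\co X\times A\to X\to B$. Since this structure map factors through $X$, forming the pullback along $e_{0}$ does not see the factor $A$: the evident map
\[
\Gamma(X\times A)=\{(x,a,(\theta,r)):f(x)=\theta(0)\}\ \longrightarrow\ \Gamma X\times A,\qquad (x,a,(\theta,r))\mapsto\bigl((x,(\theta,r)),a\bigr),
\]
is a homeomorphism by inspection, natural in $X$ and in $A$, and a map over $B$ because $\Gamma(f\circ\pi_{X})(x,a,(\theta,r))=\theta(r)=\Gamma f(x,(\theta,r))$. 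Hence $\Gamma$ commutes with $-\otimes A$ on $\aU/B$, and the identical computation applies over $BF$. (Conceptually: both $e_{0}^{*}$ and $\nu_{!}$ commute with $-\times A$, hence so does their composite.)

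For $\aT/B$ one reduces to the unbased case using the preceding proposition, which says $\Gamma$ commutes with colimits over $B$. Indeed, the tensor of a based object $X$ over $B$ with an unbased space $A$ is the half-smash $X\sma A_{+}$, obtained from $X\times A$ by collapsing $\{x_{0}\}\times A$; this is a pushout in $\aU/B$ formed out of $X\times A$, a copy of $A$, and a point. Applying $\Gamma$ and using that it preserves this pushout together with $\Gamma(X\times A)\iso\Gamma X\times A$ identifies $\Gamma(X\sma A_{+})$ with the corresponding pushout built from $\Gamma X\times A$; after the routine identification of the basepoint fibre of $\Gamma X$ — over which $\Gamma$ collapses the contractible space of Moore paths issuing from the image of the basepoint onto that basepoint, a cofibration and a homotopy equivalence — this is $(\Gamma X)\sma A_{+}$. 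The same accounting shows why the analogue fails for the based tensor $X\sma A$: there the basepoint enters through both smash variables, and since $\Gamma$ replaces a point not by a point but by the contractible Moore path space over its image in $B$, the two collapses can no longer be matched.

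The only step I expect to need care is this last $\aT/B$ bookkeeping: checking that the Moore-path fattening of the basepoint fibre is a fibrewise deformation retract onto the genuine basepoint, so that the output pushout really is $(\Gamma X)\sma A_{+}$. The unbased identification $\Gamma(X\times A)\iso\Gamma X\times A$ itself is immediate and presents no obstacle; it is essentially the content of the result of Lewis--May cited in the statement.
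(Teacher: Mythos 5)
The unbased half of your argument is correct and is the only sensible proof: $\Gamma$ is $f\mapsto e_0^*f$ postcomposed with $\nu$, the $A$-factor plays no role in the pullback because the structure map of $X\times A$ factors through $X$, and the resulting identification $\Gamma(X\times A)\cong\Gamma X\times A$ is a homeomorphism over $B$, naturally in both variables. The paper gives no proof (the surrounding propositions simply cite Lewis–May–Steinberger), so your direct computation is exactly what one would write.

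The based half is where I think there is a genuine problem, not just a bookkeeping step. Your pushout computes
\[
\Gamma(X\sma A_+)\;\cong\;\Gamma(*)\ \cup_{\Gamma(\{x_0\}\times A)}\ \Gamma(X\times A),
\]
and here $\Gamma(*)$ is the Moore path space of paths in $B$ issuing from $b_0$ and $\Gamma(\{x_0\}\times A)\cong A\times\Gamma(*)$. In $\Gamma(X\sma A_+)$ the single basepoint of $X\sma A_+$ therefore acquires the \emph{whole} Moore path space $\Gamma(*)$ as its fiber, whereas in $(\Gamma X)\sma A_+$ one has collapsed the set $\{(x_0,\zeta_{b_0})\}\times A$, a single point's worth of $A$. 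Concretely, the evident map $(\Gamma X)\sma A_+\to\Gamma(X\sma A_+)$ sends $[((x_0,\omega),a)]$ to $([x_0\sma a],\omega)=(\ast,\omega)$ for \emph{every} $a$, so it fails to be injective whenever $\omega\neq\zeta_{b_0}$ and $A$ has more than one point; and there is no continuous map in the other direction, since one would have to send $(\ast,\omega)$ to some $[((x_0,\omega),a)]$, and as $x\to x_0$ the continuity constraint forces $[((x_0,\omega),a)]$ to be independent of $a$, which it is not for $\omega\neq\zeta_{b_0}$. So collapsing $\Gamma(*)$ is not an isomorphism and does not even fit into a natural transformation of the two pushout squares: the legs $A\times\Gamma(*)\to\Gamma X\times A$, $(a,\omega)\mapsto((x_0,\omega),a)$, and $A\to\Gamma X\times A$, $a\mapsto((x_0,\zeta_{b_0}),a)$, do not agree under the projection $A\times\Gamma(*)\to A$. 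In short, the step you flag as needing care is an actual obstruction to the strict identification you are after, and the argument as written proves at most a natural fibrewise weak equivalence $\Gamma X\sma A_+\simeq\Gamma(X\sma A_+)$, not the commutation asserted.
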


Finally, we recall some salient facts about the interaction of
$\Gamma$ with operadic multiplications.  May (\cite[1.8]{may-geom})
shows that $\Gamma$ restricts to a functor on $\aT[\bC]$, for an operad
$\aC$ augmented over the linear isometries operad, and Lewis observes
that in fact $\Gamma$ extends to a functor on $\aT[\bC] / BF$.  We
make particular use of these facts in the cases of $\bT$ and $\bP$.
A related observation we will use is that $\Gamma$ restricts to a
functor on $\aU[\bL]$.  An essential aspect of these results is that
all of the various maps associated with $\Gamma$ (notably $\delta$)
are maps of $\bC$-algebras, and so in particular the map $\delta$
yields a weak equivalence in the category $\aT[\bC]$.

\subsection{$\Gamma$ and Cofibrant replacement}

In order to verify axiom {\bf A3} in this setting, we must amend the
cofibrant replacement process.  Given a map $f : X \rightarrow BF$
regarded as a map in $(\aU[\bL])[\bT]$, we consider the map
$\Gamma (cf) : \Gamma(cX) \rightarrow BF$ obtained by
the cofibrant replacement functor in $(\aU[\bL])[\bT]$ followed by
$\Gamma$.  We have the following commutative diagram:
\[
\xymatrix{
c X \ar[r] \ar[d]^-{\delta} & X \ar[r] \ar[d]^-{\delta} & BF \\
\Gamma c X \ar[r]^-{\htp} & \Gamma X \ar[ur] \\
}.
\]

Since the labeled weak equivalence in the preceding diagram connect
objects of $(\aU[\bL])[\bT]/BF$ which are $T$-good, there is a stable
equivalence connecting $\T_{\sL}(\Gamma f)$ to
$\T_{\sL}(\Gamma(c f)))$.  If $f$ itself was $T$-good, then there is a
further stable equivalence from $\T_{\sL} f$.  Therefore 
this process does not change the homotopy type of the Thom spectrum.
Finally, we apply $* \boxtimes (-)$ to ensure that we land in
$\aM_*[\bT] / BG_{\aA}$.  Denote this composite functor by $\gamma$.

Next, we must verify that this process produces something which
allows us to compute derived functors with respect to $\boxtimes$ and
$\sma$, in $\aM_*[\bT]$ and $\aM_S[\bT]$ respectively.  Although $*
\boxtimes \Gamma(c X)$ is not a cofibrant object in
$\aM_*[\bT]$, it has the homotopy type of a cofibrant object and this
suffices to ensure that it can be used to compute the derived
$\boxtimes$ product.  This observation also implies that the functor
$Q$ satisfies axiom {\bf A4} in this context.

Moving on, we now need to show that $\T_{\aA}(* \boxtimes \Gamma(c
f))$ can be used to compute the derived smash product in $\aM_S$.

\begin{lemma}\label{lemthomunder}
Let $f : X \rightarrow BG_{\aA}$ be a map in $\aM_*[\bT]$.  Let $U$ denote
the forgetful functors $\aM_*[\bT] \rightarrow \aM_*$ and $\aM_S[\bT]
\rightarrow \aM_S$ respectively.  Then there is an isomorphism
$\T_{\aA}(Uf) \cong U\T_{\aA}(f)$.
\end{lemma}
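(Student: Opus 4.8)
\medskip
This is a formal consequence of the fact, recorded in Proposition~\ref{propstrong} and the discussion following Definition~\ref{defnAthom}, that $\T_{\aA}\co\aM_*/BG_{\aA}\to\aM_S$ is strong symmetric monoidal. The plan is to identify the objects of $\aM_*[\bT]/BG_{\aA}$ with the monoids in the symmetric monoidal category $\aM_*/BG_{\aA}$, and then to appeal to the general principle that a strong symmetric monoidal functor induces a functor on categories of monoids which commutes with the forgetful functors to the underlying categories.

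First I would make the monoidal structure on $\aM_*/BG_{\aA}$ explicit. Since $BG_{\aA}$ is a commutative $\boxtimes$-monoid, this category has product $f\boxtimes g\co X\boxtimes Y\to BG_{\aA}\boxtimes BG_{\aA}\to BG_{\aA}$ and unit the unit map $*\to BG_{\aA}$ of the monoid $BG_{\aA}$. A monoid for this product is precisely a monoid morphism $\alpha\co A\to BG_{\aA}$ in $\aM_*$: the multiplication $\mu\co A\boxtimes A\to A$ and the unit $*\to A$ of $A$ are morphisms over $BG_{\aA}$ exactly because $\alpha$ respects the multiplications and units and $BG_{\aA}$ is commutative. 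Thus $\aM_*[\bT]/BG_{\aA}$ is canonically the category of monoids in $\aM_*/BG_{\aA}$, and likewise $\aM_S[\bT]$ is the category of monoids in $\aM_S$.

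Now, given $f\co X\to BG_{\aA}$ in $\aM_*[\bT]$ (so that $Uf\co UX\to U(BG_{\aA})=BG_{\aA}$ is an object of $\aM_*/BG_{\aA}$), the $S$-algebra $\T_{\aA}(f)$ is by construction the $S$-module $\T_{\aA}(Uf)$ equipped with the multiplication
\[
\T_{\aA}(Uf)\sma_S\T_{\aA}(Uf)\xr{\cong}\T_{\aA}(f\boxtimes f)\xr{\T_{\aA}(\mu)}\T_{\aA}(Uf)
\]
and unit $S\xr{\cong}\T_{\aA}(*\to BG_{\aA})\to\T_{\aA}(Uf)$, the first map in each case being a component of the strong monoidal structure of $\T_{\aA}$. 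Since these structure maps are isomorphisms of $S$-modules, applying the forgetful functor $U\co\aM_S[\bT]\to\aM_S$ to $\T_{\aA}(f)$ returns exactly the $S$-module $\T_{\aA}(Uf)$. This is the asserted natural isomorphism $U\T_{\aA}(f)\cong\T_{\aA}(Uf)$; under the identifications above it is in fact an equality.

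There is no serious obstacle here: the entire content is the bookkeeping of strong symmetric monoidal functors. The only point needing a moment's care is the identification of $\aM_*[\bT]/BG_{\aA}$ with the category of monoids in $\aM_*/BG_{\aA}$, which uses the commutativity of $BG_{\aA}$ to guarantee that the structure maps of a monoid $A$ lie over $BG_{\aA}$. We state the lemma separately because, together with Lemma~\ref{lemflat}, it is what allows us to conclude that the underlying $S$-module of $\T_{\aA}$ applied to a (homotopically) cell associative monoid represents the derived smash product in $\aM_S$.
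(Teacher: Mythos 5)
Your proof is correct and is exactly the formal argument the paper leaves implicit (the lemma is stated without proof). Your key observations --- that $\aM_*[\bT]/BG_{\aA}$ is the category of monoids in the monoidal slice category $\aM_*/BG_{\aA}$, and that the strong symmetric monoidal functor $\T_{\aA}$ therefore lifts to monoids with the underlying object unchanged --- are precisely what is needed, and you have recorded them carefully.

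One small inaccuracy in your closing remark: commutativity of $BG_{\aA}$ is not what guarantees that the structure maps of a monoid $\alpha\co A\to BG_{\aA}$ lie over $BG_{\aA}$. The identification of monoids in $\aM_*/BG_{\aA}$ with monoid morphisms $A\to BG_{\aA}$ holds already when $BG_{\aA}$ is merely an associative monoid, since that suffices to make the slice category monoidal; the conditions that $\mu_A$ and $\eta_A$ are morphisms over $BG_{\aA}$ are exactly the conditions that $\alpha$ is a monoid morphism. Commutativity of $BG_{\aA}$ is what makes $\aM_*/BG_{\aA}$ \emph{symmetric} monoidal, which is needed elsewhere in the argument (most conspicuously for the cyclic bar construction, whose cyclic operators use the symmetry isomorphism), but not for the identification you invoke in this lemma.
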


By Lemma~\ref{lemthomunder}, it will suffice to show that the
underlying $S$-module of $\T_{\aA}(* \boxtimes \Gamma(c f))$ is
in the class $\bar{\aF_S}$.  Since a slight modification of the
argument of Proposition~\ref{propunderlying} shows that the underlying
$\sL(1)$-space of a cell $\lprod$-monoid is a cell $\sL(1)$-space, in
fact Lemma~\ref{lemthomunder} implies that it will suffice to show
that given \[f: X \rightarrow BF_{\aA}\] such that $X$ is a cell
$\sL(1)$-space, $\T_{\aA}(* \boxtimes \Gamma f)$ is in the class
$\bar{\aF_S}$.  We will make an inductive argument.  Using the fact
that $\Gamma$ commutes with colimits in $\sL(1)$-spaces and preserves
Hurewicz cofibrations, it suffices to show that the Thom spectra of $*
\boxtimes \Gamma \bT(CE_n)$ and $* \boxtimes \Gamma \bT(E_n)$ are
in the class $\bar{\aF_S}$, where $E_n$ is a wedge of cells $D^n$.  We
can further reduce to the case where we are considering a single cell.
Abusing notation by suppressing the maps to $BF$, we will refer to the
relevant Thom spectra as $\T_{\aA}(* \boxtimes \Gamma (\sL(1) \times
S^n))$ and $\T_{\aA}(* \boxtimes \Gamma (\sL(1) \times D^n))$.
Finally, since $S^n$ can be constructed as the pushout $D^n
\cup_{S^{n-1}} D^n$, it suffices to consider $\T_{\aA}(*
\boxtimes \Gamma(\sL(1) \times *))$ and $\T_{\aA}(* \boxtimes \Gamma (\sL(1)
\times D^n))$.  Recall that by Proposition~\ref{propstrong}, these
spectra are isomorphic as $S$-modules to $S \sma_{\sL}
\T_{\sL}(\Gamma (\sL(1) \times *))$ and $S \sma_{\sL}
\T_{\sL}(\Gamma (\sL(1) \times D^n))$ respectively.

\begin{lemma}\label{lemextended}
The $S$-modules $\T_{\aA}(* \boxtimes \Gamma (\sL(1) \times *))$ and
$\T_{\aA}(* \boxtimes \Gamma (\sL(1) \times D^n))$ are in the class
$\bar{\aF_S}$.
\end{lemma}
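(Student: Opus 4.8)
The plan is to reduce the lemma, via the identification of these $S$-modules with $S\sma_{\sL}\T_{\sL}(\Gamma(\sL(1)\times *))$ and $S\sma_{\sL}\T_{\sL}(\Gamma(\sL(1)\times D^n))$ recorded just above, to the behavior of $\T_{\sL}$ on twisted half-smash products together with the fact that a Thom spectrum built from a null homotopic map over $BF$ has the homotopy type of the sphere. In the notation suppressed in the statement, the structure map $\sL(1)\times X\to BF$ (for $X$ a point or $X=D^n$) is the map $\bL g$ associated as in Corollary~\ref{corltol} to a map $g\co X\to BF$, i.e.\ the composite $\sL(1)\times X\to\sL(1)\times BF\to BF$ built from the $\sL$-action on $BF$. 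Since both $X$ and $\sL(1)$ are contractible, $g$ is null homotopic and $T$-good (using that the basepoint inclusion $*\to BF$ is $T$-good, that $\Gamma$ commutes with unbased tensors, and Theorem~\ref{thmthomomni}(4) and (6)); hence $\T_{\sL}(\Gamma g)\htp S$.

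The key step is a $\Gamma$-equivariant refinement of Theorem~\ref{thmthomextended} in the case $j=1$: a natural homotopy equivalence of $\bL$-spectra
\[
\T_{\sL}\bigl(\Gamma(\bL g)\bigr)\ \htp\ \bL\,\T_{\sL}(\Gamma g)\ =\ \sL(1)\thp\T_{\sL}(\Gamma g).
\]
To obtain this I would compare $\Gamma$ of a twisted product with the twisted product of $\Gamma$: pushing Moore paths forward along the $\sL$-action on $BF$ produces a natural map $\bL(\Gamma g)\to\Gamma(\bL g)$ of $\sL(1)$-spaces over $BF$, and one checks it is a fiberwise homotopy equivalence by contracting the linear isometries spaces (for instance via the isomorphism $\sL(2)\iso\sL(1)$ of $\sL(1)$-spaces); applying $\T_{\sL}$ to the source and invoking Corollary~\ref{corltol} then yields the displayed equivalence. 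Alternatively, one verifies directly, following the proof of \cite[7.5.6]{lewis-may-steinberger}, that $\bL g$ is already $T$-good, so that $\T_{\sL}(\Gamma(\bL g))\htp\T_{\sL}(\bL g)\iso\sL(1)\thp\T_{\sL}(g)$.

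Granting the displayed equivalence, we get
\[
S\sma_{\sL}\T_{\sL}\bigl(\Gamma(\sL(1)\times X)\bigr)\ \htp\ S\sma_{\sL}\bigl(\sL(1)\thp\T_{\sL}(\Gamma g)\bigr)\ \htp\ S\sma_{\sL}\bigl(\sL(1)\thp S\bigr),
\]
and $S\sma_{\sL}(\sL(1)\thp S)$ lies in $\aF_S$: it has the defining form $S\sma_{\sL}\sL(j)\thp_G K$ with $j=1$, $G=\{e\}\subset\Sigma_1$ and $K=S$, which has the homotopy type of a CW spectrum. Since $\bar{\aF_S}$ is closed under homotopy equivalence, both $S$-modules in the statement lie in $\bar{\aF_S}$, which is what we want.

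The main obstacle is the displayed equivalence. As already noted in this section, the interaction of $\Gamma$ with extended powers and twisted half-smash products is delicate: $\Gamma$ combines a pullback with a Moore-path construction, and the $\sL(1)$-action on $BF$ is not by homeomorphisms, so $\Gamma$ does not formally commute with $\sL(1)\times(-)$ over $BF$. Carrying out the fiberwise deformation carefully --- exploiting the contractibility of the linear isometries spaces as in Lewis's treatment and keeping track of $\bL$-equivariance throughout --- is where the real work lies.
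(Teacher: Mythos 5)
Your overall strategy---reduce each $S$-module to the form $S\sma_{\sL}(\sL(1)\thp K)$ with $K$ of CW homotopy type and invoke the definition of $\aF_S$---is the right shape, and you correctly recognize the structure map $\sL(1)\times X\to BF$ as $\bL g$ for some $g\co X\to BF$. But the argument hinges on the displayed equivalence $\T_{\sL}(\Gamma(\bL g))\htp\bL\,\T_{\sL}(\Gamma g)$, and you acknowledge without resolving that this is ``where the real work lies.'' The candidate comparison map $\bL(\Gamma g)\to\Gamma(\bL g)$ (push Moore paths along the $\sL(1)$-action) is not obviously a \emph{fiberwise} homotopy equivalence: $\bL(\Gamma g)$ is not a Hurewicz fibration, and the contractibility of $\sL(1)$ alone does not manufacture an inverse over $BF$. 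The alternative route---checking directly that $\bL g$ is $T$-good and writing $\T_{\sL}(\bL g)\iso\sL(1)\thp\T_{\sL}(g)$---only trades problems, since without $\Gamma$ the spectrum $\T_{\sL}(g)$ for an arbitrary $g\co D^n\to BF$ need not have CW homotopy type. So as written there is a genuine gap at the key step.

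The paper avoids the general $\Gamma$--twisted-product compatibility entirely by a two-stage reduction to a case where $\Gamma$ \emph{manifestly} commutes with $\sL(1)\times(-)$. First, a choice of point in the image of $D^n\to BF$ gives $\sL(1)\times*\to\sL(1)\times D^n$, a homotopy equivalence of cofibrant $\sL(1)$-spaces over $BF$; applying $\Gamma$ yields a fiberwise equivalence, which $\T$ sends to a homotopy equivalence. Second, path-connectedness of $BF$ (again via fiberwise equivalences after $\Gamma$) reduces to the case that $*\to BF$ is the basepoint inclusion. Since the basepoint is $\sL(1)$-fixed, $\bL$ of the trivial map is again constant, so $\Gamma(\sL(1)\times*)\iso\sL(1)\times\Gamma(*)$ over $BF$---this is just the commutation of $\Gamma$ with unbased tensors, no twisting needed. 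Applying $\T$ gives $\sL(1)_+\sma\T(\Gamma(*))$, and $S\sma_{\sL}\bigl(\sL(1)_+\sma\T(\Gamma(*))\bigr)$ lies in $\bar{\aF_S}$ by Lewis's explicit identification of the homotopy type of $\T(\Gamma(*))$ in the proof of \cite[7.3.7(ii)]{lewis-may-steinberger}. You should adopt this reduction rather than attempt the general equivariant refinement, which is delicate precisely for the reasons you flag at the end of your own proposal.
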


\begin{proof}
Given a map $D^n \rightarrow BF$, by choosing a point in $BF$ in the
image we obtain a map $* \rightarrow D^n$ over $BF$.  This
induces a map $\sL(1) \times * \rightarrow \sL(1) \times D^n$ which
is a weak equivalence of $\sL(1)$-spaces over $BF$.  Since these are
cofibrant $\sL(1)$-spaces, this is a homotopy equivalence over $BF$.
Applying $\Gamma$ turns this into a fiberwise homotopy equivalence.
Since $\T$ takes fiberwise homotopy equivalences to homotopy
equivalences of spectra, the resulting spectra are homotopy
equivalent.

Therefore, we are reduced to considering the Thom spectra associated
to $\Gamma(\sL(1) \times *)$ associated to the various choices of a
target for point.  But since $BF$ is path-connected, an argument
analogous to the one in the preceding paragraph allows us to show that
all of these spectra are homotopy equivalent.  Thus, it suffices to
consider the trivial map $* \rightarrow BF$.  But then $\Gamma(\sL(1)
\times *))$ is homeomorphic as a space over $BF$ to $\pi_2 : \sL(1) \times
\Gamma(*)$, where $\pi_2$ is the projection away from $\sL(1)$.
Applying $\T$ yields the Thom spectrum $\sL(1)_+ \sma \T(\Gamma(*))$.
Finally, $S \sma_{\sL} (\sL(1)_+ \sma \T(\Gamma(*))$ is in the
class $\bar{\aF_S}$; this follows from the proof of part (ii) of
\cite[7.3.7]{lewis-may-steinberger}, in which the homotopy type of the
Thom spectrum $\T(\Gamma(*))$ is explicitly described.
\end{proof}

In the previous proof, we are implicitly exploiting the ``untwisting''
proposition I.2.1 from EKMM which provides an isomorphism of spectra
$A \thp \Sigma^\infty X \cong \Sigma^\infty (A_+ \sma X)$.

Finally, we need to be able to compare $\Gamma (f \lprod f)$ to
$\Gamma f \lprod \Gamma f$.

\begin{proposition}
Let $f \colon X \rightarrow BF$ be a map of $\sL(1)$-spaces, and
assume that $X$ is a cell $\sL(1)$-space.  Then there is a weak
equivalence between $\T_{\sL} \Gamma (f \lprod f)$ and $\T_{\sL} (\Gamma f
\lprod \Gamma f)$.
\end{proposition}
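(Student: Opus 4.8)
The plan is to circumvent the fact, noted above, that $\Gamma$ does not commute with $\lprod$ (nor with extended powers) by using the natural transformation $\delta\colon\id\to\Gamma$ to produce a direct comparison between $\Gamma f\lprod\Gamma f$ and $\Gamma(f\lprod f)$, and then to invoke the homotopy invariance of $\T_{\sL}$ on maps of the form $\Gamma(-)$. Recall that $\T_{\sL}$ is strong symmetric monoidal by Proposition~\ref{propstrong}, and that for any map the target of $\Gamma$ is a Hurewicz fibration, hence a $T$-good map to $BF$; so weak equivalences over $BF$ between such objects induce stable equivalences of Thom spectra by Theorem~\ref{thmthomomni}(5).

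First I would construct a natural map $\kappa\colon\Gamma X\lprod\Gamma X\to\Gamma(X\lprod X)$ of $\sL(1)$-spaces over $BF$. On the coequalizer presentation $\Gamma X\lprod\Gamma X=\sL(2)\times_{\sL(1)\times\sL(1)}(\Gamma X\times\Gamma X)$, an element is represented by an $l\in\sL(2)$ together with points $(x,(\theta_1,r_1))$ and $(y,(\theta_2,r_2))$ of $\Gamma X$; send this to the point $[l;x,y]\in X\lprod X$ together with the path $t\mapsto\mu_l(\theta_1(t),\theta_2(t))$ in $BF$ (of length the maximum of $r_1$ and $r_2$), where $\mu_l\colon BF\times BF\to BF$ is the map determined by $l$ under the action of the linear isometries operad on $BF$. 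The operad identities show that this is well defined on the coequalizer and $\sL(1)$-equivariant, and a direct check gives $\kappa\circ(\delta\lprod\delta)=\delta$, where $\delta\lprod\delta\colon X\lprod X\to\Gamma X\lprod\Gamma X$ is obtained by applying $\lprod$ to the morphism $\delta$ over $BF$ (this uses only $\Gamma f\circ\delta=f$ and functoriality of $\lprod$), and $\delta\colon X\lprod X\to\Gamma(X\lprod X)$ is the standard one.

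The two points that require work are: (i) $\delta\lprod\delta$ is a weak equivalence; and (ii) $\Gamma f\lprod\Gamma f$ is $T$-good. Granting these, since $\delta\colon X\lprod X\to\Gamma(X\lprod X)$ is a weak equivalence and $\kappa\circ(\delta\lprod\delta)=\delta$, the two-out-of-three property shows that $\kappa$ is a weak equivalence over $BF$ between the $T$-good objects $\Gamma f\lprod\Gamma f$ and $\Gamma(f\lprod f)$, so Theorem~\ref{thmthomomni}(5) gives the desired stable equivalence $\T_{\sL}(\Gamma f\lprod\Gamma f)\xrightarrow{\sim}\T_{\sL}\Gamma(f\lprod f)$. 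For (i) I would induct over the cell structure of $X$: since $\lprod$, $\Gamma$ and the forgetful functor to spaces all preserve the relevant colimits and Hurewicz cofibrations (Proposition~\ref{propfreeprod} together with the colimit and tensor properties of $\Gamma$), and since $D^n$ is contractible and $BF$ is path connected, this reduces — exactly as in the proof of Lemma~\ref{lemextended} — to the trivial map $*\to BF$, where the claim follows from Proposition~\ref{propfreeprod} and the contractibility of $\Gamma(*)$; alternatively one can deduce (i) directly from Proposition~\ref{propueq}, using that $\delta$ is an $\sL(1)$-homotopy equivalence and that $X$ is cofibrant. For (ii) I would use that $\Gamma f$ is a Hurewicz fibration, so that $\T_{\sL}(\Gamma f)$ is of the correct homotopy type, together with the fact that the analysis underlying Proposition~\ref{propstrong} (that is, Theorem~\ref{thmthomextended}) exhibits $\T_{\sL}(\Gamma f\lprod\Gamma f)\cong\T_{\sL}(\Gamma f)\sma_{\sL}\T_{\sL}(\Gamma f)$ as a twisted half-smash product of spectra of the correct homotopy type, hence itself of the correct homotopy type; this is precisely the assertion that $\Gamma f\lprod\Gamma f$ is $T$-good.

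I expect step (ii) — controlling the homotopical behaviour of $\Gamma$ relative to the operadic product, rather than on a space with a single map to $BF$ — to be the main obstacle; it is the reason one routes through the twisted half-smash description of the Thom spectrum instead of arguing naively that $\Gamma$ preserves $\lprod$. Step (i) is comparatively routine, but it genuinely uses the hypothesis that $X$ be a cell $\sL(1)$-space.
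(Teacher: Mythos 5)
Your construction of the direct ``path-multiplication'' map $\kappa\colon\Gamma X\lprod\Gamma X\to\Gamma(X\lprod X)$ is a genuinely different route from the paper's, which never builds such a map: the paper instead factors through the ordinary cartesian product, using Lewis's result \cite[7.5.5]{lewis-may-steinberger} that $\Gamma f\times\Gamma f\to\Gamma(f\times f)$ is a weak equivalence between $T$-good maps and that $\Gamma f\times\Gamma f$ is good, and then connects $\Gamma f\times\Gamma f$ to $\Gamma f\lprod\Gamma f$ by the chain $\Gamma f\times\Gamma f\to\sL(2)\times(\Gamma f\times\Gamma f)\to\Gamma f\lprod\Gamma f$. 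That chain is not a chain of good maps, and the paper says so explicitly; it circumvents the issue by arguing directly that the induced maps of Thom spectra are stable equivalences, using Lemma~\ref{lemextended} (which is where the cell hypothesis enters).

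This is exactly where your argument has a gap. Your step (ii) asserts that $\Gamma f\lprod\Gamma f$ is $T$-good, but the justification conflates ``$\T_{\sL}(\Gamma f\lprod\Gamma f)$ is of the correct homotopy type'' with ``$\Gamma f\lprod\Gamma f$ is $T$-good.'' By definition $T$-goodness of $g$ means that $\T(g)\to\T\Gamma(g)$ is a stable equivalence; knowing that $\T_{\sL}(\Gamma f\lprod\Gamma f)\cong\T_{\sL}(\Gamma f)\sma_{\sL}\T_{\sL}(\Gamma f)$ is a twisted half-smash product with nice input does not by itself say anything about the comparison map to $\T\Gamma(\Gamma f\lprod\Gamma f)$. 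Indeed there is no reason to expect $\Gamma f\lprod\Gamma f$ to be a Hurewicz fibration (the operadic product $\lprod$ does not preserve fibrations), and the paper's remark that these ``are not all good maps'' strongly suggests it is \emph{not} $T$-good in general. Without $T$-goodness of both objects you cannot appeal to Theorem~\ref{thmthomomni}(5), so your invocation of the two-out-of-three argument on $\kappa$ does not yet yield a stable equivalence of Thom spectra. To repair this you would have to prove directly, for cell $X$, that $\T(\kappa)$ is a stable equivalence --- which amounts to redoing the cell-by-cell analysis of Lemma~\ref{lemextended}, i.e.\ recapitulating the paper's argument in a different guise.

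A minor further point: you locate the use of the cell hypothesis in step (i), but (i) does not really need it. The map $\delta\colon X\to\Gamma X$ is a homotopy equivalence of $\sL(1)$-spaces (it has an $\sL(1)$-equivariant retraction and the contracting homotopy is also equivariant), and the operation $\lprod$ is enriched and so preserves $\sL(1)$-homotopies; hence $\delta\lprod\delta$ is a homotopy equivalence, and in particular a weak equivalence, without any cellularity. The cell hypothesis is needed precisely for the Thom-spectrum-level analysis in (ii), not for (i).
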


\begin{proof}
Recall from Proposition~\ref{propfreeprod} that given a choice of a linear
isometric isomorphism $g \colon U^2 \to U$, there is a chain of weak
equivalences 
\[X \times X \rightarrow \sL(2) \times (X \times X) \rightarrow X
\lprod X.\] 
Moreover, these equivalences are given by maps over $BF$.

Lewis shows that there is a map $\Gamma f \times \Gamma f
\rightarrow \Gamma (f \times f)$ given by multiplication of paths
which is a weak equivalence \cite[7.5.5]{lewis-may-steinberger}.  In
addition, he shows 
that $\Gamma f \times \Gamma f$ is a good map.  This is the heart of
our comparison.  Applying $\Gamma$ to the chain of equivalences above,
we have a composite 
\[\Gamma (f \times f) \rightarrow \Gamma (\sL(2) \times (f \times f))
\rightarrow \Gamma (f \lprod f)\] 
which induces weak equivalences of Thom spectra $\T(\Gamma (f \times
f)) \rightarrow \T(\Gamma (f \lprod f))$.  On the other hand,
there is also the composite 
\[\Gamma f \times \Gamma f \rightarrow
\sL(2) \times (\Gamma f \times \Gamma f) \rightarrow \Gamma f
\lprod \Gamma f.\]
Although these are not all good maps, Lemma~\ref{lemextended} implies that
the induced maps of Thom spectra
\[g_*(\T(\Gamma f) \bar{\sma} \T(\Gamma f) \rightarrow \sL(2) \thp
(\T(\Gamma f) \bar{\sma} \T(\Gamma f)) \rightarrow \T(\Gamma f)
\lprod \T(\Gamma f)\] 
are stable equivalences.  Since
$\Gamma f \times \Gamma f \rightarrow \Gamma (f \times f)$ induces a
weak equivalence of Thom spectra, the result follows.
\end{proof}

The previous proposition gives us the following result, which allows
us to compare to a model of the free loop space which is $T$-good in
the proof of the main theorem from the axioms.

\begin{corollary}
Let $f : X \rightarrow BG_{\aA}$ be a map of $\boxtimes$-monoids.  Then there
is a weak equivalence of spectra $\T_{\aA}(N^{\cy}_\boxtimes (\gamma
f))$ and $\T_{\aA}(\gamma N^{\cy}_{\boxtimes} f)$.
\end{corollary}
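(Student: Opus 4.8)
\textit{Proof plan.}
The plan is to commute both $\T_{\aA}$ and the good‑replacement functor $\gamma$ past geometric realization, thereby reducing the claim to a levelwise comparison of the two cyclic bar constructions, and then to prove that levelwise comparison by bootstrapping the two‑fold case of the preceding proposition. Throughout we may assume that $X$ is already a cell $\boxtimes$‑monoid in $\aM_*$: replacing a general $X$ by $cX$ alters both sides only through weak equivalences compatible with $\T_{\aA}$, $\Gamma$, and $*\boxtimes(-)$, and once $X$ is cell the cofibrant‑replacement step in $\gamma$ may be taken to be the identity, so that $\gamma f=*\boxtimes\Gamma f$. Moreover, by Proposition~\ref{propfreeprod} every simplicial level $X^{\boxtimes(k+1)}$ of $N^{\cy}_\boxtimes f$ is again a cell $*$‑module, on which $\gamma$ is likewise just $*\boxtimes\Gamma(-)$ (carrying along the structure map to $BG_{\aA}$).

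First I would observe that $\T_{\aA}$ commutes with geometric realization, since by \textbf{A2} it commutes with colimits and with tensors over unbased spaces (Lemma~\ref{realizationlemma}); in fact, by Theorem~\ref{Thomrigid} the left‑hand side is already isomorphic to the cyclic bar construction of the $S$‑algebra $\T_{\aA}(\gamma f)$. Likewise $\gamma$ commutes with realization, because $*\boxtimes(-)$ is a left adjoint and $\Gamma$ commutes with colimits and with tensors over unbased spaces (Section~\ref{secBF}). Hence both $\T_{\aA}(N^{\cy}_\boxtimes(\gamma f))$ and $\T_{\aA}(\gamma N^{\cy}_\boxtimes f)$ are realizations of simplicial spectra whose $k$‑th terms are $\T_{\aA}\big((\gamma f)^{\boxtimes(k+1)}\big)$ and $\T_{\aA}\big(\gamma(X^{\boxtimes(k+1)})\big)$. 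Both simplicial spectra are good: $N^{\cy}_\boxtimes(\gamma f)$ and $N^{\cy}_\boxtimes f$ are good by Lemma~\ref{goodlemma} (the monoids $\gamma f$ and $X$ being well‑based), and $\T_{\aA}$, $\Gamma$, and $*\boxtimes(-)$ preserve $h$‑cofibrations. So it suffices to produce a weak equivalence of the $k$‑th terms that is natural in $[k]$.

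At level $k$, stripping off $*\boxtimes(-)$ by means of the strong monoidality of $\T_{\aA}$ on $\aM_*/BG_{\aA}$ (Proposition~\ref{propstrong} together with the unitality observation preceding Definition~\ref{defnAthom}) and the standard identification of smash products of $S$‑modules, the two $k$‑th terms become $S\sma_{\sL}\T_{\sL}$ of the $(k+1)$‑fold $\lprod$‑power of $\Gamma f$, respectively $S\sma_{\sL}\T_{\sL}$ of $\Gamma$ of the $(k+1)$‑fold $\lprod$‑power of $f$. Since all the $S$‑modules appearing lie in $\bar{\aF_S}$ (by the argument of Lemma~\ref{lemextended}, using Theorem~\ref{thmthomextended} and Proposition~\ref{propfreeprod}), $S\sma_{\sL}(-)$ preserves the relevant weak equivalences, so we are reduced to showing
\[
\T_{\sL}\big(\underbrace{\Gamma f\lprod\cdots\lprod\Gamma f}_{k+1}\big)\;\htp\;\T_{\sL}\big(\Gamma(\underbrace{f\lprod\cdots\lprod f}_{k+1})\big)
\]
for $f$ with cell $\sL(1)$‑space domain, the maps to $BF$ on the left being built from $\Gamma f$ and the $\sL$‑action on $BF$. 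I would prove this by induction on $k$, the case $k=1$ being precisely the proposition preceding this corollary. For the inductive step, factor the $(k+1)$‑fold power as $(f^{\lprod k})\lprod f$; the domain $X^{\lprod k}$ is a cell $\sL(1)$‑space by Proposition~\ref{propfreeprod}, so the two‑fold proposition gives $\T_{\sL}\Gamma(f^{\lprod k}\lprod f)\htp\T_{\sL}(\Gamma(f^{\lprod k})\lprod\Gamma f)$, and smashing the inductive hypothesis $\T_{\sL}\Gamma(f^{\lprod k})\htp\T_{\sL}\big((\Gamma f)^{\lprod k}\big)$ with $\T_{\sL}(\Gamma f)$ — an operation that preserves weak equivalences between objects of $\bar{\aF_S}$, since $\lprod$ with such objects computes the derived smash product (Theorem~\ref{thmthomextended}, Lemma~\ref{lemextended}) — yields the claim. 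Choosing the linear isometric isomorphisms $U^{n}\cong U$ coherently from a fixed decomposition of $U$ makes all the comparison maps natural enough to assemble into a map of simplicial (indeed cyclic) objects, and realizing over $[k]$ and invoking goodness completes the proof.

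The step I expect to be the main obstacle is this iterated comparison of $\Gamma$ of a $\lprod$‑power with the $\lprod$‑power of $\Gamma$. Almost none of the maps involved are $T$‑good, so at each inductive stage one must route through the twisted half‑smash description of Theorem~\ref{thmthomextended} and use Lemma~\ref{lemextended} to know that the point‑set smash products represent the derived ones and that all intermediate $S$‑modules remain in $\bar{\aF_S}$; and this must be carried out compatibly with the cyclic faces — which involve the multiplications $X^{\boxtimes2}\to X$ and $BG_{\aA}^{\boxtimes2}\to BG_{\aA}$ — with the degeneracies, and (for the $\bT$‑equivariant refinement) with the $\bT$‑action, which is what constrains the choices of isometries.
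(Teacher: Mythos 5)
The paper provides no proof of this corollary at all, so what you have done is to supply the missing argument; your overall plan (push $\T_{\aA}$ and $\gamma$ past realization by \textbf{A2} and the colimit/tensor properties of $\Gamma$ and $*\boxtimes(-)$, reduce to a simplicial-degree-wise comparison, invoke goodness of the two simplicial spectra, and use the $\bar{\aF_S}$ machinery to know point-set smash products represent derived ones) is the right shape. The genuine gap is in the inductive step. Your level-$k$ comparison factors $f^{\lprod(k+1)}$ as $f^{\lprod k}\lprod f$, applies the preceding proposition to that pairing, and smashes the inductive hypothesis with $\T_{\sL}(\Gamma f)$. This does produce, for each fixed $k$, a zig-zag of weak equivalences between the two $k$-th terms, but the intermediate objects in the zig-zag for different $k$ are not related by the face and degeneracy maps of the cyclic nerve: the factorization $(k\mid 1)$ is not cyclic-equivariant (the cyclic operator $t_k$ moves the last factor to the front), and the degeneracies insert the unit at arbitrary interior positions, which splits neither across $X^{\lprod k}\lprod X$ nor across the place where you invoked the inductive hypothesis. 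A collection of levelwise weak equivalences that is not a zig-zag of \emph{simplicial} maps does not, in general, yield a weak equivalence of realizations, even for good simplicial spectra, so the concluding sentence ``realizing over $[k]$ and invoking goodness completes the proof'' does not follow from what precedes it, and ``choosing the isometric isomorphisms coherently'' does not repair the non-cyclic factorization.

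The cleaner route, which is evidently what the authors have in mind when they say the corollary ``follows from'' the proposition, is to run the proposition's own proof for a general $(k+1)$-fold product rather than to induct. For each $k$ one uses the zig-zag through the cartesian product, namely
\[
\T\big(\Gamma f\times\cdots\times\Gamma f\big)\to\T\big(\sL(k+1)\times(\Gamma f)^{\times(k+1)}\big)\to\T\big((\Gamma f)^{\lprod(k+1)}\big),
\]
its companion obtained by applying $\Gamma$ to the chain $f^{\times(k+1)}\to\sL(k+1)\times f^{\times(k+1)}\to f^{\lprod(k+1)}$, and Lewis's path-multiplication comparison $\Gamma f\times\cdots\times\Gamma f\to\Gamma(f\times\cdots\times f)$. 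Each of these three chains is manifestly natural with respect to the cyclic operators and the face maps (the latter factor through operad composition $\sL(k+1)\to\sL(k)$), and also with respect to degeneracies (unit insertion corresponds to composing with the $0$-ary operation of $\sL$); the $\bar{\aF_S}$ argument from Section~\ref{secBF} shows each arrow is a levelwise stable equivalence. This gives genuine zig-zags of simplicial spectra, and the realization argument then goes through. Your inductive route recovers the abstract homotopy type at each level but should be replaced by this direct multi-fold comparison to close the gap.
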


Finally, the verification of axioms {\bf A5} and {\bf A6} is
unchanged.

\section{Preliminaries on symmetric spectra}\label{symmetricpreliminaries}

Let $\Sp^{\Sigma}$ be the category of topological symmetric spectra as
defined in \cite{mmss}. Thus, a symmetric spectrum $T$ is a spectrum
in which the spaces $T(n)$ come equipped with base point preserving
$\Sigma_n$-actions such that the iterated structure maps $S^m\wedge
T(n)\to T(m+n)$ are $\Sigma_{m+n}$-equivariant. It is proved in \cite{mmss} that $\Sp^{\Sigma}$ has a stable model structure which makes it Quillen equivalent to the category $\Sp$ of spectra. When implementing the axiomatic framework in this setting there are two technical issues that must be addressed. The first is that the forgetful functor from $\Sp^{\Sigma}$ to $\Sp$ does not take stable model equivalences in $\Sp^{\Sigma}$ (that is, the weak equivalences in the stable model structure) to ordinary stable equivalences in $\Sp$. This can be remedied using Shipley's detection functor as we recall below. The second issue is that the symmetric Thom spectrum functor on $\U\I/BF$ does not take cofibrant replacement in the model structure on $\I\U$ to cofibrant replacement in $\Sp^{\Sigma}$. For this reason we shall introduce explicit ``flat replacement'' functors on $\I\U$ and $\Sp^{\Sigma}$ which are strictly compatible with the symmetric Thom spectrum functor. In order for this to be useful, we must of course verify that the topological Hochschild homology of a symmetric ring spectrum is represented by the cyclic bar construction of its flat replacement; this is the content of Propositions \ref{flatTH} and  
\ref{flatringprop}.  

\subsection{The detection functor}\label{detectionfunctor}
A map of symmetric spectra whose underlying map of spectra is an ordinary stable equivalence is also a stable model equivalence in $\Sp^{\Sigma}$. It is a subtle property of the stable model structure on $\Sp^{\Sigma}$ that the converse does not hold; there are stable model equivalences in $\Sp^{\Sigma}$ whose underlying maps of spectra are not stable equivalences. 
In order to characterize the stable model equivalences in terms of ordinary stable equivalences, Shipley \cite{shipleyTHH} has defined an explicit ``detection'' functor $D\co
\Sp^{\Sigma}\to \Sp^{\Sigma}$. This functor takes a symmetric
spectrum $T$ to the symmetric spectrum $DT$ with $n$th space
\[
DT(n)=\hocolim_{\mathbf m\in \I}\Omega^m(T(m)\wedge S^n).
\]
Here we tacitly replace the spaces in the
definition of $DT$ by spaces that are well-based, for example the
realization of their singular simplicial complexes. It then follows as
in \cite[3.1.2]{shipleyTHH}, that a map of symmetric spectra is a
stable model equivalence if and only if applying $D$ gives an ordinary
stable equivalence of the underlying spectra. Furthermore, by
\cite[3.1.6]{shipleyTHH}, the functor $D$ is related to the identity
functor on $\Sp^{\Sigma}$ by a chain of natural stable model
equivalences of symmetric spectra. 

\subsection{The flatness condition for symmetric spectra}\label{flatnesscondition}
It follows from \cite{shipleyTHH}, that if $T$ is a cofibrant symmetric ring spectrum, then the cyclic bar construction $B^{\cy}(T)$ represents the topological Hochschild homology of $T$.  
However, in the study of Thom spectra we find it useful to introduce the notion of a \emph{flat symmetric spectrum}, which is a more general type of symmetric spectrum for which the smash product is homotopically well-behaved.
 We first consider flat symmetric spectra in general and then define what we mean by a flat symmetric ring spectrum. For this we need to recall some convenient notation from \cite{schlichtkrull-thom}. In the following $T$ denotes a symmetric spectrum and $\I$ is the category of finite sets and injective maps defined in the introduction.
Given a morphism $\alpha\co \mathbf m\to\mathbf n$, we write $\mathbf n-\alpha$ for the set 
$\mathbf n-\alpha(\mathbf m)$ and $S^{n-\alpha}$ for the one-point compactification of 
$\mathbb R^{\mathbf n-\alpha}$. Associated to $\alpha$ we have the composite map
$$
S^{n-\alpha}\wedge T(m)\to S^{n-m}\wedge T(m)\to T(n)\xr{\bar \alpha} T(n),
$$
where the first map is the homeomorphism induced by the ordering of 
$\mathbf n-\alpha$ inherited from $\mathbf n$, the second map is the structure map of the symmetric spectrum, and $\bar \alpha$ is the extension of $\alpha$ to a permutation which is order preserving on the complement of $\mathbf m$.   
The advantage of this notation is that it will make some of our constructions self-explanatory.   
Consider for each object 
$\mathbf n$ the $\I/\mathbf n$-diagram of based spaces that to an object $\alpha\co\mathbf m\to\mathbf n$ associates $S^{n-\alpha}\wedge T(m)$. 
If $\beta\co (\mathbf m,\alpha)\to(\mathbf m',\alpha')$ is a morphism in $\I/\mathbf n$, then 
$\alpha=\alpha'\circ\beta$ by definition, and $\beta$ specifies a canonical homeomorphism between 
$S^{n-\alpha}$ and $S^{n-\alpha'}\wedge S^{m'-\beta}$. The induced map is then defined by
$$
S^{n-\alpha}\wedge T(m)\xr{\sim}S^{n-\alpha'}\wedge S^{m'-\beta}\wedge T(m)\to S^{n-\alpha'}\wedge T(m').
$$
Applying this functor to a commutative diagram in $\I$ of the form
\begin{equation}\label{Isquare}
\begin{CD}
\mathbf m@>\alpha_1>> \mathbf n_1\\
@VV\alpha_2V @VV\beta_1V\\
\mathbf n_2 @>\beta_2>> \mathbf n,
\end{CD}
\end{equation}
we get a commutative diagram of based spaces
\begin{equation}\label{spacesquare}
\begin{CD}
S^{n-\gamma}\wedge T(m)@>>> S^{n-\beta_1}\wedge T(n_1)\\
@VVV @VVV\\
S^{n-\beta_2}\wedge T(n_2)@>>> T(n),
\end{CD}
\end{equation}
where $\gamma$ denotes the composite $\beta_1\circ\alpha_1=\beta_2\circ\alpha_2$.
We say that $T$ is \emph{flat} if each of the spaces $T(n)$ is well-based, and if for each diagram 
(\ref{Isquare}), such that the intersection of the images of $\beta_1$ and $\beta_2$ equals the image of $\gamma$, the induced map
$$
S^{n-\beta_1}\wedge T(n_1)\cup_{S^{n-\gamma}\wedge T(m)}S^{n-\beta_2}\wedge T(n_2)\to T(n)
$$
is an $h$-cofibration (that is, it has the homotopy extension property in the usual sense). By Lillig's union theorem for $h$-cofibrations \cite{lillig}, a levelwise well-based symmetric spectrum $T$ is flat if and only if (i) any morphism $\alpha\co\mathbf m\to\mathbf n$ in $\I$ induces 
an $h$-cofibration $S^{n-\alpha}\wedge T(m)\to T(n)$, and (ii) for any diagram of the form 
(\ref{Isquare}), satisfying the above condition, the intersection of the images of 
$S^{n-\beta_1}\wedge T(n_1)$ and 
$S^{n-\beta_2} \wedge T(n_2)$ equals the image of $S^{n-\gamma}\wedge T(m)$. This can be reformulated in a way that is easier to check in practice.
\begin{lemma}
A levelwise well-based symmetric spectrum is flat if and only if the structure maps $S^1\wedge T(n)\to T(n+1)$ are $h$-cofibrations and the diagrams
$$
\begin{CD}
S^l\wedge T(m)\wedge S^n@>>> S^l\wedge T(m+n)\\
@VVV @VVV\\
T(l+m)\wedge S^n @>>> T(l+m+n)
\end{CD}
$$
are pullback diagrams for all $l$, $m$ and $n$.\qed
\end{lemma}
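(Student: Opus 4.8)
The plan is to show that the two conditions in the lemma are jointly equivalent to the conditions (i) and (ii) of the Lillig reformulation stated just above, i.e.\ to the assertions that every morphism $\alpha\co\mathbf m\to\mathbf n$ induces an $h$-cofibration $S^{n-\alpha}\sma T(m)\to T(n)$, and that for every square of the form (\ref{Isquare}) satisfying the stated intersection hypothesis the image of $S^{n-\gamma}\sma T(m)$ in $T(n)$ equals the intersection of the images of $S^{n-\beta_1}\sma T(n_1)$ and $S^{n-\beta_2}\sma T(n_2)$. Three observations carry the argument: (a) condition (i) is equivalent to the structure maps $S^1\sma T(n)\to T(n+1)$ being $h$-cofibrations; (b) granted (i), condition (ii) for a given square is equivalent to the associated square of spaces (\ref{spacesquare}) being a pullback of spaces; and (c) every square (\ref{Isquare}) satisfying the intersection hypothesis is, after a reindexing and a pasting argument, built from one of the displayed squares.

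For (a): an injection $\alpha\co\mathbf m\to\mathbf n$ factors as a permutation $\bar\alpha$ composed with the standard inclusion $\mathbf m\hookrightarrow\mathbf n$, so the induced map $S^{n-\alpha}\sma T(m)\to T(n)$ is the composite of the reordering homeomorphism $S^{n-\alpha}\sma T(m)\iso S^{n-m}\sma T(m)$, the $(n-m)$-fold iterated structure map, and the homeomorphism $\bar\alpha\co T(n)\to T(n)$. The iterated structure map is a composite of maps $S^j\sma(S^1\sma T(k)\to T(k+1))$, each an $h$-cofibration because smashing a fixed well-based space with an $h$-cofibration is an $h$-cofibration (the retraction of the relative mapping cylinder smashes through, using that $S^j\sma(-)$ preserves pushouts); composites and homeomorphisms of $h$-cofibrations are $h$-cofibrations, so (i) follows from the structure maps being $h$-cofibrations, and the converse is the case $\alpha\co\mathbf n\hookrightarrow\mathbf{n+1}$. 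For (b), assume (i). Then all four maps in (\ref{spacesquare}) are $h$-cofibrations — the two into $T(n)$ directly by (i), and $S^{n-\gamma}\sma T(m)\to S^{n-\beta_i}\sma T(n_i)$ since, via the canonical identification $S^{n-\gamma}\cong S^{n-\beta_i}\sma S^{n_i-\alpha_i}$ used in the construction of the $\I/\mathbf n$-diagram, it is $S^{n-\beta_i}\sma(-)$ applied to the map induced by $\alpha_i$. Working in compactly generated weak Hausdorff spaces, these are closed embeddings, so $S^{n-\beta_1}\sma T(n_1)$ and $S^{n-\beta_2}\sma T(n_2)$ are closed subspaces of $T(n)$ whose fibre product is their intersection, and $S^{n-\gamma}\sma T(m)$ maps by a closed embedding into that intersection; hence (\ref{spacesquare}) is a pullback precisely when that embedding is onto, which is exactly condition (ii) for the square.

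For (c): given (\ref{Isquare}) with $\mathrm{im}\,\beta_1\cap\mathrm{im}\,\beta_2=\mathrm{im}\,\gamma$, partition $\mathbf n$ into $A=\mathrm{im}\,\gamma$, $B_1=\mathrm{im}\,\beta_1\setminus A$, $B_2=\mathrm{im}\,\beta_2\setminus A$ and $C=\mathbf n\setminus(\mathrm{im}\,\beta_1\cup\mathrm{im}\,\beta_2)$. Using the $\Sigma_n$-action on $T(n)$ and the symmetric group actions on the source objects, we may replace (\ref{Isquare}) by an isomorphic square in which $\mathbf n$ is the ordered concatenation $B_1\sqcup A\sqcup B_2\sqcup C$ with $\beta_1,\beta_2,\gamma$ the evident block inclusions; this induces an isomorphism of squares of spaces, so preserves pullbackness. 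If $C=\varnothing$ the resulting square of spaces is literally the displayed square with $l=|B_1|$, middle term $T(|A|)$, and $|B_2|$ in the role of $n$. If $C\neq\varnothing$, put $\mathbf n'=B_1\sqcup A\sqcup B_2$; following the index sets $\mathbf n-\alpha$ through the construction identifies the square over $\mathbf n$ with $S^{C}\sma(-)$ applied to the square over $\mathbf n'$, post-composed with the closed embedding $S^{C}\sma T(n')\to T(n)$ on the terminal corner only. Since $S^{C}\sma(-)$ carries a pullback of closed subspaces to a pullback, and a pullback square post-composed with a monomorphism at the terminal corner is again a pullback, the square over $\mathbf n$ is a pullback whenever the one over $\mathbf n'$ is — and the latter has empty $C$, hence is a displayed square. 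Assembling: flatness gives (i) and (ii), so by (a) the structure maps are $h$-cofibrations and by (b) the displayed squares are pullbacks; conversely, the structure maps being $h$-cofibrations gives (i) via (a), and then (c) together with (b) upgrades "displayed squares are pullbacks" to condition (ii) for all squares, so $T$ is flat.

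The formal inputs — that smashing with a fixed space preserves $h$-cofibrations and the relevant pullbacks of closed subspaces, and that isomorphic diagrams in $\I$ yield isomorphic diagrams of spaces — are routine. The one place demanding care is the bookkeeping in (c): keeping the reindexing permutations mutually consistent across the four objects of the square, and checking that the identification of the square over $\mathbf n$ with $S^{C}\sma(-)$ of the square over $\mathbf n'$ is natural in all four structure maps. This is elementary once the sets $\mathbf n-\alpha$ are tracked through the definitions, but it is where essentially all the content of the lemma resides.
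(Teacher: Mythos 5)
Your proof is correct, and since the paper leaves the lemma unproved (closing with \qed immediately after the statement, treating it as an easy reformulation of conditions (i) and (ii) above), it is the natural filling-in: show the lemma's two conditions are equivalent to (i) and (ii), via the three observations you call (a), (b), (c). The cofibration bookkeeping in (a), the closed-embedding/intersection argument in (b), and the reindex-then-smash-with-$S^C$ reduction in (c) are all sound; the only places you lean on unspelled routine facts -- that smashing a fixed well-based space preserves $h$-cofibrations and pullbacks of closed subspaces, and that isomorphic commutative squares in $\I$ produce isomorphic squares of spaces -- are exactly the verifications the paper's \qed is eliding, and you flag them appropriately.
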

Here the notation is supposed to be self-explanatory. For instance, the symmetric Thom spectra $MO$, $MF$, obtained by applying the Thom space functor levelwise, are  flat.
We shall now prove that smash products of flat symmetric spectra are homotopically well-behaved. Recall from \cite{schlichtkrull-thom} that the smash product of a family of symmetric spectra $T_1,\dots,T_k$ may be identified with the symmetric spectrum whose $n$th space is the colimt 
$$
T_1\wedge\dots\wedge T_k(n)=\colim_{\alpha\co\mathbf n_1\sqcup\dots\sqcup \mathbf n_k\to\mathbf n}S^{n-\alpha}\wedge T(n_1)\wedge\dots\wedge T(n_k).
$$
Here the colimit is over the comma category $\sqcup^k/\mathbf n$, where 
$\sqcup^k\co\I^k\to \I$ denotes the iterated monoidal product. We introduce a  homotopy invariant version by the analogous based homotopy colimit construction,
$$
T_i\wedge^h\dots\wedge^h T_k(n)=\hocolim_{\alpha\co\mathbf n_1\sqcup\dots\sqcup \mathbf n_k\to\mathbf n}S^{n-\alpha}\wedge T(n_1)\wedge\dots\wedge T(n_k).
$$ 
As we shall see in Proposition \ref{hsmashproposition}, the latter construction always represents the ``derived'' homotopy type of the smash product for symmetric spectra that are levelwise well-based.  
\begin{proposition}\label{flatsmashequivalence}
If the symmetric spectra $T_1,\dots,T_k$ are flat, then the canonical projection 
$$
T_1\wedge^h\dots\wedge^h T_k\to
T_1\wedge\dots\wedge T_k
$$ 
is a levelwise equivalence.
\end{proposition}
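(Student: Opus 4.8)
The plan is to fix a level $n$ and prove that the canonical map of spaces $T_1\wedge^h\dots\wedge^h T_k(n)\to T_1\wedge\dots\wedge T_k(n)$ is a weak homotopy equivalence; it is the map from the homotopy colimit to the colimit of the diagram $D$ on the comma category $\sqcup^k/\mathbf n$ sending $\alpha\co\mathbf n_1\sqcup\dots\sqcup\mathbf n_k\to\mathbf n$ to $S^{n-\alpha}\wedge T_1(n_1)\wedge\dots\wedge T_k(n_k)$. The first point is that $\sqcup^k/\mathbf n$ is, up to equivalence, a \emph{finite direct category}: since each $\alpha$ is injective, an endomorphism $(\beta_1,\dots,\beta_k)$ of an object is forced to have $\bigsqcup_i\beta_i=\mathrm{id}$, so all automorphism groups are trivial and two objects of the same degree $d(\mathbf n_\bullet,\alpha)=n_1+\dots+n_k$ are either non-isomorphic or uniquely isomorphic; passing to a skeleton $\mathcal C$ gives a category in which every non-identity morphism strictly raises $d$, and $d$ is bounded above by $n$ because $\alpha$ is injective. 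Thus $\mathcal C$ has a finite filtration $\mathcal C_0\subset\dots\subset\mathcal C_n=\mathcal C$ by the full subcategories on objects of degree $\le j$, and since homotopy colimits are invariant under equivalences of indexing categories it suffices to compare $\hocolim_{\mathcal C}D$ with $\colim_{\mathcal C}D$.

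I would deduce this from the general principle that for any finite direct category $\mathcal D$ and any diagram of well-based spaces whose latching maps $L_c\to D(c)$ are $h$-cofibrations, the canonical map $\hocolim_{\mathcal D}D\to\colim_{\mathcal D}D$ is a weak equivalence. This is proved by induction on the length of the degree filtration: writing $X_j=\colim_{\mathcal D_j}D$ and $X_j^h=\hocolim_{\mathcal D_j}D$, the usual skeletal decomposition of colimits over direct categories presents $X_j$ as the pushout of $X_{j-1}\leftarrow\bigvee_{d(c)=j}L_c\to\bigvee_{d(c)=j}D(c)$ and $X_j^h$ as the homotopy pushout of the same span with $L_c$ replaced by $L_c^h=\hocolim$ of the defining diagram; the latching maps being $h$-cofibrations makes the former a homotopy pushout, the maps $L_c^h\to L_c$ are instances of the same statement applied to the latching categories of the $c$ (again finite direct categories, carrying the restricted and hence still latching-cofibrant diagrams), and the gluing lemma for $h$-cofibrations closes the induction. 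Since all the spaces $S^{n-\alpha}\wedge\bigwedge_i T_i(n_i)$ occurring in our $D$ are well-based (the $T_i(n_i)$ by flatness, the spheres trivially), the theorem is reduced to the assertion that the latching maps of $D$ over $\mathcal C$ are $h$-cofibrations.

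The latching object at $c=(\mathbf n_\bullet,\alpha)$ is the union inside $D(c)=S^{n-\alpha}\wedge\bigwedge_i T_i(n_i)$ of the images of the maps $D(c')\to D(c)$ coming from the sub-objects obtained by shrinking one or more of the blocks $\mathbf n_i$ (and correspondingly enlarging the sphere factor). By Lillig's union theorem it is enough to show that each ``codimension-one'' sub-object --- delete one element from a single block $\mathbf n_i$ --- maps into $D(c)$ by an $h$-cofibration, and that the pairwise intersections of the resulting subspaces are the images of the corresponding ``codimension-two'' sub-objects (the higher-intersection conditions follow formally, iterating Lillig). For a codimension-one sub-object the map $D(c')\to D(c)$ is, after permuting sphere coordinates via the relevant order-preserving extensions $\bar\alpha$, the identity on $S^{n-\alpha}\wedge\bigwedge_{l\ne i}T_l(n_l)$ smashed with a structure map $S^1\wedge T_i(n_i-1)\to T_i(n_i)$ of $T_i$; flatness of $T_i$ makes this an $h$-cofibration (and all smashing factors well-based), so the whole map is an $h$-cofibration. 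The intersection statements come from the pullback squares built into the definition of flatness of $T_i$ --- those comparing $S^l\wedge T_i(m)\wedge S^p$, $S^l\wedge T_i(m+p)$, $T_i(l+m)\wedge S^p$ and $T_i(l+m+p)$ --- smashed with the inert factors. I expect this last step, translating the single-variable flatness of each $T_i$ into the multi-variable latching-cofibrancy of $D$ on $\sqcup^k/\mathbf n$ while keeping track of the sphere twists $S^{n-\alpha}$ and of which blocks are being shrunk, to be the main obstacle; the rest of the argument is formal once it is in place.
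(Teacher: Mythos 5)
Your proof takes essentially the same route as the paper's: pass to a skeleton of the comma category (which the paper identifies with the poset of pairwise-disjoint tuples of subsets of $\mathbf n$), invoke the fact that for a finite direct category the map from hocolim to colim is an equivalence once the latching maps are $h$-cofibrations (the paper compresses your inductive skeletal argument into a citation of ``general model theoretical arguments using the Str\o m model category structure''), and then verify the latching-cofibrancy using the flatness axioms of the $T_i$ together with Lillig's union theorem. The only substantive thing the paper makes explicit that you elide is the reduction from the based homotopy colimit appearing in $\wedge^h$ to the unbased one (via contractibility of the skeleton and nondegeneracy of basepoints), and conversely the paper only writes out $k=2$ while you handle general $k$; otherwise the two arguments coincide.
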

\begin{proof}
For notational reasons we only carry out the proof for a pair of flat symmetric spectra $T_1$ and $T_2$. The proof in the general case is completely analogous. 
Let $\mathcal A(n)$ be the full subcategory of $\sqcup /\mathbf n$ whose objects $\alpha\co\mathbf n_1\sqcup\mathbf n_2\to\mathbf n$ are such that the restrictions to $\mathbf n_1$ and 
$\mathbf n_2$ are order preserving. Since this is a skeleton subcategory, it suffices to show that the canonical map 
$$
\hocolim_{\mathcal A(n)}S^{n-\alpha}\wedge T_1(n_1)\wedge T_2(n_2)\to \colim_{\mathcal A(n)}S^{n-\alpha}\wedge T_1(n_1)\wedge T_2(n_2)
$$
is a weak homotopy equivalence for each $n$. Notice that $\mathcal A(n)$ may be identified with the partially ordered set of pairs $(U_1,U_2)$ of disjoint subsets of $\mathbf n$, so that we may write the diagram  in the form
$$
Z(U_1,U_2)=S^{\mathbf n-U_1\cup U_2}\wedge T_1(U_1)\wedge T_2(U_2).
$$
Notice also, that since the base points are non-degenerate and the categories 
$\mathcal A(n)$ are contractible, it suffices to consider the unbased
homotopy colimit instead of the based homotopy colimit. We now use
that the categories $\mathcal A(n)$ are very small in the sense of
\cite[\S 10.13] {dwyer-spalinski}. By general model theoretical
arguments using the Str\o m model category structure \cite{strom} on
$\mathcal U$,  we are therefore left with showing that the canonical
map 
\[
\colim_{(U_1,U_2)\subsetneq(U_1^0,U_2^0)}Z(U_1,U_2)\to Z(U_1^0,U_2^0)
\] 
is an $h$-cofibration for each fixed object $(U_1^0,U_2^0)$. Since the structure maps in the 
$\mathcal A(n)$-diagram $Z$ are $h$-cofibrations and since $h$-cofibrations are closed inclusions, we may view each of the spaces $Z(U_1,U_2)$ as a subspace of $Z(U_1^0,U_2^0)$. By the flatness assumptions on $T_1$ and $T_2$ we then have the equality 
$$
Z(U_1,U_2)\cap Z(V_1,V_2)=Z(U_1\cap V_1,U_2\cap V_2)
$$
for each pair of objects $(U_1,U_2)$ and $(V_1,V_2)$. Thus, it follows from the pasting lemma for maps defined on a union of closed subspaces that the colimit in question may be identified with the union of the subspaces $Z(U_1,U_2)$. The conclusion now follows from an inductive argument using Lillig's union theorem for $h$-cofibrations \cite{lillig}. 
\end{proof}

\begin{proposition}\label{hsmashproposition}
If $T_1,\dots,T_k$ are levelwise well-based symmetric spectra, then there is a chain of 
levelwise equivalences
$$
T_1\wedge^h\dots\wedge^h T_k\xl{\sim}T'_1\wedge^h\dots\wedge^h T'_k\xr{\sim}
T'_1\wedge\dots\wedge T'_k,
$$
where $T'_i\to T_i$ are cofibrant replacements in the stable model structure on symmetric spectra.
\end{proposition}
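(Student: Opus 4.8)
The plan is to verify the two levelwise equivalences in the asserted chain separately: the right-hand map will be an instance of Proposition~\ref{flatsmashequivalence} applied to the $T_i'$, while the left-hand map will follow from homotopy invariance of the construction $\wedge^h$ on levelwise well-based symmetric spectra. So the middle term $T_1'\wedge^h\dots\wedge^h T_k'$ is the object through which both halves of the chain pass.

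For the right-hand map the first thing I would record is that a cofibrant symmetric spectrum, in the stable model structure, is flat in the sense of Section~\ref{flatnesscondition}. A cofibrant object is a retract of a cell symmetric spectrum built by iterated pushouts along wedges of the generating cofibrations $F_m(S^{n-1}_+)\to F_m(D^n_+)$; the free symmetric spectra $F_m(K)$ on a well-based space $K$ are checked directly to satisfy the structure-map $h$-cofibration and pullback-square criterion for flatness recorded in the lemma preceding Proposition~\ref{flatsmashequivalence}, and this flatness criterion is preserved under pushout along such cell attachments, sequential colimits along $h$-cofibrations, and retracts. The verification of this closure is the same bookkeeping with Lillig's union theorem that appears in the proof of Proposition~\ref{flatsmashequivalence}. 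Granting this, each cofibrant replacement $T_i'$ is flat, and Proposition~\ref{flatsmashequivalence} gives that the canonical projection $T_1'\wedge^h\dots\wedge^h T_k'\to T_1'\wedge\dots\wedge T_k'$ is a levelwise equivalence.

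For the left-hand map I would argue that $(T_1,\dots,T_k)\mapsto T_1\wedge^h\dots\wedge^h T_k$ carries a levelwise equivalence between levelwise well-based symmetric spectra to a levelwise equivalence. Fixing $n$, the $n$th space is the based homotopy colimit over $\sqcup^k/\mathbf n$ of the functor $\alpha\mapsto S^{n-\alpha}\wedge T_1(n_1)\wedge\dots\wedge T_k(n_k)$; since each $T_i(n_i)$ is well-based, every value of this functor is well-based, and a levelwise equivalence $T_i'\to T_i$ induces an objectwise weak equivalence of these diagrams, because a smash product of weak equivalences between well-based spaces is again a weak equivalence. Homotopy colimits send objectwise weak equivalences between diagrams of well-based spaces to weak equivalences — and, exactly as in the proof of Proposition~\ref{flatsmashequivalence}, one may pass from the based to the unbased homotopy colimit since the basepoints are nondegenerate and the indexing categories have contractible skeleta $\mathcal A(n)$ — so the induced map on $n$th spaces is a weak equivalence. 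Applying this to the cofibrant replacements $T_i'\to T_i$, which are levelwise equivalences of levelwise well-based symmetric spectra (cofibrant symmetric spectra being levelwise well-based), yields the left-hand levelwise equivalence $T_1'\wedge^h\dots\wedge^h T_k'\to T_1\wedge^h\dots\wedge^h T_k$, completing the chain.

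The only genuine input beyond formal manipulations of homotopy colimits is the claim that stably cofibrant symmetric spectra are flat in the precise $h$-cofibration sense of Section~\ref{flatnesscondition}; I expect that to be the step requiring real care, and I would handle it by the inductive union-theorem argument sketched above, parallel to the proof of Proposition~\ref{flatsmashequivalence}. The based-versus-unbased homotopy colimit point is a secondary technical nuisance, disposed of in the same way as in that proof.
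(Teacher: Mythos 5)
Your overall decomposition matches the paper's proof exactly: check the right-hand map using flatness of cofibrant symmetric spectra together with Proposition~\ref{flatsmashequivalence}, and check the left-hand map by homotopy invariance of based homotopy colimits of well-based diagrams. The sketch that cofibrant implies flat (free spectra are flat, flatness is preserved by cell attachments, sequential colimits, retracts) is the same inductive Lillig-type bookkeeping the paper appeals to.

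There is, however, one genuine gap, and it sits precisely where the paper bothers to cite something. You treat it as automatic that the cofibrant replacement maps $T_i'\to T_i$ are \emph{levelwise} equivalences. In the stable model structure on symmetric spectra this is not automatic: stable weak equivalences form a strictly larger class than levelwise equivalences, so an arbitrary cofibrant object stably equivalent to $T_i$ need not map to it by a levelwise equivalence, and your termwise-equivalence-of-diagrams argument for the left-hand map would then fail. What saves the statement is that one may \emph{choose} the cofibrant replacement $T_i'\to T_i$ to be a levelwise acyclic (Serre) fibration --- the acyclic fibrations in the stable model structure are exactly these --- which is what the paper invokes via \cite[9.9]{mmss}. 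You need to make that choice explicit (or cite the characterization of acyclic fibrations); without it, the phrase ``which are levelwise equivalences'' is an unsupported assertion and the left half of your chain is not justified.
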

\begin{proof}
It follows from \cite[9.9]{mmss} that we may choose cofibrant symmetric spectra $T_i'$ and levelwise acyclic fibrations  $T_i'\xr{\sim} T_i$. The left hand map is then a levelwise equivalence since homotopy colimits preserve termwise equivalences of well-based diagrams. That the right hand map is an equivalence follows from Proposition \ref{flatsmashequivalence} since cofibrant symmetric spectra are retracts of relative cell-complexes by \cite{mmss}, hence in particular flat.  
\end{proof}

Combining these propositions we get the following corollary which states that smash products of flat symmetric spectra represent the ``derived'' smash products. 
\begin{corollary}\label{flatderived}
If $T_1,\dots,T_k$ are flat symmetric spectra, then there is a levelwise equivalence
$$
T_1'\wedge\dots\wedge T_k'\xr{\sim}T_1\wedge\dots\wedge T_k,
$$
where $T_i'\to T_i$ are cofibrant replacements in the stable model structure on symmetric spectra.\qed
\end{corollary}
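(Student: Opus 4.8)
The plan is to deduce the corollary as a purely formal consequence of Propositions~\ref{flatsmashequivalence} and~\ref{hsmashproposition} by means of a single comparison square, so there is essentially nothing to prove beyond a diagram chase. First I would recall, following the proof of Proposition~\ref{hsmashproposition}, that the cofibrant replacements may be chosen so that the maps $T_i'\to T_i$ are levelwise acyclic fibrations; in particular each $T_i'$ is levelwise well-based, and since cofibrant symmetric spectra are retracts of relative cell complexes \cite{mmss}, each $T_i'$ is itself flat. The maps $T_i'\to T_i$ induce the natural map $T_1'\wedge\dots\wedge T_k'\to T_1\wedge\dots\wedge T_k$, and this is the map I want to identify as a levelwise equivalence.

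Next I would write down the commutative square
\[
\begin{CD}
T_1'\wedge^h\dots\wedge^h T_k' @>>> T_1\wedge^h\dots\wedge^h T_k\\
@VVV @VVV\\
T_1'\wedge\dots\wedge T_k' @>>> T_1\wedge\dots\wedge T_k,
\end{CD}
\]
in which the vertical arrows are the canonical projections from the homotopy colimit to the colimit (as in Proposition~\ref{flatsmashequivalence}) and the horizontal arrows are induced by the replacements $T_i'\to T_i$; the square commutes because the projection $\hocolim\to\colim$ is natural in the underlying diagram. Three of the four arrows are levelwise equivalences: the right-hand vertical map by Proposition~\ref{flatsmashequivalence} applied to the flat spectra $T_1,\dots,T_k$; the left-hand vertical map by Proposition~\ref{flatsmashequivalence} applied to $T_1',\dots,T_k'$, which are flat by the first paragraph; and the top horizontal map because at each level $n$ it is the map of homotopy colimits over $\sqcup^k/\mathbf n$ induced by the termwise weak equivalence $\alpha\mapsto\big(S^{n-\alpha}\wedge T_1'(n_1)\wedge\dots\wedge T_k'(n_k)\to S^{n-\alpha}\wedge T_1(n_1)\wedge\dots\wedge T_k(n_k)\big)$ of levelwise well-based diagrams, which is a weak homotopy equivalence exactly as in the proof of Proposition~\ref{hsmashproposition}. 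By the two-out-of-three property the bottom horizontal map is then a levelwise equivalence, which is the assertion of the corollary.

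I do not expect any genuine obstacle here: the only points that require attention are the well-basedness hypotheses that make the homotopy-colimit construction $\wedge^h$ homotopy invariant, and these are built into the definition of flatness (each $T_i(n)$ is well-based) and into the choice of cofibrant replacement. All of the substantive point-set work — identifying the colimit defining $\wedge$ with a union of closed subspaces and applying Lillig's union theorem for $h$-cofibrations \cite{lillig} — has already been carried out in the proof of Proposition~\ref{flatsmashequivalence}, so the proof of the corollary is genuinely just the diagram chase above.
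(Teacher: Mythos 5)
Your proof is correct and spells out exactly what the paper leaves implicit when it says the corollary follows by ``combining these propositions'': the comparison square between the $\wedge^h$- and $\wedge$-products, with three sides levelwise equivalences (the top by the homotopy invariance of $\wedge^h$ under termwise equivalences of well-based diagrams, and the two verticals by Proposition~\ref{flatsmashequivalence} applied to the flat $T_i$ and to the flat $T_i'$), so that the bottom arrow is a levelwise equivalence by two-out-of-three. You have also correctly recorded the supporting facts already used in the proof of Proposition~\ref{hsmashproposition} (the choice of levelwise acyclic fibrations as cofibrant replacements and the flatness of cofibrant symmetric spectra as retracts of cell complexes), so no gap remains.
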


In the following definition we use the notion of an $h$-cofibration introduced in Section 
\ref{realizationsection}. 

\begin{definition}\label{flatsymmetricring}
A flat symmetric ring spectrum $T$ is a symmetric ring spectrum whose underlying symmetric spectrum is flat and whose unit $S\to T$ is an $h$-cofibration. 
\end{definition}

\begin{proposition}\label{flatTH}
If $T$ is a flat symmetric ring spectrum, then $B^{\cy}(T)$ represents the topological Hochschild homology of $T$.
\end{proposition}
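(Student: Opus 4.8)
The plan is to compare $B^{\cy}(T)$ with the cyclic bar construction of a cofibrant replacement. I would choose a cofibrant replacement $T'\xrightarrow{\sim} T$ in the category of symmetric ring spectra; by \cite{shipleyTHH} the object $B^{\cy}(T')$ represents the topological Hochschild homology of $T$, so it suffices to show that the induced map $B^{\cy}(T')\to B^{\cy}(T)$ is a stable equivalence, and I will in fact show that it is a levelwise equivalence of symmetric spectra. Two preliminary observations are needed. First, since $T'$ is cofibrant its unit $S\to T'$ is an $h$-cofibration, and by Definition \ref{flatsymmetricring} the unit $S\to T$ is an $h$-cofibration as well, so $T'$ and $T$ are well-based monoids and Lemma \ref{goodlemma} shows that the simplicial symmetric spectra $B^{\cy}_\bullet(T')$ and $B^{\cy}_\bullet(T)$ are good; since $h$-cofibrations of symmetric spectra are levelwise Hurewicz cofibrations, the simplicial spaces $B^{\cy}_\bullet(T')(n)$ and $B^{\cy}_\bullet(T)(n)$ are good for every spectrum level $n$. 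Second, a cofibrant symmetric ring spectrum is a retract of a cell ring spectrum, and an analysis of the underlying symmetric spectrum of such a cell object, parallel to the proof of Proposition \ref{propunderlying} in the $\sL(1)$-space setting, shows that the underlying symmetric spectrum of $T'$ is flat in the sense of Section \ref{flatnesscondition}.

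Next I would verify that in each simplicial degree $k$ the map $B^{\cy}_k(T')\to B^{\cy}_k(T)$, that is, the $(k+1)$-fold smash power of $T'\to T$, is a levelwise equivalence. This follows by comparing both sides with the homotopy-invariant smash product $\wedge^h$ of Section \ref{flatnesscondition}: the map $T'\wedge^h\cdots\wedge^h T'\to T\wedge^h\cdots\wedge^h T$ ($k+1$ factors) induced by $T'\to T$ is a levelwise equivalence because $\wedge^h$ is constructed spacewise as a homotopy colimit of diagrams of well-based spaces on which $T'\to T$ induces termwise equivalences, while the canonical projections $T'\wedge^h\cdots\wedge^h T'\to (T')^{\wedge(k+1)}$ and $T\wedge^h\cdots\wedge^h T\to T^{\wedge(k+1)}$ are levelwise equivalences by Proposition \ref{flatsmashequivalence}, since $T'$ and $T$ are both flat. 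Hence $(T')^{\wedge(k+1)}\to T^{\wedge(k+1)}$ is a levelwise equivalence.

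Finally, the internal realization of a simplicial symmetric spectrum agrees at each spectrum level $n$ with the ordinary geometric realization of the associated simplicial space (Lemma \ref{realizationlemma}), and the realization of a levelwise equivalence between good simplicial spaces is a weak homotopy equivalence; applying this at every level shows that $B^{\cy}(T')\to B^{\cy}(T)$ is a levelwise equivalence of symmetric spectra, hence a stable equivalence. Together with the first paragraph this proves that $B^{\cy}(T)$ represents the topological Hochschild homology of $T$. The one genuinely technical point is the flatness of the underlying symmetric spectrum of a cofibrant symmetric ring spectrum, and this is where a careful point-set analysis of cell ring spectra, paralleling Proposition \ref{propunderlying}, is required; everything else is a formal consequence of Proposition \ref{flatsmashequivalence}, Lemma \ref{goodlemma} and Lemma \ref{realizationlemma}.
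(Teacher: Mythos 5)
Your proof takes the same route as the paper: choose a cofibrant replacement $T'\to T$ in symmetric ring spectra, cite \cite{shipleyTHH} for $B^{\cy}(T')$, show that $B^{\cy}_\bullet(T')\to B^{\cy}_\bullet(T)$ is a levelwise equivalence in each simplicial degree, use goodness (from $h$-cofibrant units via Lemma~\ref{goodlemma}) to pass to realizations. The paper compresses the middle step into a single citation of Corollary~\ref{flatderived}; you unpack it through the intermediate $\wedge^h$-product and Proposition~\ref{flatsmashequivalence}, which is the content behind that corollary, so the argument is the same in substance.

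One place where your write-up is actually more careful than the paper's is the flatness of the underlying symmetric spectrum of the cofibrant ring spectrum $T'$. Corollary~\ref{flatderived} as stated concerns cofibrant replacements in the stable model structure on symmetric spectra, not on ring spectra, so applying it with the ring-spectrum cofibrant replacement $T'$ requires knowing (or arguing, as you sketch via a parallel to Proposition~\ref{propunderlying}) that the underlying symmetric spectrum of a cell symmetric ring spectrum is flat. The paper leaves this implicit; you correctly flag it as the genuinely technical point. This is a refinement, not a divergence in strategy.
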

\begin{proof}
Let $T'\to T$ be a cofibrant replacement of $T$ as a symmetric ring
spectrum. Then it follows from \cite{shipleyTHH} that
$B^{\cy}(T')$ represents the topological Hochschild of $T$. Using
Corollary \ref{flatderived}, we see that the induced map of simplicial
symmetric spectra  
$B^{\cy}_{\bullet}(T')\to B^{\cy}_{\bullet}(T)$ is a levelwise equivalence in each simplicial degree. 
Furthermore, the assumption that the unit be an $h$-cofibration implies by Lemma 
\ref{goodlemma} that these are good simplicial spaces. Therefore the topological realizations are also levelwise equivalent.
\end{proof}

\subsection{Flat replacement of symmetric spectra}\label{symmetricflatreplacement}
We define an endofunctor on the category of symmetric spectra by associating to a symmetric spectrum $T$ the symmetric spectrum $\bar T$ defined by
$$
\bar T(n)=\hocolim_{\alpha\co\mathbf n_1\to\mathbf n}S^{n-\alpha}\wedge T(n_1),
$$
where the (based) homotopy colimit is over the category $\I/\mathbf n$. This is not quite a flat replacement functor since $\bar T$ need not be levelwise well-based. However, we do have the following.
\begin{proposition}\label{symmetricflatprop}
If $T$ is a symmetric spectrum that is levelwise well-based, then
$\bar T$ is flat and the canonical projection $\bar T\to T$ is a
levelwise weak equivalence. 
\end{proposition}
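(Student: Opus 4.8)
The plan is to treat the two assertions in turn, and to deduce flatness from the criterion recorded in Section~\ref{flatnesscondition} (derived there from Lillig's union theorem): a levelwise well-based symmetric spectrum $U$ is flat as soon as (i) every $\alpha\co\mathbf m\to\mathbf n$ in $\I$ induces an $h$-cofibration $S^{n-\alpha}\wedge U(m)\to U(n)$, and (ii) for every square as in (\ref{Isquare}) with $\mathrm{im}(\beta_1)\cap\mathrm{im}(\beta_2)=\mathrm{im}(\gamma)$, the images of $S^{n-\beta_1}\wedge U(n_1)$ and $S^{n-\beta_2}\wedge U(n_2)$ in $U(n)$ meet exactly in the image of $S^{n-\gamma}\wedge U(m)$. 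For the levelwise weak equivalence, the point is simply that $\I/\mathbf n$ has a terminal object $\mathrm{id}\co\mathbf n\to\mathbf n$, at which the diagram $\alpha\mapsto S^{n-\alpha}\wedge T(n_1)$ takes the value $S^0\wedge T(n)=T(n)$. The diagram is objectwise well-based since $T$ is, and $\I/\mathbf n$ is contractible, so, exactly as in the proof of Proposition~\ref{flatsmashequivalence}, the based homotopy colimit agrees up to weak equivalence with the unbased one, and the inclusion of the vertex over $\mathrm{id}$ is homotopy cofinal; hence the vertex map $T(n)\to\bar T(n)$ is a weak equivalence. Since its composite with the canonical projection $\bar T(n)\to T(n)$ is the identity, the projection is a weak equivalence as well.

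For flatness, first $\bar T(n)$ is well-based: the bar construction computing the homotopy colimit is a good simplicial space (degeneracies insert identity morphisms, which on the wedge level are inclusions of a wedge summand, hence $h$-cofibrations) and is levelwise well-based with the wedge point as basepoint, so its realization is well-based in the standard way. The structural input for (i) and (ii) is the identification, via the canonical homeomorphisms $S^{n-\alpha}\wedge S^{m-\beta}\cong S^{n-\alpha\circ\beta}$, of the map $S^{n-\alpha}\wedge\bar T(m)\to\bar T(n)$ with the map of homotopy colimits induced by the functor $\alpha_*\co\I/\mathbf m\to\I/\mathbf n$, $\beta\mapsto\alpha\circ\beta$, together with the corresponding natural isomorphism of diagrams. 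Since $\alpha$ is injective, $\alpha_*$ is injective on objects and faithful, hence an isomorphism onto the full subcategory $\mathcal C_\alpha\subseteq\I/\mathbf n$ of those $\delta$ with $\mathrm{im}(\delta)\subseteq\mathrm{im}(\alpha)$. On bar constructions $\alpha_*$ is therefore in each simplicial degree the inclusion of a wedge summand, so its realization is an $h$-cofibration whose image is the closed subspace $\hocolim_{\mathcal C_\alpha}$ (homotopy colimit of the restricted diagram) of $\bar T(n)$. Taking $\alpha$ to be a structure inclusion gives (i).

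For (ii), the three images in question are, by the same identification, the subspaces indexed by $\mathcal C_{\beta_1}$, $\mathcal C_{\beta_2}$ and $\mathcal C_\gamma$. In any string $\alpha_0\to\cdots\to\alpha_p$ in $\I/\mathbf n$ one has $\mathrm{im}(\alpha_0)\subseteq\cdots\subseteq\mathrm{im}(\alpha_p)$, so such a $p$-simplex of the nerve lies in $\mathcal C_{\beta_i}$ iff $\mathrm{im}(\alpha_p)\subseteq\mathrm{im}(\beta_i)$; hence $N_\bullet\mathcal C_{\beta_1}\cap N_\bullet\mathcal C_{\beta_2}=N_\bullet\mathcal C_\gamma$ precisely because $\mathrm{im}(\beta_1)\cap\mathrm{im}(\beta_2)=\mathrm{im}(\gamma)$. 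Thus the two sub-simplicial-spaces of the bar construction meet degreewise in the sub-simplicial-space indexed by $\mathcal C_\gamma$, and, the inclusions being levelwise closed and the simplicial spaces good, this persists after realization by the same point-set argument (closed inclusions, pasting lemma, Lillig's theorem) used in the proof of Proposition~\ref{flatsmashequivalence}. This yields $\hocolim_{\mathcal C_{\beta_1}}\cap\hocolim_{\mathcal C_{\beta_2}}=\hocolim_{\mathcal C_\gamma}$ inside $\bar T(n)$, which is (ii); with (i) and levelwise well-basedness, the criterion shows $\bar T$ is flat.

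The step I expect to be the main obstacle is the last one: making precise the identification of the images of the sub-objects $S^{n-\alpha}\wedge\bar T(m)$ with honest closed subspaces given by homotopy colimits over full subcategories, and verifying that geometric realization commutes with the relevant finite intersections of levelwise-closed sub-simplicial-spaces. This is routine $h$-cofibration bookkeeping, of exactly the flavour already carried out in the proof of Proposition~\ref{flatsmashequivalence}, but it is where the genuine work lies; the weak-equivalence claim and the well-basedness of $\bar T(n)$ are comparatively formal.
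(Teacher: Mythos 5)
Your proof is correct and follows essentially the same route as the paper's: both identify the structure map $S^{n-\alpha}\wedge\bar T(m)\to\bar T(n)$ with the map of homotopy colimits induced by $\alpha_*\co\I/\mathbf m\to\I/\mathbf n$, observe that $\alpha_*$ is an isomorphism onto a full subcategory, deduce the $h$-cofibration condition (i) from this, verify the intersection condition (ii) in each simplicial degree and pass to realization, and obtain the levelwise weak equivalence from the terminal object of $\I/\mathbf n$. The only superficial differences are that you spell out the well-basedness of $\bar T(n)$ and argue the $h$-cofibration in (i) directly via wedge-summand inclusions, whereas the paper cites \cite[X.3.5]{ekmm} for the general fact that restriction along a full subcategory induces an $h$-cofibration of homotopy colimits.
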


\begin{proof}
We show that $\bar T$ satisfies the flatness conditions (i) and (ii). Thus, given a morphism $\alpha\co\mathbf m\to\mathbf n$, we claim that the structure map 
$S^{n-\alpha}\wedge \bar T(m)\to \bar T(n)$ is an $h$-cofibration. Let
$\alpha_*\co\I/\mathbf m\to\I/\mathbf n$ be the functor induced by
$\alpha$. Using that based homotopy colimits commute with smash
products and that there is a natural isomorphism of
$\I/\mathbf m$-diagrams 
\[
S^{n-\alpha}\wedge S^{m_1-\beta}\wedge T(m_1)\cong S^{n-\alpha\beta}\wedge T(m_1),
\qquad \beta\co \mathbf m_1\to \mathbf m,
\] 
 the map in question may be identified with the map of homotopy colimits
\[
\hocolim_{\beta\co\mathbf m_1\to\mathbf m}S^{n-\alpha\beta}\wedge
T(m_1)\\
 \xr{\alpha_*} \hocolim_{\gamma\co\mathbf n_1\to\mathbf n}S^{n-\gamma}\wedge T(n_1).  
\]
 Notice that $\alpha_*$ induces an isomorphism of $\I/\mathbf m$ onto a full subcategory of 
 $\I/\mathbf n$. The claim therefore follows from the general fact that the map of homotopy colimits obtained by restricting a diagram to a full subcategory is an $h$-cofibration, see  
e.g.\ \cite[X.3.5]{ekmm}. In order to verify (ii) one first checks the condition in each simplicial degree of the simplicial spaces defining the homotopy colimits. The result then follows from the fact that topological realization preserves pullback diagrams. The map $\bar T\to T$ is defined by the canonical projection from the homotopy colimit to the colimit
$$
\bar T(n)=\hocolim_{\I/\mathbf n}T\to \colim_{\I/\mathbf n}T=T(n),
$$
where the identification of the colimit comes from the fact that $\I/\mathbf n$ has a terminal object. The existence of a terminal object implies that this is a weak homotopy equivalence.
\end{proof}

The relationship between the replacement functor and the $\wedge^h$-product is recorded in the following proposition whose proof we leave with the reader.  
\begin{proposition}
There is a natural isomorphism of symmetric spectra
$$
\bar T_1\wedge\dots\wedge\bar T_k\cong T_1\wedge ^h\dots\wedge^h T_k.
\eqno\qed
$$
\end{proposition}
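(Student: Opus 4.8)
The plan is to compute $\bar T_1\wedge\dots\wedge\bar T_k$ directly from the colimit description of the smash product of symmetric spectra recalled above from \cite{schlichtkrull-thom}, and to observe that the outer colimit over $\sqcup^k/\mathbf n$ precisely absorbs the ``structure maps to $\mathbf n_i$'' that are built into the definition of each $\bar T_i$. Concretely, I would substitute the definitions of the $\bar T_i$ into
$$
(\bar T_1\wedge\dots\wedge\bar T_k)(n)=\colim_{\alpha\co\mathbf n_1\sqcup\dots\sqcup\mathbf n_k\to\mathbf n}S^{n-\alpha}\wedge\bar T_1(n_1)\wedge\dots\wedge\bar T_k(n_k).
$$
Since the smash product of based spaces preserves colimits in each variable, and a homotopy colimit is built from coproducts and geometric realization, the smash products and the outer colimit all commute with the homotopy colimits defining the spaces $\bar T_i(n_i)$. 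Combining this with the canonical homeomorphism $S^{n-\alpha}\wedge S^{n_1-\beta_1}\wedge\dots\wedge S^{n_k-\beta_k}\cong S^{n-\gamma}$ for morphisms $\beta_i\co\mathbf m_i\to\mathbf n_i$, where $\gamma=\alpha\circ(\beta_1\sqcup\dots\sqcup\beta_k)$, exhibits $(\bar T_1\wedge\dots\wedge\bar T_k)(n)$ as the realization of the simplicial space whose $\ell$-simplices form $\colim_{(\mathbf n_\bullet,\alpha)\in\sqcup^k/\mathbf n}$ of the coproduct, taken over $k$-tuples of length-$\ell$ chains $\mathbf m_i^0\to\dots\to\mathbf m_i^\ell$ in $\I/\mathbf n_i$ (with structure maps $\beta_i\co\mathbf m_i^\ell\to\mathbf n_i$), of the spaces $S^{n-\gamma_0}\wedge T_1(m_1^0)\wedge\dots\wedge T_k(m_k^0)$, where $\gamma_0\co\mathbf m_1^0\sqcup\dots\sqcup\mathbf m_k^0\to\mathbf n$ is the evident composite.

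The heart of the argument is then a degreewise analysis of this last colimit. For each of the data above, the morphism $(\beta_1,\dots,\beta_k)$ of $\sqcup^k/\mathbf n$ from $(\mathbf m_\bullet^\ell,\alpha\circ(\beta_1\sqcup\dots\sqcup\beta_k))$ to $(\mathbf n_\bullet,\alpha)$ carries the summand indexed by the same chains, but with each $\beta_i$ replaced by $\id_{\mathbf m_i^\ell}$, isomorphically onto the given summand; moreover no two distinct summands of this ``canonical'' form (those with $\beta_i=\id$) get identified in the colimit, since any morphism of $\sqcup^k/\mathbf n$ preserving canonicity is an identity. Hence the colimit is isomorphic to the coproduct --- over $k$-tuples of length-$\ell$ chains $\mathbf m_i^0\to\dots\to\mathbf m_i^\ell$ together with a morphism $\mathbf m_1^\ell\sqcup\dots\sqcup\mathbf m_k^\ell\to\mathbf n$ --- of the spaces $S^{n-\gamma_0}\wedge T_1(m_1^0)\wedge\dots\wedge T_k(m_k^0)$, which is exactly the $\ell$-simplices of the simplicial space realizing $(T_1\wedge^h\dots\wedge^h T_k)(n)$. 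A routine verification shows that these identifications respect the face and degeneracy operators, hence assemble to an isomorphism $(\bar T_1\wedge\dots\wedge\bar T_k)(n)\cong(T_1\wedge^h\dots\wedge^h T_k)(n)$, and one checks directly that it is compatible with the symmetric spectrum structure maps and natural in the $T_i$.

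The hard part is purely organizational. One must set up the indexing so that the interchange of the smash products with the homotopy colimits is a genuine isomorphism rather than merely a weak equivalence, which relies on the associativity of the complement-sphere homeomorphisms and their compatibility with the structure maps; and one must keep careful track of how the morphisms of $\sqcup^k/\mathbf n$ act on the indexing sets of the coproducts above, in order to see that the colimit collapses onto the canonical summands with no further identifications. Once this bookkeeping is in place the proof is entirely formal, which is why it can reasonably be left to the reader.
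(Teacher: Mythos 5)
The paper records this proposition without proof (the $\qed$ is immediate, the verification being left to the reader), so there is no written argument to compare against. Your proposal is the natural argument and it is essentially correct: substitute the defining homotopy colimits of $\bar T_i$ into the colimit formula for the $k$-fold smash product, use that the smash product commutes with colimits in each variable and the diagonal isomorphism for realizations to exhibit $(\bar T_1\wedge\cdots\wedge\bar T_k)(n)$ as the realization of a simplicial based space, and then identify its $\ell$-simplices degreewise with those of $(T_1\wedge^h\cdots\wedge^h T_k)(n)$.

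One clarification is worth making on the injectivity step in the degreewise identification. The statement that any morphism of $\sqcup^k/\mathbf n$ preserving canonicity is an identity does not by itself rule out two distinct canonical summands being identified in the colimit: in a colimit of sets, identifications are generated by zig-zags which may pass through non-canonical summands. What closes the argument is that the assignment sending a summand over $\alpha$, indexed by a tuple of chains in $\prod_i \I/\mathbf n_i$ with anchors $\beta_i$, to its induced chain in $\sqcup^k/\mathbf n$ (composing everything with $\alpha$) is natural in $\alpha$, hence descends to a well-defined function on the degreewise colimit; this function is a bijection when restricted to the canonical summands, and by your surjectivity observation every element is equivalent to a canonical one, so the colimit is precisely the wedge over chains in $\sqcup^k/\mathbf n$. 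Equivalently, the relevant indexing diagram factors through the set of connected components of the Grothendieck construction of $\alpha\mapsto\{\text{chains in }\prod_i\I/\mathbf n_i\}$, and your canonical summands enumerate these components. With that supplied, the remaining checks of compatibility with faces, degeneracies, $\Sigma_n$-equivariance, and the symmetric spectrum structure maps are indeed routine as you say.
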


Recall the notion of a monoidal functor from \cite[\S XI.2]{maclane}. This is what is sometimes called lax monoidal. 

\begin{proposition}\label{flatmonoidalprop}
The replacement functor $T\mapsto \bar T$ is (lax) monoidal and the
canonical map $\bar T\to T$ is a monoidal natural transformation. 
\end{proposition}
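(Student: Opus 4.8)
The plan is to read off the lax monoidal structure on $T\mapsto\bar T$ from the natural isomorphism $\bar T_1\wedge\dots\wedge\bar T_k\cong T_1\wedge^h\dots\wedge^h T_k$ of the preceding proposition, together with the evident comparison between the $\wedge^h$-construction and $\overline{(-)}$ itself, which is the case $k=1$ of that isomorphism.

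First I would produce the structure maps. For each $\mathbf n$ let $\Phi_{\mathbf n}\co\sqcup^2/\mathbf n\to\I/\mathbf n$ be the functor sending an object $\alpha\co\mathbf n_1\sqcup\mathbf n_2\to\mathbf n$ to the same injection regarded as an object of $\I/\mathbf n$ with source $\mathbf n_1\sqcup\mathbf n_2$, and a morphism $(\beta_1,\beta_2)$ to $\beta_1\sqcup\beta_2$. There is a natural transformation from the $\sqcup^2/\mathbf n$-diagram $\alpha\mapsto S^{n-\alpha}\wedge T_1(n_1)\wedge T_2(n_2)$ to the restriction along $\Phi_{\mathbf n}$ of the $\I/\mathbf n$-diagram sending $\beta\co\mathbf m\to\mathbf n$ to $S^{n-\beta}\wedge(T_1\wedge T_2)(m)$, given in each degree by smashing the identity of $S^{n-\alpha}$ with the canonical map $T_1(n_1)\wedge T_2(n_2)\to(T_1\wedge T_2)(n_1+n_2)$ into the defining colimit (the summand indexed by the identity); naturality is the routine verification that this canonical map is compatible with the sphere-coordinate twists and with the structure maps. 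Passing to based homotopy colimits and using functoriality of $\hocolim$ along $\Phi_{\mathbf n}$ yields a map $(T_1\wedge^h T_2)(n)\to\overline{T_1\wedge T_2}(n)$; since $\Phi_{\mathbf n}$ and the above natural transformation are compatible with the functors induced by the inclusions $\mathbf n\hookrightarrow\mathbf{n+1}$, these assemble to a map of symmetric spectra. Composing with the isomorphism of the preceding proposition gives $\phi_{T_1,T_2}\co\bar T_1\wedge\bar T_2\to\overline{T_1\wedge T_2}$, natural in $T_1$ and $T_2$, and the evident $k$-fold analogues $\Phi^{(k)}_{\mathbf n}\co\sqcup^k/\mathbf n\to\I/\mathbf n$ give the iterated comparison maps. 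For the unit constraint I would use that $\I/\mathbf n$ has an initial object $\mathbf 0\to\mathbf n$, whose corresponding summand in $\bar T(n)$ is $S^n\wedge T(0)$; when $T=S$ this is $S^n=S(n)$, and the inclusion of this summand is a map of symmetric spectra $\varepsilon\co S\to\bar S$ — structure-map compatibility holds since the functors $\I/\mathbf n\to\I/\mathbf{n+1}$ preserve initial objects.

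Next I would verify the coherence axioms of a monoidal functor \cite[\S XI.2]{maclane}. For associativity one must show that the two composites $(\bar T_1\wedge\bar T_2)\wedge\bar T_3\to\overline{(T_1\wedge T_2)\wedge T_3}$ and $\bar T_1\wedge(\bar T_2\wedge\bar T_3)\to\overline{T_1\wedge(T_2\wedge T_3)}$ agree after identifying the sources by the associator of $\wedge$ and the targets by the image under $\overline{(-)}$ of the associator of $\wedge$. Applying the preceding proposition and the (coherent) associativity of $\wedge^h$ — which, exactly as for the colimit-defined $\wedge$, is a reindexing isomorphism inherited from the strict associativity of $\sqcup$ — both composites become identified with the single three-fold map induced by $\Phi^{(3)}_{\mathbf n}$ and the canonical map $T_1(n_1)\wedge T_2(n_2)\wedge T_3(n_3)\to(T_1\wedge T_2\wedge T_3)(n_1+n_2+n_3)$. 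The required identification amounts to the assertion that iterating the two-fold canonical maps for $\wedge$ in either bracketing produces the three-fold canonical map, which is precisely the coherent associativity of the smash product of symmetric spectra; the unit coherence is the analogous statement using $\Phi^{(2)}_{\mathbf n}$ and the initial-object summand, reducing to the unitality of $\wedge$. Finally, for the last assertion I would check that the canonical projection $\bar T\to T$ — induced by the terminal object $\mathrm{id}_{\mathbf n}$ of $\I/\mathbf n$ — is a monoidal natural transformation: the composite $S\xrightarrow{\varepsilon}\bar S\to S$ is the diagram map of $\overline{(-)}$'s defining diagram from the initial summand to the terminal summand, namely the canonical identification of spheres, hence the identity; and for the binary part, tracing through the identification $\bar T_1\wedge\bar T_2\cong T_1\wedge^h T_2$ reduces the relevant square to the equality of $T_1\wedge^h T_2\xrightarrow{\phi}\overline{T_1\wedge T_2}\to T_1\wedge T_2$ with the canonical projection $T_1\wedge^h T_2\to T_1\wedge T_2$, which is immediate since both send the summand indexed by $\alpha$ into $(T_1\wedge T_2)(n)$ by the colimit structure map at $\alpha$.

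I expect no conceptual obstacle: every claim reduces to the preceding proposition, to the coherence of $\wedge$ and of $\sqcup$, and to the presence of an initial and a terminal object in $\I/\mathbf n$. The only real work is the bookkeeping in the last two paragraphs — matching up the various homotopy-colimit reindexings and sphere-coordinate identifications — which I anticipate being laborious but entirely routine, and it is why, like the preceding proposition, one is tempted to leave the details to the reader.
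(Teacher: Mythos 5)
Your proof is correct and takes essentially the same approach as the paper: the structure map is built by applying the canonical maps $T_1(n_1)\wedge T_2(n_2)\to(T_1\wedge T_2)(n_1+n_2)$ inside a homotopy colimit and pushing forward along a comma-category functor, and the unit constraint uses the initial object $\mathbf 0\to\mathbf n$ of $\I/\mathbf n$. The only organizational difference is that the paper reduces to a natural transformation of $\I_S\times\I_S$-diagrams via the universal property of $\wedge$ as a Kan extension together with the concatenation functor $\I/\mathbf m\times\I/\mathbf n\to\I/(\mathbf m\sqcup\mathbf n)$, whereas you first invoke the preceding proposition's isomorphism $\bar T_1\wedge\bar T_2\cong T_1\wedge^h T_2$ and work with $\Phi_{\mathbf n}\co\sqcup^2/\mathbf n\to\I/\mathbf n$; these are equivalent, and your explicit verification of coherence and of the monoidal naturality of $\bar T\to T$ fills in details the paper leaves as ``clear.''
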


\begin{proof}
The replacement of the sphere spectrum has 0th space $\bar S(0)=S^0$
and we let $S\to\bar S$ be the unique map of symmetric spectra that is
the identity in degree 0. We must define an associative and unital
natural transformation $\bar T_1\wedge \bar T_2\to\overline{T_1\wedge
  T_2}$, which by the universal property of the smash product amounts
to an associative and unital natural transformation of $\I_S\times
\I_S$-diagrams 
\begin{equation}\label{Hcofibrantmonoidalmap}
\bar T_1(m)\wedge \bar T_2(n)\to \overline{T_1\wedge T_2}(m+n).
\end{equation}
Here $\I_S$ denotes the topological category such that $\Sp^{\Sigma}$
may be identified with the category of based $\I_S$-diagrams, see
\cite{mmss} and \cite{schlichtkrull-thom}.  Consider the natural
transformation  of $\I/\mathbf m\times \I/\mathbf n$-diagrams that to
a pair of objects $\alpha\co \mathbf m_1\to\mathbf m$ and
$\beta\co\mathbf n_1\to\mathbf n$ associates the map  
\[
S^{m-\alpha}\wedge T_1(m_1)\wedge S^{n-\beta}\wedge T_2(n_1)
\to S^{m+n-\alpha\sqcup\beta}\wedge T_1\wedge T_2(m_1+n_1),
\]
where we first permute the factors and then apply the universal map to
the smash product $T_1\wedge T_2$. Using that based homotopy colimits
commute with smash products, the map (\ref{Hcofibrantmonoidalmap}) is
the induced map of homotopy colimits, followed by the map
\[
\hocolim_{\I/\mathbf m\times\I/\mathbf n}S^{m+n-\alpha\sqcup \beta}\wedge 
T_1\wedge T_2(m_1+n_1)\to\overline{T_1\wedge T_2}(m+n)
\]
 induced by the concatenation functor $\I/\mathbf
m\times\I/\mathbf n\to\I/\mathbf m\sqcup\mathbf n$.  With this
definition it is clear that the natural transformation $\bar T\to T$
is monoidal. 
\end{proof}
It follows from this that the replacement functor induces a functor on
the category of symmetric ring spectra. In order to ensure that the unit is an $h$-cofibration we adapt the usual method for replacing a topological monoid with one that is well-based. Thus, given a symmetric ring spectrum $T$, we define $T'$ to be the mapping cylinder
\[
T'=T\cup_{S\wedge\{0\}_+}S\wedge I_+,
\]
where we view the unit interval $I$ as a multiplicative topological monoid.  
This is again a symmetric ring spectrum and arguing as in the case of a topological monoid 
\cite[A.8]{may-geom} one deduces that the unit $S\to T'$ is an $h$-cofibration and that the canonical map of symmetric ring spectra $T'\to T$ is a homotopy equivalence. It is easy to check that if $T$ is flat as a symmetric spectrum, then $T'$ is a flat symmetric
ring spectrum in the sense of Definition \ref{flatsymmetricring}. Combining these remarks with Proposition \ref{symmetricflatprop}, we get the following. 

\begin{proposition}\label{flatringprop}
If $T$ is a symmetric ring spectrum that is levelwise well-based,
then $T^c=(\bar T)'$ is a flat symmetric ring
spectrum.\qed
\end{proposition}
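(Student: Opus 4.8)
The plan is to obtain $T^{c}=(\bar T)'$ by applying the two replacement constructions of this section in succession and checking that each step preserves the needed structure. First I would invoke Proposition~\ref{flatmonoidalprop}: since the functor $T\mapsto\bar T$ is lax monoidal, it carries the symmetric ring spectrum $T$ to a symmetric ring spectrum $\bar T$, with $\bar T\to T$ a map of symmetric ring spectra. Because $T$ is levelwise well-based, Proposition~\ref{symmetricflatprop} then shows that $\bar T$ is flat as a symmetric spectrum; in particular each space $\bar T(n)$ is well-based, so the hypothesis required for the next step is in force.

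Second, I would feed $\bar T$ into the mapping cylinder construction described just before the statement, setting $(\bar T)'=\bar T\cup_{S\sma\{0\}_{+}}S\sma I_{+}$. Exactly as recalled there (following \cite[A.8]{may-geom}), $(\bar T)'$ is again a symmetric ring spectrum, the canonical map $(\bar T)'\to\bar T$ is a homotopy equivalence, and the unit $S\to(\bar T)'$ is an $h$-cofibration. It then remains only to see that the underlying symmetric spectrum of $(\bar T)'$ is flat; granting this, $(\bar T)'$ satisfies Definition~\ref{flatsymmetricring} and the proposition follows by concatenating the two steps.

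The one point needing genuine verification — and where I expect the real work to sit — is the assertion, stated informally above, that the mapping cylinder of a flat symmetric spectrum is again flat. Here one checks that $(\bar T)'(n)=\bar T(n)\cup_{S^{n}}(S^{n}\sma I_{+})$ is well-based, that the structure maps $S^{1}\sma(\bar T)'(n)\to(\bar T)'(n+1)$ are $h$-cofibrations, and that the squares of the flatness criterion remain pullbacks. Each of these reduces to the corresponding property of $\bar T$ (known from Proposition~\ref{symmetricflatprop}) and of the sphere spectrum, combined with the standard facts that $(-)\sma I_{+}$ preserves $h$-cofibrations and well-basedness, that pushouts along $h$-cofibrations are homotopically harmless, and Lillig's union theorem for $h$-cofibrations — the same toolkit deployed in the proof of Proposition~\ref{flatsmashequivalence}. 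With that lemma in hand the argument is purely formal.
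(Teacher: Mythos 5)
Your proposal is correct and follows essentially the same route the paper itself uses: the paper packs the proof into the discussion immediately preceding the proposition (lax monoidality of $\bar{(-)}$ so $\bar T$ is a ring spectrum, Proposition~\ref{symmetricflatprop} to get flatness from levelwise well-basedness, the mapping-cylinder construction to make the unit an $h$-cofibration, and the remark ``it is easy to check that if $T$ is flat as a symmetric spectrum, then $T'$ is a flat symmetric ring spectrum''), and you reconstruct precisely this chain, correctly isolating the last remark as the only step needing nontrivial verification and sketching it along the same lines (Lillig's theorem plus the compatibility of $(-)\wedge I_+$ and pushouts along $h$-cofibrations with the flatness conditions).
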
  

\section{Implementing the axioms for symmetric spectra}
\label{I-spacesection}
In this section we verify the axioms {\bf A1}--{\bf A6} in the setting of $\I$-spaces and symmetric spectra. The basic reference for this material is the paper \cite{schlichtkrull-thom} in which the theory of Thom spectra is developed in the category of symmetric spectra.
\subsection{Symmetric spectra and $\I$-spaces}
In the axiomatic framework set up in Section \ref{axiomssection} we define $\mathcal S$ to be the category of symmetric spectra $\Sp^{\Sigma}$ and we say that a symmetric ring spectrum is flat if it satisfies the conditions in Definition \ref{flatsymmetricring}.  We define $U$ to be the composite functor
$$
U\co \Sp^{\Sigma}\xr{D}\Sp^{\Sigma}\to \Sp,
$$ 
where $D$ is the detection functor from Section \ref{detectionfunctor} and the second arrow represents the obvious forgetful functor. It follows from the discussion in Section 
\ref{detectionfunctor} that a map of symmetric spectra is a stable model equivalence if and only if applying $U$ gives an ordinary stable equivalence of spectra.   

We define $\mathcal A$ to be the symmetric monoidal category $\I\mathcal U$ of $\I$-spaces as defined in the introduction. Given an $\I$-space $B$, let $B[n]$ be the $\I$-space 
$B(\mathbf n\sqcup-)$ obtained by composing with the ``shift'' functor $\mathbf n\sqcup-$ on $\I$. We write $\Hom_{\I}(A,B)$ for the internal Hom-object in $\I\mathcal U$ defined by
$\mathbf n\mapsto \I\mathcal U(A,B[n])$.  
This makes $\I\mathcal U$ a closed symmetric monoidal topological category in the sense that there is a natural isomorphism
$$
\I\mathcal U(A\boxtimes B,C)\cong\I\mathcal U(A,\Hom_{\I}(B,C)).
$$ 
We define an \emph{$\I$-space monoid} to be a monoid in $\I\mathcal
U$. The tensor of an $\I$-space $A$ with a space $K$ is given by the obvious levelwise cartesian product $A\times K$. Associating to an $\I$-space its homotopy colimit over $\I$ defines the functor $U\co \I\mathcal U\to \mathcal U$, that is, $UA=\hocolim_{\mathcal I}A$. 
Here we adapt the Bousfield-Kan construction \cite{bousfield-kan}, such that by
definition $UA$ is the realization of the simplicial space 
\[
[k]\mapsto\coprod_{\mathbf n_0\leftarrow\dots\leftarrow \mathbf n_k}A(n_k),
\] 
where the coproduct is indexed over the nerve of $\I$. In particular,
$U*$ is the classifying space $B\mathcal I$ which is contractible
since $\mathcal I$ has an initial object.  That $U$ preserves tensors
and colimits follows from the fact that topological realization has
this property. As in \cite{schlichtkrull-thom} and \cite{schlichtkrull-units} we also use the notation $A_{h\I}$ for the homotopy colimit of an $\I$-space $A$. 

In the discussion of the axioms \textbf{A1}--\textbf{A6} we consider
two cases corresponding to the underlying Thom spectrum functor on
$\mathcal U/BF$ and its restriction to $\mathcal U/BO$. In the case of
\textbf{A2} and \textbf{A3} we formulate a slightly weaker version of
the axioms which hold in the $\I\mathcal U/BF$ case and which imply
the original axioms when restricted to objects in $\I\mathcal
U/BO$. In Section \ref{generalsymmetric} we then provide additional
arguments to show why the weaker form of the axioms suffices to prove
the statement in Theorem~\ref{maintheorem}. One can also verify the axioms
for more general families of subgroups of the topological monoids $F(n)$. We omit the details of this since the only reason for singling out the group valued case is
to explain how the arguments simplify in this situation.  
  
\subsection*{Axiom {\bf A1}}
As explained in Section \ref{Lewissection}, the correspondence $\mathbf n\mapsto BF(n)$ defines a commutative monoid in $\I\U$.  In order to be consistent with the notation used in 
\cite{schlichtkrull-thom}, we now redefine $BF$ to be this $\I$-space monoid  and we write 
$BF_{h\I}$ for its homotopy colimit. Let $\mathcal N$ be the subcategory of $\I$ whose only morphisms are the subset inclusions and let $i\co \mathcal N\to\I$ be the inclusion. Thus, $\mathcal N$ may be identified with the ordered set of natural numbers. Let $BF_{h\mathcal N}$ be the homotopy colimit and $BF_{\mathcal N}$ the colimit of the $\mathcal N$-diagram $BF$, such that $BF_{\mathcal N}$
is now what was denoted $BF$ in Section \ref{Lewissection}. We then have a diagram of weak homotopy equivalences
$$
BF_{h\I}\xl{i} BF_{h\mathcal N}\xr{t}BF_{\mathcal N},
$$
where $t$ is the canonical projection from the homotopy colimit to the colimit. Here $i$ is a weak homotopy equivalence by B\"okstedt's approximation lemma \cite[2.3.7]{madsen},
and $t$ is a weak homotopy equivalence since the structure maps are $h$-cofibrations. Using that $BF_{h\I}$ has the homotopy type of a CW-complex, we choose a homotopy inverse $j$ of $i$ and define $\zeta$ to be the composite weak homotopy equivalence
$$
\zeta\co BF_{h\I}\xr{j} BF_{h\mathcal N}\xr{t} BF_{\mathcal N}.
$$
Starting with the commutative $\I$-space monoid $\mathbf n\mapsto BO(n)$, we similarly get a weak homotopy equivalence $\zeta\co BO_{h\I}\xr{\sim} BO_{\mathcal N}$. 

\subsection*{Axiom {\bf A2}}
The symmetric Thom spectrum functor 
$$
T\co \I\mathcal U/BF\to \Sp^{\Sigma}
$$
is defined by applying the Thom space functor from Section \ref{Lewissection} levelwise: given an object $\alpha\co A\to BF$ with level maps $\alpha_n\co A(n)\to BF(n)$, the $n$th space of the symmetric spectrum $T(\alpha)$ is given by $T(\alpha_n)$. Since colimits and tensors in 
$\I\mathcal U/BF$ and 
$\Sp^{\Sigma}$ are formed levelwise, the fact that the Thom space functor preserves these constructions \cite[IX]{lewis-may-steinberger} implies that the symmetric Thom spectrum functor has the same property. Given maps of $\I$-spaces $\alpha\co A\to BF$ and $\beta\co B\to BF$, we may view the canonical maps
$$
A(m)\times B(n)\to A\boxtimes B(m+n) 
$$ 
as maps over $BF(m+n)$ and since the Thom space functor takes cartesian products to smash products, the induced maps of Thom spaces take the form
$$
T(\alpha)(m)\wedge T(\beta)(n)\to T(\alpha\boxtimes\beta)(m+n).
$$
We refer to \cite{schlichtkrull-thom} for a proof of the fact that the induced map of symmetric spectra
$$
T(\alpha)\wedge T(\beta)\to T(\alpha\boxtimes\beta)
$$
is an isomorphism. This implies that $T$ is a strong symmetric monoidal functor. 
We next discuss homotopy invariance. Applying the usual (Hurewicz) fibrant replacement functor levelwise we get an endofunctor $\Gamma$ on $\I\mathcal U/BF$. It is proved in 
\cite{schlichtkrull-thom} that the composite functor 
$$
T\Gamma\co \I\U/BF\xr{\Gamma} \I\U/BF\xr{T} \Sp^{\Sigma}
$$
is a homotopy functor: if $(A,\alpha)\to (B,\beta)$ is a weak equivalence over $BF$ (that is,  
$A_{h\I}\to B_{h\I}$ is a weak homotopy equivalence), then 
$T\Gamma(\alpha)\to T\Gamma(\beta)$ is a stable model equivalence. We say that an object 
$(A,\alpha)$ is \emph{$T$-good} if the canonical map $T(\alpha)\to T\Gamma(\alpha)$ is a stable model equivalence. It follows from the definition that the symmetric Thom spectrum functor preserves weak equivalences on the full subcategory of $T$-good objects in $\I\mathcal U/BF$. Since $\I\U/BO$ maps into the subcategory of $T$-good objects in $\I\U/BF$, this in particular implies that $T$ preserves weak equivalences when restricted to $\I\mathcal U/BO$.

\begin{proposition}\label{restrictedA2}
Restricted to the full subcategory of $T$-good objects, the two compositions in the diagram
\[
\xymatrix{
\I\U/BF \ar[rr]^{T}\ar[d]^{U} & & \Sp^{\Sigma}\ar[d]^U\\
\U/BF_{h\I}\ar[r]^{\zeta_*} & \U/BF_{\mathcal N} \ar[r]^{T\Gamma} & \Sp
}
\]
are related by a chain of natural stable equivalences.
\end{proposition}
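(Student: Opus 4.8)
The plan is to deduce Proposition~\ref{restrictedA2} from the comparison between the symmetric Thom spectrum functor and the Lewis--May Thom spectrum functor established in \cite{schlichtkrull-thom}, together with Shipley's analysis of the detection functor $D$ from \cite{shipleyTHH} and B\"okstedt's approximation lemma. Since we restrict to $T$-good objects, the canonical map $T(\alpha)\to T\Gamma(\alpha)$ is a stable model equivalence, so I would first replace $\alpha$ by $\Gamma(\alpha)$; this lets us assume that each level map $\alpha_n\co A(n)\to BF(n)$ is a Hurewicz fibration with section a fibrewise cofibration, so that $T(\alpha)$ is levelwise well-based and the levelwise Thom space construction is homotopically meaningful. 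What then has to be produced is a natural chain of stable equivalences of ordinary spectra between $UT(\alpha)=(\text{forget})\,DT(\alpha)$ and $T\Gamma(\zeta_*U\alpha)$.

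The core of the argument is a homotopy-colimit computation carried out on each side. The Lewis--May functor on $\mathcal U/BF_{\mathcal N}$ preserves colimits and $h$-cofibrations, hence realizations of good simplicial objects and homotopy colimits of good diagrams; writing $A_{h\I}$ over $BF_{\mathcal N}$ as the realization of the Bousfield--Kan simplicial space $[k]\mapsto\coprod_{\mathbf n_0\leftarrow\dots\leftarrow\mathbf n_k}A(n_k)$, with the $A(n_k)$-summand mapped through $A(n_k)\to BF(n_k)\to BF_{\mathcal N}$, one obtains a natural stable equivalence
\[
T\Gamma(\zeta_*U\alpha)\ \simeq\ \hocolim_{\mathbf n\in\I}\,T\Gamma\big(A(n)\to BF(n)\to BF_{\mathcal N}\big),
\]
the homotopy colimit being formed over the $\I$-diagram structure on $\mathbf n\mapsto A(n)$. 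For each fixed $n$, a map landing in the filtration stage $BF(n)\subset BF_{\mathcal N}$ has Lewis--May Thom spectrum concentrated in levels $\geq n$, with $(n+k)$th space the Thom space $S^k\wedge T(\alpha_n)$ of the pulled-back bundle; thus $T\Gamma(A(n)\to BF(n)\to BF_{\mathcal N})$ is the $n$-fold shift desuspension of $\Sigma^\infty T(\alpha_n)$. On the other side, Shipley's formula $DT(\alpha)(k)=\hocolim_{\mathbf m\in\I}\Omega^m(T(\alpha_m)\wedge S^k)$ exhibits the underlying spectrum of $DT(\alpha)$ as the homotopy colimit over $\I$ of the same diagram $\mathbf n\mapsto\Sigma^{-n}\Sigma^\infty T(\alpha_n)$ (with its $\Sigma_n$-actions), while by \cite[3.1.6]{shipleyTHH} this is connected to $T(\alpha)$ itself by a chain of natural stable model equivalences. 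Concatenating the two chains — and threading through the zig-zag $\zeta\co BF_{h\I}\xleftarrow{\sim}BF_{h\mathcal N}\xrightarrow{\sim}BF_{\mathcal N}$, whose first map is a weak equivalence by B\"okstedt's approximation lemma \cite[2.3.7]{madsen} and whose second is one since the structure maps of the $\mathcal N$-diagram $BF$ are $h$-cofibrations, together with the $T$-good replacement maps — yields the asserted natural chain of stable equivalences.

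The main obstacle is the middle identification: matching the two $\I$-indexed homotopy colimits — the one implicit in Shipley's $D$ and the one obtained by pushing the Lewis--May Thom spectrum functor through the Bousfield--Kan construction — compatibly over all of $\I$, including its automorphisms, while simultaneously controlling the filtration-level bookkeeping that turns the Thom spectrum of a map into $BF(n)$ into a shift desuspension. This is essentially the comparison theorem of \cite{schlichtkrull-thom}, so in the final write-up this paragraph largely reduces to a citation of that result; the remaining ingredients — goodness of the simplicial objects involved (cf.\ Lemma~\ref{goodlemma} and the discussion in Section~\ref{realizationsection}), $T$-goodness of the objects in play, and the cofinality and approximation facts used for $\zeta$ — are routine given the earlier sections.
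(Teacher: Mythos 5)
Your proposal takes a genuinely different route from the paper. The paper's proof is a short formal argument factoring through the $\I$-space lifting functor $R$: it first observes (Proposition~\ref{RUprop}) that $RU$ is naturally weakly equivalent to the identity on $\I\U/BF$, so $T\Gamma RU\simeq T\Gamma$; then it applies Proposition~\ref{UTRprop}, which already asserts $UT\Gamma R\simeq T\Gamma\zeta_*$, to obtain $UT\Gamma\simeq UT\Gamma RU\simeq T\Gamma\zeta_*U$. You avoid $R$ entirely and instead try to identify both sides directly as $\I$-indexed homotopy colimits: on the Lewis--May side by running $T\Gamma$ through the Bousfield--Kan realization of $\hocolim_{\I}A$ and recognizing each term as a shift desuspension of $\Sigma^\infty T\Gamma(\alpha_n)$, and on the symmetric-spectrum side by unwinding Shipley's detection functor. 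This is morally the same content as Proposition~\ref{UTRprop}, and you correctly flag this yourself; but your proposal ends up invoking that result (``the comparison theorem of \cite{schlichtkrull-thom}'') for the crucial matching step rather than citing it cleanly as a black box and separating out the $RU\simeq\mathrm{id}$ input. What the paper's route buys is modularity: the two propositions already handle the delicate points you would otherwise have to re-establish, namely (i) that $\zeta$ involves a chosen homotopy inverse $j$, so $\zeta_*U\alpha$ does not strictly factor through the levelwise maps $A(n)\to BF(n)\to BF_{\mathcal N}$ but only up to homotopy, which is only harmless after $\Gamma$; (ii) that the filtration-dependence of the Lewis--May functor makes the ``shift desuspension'' identification a stable-equivalence statement, not an isomorphism; and (iii) the $\Sigma_n$-equivariance needed to compare against Shipley's $\I$-homotopy colimit. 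Your direct approach buys concreteness and makes the mechanism transparent, but at the cost of having to re-derive the content of Proposition~\ref{UTRprop} carefully, which is the heavy lifting the paper deliberately offloads to \cite{schlichtkrull-thom}.
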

Again this implies the statement in \textbf{A2} when restricted to $\I\mathcal U/BO$. We postpone the proof of this result until we have introduced the functor $R$ in \textbf{A6}.

\subsection*{Flat replacement and Axiom {\bf A3}}\label{symmetricA3section}
We first recall the flatness notion for $\I$-spaces introduced in \cite{schlichtkrull-doldthom}. 
Thus, an $\I$-space $A$ is \emph{flat} if for any diagram of the form 
(\ref{Isquare}), such that the intersection of the images of $\beta_1$ and $\beta_2$ equals the image of $\gamma$,  the induced map
$$
A(n_1)\cup_{A(m)}A(n_2)\to A(n)
$$
is an $h$-cofibration. By Lillig's union theorem for $h$-cofibrations \cite{lillig}, this is equivalent to the requirement that (i) any morphism $\alpha\co\mathbf m\to\mathbf n$ in $\I$ induces an $h$-cofibration $A(m)\to A(n)$, and (ii) that the intersection of the images of $A(n_1)$ and $A(n_2)$ in $A(n)$ equals the image of $A(m)$. For instance, the $\I$-spaces $BO$ and $BF$ are flat. 

The \emph{flat replacement} of an $\I$-space $A$ is the $\I$-space $\bar A$ defined by 
$$
\bar A(n)=\hocolim_{\alpha\co\mathbf n_1\to\mathbf n}A(n_1),
$$
where the homotopy colimit is over the category $\I/\mathbf n$. We have the following $\I$-space analogues of Propositions \ref{symmetricflatprop} and \ref{flatmonoidalprop}. 

\begin{proposition}\label{Iflatprop}
The $\I$-space $\bar A$ is flat and the canonical projection $\bar A\to A$ is a levelwise equivalence.\qed
\end{proposition}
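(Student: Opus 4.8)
The plan is to run the proof of Proposition~\ref{symmetricflatprop} with the sphere factors $S^{n-\alpha}$ erased; the $\I$-space situation is strictly simpler. First I would verify the two flatness conditions (i) and (ii) recalled just above. For (i), fix a morphism $\alpha\co\mathbf m\to\mathbf n$ in $\I$ and let $\alpha_*\co\I/\mathbf m\to\I/\mathbf n$ be postcomposition with $\alpha$. Since $\alpha$ is injective, $\alpha_*$ is an isomorphism of $\I/\mathbf m$ onto a full subcategory of $\I/\mathbf n$, and the $\I/\mathbf n$-diagram $(\gamma\co\mathbf b\to\mathbf n)\mapsto A(b)$ that defines $\bar A(n)$ restricts along $\alpha_*$ exactly to the $\I/\mathbf m$-diagram $(\beta\co\mathbf a\to\mathbf m)\mapsto A(a)$ that defines $\bar A(m)$. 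Hence the structure map $\bar A(m)\to\bar A(n)$ is the map of Bousfield--Kan homotopy colimits induced by the inclusion of a full subcategory, and such a map is an $h$-cofibration by \cite[X.3.5]{ekmm}.

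For (ii) I would argue degreewise on the simplicial spaces computing the homotopy colimits and then pass to realizations. In simplicial degree $k$ the space $\bar A_\bullet(n)$ is a coproduct of spaces $A(-)$ indexed by the $k$-simplices of the nerve of $\I/\mathbf n$, and the three maps into it from $\bar A_\bullet(n_1)$, $\bar A_\bullet(n_2)$ and $\bar A_\bullet(m)$ are the inclusions of the sub-coproducts indexed by the chains obtained by postcomposing chains in $\I/\mathbf n_1$, $\I/\mathbf n_2$ and $\I/\mathbf m$ with $\beta_1$, $\beta_2$ and $\gamma=\beta_1\alpha_1=\beta_2\alpha_2$ respectively. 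Since $\beta_1$ and $\beta_2$ are injective and the intersection of their images equals the image of $\gamma$, any object of $\I/\mathbf n$ occurring in both the first and the second index set has image contained in that of $\gamma$, hence factors uniquely as $\gamma\circ\delta$ for an object $\delta$ of $\I/\mathbf m$; the same holds for whole chains, so in each degree the intersection of the first two sub-coproducts is precisely the third. As these are levelwise closed inclusions of good simplicial spaces and geometric realization takes the resulting levelwise pullback squares to pullback squares, the intersection of the images of $\bar A(n_1)$ and $\bar A(n_2)$ in $\bar A(n)$ is the image of $\bar A(m)$, which is condition (ii).

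For the second assertion, note that $\I/\mathbf n$ has the terminal object $\mathrm{id}\co\mathbf n\to\mathbf n$, at which the diagram defining $\bar A(n)$ takes the value $A(n)$; thus its colimit is $A(n)$ and the component $\bar A(n)\to A(n)$ of the projection is the canonical map $\hocolim_{\I/\mathbf n}A(-)\to\colim_{\I/\mathbf n}A(-)$, which is a weak homotopy equivalence because the indexing category has a terminal object, exactly as in Proposition~\ref{symmetricflatprop}. The only place calling for any care is the combinatorial bookkeeping in (ii) --- pinning down which simplices lie in both pushed-forward nerves and checking compatibility with the face and degeneracy maps so that the degreewise statement assembles correctly after realization --- but since this is just the space-level shadow of the computation already carried out for symmetric spectra, I do not anticipate a genuine obstacle.
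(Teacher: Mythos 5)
Your proposal is correct and takes essentially the same route as the paper: the paper states Proposition~\ref{Iflatprop} with a ``\qed'' precisely because it is the proof of Proposition~\ref{symmetricflatprop} with the sphere factors $S^{n-\alpha}$ stripped out, and your argument carries out exactly that translation --- condition (i) via the full-subcategory inclusion of comma categories and \cite[X.3.5]{ekmm}, condition (ii) degreewise on the Bousfield--Kan simplicial spaces followed by realization, and the levelwise equivalence from the terminal object $\mathrm{id}_{\mathbf n}$ of $\I/\mathbf n$.
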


\begin{proposition} \label{I-spacemonoidalreplacement}
The flat replacement functor is a monoidal functor on $\mathcal I\mathcal U$ and $\bar A\to A$ is a monoidal natural transformation.\qed
\end{proposition}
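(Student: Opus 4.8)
The plan is to follow the proof of Proposition~\ref{flatmonoidalprop} essentially verbatim; the argument is in fact simpler for $\I$-spaces, since there are no sphere-smash factors to carry along. First I would construct the unit map of the lax monoidal structure, a map of $\I$-spaces $*\to\bar *$. Since $\mathbf 0$ is an initial object of $\I$, the comma category $\I/\mathbf 0$ is the terminal category, so $\bar *(\mathbf 0)=*$; for a general object $\mathbf n$ the unique morphism $\mathbf 0\to\mathbf n$ is an object of $\I/\mathbf n$, and its inclusion as a vertex of the nerve of $\I/\mathbf n$ picks out a point of $\bar *(\mathbf n)=\hocolim_{\I/\mathbf n}*$ (which is contractible, as $\I/\mathbf n$ has a terminal object). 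This assignment is natural in $\mathbf n$: for $\phi\co\mathbf n\to\mathbf n'$ the induced functor $\I/\mathbf n\to\I/\mathbf n'$ carries $(\mathbf 0\to\mathbf n)$ to $(\mathbf 0\to\mathbf n')$. This defines $*\to\bar *$.

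Next I would produce the multiplication transformation. By the defining left Kan extension property of $\boxtimes$ recalled in the introduction, a map of $\I$-spaces $\bar A\boxtimes\bar B\to C$ is the same as a natural family of maps $\bar A(p)\times\bar B(q)\to C(\mathbf p\sqcup\mathbf q)$ indexed by $(\mathbf p,\mathbf q)\in\I^2$. Since the Bousfield--Kan homotopy colimit is a geometric realization it commutes with finite cartesian products, and a chain in $\I/\mathbf p\times\I/\mathbf q$ is the same as a pair of chains, so there is a natural identification $\bar A(p)\times\bar B(q)\cong\hocolim_{(\alpha,\beta)\in\I/\mathbf p\times\I/\mathbf q}A(m_1)\times B(n_1)$, where $\alpha\co\mathbf m_1\to\mathbf p$ and $\beta\co\mathbf n_1\to\mathbf q$. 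There is a concatenation functor $\I/\mathbf p\times\I/\mathbf q\to\I/(\mathbf p\sqcup\mathbf q)$ sending $(\alpha,\beta)$ to $\alpha\sqcup\beta\co\mathbf m_1\sqcup\mathbf n_1\to\mathbf p\sqcup\mathbf q$, together with the canonical map $A(m_1)\times B(n_1)\to(A\boxtimes B)(\mathbf m_1\sqcup\mathbf n_1)$ into the colimit defining $\boxtimes$. Composing these I obtain the natural transformation
\begin{align*}
\bar A(p)\times\bar B(q)&\cong\hocolim_{\I/\mathbf p\times\I/\mathbf q}A(m_1)\times B(n_1)\\
&\to\hocolim_{\I/\mathbf p\times\I/\mathbf q}(A\boxtimes B)(\mathbf m_1\sqcup\mathbf n_1)\to\overline{A\boxtimes B}(\mathbf p\sqcup\mathbf q),
\end{align*}
the last arrow being induced by the concatenation functor; this adjoints to the structure map $\bar A\boxtimes\bar B\to\overline{A\boxtimes B}$.

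Finally it remains to check that these data satisfy the associativity and unitality coherence diagrams of a lax monoidal functor, and that the canonical projection $\bar A\to A$ --- which on level $\mathbf n$ is the map $\hocolim_{\I/\mathbf n}A\to\colim_{\I/\mathbf n}A=A(n)$ induced by the terminal object $(\mathbf n,\id)$ of $\I/\mathbf n$ --- commutes with the structure maps and units, hence is a monoidal natural transformation. Both points reduce to unwinding the definitions: the concatenation functors $\I/\mathbf p\times\I/\mathbf q\to\I/(\mathbf p\sqcup\mathbf q)$ are strictly compatible with the associativity and unit constraints of $\sqcup$, the canonical maps into the colimits defining $\boxtimes$ satisfy the analogous compatibilities, and every map in sight is canonical. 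The main --- indeed the only --- difficulty is this coherence bookkeeping; exactly as in Proposition~\ref{flatmonoidalprop}, no homotopical input is needed beyond the fact that homotopy colimits commute with finite products, so the relevant diagrams commute by inspection.
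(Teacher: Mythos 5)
Your argument is correct and is exactly the $\I$-space analogue of the paper's proof of Proposition~\ref{flatmonoidalprop}, which the paper itself leaves as the intended (unwritten) justification here: the multiplication comes from the concatenation functors $\I/\mathbf p\times\I/\mathbf q\to\I/(\mathbf p\sqcup\mathbf q)$ together with commutation of homotopy colimits with finite products, the unit is forced by the freeness of $*=\I(\mathbf 0,-)$ at level~$\mathbf 0$, and the coherence checks are definitional. No discrepancy.
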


It follows from this that the flat replacement functor induces a functor on the category of 
$\I$-space monoids. As in the axiomatic framework from Section \ref{axiomssection} we say that an $\I$-space monoid is \emph{well-based} if the unit $*\to A$ is an $h$-cofibration. 
Let $I$ be the unit interval, thought of as a based topological monoid with base point 0 and unit 1. Given a $\I$-space monoid $A$, let $A'=A\vee I$ be the  $\I$-space monoid defined by the levelwise wedge products $A'(n)=A(n)\vee I$. Notice that if $A$ is commutative, then so is $A'$. This construction is the $\I$-space analogue of the usual procedure for replacing a topological monoid by one that is well-based. Arguing as in the case of a topological monoid  \cite[A.8]{may-geom}, one deduces the following. 

\begin{proposition}\label{well-basedreplacement}
Let $A$ be an $\I$-space monoid. The associated $\I$-space monoid $A'=A\vee I$ is well-based and the canonical map of monoids $A'\to A$ is a  homotopy equivalence.\qed
\end{proposition}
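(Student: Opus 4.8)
The plan is to run, levelwise over $\I$, the classical device for replacing a topological monoid by a well-based one (\cite[A.8]{may-geom}), taking care that every map, homotopy and retraction that occurs is natural in $\I$ and compatible with the $\boxtimes$-product, so that the levelwise statements assemble into statements in $\I\U$. First I would unwind the construction. At level $n$ the space $A'(n)=A(n)\vee I$ is the mapping cylinder of the unit map $*(n)=\{*\}\to A(n)$: it is $A(n)$ with a whisker $I=[0,1]$ attached by gluing $0\in I$ to the old unit $e_n$ (which is the basepoint of $A(n)$), and the unit of $A'$ is the free end $1\in I$. The $\I$-structure maps act by those of $A$ on the $A$-part and by the identity on the whisker. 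The monoid multiplication $A'\boxtimes A'\to A'$ is assembled from $\mu_A$ on $A\boxtimes A$, from the multiplication of the topological monoid $I$ on the whisker--whisker part, and on the mixed part by sending a pair $(a,t)$ with $a\in A(n_1)$ and $t$ in the whisker of $A'(n_2)$ to the image of $a$ in $A(n)$, and symmetrically; this is continuous since $t=0$ gives $\mu_A(a,e_{n_2})$ while $t=1$ is the unit. Verifying that these formulas are well defined, associative, unital with unit $1$, and natural in $\mathbf n$ is routine and is exactly the $\I$-space version of \cite[A.8]{may-geom}.

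Next, well-basedness. The unit $*\to A'$ is, at level $n$, the inclusion of the source $\{*\}$ at the free end of the mapping cylinder $A(n)\vee I$, which is a Hurewicz cofibration. A retraction $A'(n)\times I\to A'(n)\times\{0\}\cup\{1\}\times I$ can be chosen by pushing the $A$-part straight down, $(a,s)\mapsto (a,0)$, and applying on the whisker the standard explicit retraction for the NDR pair $([0,1],\{1\})$; since this formula involves only the whisker and cylinder coordinates it is natural in $\mathbf n$. Because tensors with unbased spaces and pushouts in $\I\U$ are formed levelwise, such a natural family of retractions is precisely a retraction in $\I\U$ of the canonical map $A'\cup_{*}(*\otimes I)\to A'\otimes I$; hence $*\to A'$ is an $h$-cofibration and $A'$ is well-based.

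Finally, the canonical map $A'\to A$ collapsing the whisker is a map of $\I$-space monoids by construction, and at each level it is the mapping-cylinder retraction $A(n)\vee I\to A(n)$, a based homotopy equivalence. A homotopy inverse, namely the inclusion of the $A$-part, together with the connecting homotopies, can be written using only the whisker coordinate, hence is $\I$-natural, so $A'\to A$ is a levelwise homotopy equivalence of $\I$-spaces and in particular a weak equivalence of $\I$-space monoids; as in \cite[A.8]{may-geom} these homotopies can moreover be made compatible with the multiplications using that $I$ is a topological monoid.

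The hard part is not any single computation but the naturality bookkeeping: one must arrange all of the explicit constructions borrowed from the topological-monoid case so that they depend only on the whisker and cylinder coordinates, so that they commute with the $\I$-structure maps and therefore yield morphisms and $h$-cofibrations in $\I\U$ rather than merely levelwise data. The companion point requiring care is that $A'(n)=A(n)\vee I$ must genuinely be shown to carry a $\boxtimes$-monoid structure, since the levelwise wedge is not the coproduct of $\I$-space monoids; this too is handled exactly as in the classical situation, which is why the paper's proof can simply cite \cite[A.8]{may-geom}.
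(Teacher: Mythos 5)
Your proof is correct and follows the same route as the paper, which simply cites May's Lemma A.8 and leaves the levelwise adaptation to $\I\mathcal{U}$ implicit; you have filled in those details accurately (the wedge at the unit, the mapping-cylinder structure, the $\I$-natural NDR retraction, the collapsing homotopy equivalence). One small overstatement: the final clause asserting that the connecting homotopies ``can moreover be made compatible with the multiplications'' is not correct, since the inclusion $A\hookrightarrow A'$ does not preserve units (it sends $e_n$ to the whisker endpoint $0$, not to the new unit $1$), so $\iota\circ\pi$ is not a monoid morphism and no homotopy from it to $\mathrm{id}_{A'}$ can be through monoid maps; this is immaterial, as the proposition only asserts a homotopy equivalence of underlying $\I$-spaces.
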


We now define the functor $C$ in \textbf{A3} by
$$
C\co \I\U[\bT]\to \I\U[\bT],\quad A\mapsto A^c=(\bar A)'
$$
and we define the natural transformation $A^c\to A$ to be the composition of the levelwise weak homotopy equivalences $(\bar A)'\to \bar A\to A$. Given a monoid morphism $\alpha\co A\to BF$, we write $\alpha^c$ for the composition
$$
\alpha^c\co A^c\xr{} A\xr{\alpha} BF.
$$
We need a technical assumption to ensure that the associated symmetric spectrum 
$T(\alpha^c)$ is levelwise well-based. In general, we say that a map of $\I$-spaces $\alpha\co A\to BF$ \emph{classifies a well-based $\I$-space} over $A$ if the $h$-cofibrations $BF(n)\to EF(n)$ pull back to $h$-cofibrations via $\alpha$. This condition is automatically satisfied if $\alpha$ factors through $BO$, see \cite[IX]{lewis-may-steinberger}. Thus, restricted to such morphisms the following proposition verifies \textbf{A3} in the $BO$ case. Recall the functor $T\mapsto T^c$ from Proposition \ref{flatringprop}.   
\begin{proposition}\label{flatcompatibility}
There is an isomorphism of symmetric ring spectra 
$$
T(\alpha^c)\cong T(\alpha)^c
$$ 
and if $\alpha$ classifies a well-based $\I$-space over $A$, then $T(\alpha^c)$ is a flat symmetric ring spectrum.
\end{proposition}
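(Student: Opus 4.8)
The assertion has two parts: a natural isomorphism of symmetric ring spectra $T(\alpha^c)\iso T(\alpha)^c$, and flatness of $T(\alpha^c)$ when $\alpha$ classifies a well-based $\I$-space over $A$. Unwinding the definitions, $\alpha^c$ is the composite $(\bar A)'\to A\xr{\alpha}BF$ and, by Proposition~\ref{flatringprop}, $T(\alpha)^c=(\overline{T(\alpha)}\,)'$. So the plan for the first part is to show that the symmetric Thom spectrum functor strictly intertwines the two flat replacement operations and the two well-basing operations: concretely, to construct a natural levelwise isomorphism $T(\bar\alpha)\iso\overline{T(\alpha)}$, where $\bar\alpha\co\bar A\to BF$ denotes the composite of the counit $\bar A\to A$ with $\alpha$, to upgrade it to an isomorphism of symmetric ring spectra, and then to check that applying $(-)'$ to both sides identifies $T(\alpha^c)$ with $T(\alpha)^c$. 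For the second part, the plan is to observe that $T(\alpha)$ is levelwise well-based under the hypothesis and invoke Proposition~\ref{flatringprop} together with the first part.

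For the levelwise isomorphism, fix $\mathbf n$. By naturality of $\alpha$, the map $\bar\alpha_n\co\bar A(n)=\hocolim_{\alpha_1\co\mathbf n_1\to\mathbf n}A(n_1)\to BF(n)$ restricts on the summand indexed by $\alpha_1\co\mathbf n_1\to\mathbf n$ to $A(n_1)\xr{\alpha_{n_1}}BF(n_1)\xr{BF(\alpha_1)}BF(n)$, so $\bar\alpha_n$ is the homotopy colimit over $\I/\mathbf n$ of a diagram of spaces over the fixed base $BF(n)$. The universal quasifibration $EF(n)\to BF(n)$ pulls back along the $\I$-space structure map $BF(\alpha_1)\co BF(n_1)\to BF(n)$ to $EF(n_1)\bar\wedge S^{n-\alpha_1}$, so the Thom space of the $\alpha_1$-summand is $S^{n-\alpha_1}\wedge T(\alpha_{n_1})$; this is precisely the identification underlying the structure maps of a symmetric Thom spectrum, as in \cite{schlichtkrull-thom}. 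The levelwise Thom space functor on spaces over $BF(n)$ preserves colimits and tensors with unbased spaces (Theorem~\ref{thmthomomni}), hence preserves the realizations of the simplicial spaces computing Bousfield--Kan homotopy colimits and sends coproducts over $BF(n)$ to wedges of Thom spaces. Therefore
\[
T(\bar\alpha)(n)=\hocolim_{\alpha_1\co\mathbf n_1\to\mathbf n}S^{n-\alpha_1}\wedge T(\alpha_{n_1})=\overline{T(\alpha)}(n).
\]
Since $\overline{(-)}$ on $\I\U$ and $\overline{(-)}$ on $\Sp^{\Sigma}$ are both given by this same $\I/\mathbf n$-indexed homotopy colimit and $T$ is strong symmetric monoidal, one checks that this identification respects the structure maps and, using Propositions~\ref{I-spacemonoidalreplacement} and \ref{flatmonoidalprop}, the multiplications, so it is an isomorphism of symmetric ring spectra.

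The well-basing operation $B\mapsto B'=B\vee I$ is visibly preserved by $T$: applying the levelwise Thom space functor to $(\bar A)'(n)=\bar A(n)\vee I\to BF(n)$, the wedge summand $I$ maps constantly to the basepoint of $BF(n)$ and contributes a copy of $S^n\wedge I_+$, glued to $T(\bar\alpha_n)$ along the image $S^n$ of the unit. Hence
\[
T(\alpha^c)(n)\iso T(\bar\alpha_n)\cup_{S^n}(S^n\wedge I_+)=(\overline{T(\alpha)}\,)'(n)=T(\alpha)^c(n),
\]
compatibly with all the structure, which proves the first part. For the second, suppose $\alpha$ classifies a well-based $\I$-space over $A$. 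Then the section $BF(n)\to EF(n)$ pulls back along $\alpha_n$ to an $h$-cofibration $A(n)\to\alpha_n^{*}EF(n)$, so its quotient $T(\alpha_n)=\alpha_n^{*}EF(n)/A(n)$ has a nondegenerate basepoint; thus $T(\alpha)$ is a levelwise well-based symmetric ring spectrum, and Proposition~\ref{flatringprop} shows that $T(\alpha)^c=(\overline{T(\alpha)}\,)'$ is flat. By the first part, $T(\alpha^c)$ is then a flat symmetric ring spectrum as well.

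The step I expect to be the main obstacle is the construction of the levelwise isomorphism $T(\bar\alpha)\iso\overline{T(\alpha)}$: one must carefully check that the levelwise Thom space functor commutes with the $\I/\mathbf n$-indexed homotopy colimit defining the flat replacement, identify its summands via the untwisting isomorphism $T\big(A(n_1)\to BF(n_1)\xr{BF(\alpha_1)}BF(n)\big)\iso S^{n-\alpha_1}\wedge T(\alpha_{n_1})$, and verify that the resulting equivalence upgrades to an isomorphism of symmetric ring spectra rather than merely of symmetric spectra. The remaining points---commuting with $B\mapsto B'$ and deducing flatness---are routine given Propositions~\ref{symmetricflatprop}, \ref{Iflatprop} and \ref{flatringprop}.
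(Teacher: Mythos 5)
Your argument is correct and follows the same route as the paper: you reduce to the two claims $T(\bar\alpha)\cong\overline{T(\alpha)}$ and $T(\alpha')\cong T(\alpha)'$, establish the first via commutation of the levelwise Thom space functor with the $\I/\mathbf n$-indexed homotopy colimits (together with the untwisting $T(A(n_1)\to BF(n_1)\to BF(n))\cong S^{n-\alpha_1}\wedge T(\alpha_{n_1})$) and the second via preservation of pushouts, and then deduce flatness of $T(\alpha^c)$ from Proposition~\ref{flatringprop}. The only quibble is a cosmetic one: Theorem~\ref{thmthomomni} is phrased for the coordinate-free Lewis--May functor $T_{\sS}$, whereas here one uses the corresponding colimit- and tensor-preservation statements for the levelwise Thom space functor (the same facts from \cite{lewis-may-steinberger}), but the substance of the argument is the same as the paper's.
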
  
\begin{proof}
Consider in general a monoid morphism $\alpha\co A\to BF$ and the induced morphisms $\alpha'\co A'\to BF$ and $\bar \alpha\co \bar A\to BF$. We claim that there are isomorphisms of symmetric ring spectra $T(\alpha')\cong T(\alpha)'$ and $T(\bar\alpha)\cong \overline{T(\alpha)}$.
This clearly gives the isomorphism in the proposition. The isomorphism for $\alpha'$ follows directly from the fact that $T$ preserves colimits. For the second isomorphism we first observe that since the Thom space functor preserves coproducts and topological realization, it also preserves homotopy colimits. Thus, we have levelwise homeomorphisms
\[
T(\bar \alpha)(n)\cong\hocolim_{\gamma\co\mathbf n_1\to\mathbf n}
T(A(n_1)\xr{\gamma\circ\alpha_{n_1}} BF(n)) 
\cong\hocolim_{\gamma\co\mathbf n_1\to\mathbf n}S^{n-\gamma}\wedge T(\alpha)(n_1)
\]
and since the last term is $\overline{T(\alpha)}(n)$ by definition, the claim follows. For the last statement in the proposition we observe that if $\alpha$ classifies a well-based $\I$-space over $A$, then the symmetric spectrum $T(\alpha)$ is levelwise well-based. The statement therefore follows from Proposition \ref{flatringprop}. 
\end{proof}  

\begin{remark}
It is proved in \cite{sagave-schlichtkrull} that there is a model structure on $\I\U$ whose weak equivalences are the maps that induce weak homotopy equivalences on homotopy colimits. 
Similarly, there are model structures on the categories of monoids and commutative monoids in 
$\I\mathcal U$. However, there are several reasons why these model structures are not directly suited for the analysis of Thom spectra. For example, it is not clear that the $T$-goodness condition on objects in $\I\mathcal U/BF$ is preserved under cofibrant replacement and the Thom spectra associated to cofibrant $\I$-spaces will not in general be cofibrant as symmetric spectra. Using the less restrictive notion of flatness introduced here also makes it clear that many $\I$-spaces, such as for example $BF$, behave well with respect to the $\boxtimes$-product even though they are not cofibrant in these model structures.
\end{remark}

We next formulate some further properties of the flat replacement functor that will be needed later. Consider the homotopy invariant version of the $\boxtimes$-product defined by 
$$
A_1\boxtimes^h\dots\boxtimes ^h A_k(n)= 
\hocolim_{\mathbf n_1\sqcup\dots\sqcup\mathbf n_k\to \mathbf n} A(n_1)\times\dots
\times A(n_1).
$$
As for the analogous constructions on symmetric spectra the flat replacement functor is closely related to the $\boxtimes^h$-product.   
\begin{proposition}\label{replacementboxtimes}
There is a natural isomorphism of $\I$-spaces 
$$
\bar A_1\boxtimes\dots\boxtimes \bar A_k\cong
A_1\boxtimes^h\dots\boxtimes ^h A_k
\eqno\qed 
$$
\end{proposition}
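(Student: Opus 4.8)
The plan is to exhibit the isomorphism by identifying, level by level, the simplicial spaces whose geometric realizations compute the two sides (the assertion is an honest isomorphism, not merely a weak equivalence, so an abstract Kan-extension argument does not suffice). First I would unwind the definitions: at level $\mathbf n$ the left-hand side is the ordinary colimit $\colim_{\bigsqcup^k/\mathbf n}\prod_{i=1}^k\bar A_i(p_i)$ over the comma category of the concatenation functor, and each $\bar A_i(p_i)=\hocolim_{\I/\mathbf p_i}A_i$ is the realization of the simplicial space $[\ell]\mapsto\coprod A_i(m^{(i)}_\ell)$, the coproduct running over chains of length $\ell$ in $\I/\mathbf p_i$ (the functor value being taken at the source end of the chain). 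Since in the category of compactly generated spaces geometric realization commutes with finite products and with all colimits, and products distribute over coproducts, the left-hand side at level $\mathbf n$ is the realization of the simplicial space
\[
[\ell]\ \longmapsto\ \colim_{\bigsqcup^k/\mathbf n}\ \coprod\ \prod_{i=1}^k A_i(m^{(i)}_\ell),
\]
the inner coproduct running over $k$-tuples of length-$\ell$ chains, one in each $\I/\mathbf p_i$.

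The key step is to evaluate this colimit. A length-$\ell$ chain in $\I/\mathbf p_i$ is the same datum as a length-$\ell$ chain $\mathbf m^{(i)}_\ell\to\cdots\to\mathbf m^{(i)}_0$ in $\I$ together with a morphism $\mathbf m^{(i)}_0\to\mathbf p_i$, the remaining structure maps to $\mathbf p_i$ being obtained by composition. Fixing the chains in $\I^k$, the colimit over $\bigsqcup^k/\mathbf n$ therefore becomes a colimit over the category whose objects are pairs $\bigl((\mathbf p_i)_i,\ \psi\colon\bigsqcup_i\mathbf p_i\to\mathbf n\bigr)$ equipped with morphisms $\mathbf m^{(i)}_0\to\mathbf p_i$. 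Sending such an object to $\psi_0:=\psi\circ(\bigsqcup_i\mathbf m^{(i)}_0\to\bigsqcup_i\mathbf p_i)$ exhibits this category as a disjoint union, indexed over the morphisms $\psi_0\colon\bigsqcup_i\mathbf m^{(i)}_0\to\mathbf n$, of subcategories each containing $\bigl((\mathbf m^{(i)}_0)_i,\psi_0\bigr)$ (with identity maps) as an initial object. Since the diagram is constant on each piece, the colimit is $\coprod_{\psi_0}\prod_i A_i(m^{(i)}_\ell)$, so the left-hand side at level $\mathbf n$ is the realization of $[\ell]\mapsto\coprod\prod_i A_i(m^{(i)}_\ell)$, the coproduct now indexed by pairs consisting of a length-$\ell$ chain in $\I^k$ and a morphism $\psi_0$.

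It remains to recognize this last simplicial space as the one computing $A_1\boxtimes^h\cdots\boxtimes^h A_k$ at level $\mathbf n$. Indeed a length-$\ell$ chain in the comma category $\bigsqcup^k/\mathbf n$ consists precisely of a length-$\ell$ chain in $\I^k$ together with a structure map $\psi_0$ at its bottom vertex (all higher structure maps being forced by composition), and the value of $\prod_i A_i$ at the source of the chain is $\prod_i A_i(m^{(i)}_\ell)$; one checks immediately that the face and degeneracy operators match under this identification and that everything is natural in the $A_i$. Realizing gives the asserted natural isomorphism of $\I$-spaces. (For $k=1$ this is just the definition of $\bar A_1$; the general case could alternatively be deduced from the case $k=2$ using associativity of $\boxtimes$, but the direct argument above is more transparent.) I expect the only genuinely delicate point to be the colimit computation of the second paragraph — correctly decomposing the indexing category and locating its initial objects — together with the routine but necessary verifications that realization really does commute with the finite products and colimits in play and that the resulting bijections respect the simplicial structure.
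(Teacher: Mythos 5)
The paper states this proposition without proof (the symmetric-spectrum analogue is explicitly ``left with the reader,'' and the $\I$-space version is simply marked \textit{q.e.d.}), so there is no argument in the text to compare against. Your proof is correct. The level-wise identification of simplicial spaces is the natural strategy, and the one genuinely delicate step — the colimit computation in your second paragraph — checks out: once the $k$-tuple of chains in $\I$ is fixed, the fibre of the Grothendieck construction over $\sqcup^k/\mathbf n$ does split as a disjoint union indexed by the composite $\psi_0\colon\sqcup_i\mathbf m^{(i)}_0\to\mathbf n$, and each summand has $\bigl((\mathbf m^{(i)}_0)_i,\psi_0\bigr)$ with identity comparison maps as an initial object, so the colimit of the (constant) diagram on that piece is as you say. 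The identification of the simplicial structure maps and the naturality in $\mathbf n$ and the $A_i$ are indeed routine. For what it is worth, one can package exactly the same computation using tools already in the paper: by Lemma~\ref{Kanextensionlemma} applied to $\phi=\mathrm{id}_{\sqcup^k/\mathbf n}$ one has $\hocolim_{\sqcup^k/\mathbf n}\prod_i A_i\cong\colim_{\sqcup^k/\mathbf n}\bigl(c\mapsto\hocolim_{(\sqcup^k/\mathbf n)/c}\prod_i A_i\bigr)$, the over-category $(\sqcup^k/\mathbf n)/\bigl((\mathbf p_i),\psi\bigr)$ is canonically $\prod_i\I/\mathbf p_i$, and the product/realization compatibility you invoke gives $\hocolim_{\prod_i\I/\mathbf p_i}\prod_i A_i\cong\prod_i\bar A_i(p_i)$, so the colimit is $\bigl(\bar A_1\boxtimes\cdots\boxtimes\bar A_k\bigr)(n)$. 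This is the same underlying argument, just reorganized; your direct chain-level verification is equally valid.
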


We also have the following $\I$-space analogue of Proposition \ref{flatsmashequivalence}. The proof is similar to but slightly easier than the symmetric spectrum version since we do not have to worry about base points here.   
\begin{proposition}[\cite{schlichtkrull-doldthom}]\label{flatboxtimesequivalence}
If the $\I$-spaces $A_1,\dots, A_k$ are flat, then the canonical projection
$$
A_1\boxtimes^h\dots\boxtimes ^h A_k\to A_1\boxtimes\dots\boxtimes  A_k
$$
is a levelwise equivalence. \qed
\end{proposition}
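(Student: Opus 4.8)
The plan is to transcribe the proof of Proposition \ref{flatsmashequivalence} into the category of $\I$-spaces, where it becomes slightly simpler because there are no spheres to smash in and no basepoints to keep track of. As in that proof, I would treat only the case $k=2$, the general case being notationally heavier but identical in substance. First I would replace the indexing category $\sqcup^2/\mathbf n$ by its skeleton $\mathcal A(n)$, the full subcategory of those $\alpha\co\mathbf n_1\sqcup\mathbf n_2\to\mathbf n$ whose restrictions to $\mathbf n_1$ and to $\mathbf n_2$ are order preserving; since both $\hocolim$ and $\colim$ are invariant under equivalence of indexing categories, it suffices to show that the canonical map
$$
\hocolim_{\mathcal A(n)} A_1(n_1)\times A_2(n_2)\to\colim_{\mathcal A(n)} A_1(n_1)\times A_2(n_2)
$$
is a weak homotopy equivalence for each $\mathbf n$. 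The crucial point, exactly as in the symmetric spectrum case, is that $\mathcal A(n)$ is isomorphic to the poset of pairs $(U_1,U_2)$ of disjoint subsets of $\mathbf n$, so the diagram takes the self-explanatory form $Z(U_1,U_2)=A_1(U_1)\times A_2(U_2)$.

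Next I would invoke the same homotopy-theoretic input used for Proposition \ref{flatsmashequivalence}: since $\mathcal A(n)$ is a finite, hence very small, poset in the sense of \cite[\S 10.13]{dwyer-spalinski}, the model-category argument with the Str\o m structure \cite{strom} on $\mathcal U$ reduces the claim to showing that for each fixed object $(U_1^0,U_2^0)$ the latching map
$$
\colim_{(U_1,U_2)\subsetneq(U_1^0,U_2^0)}Z(U_1,U_2)\to Z(U_1^0,U_2^0)
$$
is an $h$-cofibration. Here the flatness hypotheses enter. Flatness of $A_1$ and $A_2$ forces each structure map $A_i(U)\to A_i(U')$ with $U\subseteq U'$ to be an $h$-cofibration, hence a closed inclusion, so every $Z(U_1,U_2)$ may be regarded as a closed subspace of $Z(U_1^0,U_2^0)$; the image-intersection clause of flatness then gives $A_i(U)\cap A_i(V)=A_i(U\cap V)$ inside $A_i(U_1^0)$ (resp.\ $A_i(U_2^0)$), whence $Z(U_1,U_2)\cap Z(V_1,V_2)=Z(U_1\cap V_1,U_2\cap V_2)$. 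Consequently the colimit above is just the union of the subspaces $Z(U_1,U_2)$ with $(U_1,U_2)\subsetneq(U_1^0,U_2^0)$, and an induction over this finite poset using Lillig's union theorem for $h$-cofibrations \cite{lillig} shows the inclusion is an $h$-cofibration. Unlike in the symmetric spectrum proof, no preliminary reduction to unbased homotopy colimits is needed, since we are working with honest unbased spaces from the start.

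The only step requiring any care is the verification of the intersection identity for $Z$ from the two clauses in the definition of flatness, together with the bookkeeping in the Lillig-theorem induction; both are routine, and indeed a touch easier than their symmetric spectrum counterparts, so I do not anticipate a genuine obstacle. This is essentially the argument given in \cite{schlichtkrull-doldthom}.
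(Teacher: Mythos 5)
Your proposal is correct and is exactly the argument the paper has in mind: the Proposition is cited from \cite{schlichtkrull-doldthom} and the text immediately before it explains that the proof "is similar to but slightly easier than" that of Proposition~\ref{flatsmashequivalence} because base points drop out, which is precisely your transcription. The passage to the skeleton $\mathcal A(n)$, the identification with the poset of pairs of disjoint subsets, the Str\o m/Dwyer--Spalinski reduction to latching maps, the intersection identity from clause (ii) of flatness, and the Lillig-theorem induction are all the same.
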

As in Section \ref{flatnesscondition} we conclude from this that the $\boxtimes^h$-product always represents the ``derived'' homotopy type and that the $\boxtimes$-product has the ``derived'' homotopy type for flat $\I$-spaces.

\subsection*{Axiom {\bf A4}}
We define $Q$ to be the colimit over $\I$, 
$$
Q\co\I\mathcal U\to \mathcal U,\quad QA=\colim_{\I}A.
$$
It is clear that $Q$ preserves colimits and the fact that $\mathcal U$ is closed symmetric monoidal under the categorical product implies that it also preserves tensors. Before verifying  the remaining conditions it is helpful to recall some general facts about  Kan extensions. Thus, consider in general a functor  $\phi\co\mathcal B\to\mathcal C$ between  small categories. Given a $\mathcal B$-diagram $X\co \mathcal B\to\mathcal U$, the (left) Kan extension is the functor 
$\phi_*X\co \mathcal C\to\mathcal U$ defined by
$$
\phi_*X(c)=\colim_{\phi/c}X\circ\pi_c
$$
and the homotopy Kan extension is the functor $\phi_*^hX\co \mathcal C\to\mathcal U$ defined by the analogous homotopy colimits,
$$
\phi_*^hX(c)=\hocolim_{\phi/c}X\circ\pi_c.
$$
Here $\pi_c$ denotes the forgetful functor $\phi/c\to\mathcal B$, see
e.g.\ \cite{schlichtkrull-doldthom}. The effect of evaluating the
colimits of these functors is recorded in the following lemma. 

\begin{lemma}\label{Kanextensionlemma}
There are natural isomorphisms
$$
\colim_{\mathcal C}\phi_*X\cong \colim_{\mathcal B}X,\quad \colim_{\mathcal C}\phi_*^hX\cong \hocolim_{\mathcal B}X,
$$
and the canonical projection from the homotopy colimit to the colimit
$$
\hocolim_{\mathcal C}\phi_*^hX\to \colim_{\mathcal C}\phi_*^hX
$$
is a weak homotopy equivalence.\qed
\end{lemma}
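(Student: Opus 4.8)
The plan is to deduce all three assertions from the single principle that (homotopy) colimits and (homotopy) left Kan extensions compose, made concrete through the Bousfield--Kan model of the homotopy colimit already in use in this section. Throughout, for a small category $\mathcal{J}$ and a diagram $Y\co\mathcal{J}\to\mathcal{U}$ I use that $\hocolim_{\mathcal{J}}Y$ is the realization of the simplicial space $[p]\mapsto\coprod_{j_0\to\dots\to j_p}Y(j_0)$, that $\colim_{\mathcal{J}}$ is the left Kan extension along $\mathcal{J}\to *$, and that both $\colim_{\mathcal{J}}$ and $\hocolim_{\mathcal{J}}$ commute with coproducts and with geometric realization, being colimit constructions. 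For the first isomorphism this is immediate: $\colim_{\mathcal{C}}$ and $\colim_{\mathcal{B}}$ are the left Kan extensions along the projections to the terminal category, and left Kan extensions compose, so $\colim_{\mathcal{C}}\circ\,\phi_*\cong\colim_{\mathcal{B}}$ naturally in $X$. (Equivalently, both are left adjoint to the constant-diagram functor $\mathcal{U}\to\mathcal{U}^{\mathcal{B}}$, since $\phi^{*}$ sends a constant $\mathcal{C}$-diagram to a constant $\mathcal{B}$-diagram, and one invokes uniqueness of adjoints.)

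For the remaining two statements the key observation is that, in each simplicial degree $q$, the space $\phi_*^h X(c)=\hocolim_{\phi/c}X\circ\pi_c$ is, as a functor of $c\in\mathcal{C}$, built from corepresentables: a $q$-chain in the comma category $\phi/c$ is precisely a $q$-chain $b_0\to\dots\to b_q$ in $\mathcal{B}$ together with a single morphism $\phi(b_q)\to c$, the remaining maps to $c$ being then determined. Hence the $q$-simplices of the Bousfield--Kan simplicial space of $\phi_*^h X(c)$ form $\coprod_{b_0\to\dots\to b_q}X(b_0)\times\mathcal{C}(\phi(b_q),c)$, a coproduct of copies of the twisted corepresentable $X(b_0)\times\mathcal{C}(\phi(b_q),-)$. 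I would then commute $\colim_{\mathcal{C}}$, respectively $\hocolim_{\mathcal{C}}$, past the realization in the $q$-direction. Since the colimit over $\mathcal{C}$ of a corepresentable $\mathcal{C}(x,-)$ is a point, $\colim_{\mathcal{C}}$ of the degree-$q$ diagram is $\coprod_{b_0\to\dots\to b_q}X(b_0)$, and realizing in $q$ gives $\hocolim_{\mathcal{B}}X$ — this is the second isomorphism, and it is natural in $X$. Since instead $\hocolim_{\mathcal{C}}\mathcal{C}(x,-)\cong N(x/\mathcal{C})$, the nerve of the coslice category, $\hocolim_{\mathcal{C}}$ of the degree-$q$ diagram is $\coprod_{b_0\to\dots\to b_q}X(b_0)\times|N(\phi(b_q)/\mathcal{C})|$. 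The canonical projection from the homotopy colimit to the colimit, performed degreewise, is the map $X(b_0)\times|N(\phi(b_q)/\mathcal{C})|\to X(b_0)$ collapsing the nerve; since $\phi(b_q)/\mathcal{C}$ has an initial object, this nerve is contractible, so the projection is a levelwise weak homotopy equivalence of simplicial spaces. These simplicial spaces are good — their degeneracies are inclusions of subcoproducts, hence $h$-cofibrations — so on realizations one obtains the weak homotopy equivalence $\hocolim_{\mathcal{C}}\phi_*^h X\xr{\sim}\colim_{\mathcal{C}}\phi_*^h X$ of the third assertion.

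The routine facts used above about left Kan extensions, homotopy colimits, and homotopy Kan extensions are collected in \cite{schlichtkrull-doldthom}, so the write-up can be kept short. The one point requiring genuine care is the third assertion: identifying $\hocolim_{\mathcal{C}}\phi_*^h X$ with an iterated realization and checking that realizing the $\mathcal{C}$-direction first produces exactly the nerves of the coslice categories $\phi(b)/\mathcal{C}$, compatibly with the projection to $\colim_{\mathcal{C}}\phi_*^h X$. This bisimplicial bookkeeping, together with keeping track of the variance conventions of the Bousfield--Kan construction, is where the only real work lies; everything else is formal.
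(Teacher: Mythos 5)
The paper itself supplies no proof for this lemma, immediately citing \cite[1.4]{schlichtkrull-doldthom} for the third assertion. Your argument --- composition of left adjoints for the first claim, then the bisimplicial rewriting of the degree-$q$ pieces of $\phi_*^h X(c)$ as coproducts of the constantly-twisted corepresentables $X(b_0)\times\mathcal{C}(\phi(b_q),-)$, together with contractibility of the undercategories $\phi(b_q)/\mathcal{C}$ and the goodness/levelwise-equivalence argument for the last two claims --- is correct and is precisely the expected proof behind that citation.
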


An explicit proof of the last statement can be found in
\cite[1.4]{schlichtkrull-doldthom}.   
Using that the $\boxtimes$-product is defined as a Kan extension, the fact that $Q$ is strong symmetric monoidal now follows from the canonical homeomorphisms
$$
\colim_{\I}A\boxtimes B\cong \colim_{\I\times \I}A\times B\cong \colim_{\I}A\times \colim_{\I}B,
$$ 
where the second homeomorphism again is deduced from the fact that $\mathcal U$ is closed.  We define the natural transformation $U\to Q$ to be the canonical projection from the homotopy colimit to the colimit. 
\begin{lemma}\label{UQlemma}
Given $\I$-spaces $A_1,\dots,A_k$, there is a canonical homeomorphism
$$
Q(\bar A_1\boxtimes\dots\boxtimes \bar A_k)\cong UA_1\times\dots\times UA_k
$$
and the canonical projection gives a weak homotopy equivalence
$$
U(\bar A_1\boxtimes\dots\boxtimes \bar A_k)\xr{\sim}Q(\bar A_1\boxtimes\dots\boxtimes\bar A_k).
$$
\end{lemma}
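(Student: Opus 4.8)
The plan is to deduce Lemma~\ref{UQlemma} from the material already assembled, namely Proposition~\ref{replacementboxtimes}, Lemma~\ref{Kanextensionlemma}, and the fact that $Q$ is strong symmetric monoidal. First I would establish the homeomorphism. By Proposition~\ref{replacementboxtimes} there is a natural isomorphism of $\I$-spaces
$$
\bar A_1\boxtimes\dots\boxtimes \bar A_k\cong A_1\boxtimes^h\dots\boxtimes^h A_k,
$$
so it suffices to compute $Q$ applied to the right-hand side. Now $\boxtimes^h$ is the left Kan extension along the iterated concatenation functor $\sqcup^k\co\I^k\to\I$ of the $\I^k$-diagram $(n_1,\dots,n_k)\mapsto A_1(n_1)\times\dots\times A_k(n_k)$, but with the colimit replaced by a homotopy colimit; that is, $A_1\boxtimes^h\dots\boxtimes^h A_k$ is exactly the homotopy Kan extension $\phi^h_*X$ with $\phi=\sqcup^k$ and $X(n_1,\dots,n_k)=A_1(n_1)\times\dots\times A_k(n_k)$. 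By the first isomorphism of Lemma~\ref{Kanextensionlemma} applied to this homotopy Kan extension,
$$
\colim_{\I}\,(A_1\boxtimes^h\dots\boxtimes^h A_k)=\colim_{\I}\phi^h_*X\cong\hocolim_{\I^k}X.
$$
Since homotopy colimits over a product category factor as iterated homotopy colimits and $X$ is a product of diagrams depending on separate variables, $\hocolim_{\I^k}X\cong (\hocolim_{\I}A_1)\times\dots\times(\hocolim_{\I}A_k)=UA_1\times\dots\times UA_k$, using that the cartesian product commutes with homotopy colimits in each variable separately (each factor being a diagram of spaces and $\mathcal U$ being cartesian closed). This gives the asserted canonical homeomorphism.

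For the second assertion, I would identify the canonical projection $U\to Q$ evaluated on $\bar A_1\boxtimes\dots\boxtimes\bar A_k$ with a projection of the form covered by the last statement of Lemma~\ref{Kanextensionlemma}. Indeed, $U(\bar A_1\boxtimes\dots\boxtimes\bar A_k)=\hocolim_{\I}(A_1\boxtimes^h\dots\boxtimes^h A_k)=\hocolim_{\I}\phi^h_*X$, and $Q(\bar A_1\boxtimes\dots\boxtimes\bar A_k)=\colim_{\I}\phi^h_*X$, so the natural transformation $U\to Q$ evaluated here is precisely the canonical projection
$$
\hocolim_{\I}\phi^h_*X\to\colim_{\I}\phi^h_*X,
$$
which is a weak homotopy equivalence by the last part of Lemma~\ref{Kanextensionlemma} (proved in \cite[1.4]{schlichtkrull-doldthom}). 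One has to check that the map produced this way agrees with the natural transformation $U\to Q$ defined earlier as the projection from the homotopy colimit to the colimit, but this is immediate from naturality of that projection and the fact that all the identifications above are natural.

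The main obstacle I anticipate is bookkeeping rather than mathematical depth: one must verify that the identification of $\boxtimes^h$ with the homotopy Kan extension $\phi^h_*$ along $\sqcup^k$ is compatible, on the nose or at least up to natural homeomorphism, with the isomorphism in Proposition~\ref{replacementboxtimes}, and that passing $\colim_\I$ through the homotopy Kan extension really produces $\hocolim_{\I^k}$ of the external product diagram and not some reindexed variant. A secondary point requiring care is the claim that cartesian product of spaces commutes with homotopy colimits, which is true because we are in $\mathcal U$ (compactly generated weak Hausdorff spaces) where $-\times K$ is a left adjoint; this is where the closedness of $\mathcal U$ under the cartesian product, already invoked repeatedly in this section, is used. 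Once these compatibilities are in place, both statements of the lemma follow formally.
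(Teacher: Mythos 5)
Your proof is correct and follows essentially the same route as the paper: identify $\bar A_1\boxtimes\dots\boxtimes\bar A_k$ with the homotopy Kan extension via Proposition~\ref{replacementboxtimes} and then apply Lemma~\ref{Kanextensionlemma}. The only difference is that you spell out the step (which the paper leaves implicit in its ``the result follows immediately'') that $\hocolim_{\I^k}(A_1\times\dots\times A_k)\cong \prod_i\hocolim_{\I}A_i$, using that realization and homotopy colimits commute with finite products in $\mathcal U$.
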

\begin{proof}
Using Proposition \ref{replacementboxtimes} we may write 
$\bar A_1\boxtimes\dots\boxtimes \bar A_k$ as a homotopy Kan extension, hence the result follows immediately from Lemma  \ref{Kanextensionlemma}. 
\end{proof}
When $A$ is an $\I$-space monoid the canonical map $(\bar A)'\to \bar A$ is a homotopy equivalence of $\I$-space. The above lemma therefore implies the following result which  concludes the verification of the conditions in \textbf{A4}.
\begin{proposition}
Given $\I$-space monoids $A_1,\dots,A_k$, the canonical projection 
$$
U(A_1^c\boxtimes\dots\boxtimes A_k^c)\to Q(A_1^c\boxtimes\dots\boxtimes A_k^c)
$$
is a weak homotopy equivalence.\qed
\end{proposition}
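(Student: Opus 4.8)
The plan is to deduce this from Lemma~\ref{UQlemma} by comparing $A_1^c\boxtimes\dots\boxtimes A_k^c$ with $\bar A_1\boxtimes\dots\boxtimes \bar A_k$ via the map induced by the $k$ canonical projections $A_i^c=(\bar A_i)'\to \bar A_i$. By the remark preceding the proposition each of these is a levelwise homotopy equivalence of $\I$-spaces, so the first step is to check that their $\boxtimes$-product
\[
q\co A_1^c\boxtimes\dots\boxtimes A_k^c\to \bar A_1\boxtimes\dots\boxtimes \bar A_k
\]
is again a levelwise homotopy equivalence.

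The key point --- and the reason the remark is phrased in terms of a genuine homotopy equivalence rather than merely a weak one --- is that although $\boxtimes$ is a colimit and hence does not preserve levelwise weak equivalences in general, it does preserve levelwise \emph{homotopy} equivalences. Indeed, since $\I\mathcal U$ is closed symmetric monoidal and tensored over $\mathcal U$ and $\boxtimes$ is continuous, each functor $A\boxtimes(-)$ is a continuous left adjoint, so it commutes with the tensor $(-)\otimes I$ and therefore carries levelwise homotopies to levelwise homotopies; applying this one variable at a time shows that $q$ is a levelwise homotopy equivalence. Next I would invoke that, by their construction in the verification of {\bf A4}, both $U=\hocolim_{\I}$ and $Q=\colim_{\I}$ preserve tensors with unbased spaces, hence preserve levelwise homotopies, hence preserve levelwise homotopy equivalences; thus $Uq$ and $Qq$ are weak homotopy equivalences.

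Finally I would assemble the commutative square whose rows are the canonical projections $\hocolim_{\I}\to\colim_{\I}$ for $A_1^c\boxtimes\dots\boxtimes A_k^c$ and for $\bar A_1\boxtimes\dots\boxtimes \bar A_k$ and whose columns are $Uq$ and $Qq$. The bottom row is a weak homotopy equivalence by Lemma~\ref{UQlemma}, and $Uq$, $Qq$ are weak homotopy equivalences by the previous step, so the two-out-of-three property forces the top row --- which is the map in the statement --- to be a weak homotopy equivalence. I do not expect any real obstacle; the one thing to avoid is trying to push the weak equivalences $A_i^c\to\bar A_i$ through $\boxtimes$ directly. An alternative would be to rewrite $\bar A_1\boxtimes\dots\boxtimes\bar A_k$ as a homotopy Kan extension using Proposition~\ref{replacementboxtimes} and to quote Proposition~\ref{flatboxtimesequivalence}, but this needs the $A_i^c$ to be flat, so the homotopy-equivalence argument is the more economical one.
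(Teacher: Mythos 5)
Your proof is correct and follows essentially the same route as the paper, which simply notes that $A^c=(\bar A)'\to\bar A$ is an $\I$-space homotopy equivalence and then cites Lemma~\ref{UQlemma}, leaving implicit exactly the bookkeeping you spell out (that $\boxtimes$, $U$, and $Q$ preserve tensors with $I$, hence carry internal homotopy equivalences to homotopy equivalences, so the comparison square gives the result by two-out-of-three). Your remark that the reduction must pass through a genuine homotopy equivalence rather than a mere weak one --- because $\boxtimes$ and $Q$ are colimit constructions --- is precisely the point the paper's terse phrasing relies on.
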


\subsection*{Axiom \textbf{A5}}
In the notation of Axiom \textbf{A5}, we define $BF'$ and $BO'$ to be the well-based  $\I$-space monoids defined from $BF$ and $BO$ as in Proposition \ref{well-basedreplacement}. These are again  commutative flat $\I$-space monoids and the condition in \textbf{A5} therefore follows from the following more general result.
\begin{proposition}
If $A_1,\dots, A_k$ are flat $\I$-spaces, then the canonical map 
$$
U(A_1\boxtimes\dots\boxtimes A_k)\to UA_1\times\dots\times UA_k
$$
is a weak homotopy equivalence. 
\end{proposition}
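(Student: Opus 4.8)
The plan is to reduce the statement to the flat replacements $\bar A_i$ of Proposition~\ref{Iflatprop} and then to quote Lemma~\ref{UQlemma}. First I would consider the commutative square
\[
\begin{CD}
U(\bar A_1\boxtimes\dots\boxtimes\bar A_k) @>>> U\bar A_1\times\dots\times U\bar A_k\\
@VVV @VVV\\
U(A_1\boxtimes\dots\boxtimes A_k) @>>> UA_1\times\dots\times UA_k
\end{CD}
\]
whose horizontal arrows are the canonical maps of the proposition and whose vertical arrows are induced by the flat replacement transformations $\bar A_i\to A_i$. The right hand map is a product of weak homotopy equivalences, since each $\bar A_i\to A_i$ is a levelwise equivalence (Proposition~\ref{Iflatprop}) and $U=\hocolim_{\I}$, being the realization of a good simplicial space, preserves levelwise equivalences. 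The left hand map is a weak homotopy equivalence precisely because the $A_i$ are flat: by Proposition~\ref{replacementboxtimes} its source is $A_1\boxtimes^h\dots\boxtimes^h A_k$, and Proposition~\ref{flatboxtimesequivalence} identifies the resulting map to $A_1\boxtimes\dots\boxtimes A_k$ as a levelwise equivalence, so applying $U$ gives a weak homotopy equivalence. Hence it suffices to treat the top arrow, i.e.\ to prove the proposition with the $\bar A_i$ in place of the $A_i$.

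For the flat replacements I would assemble the maps furnished by Lemma~\ref{UQlemma} into the square
\[
\begin{CD}
U(\bar A_1\boxtimes\dots\boxtimes\bar A_k) @>>> U\bar A_1\times\dots\times U\bar A_k\\
@VVV @VVV\\
Q(\bar A_1\boxtimes\dots\boxtimes\bar A_k) @>\cong>> Q\bar A_1\times\dots\times Q\bar A_k
\end{CD}
\]
in which the lower horizontal map is the isomorphism coming from $Q$ being strong symmetric monoidal, the vertical maps are the canonical projections from the homotopy colimit to the colimit, and the identifications $Q\bar A_i\cong UA_i$ (the $\phi=\mathrm{id}$ case of Lemma~\ref{Kanextensionlemma}, equivalently the $k=1$ case of Lemma~\ref{UQlemma}) match the bottom row with $UA_1\times\dots\times UA_k$. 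By Lemma~\ref{UQlemma} the left vertical map is a weak homotopy equivalence, and by its $k=1$ case so is each factor of the right vertical map; since the bottom map is a homeomorphism, once the square is known to commute the top arrow is a weak homotopy equivalence, which is exactly what we want.

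The only genuine work, and the point I would expect to be the main obstacle, is checking that the two displayed squares commute on the nose — in other words, that all the maps christened ``canonical'' are mutually compatible. For the first square this is naturality of flat replacement together with naturality of the levelwise-product map $A_1\boxtimes\dots\boxtimes A_k\to A_1\times\dots\times A_k$ that defines the canonical map. For the second it amounts to the assertion that the natural transformation $U\to Q$ (projection from the homotopy colimit to the colimit) is oplax monoidal with respect to the canonical oplax structure on $U$ and the strong monoidal structure on $Q$. Both are diagram chases through the Bousfield--Kan simplicial model for $\hocolim_{\I}$ and the Day-convolution colimit formula for $\boxtimes$; they present no conceptual difficulty but require some bookkeeping, of the same nature as the verifications in the proofs of Propositions~\ref{flatsmashequivalence} and~\ref{hsmashproposition} in the symmetric spectrum setting.
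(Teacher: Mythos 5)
Your argument is correct and is essentially the paper's own: the paper stacks your two commutative squares into a single $3\times 2$ diagram (top row $Q$, middle row $U$ of the flat replacements, bottom row $U$ of the $A_i$), invokes Proposition~\ref{flatboxtimesequivalence} and Lemma~\ref{UQlemma} for the vertical equivalences, and uses that $Q$ is strong monoidal for the top horizontal homeomorphism. The only difference is that you spell out the commutativity verifications the paper leaves implicit.
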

\begin{proof}
Consider the commutative diagram
$$
\begin{CD}
Q(\bar A_1\boxtimes \dots\boxtimes \bar A_k)@>\simeq  >> 
Q(\bar A_1)\times \dots\times Q(\bar A_k)\\
@AA\sim A @AA\sim A\\
U(\bar A_1\boxtimes \dots\boxtimes \bar A_k)@>>>
U(\bar A_1)\times \dots\times U(\bar A_k)\\
@VV\sim V @VV\sim V\\
U(A_1\boxtimes \dots\boxtimes A_k)@>>>
U(A_1)\times \dots\times U(A_k),
\end{CD}
$$
where the vertical maps are weak homotopy equivalences by Proposition \ref{flatboxtimesequivalence} and Lemma \ref{UQlemma}. The horizontal map on the top is a homeomorphism since $Q$ is strong monoidal and the result follows.
\end{proof}

\subsection*{Axiom \textbf{A6}}
The definition of the functor in \textbf{A6} is based on the $\I$-space lifting functor 
$$
R\co\mathcal U/BF_{h\I}\to \I\mathcal U/BF,\quad(X\xr{f}BF_{h\I})\mapsto (R_fX\xr{R(f)}BF)
$$ 
introduced in \cite{schlichtkrull-thom}. We shall not repeat the details of the definition here, but we remark that a similar construction applies to give an $\I$-space lifting functor with $BO$ instead of $BF$. The following is proved in \cite{schlichtkrull-thom}.
\begin{proposition}[\cite{schlichtkrull-thom}]
The Barratt-Eccles operad $\mathcal E$ acts on $BF_{h\I}$ and if $\mathcal C$ is an operad that   
is augmented over $\mathcal E$, then there is an induced functor
$$
R\co \mathcal U[\mathcal C]/BF_{h\I}\to \I\mathcal U[\mathcal C]/BF
$$
and a natural weak equivalence of $\mathcal C$-spaces $(R_fX)_{h\I}\xr{\sim}X$.
\end{proposition}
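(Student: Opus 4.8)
The plan is to deduce everything from two facts: $BF$ is a \emph{commutative} monoid in $\I\U$, and the homotopy colimit functor $U=\hocolim_\I$ is (lax) symmetric monoidal. First I would make the monoidal structure of $\hocolim_\I$ explicit. A Fubini argument combined with Lemma~\ref{Kanextensionlemma} gives a natural homeomorphism $\hocolim_\I(A\boxtimes^h B)\cong A_{h\I}\times B_{h\I}$; composing its inverse with the projection $A\boxtimes^h B\to A\boxtimes B$ of Proposition~\ref{flatboxtimesequivalence} produces natural B\"okstedt-type maps $A_{h\I}\times B_{h\I}\to(A\boxtimes B)_{h\I}$ that are coherently associative, unital and symmetric, so that $\hocolim_\I$ is lax symmetric monoidal (and strong symmetric monoidal for $\boxtimes^h$). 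Feeding the multiplication of the commutative $\I$-space monoid $BF$ through these maps makes $BF_{h\I}$ a homotopy-associative, homotopy-commutative $H$-space; the coherence homotopies relating iterated products under permutation of the factors are obtained by applying the nerve to the relevant diagrams of permutations, and assembling them over all arities yields an action of the Barratt--Eccles operad $\mathcal E$ on $BF_{h\I}$. Restricting along the augmentation $\mathcal C\to\mathcal E$ then equips $BF_{h\I}$ with a $\mathcal C$-action, so that $\U[\mathcal C]/BF_{h\I}$ is defined; note that for the same reason the commutative monoid $BF$ is an $\mathcal E$-algebra, hence a $\mathcal C$-algebra, in $\I\U$.

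Next I would upgrade the lifting functor. By its construction in \cite{schlichtkrull-thom}, the underlying functor $R\co\U/BF_{h\I}\to\I\U/BF$ recalled above carries a lax symmetric monoidal structure over $BF$: for $f\co X\to BF_{h\I}$ and $g\co Y\to BF_{h\I}$ there is a natural coherent map $R_fX\boxtimes R_gY\to R_{f\cdot g}(X\times Y)$ lying over the commutative multiplication $BF\boxtimes BF\to BF$, and it is continuous and compatible with tensors by unbased spaces. Given a $\mathcal C$-action on $f$, encoded by maps $\mathcal C(k)\times X^{\times k}\to X$ over the $\mathcal C$-action on $BF_{h\I}$, I would transport it through $R$ using the iterated lax structure maps $(R_fX)^{\boxtimes k}\to R(X^{\times k})$, obtaining maps $\mathcal C(k)\times(R_fX)^{\boxtimes k}\to R_fX$ in $\I\U$. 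That these land over $BF$ compatibly is exactly where one uses that the $\mathcal C$-action on $BF_{h\I}$ was \emph{defined} from the $\boxtimes$-structure of $BF$ via the B\"okstedt maps. The operad unit, associativity and equivariance identities for $R_fX$ then follow formally from the corresponding identities for $X$ together with the coherence of the lax monoidal structure of $R$, and one checks that $R(f)\co R_fX\to BF$ is a $\mathcal C$-map. This produces the desired functor $R\co\U[\mathcal C]/BF_{h\I}\to\I\U[\mathcal C]/BF$, and naturality in $f$ is clear.

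Finally I would verify that the natural weak equivalence $(R_fX)_{h\I}\xr{\sim}X$ supplied by the construction of \cite{schlichtkrull-thom} is a map of $\mathcal C$-spaces. The $\mathcal C$-structure on $(R_fX)_{h\I}$ is obtained by applying $\hocolim_\I$ to the $\mathcal C$-structure just constructed and transporting it through the lax monoidal structure of $\hocolim_\I$. Since the composite $\hocolim_\I\circ R$ is related to the identity functor on $\U/BF_{h\I}$ by a \emph{monoidal} natural transformation --- a property of the explicit construction of $R$ in \cite{schlichtkrull-thom}, where the counit is built from the canonical projections from homotopy colimits to colimits --- the counit $(R_\bullet)_{h\I}\Rightarrow\mathrm{id}$ intertwines the two $\mathcal C$-actions; and it is a weak equivalence by the non-equivariant statement.

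I expect the main obstacle to be bookkeeping rather than any single delicate estimate: one must fix explicit point-set models for $\hocolim_\I$, for the B\"okstedt maps, and for $R$, and check on the nose that (a) the $\mathcal E$-action on $BF_{h\I}$ is the one induced by the commutative $\boxtimes$-monoid $BF$ through these maps, (b) $R$ is lax symmetric monoidal over the multiplication of $BF$ in a sufficiently rigid way to transport operad actions on the nose, and (c) the counit $(R_\bullet)_{h\I}\Rightarrow\mathrm{id}$ is monoidal, all compatibly. The only genuinely non-formal ingredient is B\"okstedt's approximation lemma \cite{madsen}, used --- exactly as in the verification of axiom~\textbf{A1} --- to guarantee that the comparison maps between homotopy colimits over $\I$ and over $\mathcal N$ are weak equivalences, which is what makes $(R_fX)_{h\I}\xr{\sim}X$ a genuine equivalence.
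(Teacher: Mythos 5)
The paper does not actually prove this proposition; it is quoted from \cite{schlichtkrull-thom}. So I can only compare your proposal against what the statement genuinely requires, and there is a real gap at the very first step.

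You assert that the B\"okstedt maps $A_{h\I}\times B_{h\I}\to(A\boxtimes B)_{h\I}$ make $\hocolim_\I$ a lax \emph{symmetric} monoidal functor, and you then want to feed the commutative $\boxtimes$-monoid $BF$ through this structure to get an H-space structure on $BF_{h\I}$. But $\hocolim_\I$ is only lax monoidal, not lax symmetric monoidal. The structure map is built by concatenating strings in $\I$; swapping the two inputs lands you at $\mathbf n\sqcup\mathbf m$ instead of $\mathbf m\sqcup\mathbf n$, and the two points of $(A\boxtimes B)_{h\I}$ differ by a nontrivial simplex of the homotopy colimit recording the shuffle permutation. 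They are not equal, only canonically homotopic. This is not a pedantic distinction: if $\hocolim_\I$ were genuinely lax symmetric monoidal, a commutative $\boxtimes$-monoid such as $BF$ would be sent to a strictly commutative topological monoid, which would force $BF\simeq BF_{h\I}$ to be a generalized Eilenberg--Mac Lane space. The entire content of the first assertion of the proposition --- that one gets an action of the Barratt--Eccles operad $\mathcal E$, and not a strictly commutative multiplication --- is the failure of symmetry that your claim erases. The $\mathcal E$-action is obtained instead by a substantive argument: one identifies $BF_{h\I}$ with the nerve of the (topological) permutative category $\I\!\int\! BF$ and invokes the standard $E_\infty$-structure on nerves of permutative categories, where $\mathcal E(k)=E\Sigma_k$ acts by combining the coherence isomorphisms of the permutative structure with the permutation of factors. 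Your sentence about ``assembling coherence homotopies'' gestures at this, but having already (incorrectly) asserted that the structure is lax symmetric monoidal, the argument you give does not actually construct the operad action.

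The remaining steps inherit this problem and add new ones. You assert, ``by its construction in \cite{schlichtkrull-thom},'' that $R$ carries a lax symmetric monoidal structure over $BF$ and that the counit $(R_\bullet)_{h\I}\Rightarrow\mathrm{id}$ is a monoidal natural transformation; but in a blind proof you cannot appeal to the construction you are supposed to be reconstructing, and neither assertion is verified. The functor $R$ is built by a homotopy pullback of $X$ along a comparison map between two $\I$-indexed homotopy colimits, and it is not at all obvious that such a pullback admits a natural coherent pairing $R_fX\boxtimes R_gY\to R_{f\cdot g}(X\times Y)$, nor that, if it does, that pairing is compatible with the symmetry of $\boxtimes$ in the sense needed to transport $\Sigma_k$-equivariant operad structure maps. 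The correct mechanism (as in \cite{schlichtkrull-thom}) constructs the operad action on $R_fX$ directly from the $\mathcal C$-action on $X$ and the commutative monoid structure of $BF$ without claiming that $R$ is a symmetric monoidal functor, and the compatibility of the counit with the $\mathcal C$-actions is a separate verification. So while your identification of the one non-formal ingredient --- B\"okstedt's approximation lemma guaranteeing that $(R_fX)_{h\I}\to X$ is an equivalence --- is correct, the architecture of the argument rests on two unproved (and in the first case false) structural claims, and as written the proof does not go through.
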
 
There is an analogous result in the $BO$ case. Now let $\mathcal C$ be the associativity operad such that the categories $\mathcal U[\mathcal C]$ and $\I\mathcal U[\mathcal C]$ are the categories of topological monoids and $\I$-space monoids, respectively. The associativity operad is augmented over the Barratt-Eccles operad and we define the functor $R$ in \textbf{A6} to be the induced functor
$$
R\co\mathcal U[\mathcal C]/BF_{h\I}\to \I\mathcal U[\bT]/BF
$$
and similarly with $BO$ instead of $BF$. The composite functor 
$$
\mathcal U[\mathcal C]/BF_{h\I}\xr{R}\I\mathcal U[\bT]/BF\to \I\mathcal U[\bT]
\xr{C}\I\mathcal U[\bT]\xr{Q}\mathcal U[\bT]\to \mathcal U[\mathcal C]
$$
takes an object $f\co X\to BF_{h\I}$ to $Q(R_fX)^c$ and we have a
chain of weak homotopy equivalences in $\mathcal U[\mathcal C]$ given
by 
\[
Q(R_fX)^c\xr{\sim}Q(\overline{R_fX})\xr{\sim}(R_fX)_{h\I}\xr{\sim}X.
\]
Here the left hand equivalence is induced by the canonical homotopy
equivalence of Proposition~\ref{well-basedreplacement}, the next is
the homeomorphism established in Lemma~\ref{UQlemma}, and the last
equivalence is provided by the preceding Proposition. The 
verification of the axioms is now complete except for the proof of
Proposition~\ref{restrictedA2}.  For this we need the following two
results from \cite{schlichtkrull-thom}.  Recall that by our
conventions a map in $\I\mathcal U$ is a weak equivalence if the
induced map of homotopy colimits is a weak homotopy equivalence.
    
\begin{proposition}[\cite{schlichtkrull-thom}]\label{RUprop}
The composite functor
$$
\I\mathcal U/BF\xr{U}\mathcal U/BF_{h\I}\xr{R} \I\mathcal U/BF
$$
is related to the identity functor on $\I\mathcal U/BF$ by a chain of natural weak equivalences.
\end{proposition}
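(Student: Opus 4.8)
The plan is to produce a natural transformation (or a short natural zig-zag) between the identity functor on $\I\U/BF$ and the composite $R\circ U$, and to recognize it as a weak equivalence in $\I\U/BF$ — i.e.\ a map inducing a weak homotopy equivalence of homotopy colimits — by combining the structural equivalence for $R$ with a two-out-of-three argument. Unwinding the definitions, $R\circ U$ sends an object $\alpha\co A\to BF$ to $R(\alpha_{h\I})\co R_{\alpha_{h\I}}(A_{h\I})\to BF$, where $\alpha_{h\I}\co A_{h\I}\to BF_{h\I}$ is the induced map of homotopy colimits.

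First I would recall the construction of $R$ from \cite{schlichtkrull-thom}. There the $\I$-space $R_fX$ attached to $f\co X\to BF_{h\I}$ is built fibrewise over the flat replacement $\overline{BF}$: one has the canonical map of $\I$-spaces from $\overline{BF}$ to the constant $\I$-space $BF_{h\I}$, coming from the forgetful functors $\I/\mathbf n\to\I$, and $R_fX(\mathbf n)$ is (a point-set model for) the homotopy pullback of $f$ along $\overline{BF}(\mathbf n)=\hocolim_{\alpha\co\mathbf n_1\to\mathbf n}BF(n_1)\to BF_{h\I}$, with $R(f)$ the composite $R_fX\to\overline{BF}\to BF$. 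For an object $(A,\alpha)$ of $\I\U/BF$, the canonical maps $A(\mathbf n)\to A_{h\I}$ and the flat-replacement map $A\to\overline{A}\to\overline{BF}$ induced by $\alpha$ agree over $BF_{h\I}$, since in either composite $A(\mathbf n)\to BF_{h\I}$ is the canonical structure map of $\alpha_{h\I}$. Hence the universal property of the (homotopy) pullback yields a natural map
\[
\eta_{(A,\alpha)}\co A\longrightarrow R_{\alpha_{h\I}}(A_{h\I})
\]
in $\I\U$, natural in $(A,\alpha)$ and lying over $BF$. Should the published definition of $R$ use a strict rather than a homotopy pullback, one first inserts a functorial fibrant replacement over $BF_{h\I}$; this only turns $\eta$ into a two-step natural zig-zag and affects nothing below.

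Finally I would verify that $\eta$ is a weak equivalence in $\I\U/BF$ by applying $(-)_{h\I}$. The lifting functor of \cite{schlichtkrull-thom} comes equipped with a natural weak homotopy equivalence $\epsilon_X\co(R_fX)_{h\I}\xr{\sim}X$ over $BF_{h\I}$ — the Barratt--Eccles proposition above records this for $\mathcal C$-maps, and here one needs only the underlying statement for plain maps — and, by the construction, $\epsilon_{A_{h\I}}\circ(\eta_{(A,\alpha)})_{h\I}$ is the identity on $A_{h\I}$, or at least naturally homotopic to it. Since $\epsilon_{A_{h\I}}$ is a weak homotopy equivalence, two-out-of-three forces $(\eta_{(A,\alpha)})_{h\I}$ to be one as well, so $\eta$ supplies the required chain of natural weak equivalences $\id\simeq R\circ U$. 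The one delicate point — and the reason this proposition is quoted rather than reproved here — is precisely this last compatibility: one must unwind the two-sided bar constructions underlying $R$ and $(-)_{h\I}$ far enough to see that $\epsilon_{A_{h\I}}$ and $(\eta_{(A,\alpha)})_{h\I}$ really are mutually inverse up to natural homotopy, and not merely two a priori unrelated self-maps of $A_{h\I}$; this bookkeeping is what is carried out in \cite{schlichtkrull-thom}.
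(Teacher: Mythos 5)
The paper states this proposition purely as a citation of \cite{schlichtkrull-thom} and offers no proof of its own, so there is no internal argument for me to compare against. Your outline is a reasonable reconstruction of the expected shape: build a natural comparison $\eta\co (A,\alpha)\to R(U(A,\alpha))$ in $\I\U/BF$, apply $(-)_{h\I}$, and deduce that $\eta$ is a weak equivalence from the known natural equivalence $\epsilon\co (R_fX)_{h\I}\xr{\sim}X$ together with two-out-of-three. You are also right to flag the load-bearing compatibility — that $\epsilon_{A_{h\I}}\circ(\eta_{(A,\alpha)})_{h\I}$ is naturally homotopic to the identity, and not just some unrelated self-map — as the nontrivial point, and your remark that this is why the present paper defers to the citation is apt.

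Two caveats. First, your description of $R$ as a fibrewise homotopy pullback of $f$ along the canonical maps $\overline{BF}(\mathbf n)\to BF_{h\I}$ is a conjecture about the construction in \cite{schlichtkrull-thom}; nothing in the present paper confirms this, and if the actual construction differs (say, a two-sided bar construction rather than a pullback), the natural map $\eta$ would have to be produced by different means, even though the overall strategy of reducing to $\epsilon$ and two-out-of-three would survive. Second, even granting your model, invoking the universal property of a \emph{homotopy} pullback to obtain a strict natural transformation $\eta$ is not automatic: the homotopy universal property only produces a map up to homotopy. Your parenthetical about choosing a point-set model and inserting functorial fibrant replacements, so that $\eta$ becomes a short natural zig-zag, is therefore not a cosmetic adjustment but is doing real work, and should be treated as part of the argument rather than as an afterthought.
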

The next proposition shows that the ordinary Thom spectrum functor can be recovered from the symmetric Thom spectrum functor up to stable equivalence.
\begin{proposition}[\cite{schlichtkrull-thom}]\label{UTRprop}
The two compositions in the diagram
$$
\begin{CD}
\I\U/BF@>\Gamma>> \I\U/BF @> T>>\Sp^{\Sigma} \\
@AA R A @. @VV U V\\
\U/BF_{h\I}@>\zeta_*>> \U/BF_{\mathcal N} @> T\Gamma >> \Sp
\end{CD}
$$
are related by a chain of natural stable equivalences. 
\end{proposition}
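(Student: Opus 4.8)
This proposition is the main comparison theorem of \cite{schlichtkrull-thom} --- that the symmetric Thom spectrum functor realizes the Lewis-May homotopy type --- specialized to objects of the form $\Gamma R(f)$; the plan is to recall how that comparison is assembled and then feed in the defining property of the $\I$-space lifting functor $R$. Throughout, recall that $U\co\Sp^{\Sigma}\to\Sp$ is the detection functor $D$ of Section~\ref{detectionfunctor} followed by the forgetful functor, that $D$ preserves stable model equivalences and is joined to the identity of $\Sp^{\Sigma}$ by a chain of natural stable model equivalences \cite[3.1.2, 3.1.6]{shipleyTHH}, and that $(-)_{h\I}$ denotes the space-level functor $U\co\I\U\to\U$. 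Observe also that for \emph{any} $f$ the object $\Gamma R(f)$ is $T$-good: after applying $\Gamma$ levelwise the map $\Gamma R_fX\to BF$ is a levelwise Hurewicz fibration, the canonical comparison $T(\Gamma R(f))\to T\Gamma(\Gamma R(f))$ is induced by the natural map $\delta\co\Gamma R_fX\to\Gamma\Gamma R_fX$, which is a map over $BF$ between levelwise Hurewicz fibrations and a levelwise homotopy equivalence hence fiberwise a homotopy equivalence, and $T$ sends such maps to levelwise equivalences of symmetric spectra.

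The core input, which is the substance of \cite{schlichtkrull-thom}, is that for any $T$-good $\I$-space map $\alpha\co A\to BF$ the underlying spectrum $UT(\alpha)$ is naturally stably equivalent to the Lewis-May Thom spectrum $T\Gamma\bigl(A_{h\I}\to BF_{h\I}\xr{\zeta}BF_{\mathcal N}\bigr)$ of the induced composite. I would recall the argument in three moves. First, forgetting the symmetric structure, $T(\alpha)$ is the $\mathcal N$-indexed spectrum $\mathbf n\mapsto T(\alpha_n)$; since the Thom space functor preserves colimits \cite[IX]{lewis-may-steinberger} and --- after inserting the fibrant replacement $\Gamma$ --- the relevant filtered and simplicial homotopy colimits, this spectrum is identified, up to a zigzag of level equivalences, with the Lewis-May Thom spectrum of the $\mathcal N$-homotopy colimit $A_{h\mathcal N}\to BF_{h\mathcal N}\to BF_{\mathcal N}$. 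Second, B\"okstedt's approximation lemma \cite[2.3.7]{madsen} provides weak homotopy equivalences $A_{h\mathcal N}\xr{\sim}A_{h\I}$ over $BF_{h\mathcal N}\xr{\sim}BF_{h\I}$, so homotopy invariance of $T\Gamma$ \cite[IX, 4.9]{lewis-may-steinberger} replaces the previous Thom spectrum by $T\Gamma(\zeta\circ\alpha_{h\I})$. Third, $DT(\alpha)$ is semistable and its underlying spectrum is naturally stably equivalent to that of $T(\alpha)$; combining the three moves gives the stated natural stable equivalence.

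Applying this with $\alpha=\Gamma R(f)$ yields a natural stable equivalence from $UT(\Gamma R(f))$ to the Lewis-May Thom spectrum of $(\Gamma R_fX)_{h\I}\to BF_{h\I}\xr{\zeta}BF_{\mathcal N}$. It remains to replace $(\Gamma R_fX)_{h\I}$ by $X$: the natural map $\delta\co R_fX\to\Gamma R_fX$ is a map over $BF$ which is levelwise a homotopy equivalence, hence induces a weak homotopy equivalence $(R_fX)_{h\I}\xr{\sim}(\Gamma R_fX)_{h\I}$ over $BF_{h\I}$, while the defining property of $R$ recalled above gives a natural weak homotopy equivalence $(R_fX)_{h\I}\xr{\sim}X$ over $BF_{h\I}$. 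Composing with $\zeta$ and invoking homotopy invariance of $T\Gamma$ once more converts the target into $T\Gamma(\zeta_* f)$, so chaining all the links above produces the required chain of natural stable equivalences $UT(\Gamma R(f))\simeq T\Gamma(\zeta_* f)$.

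The real obstacle is the core input of the second paragraph, which is where the technical work of \cite{schlichtkrull-thom} concentrates. The delicate points are: (i) $\Gamma$-fibrant replacement is not preserved by (homotopy) colimits, so identifying $UT(\alpha)$ with a Lewis-May Thom spectrum of a homotopy colimit forces one to insert auxiliary copies of $\Gamma$ and to check, via fiberwise invariance of $T$, that they do not alter the stable type; (ii) one must control the interaction of the detection functor with the Bousfield-Kan model for $(-)_{h\I}$ in order to pass between $T(\alpha)$ and $DT(\alpha)$ at the level of homotopy colimits; and (iii) B\"okstedt's approximation lemma needs a convergence hypothesis on the $\I$-diagrams involved, which is transparent when everything is pulled back over $BO$ but requires a separate argument over $BF$ --- which is exactly why the excerpt treats the $BO$ case first and defers the general case to the additional arguments of Section~\ref{generalsymmetric}.
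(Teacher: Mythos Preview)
The paper does not give a proof of this proposition: it is quoted verbatim from \cite{schlichtkrull-thom} and used as a black box (the only follow-up text is the remark that $R$ lands in $T$-good objects). So there is no ``paper's own proof'' to compare against; what you have written is a reconstruction of the argument from the cited reference.

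Your reconstruction is a reasonable outline of how that comparison goes, and you correctly identify the three ingredients: the levelwise Thom-space description of the underlying spectrum, the passage from $\mathcal N$- to $\I$-homotopy colimits, and the defining equivalence $(R_fX)_{h\I}\xr{\sim}X$. Your final paragraph also locates the genuine technical work accurately. One point worth tightening: B\"okstedt's approximation lemma does not apply to an arbitrary $\I$-diagram $A$ --- it needs the connectivity/convergence hypothesis you allude to --- and in \cite{schlichtkrull-thom} this is handled not by verifying that hypothesis for $\Gamma R_fX$ directly but by a more hands-on comparison at the spectrum level. Since the present paper treats the proposition as input, your honest option is either to cite it as the paper does, or to flag explicitly that the second paragraph is a summary of \cite{schlichtkrull-thom} rather than a self-contained argument.
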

Since by \cite{schlichtkrull-thom} the functor $R$ takes values in the subcategory of $T$-good objects in $\I\mathcal U/BF$, the composite functor in the proposition is in fact stably equivalent to $UTR$. However, the above formulation is convenient for the application in the following proof.

\medskip
\noindent\textit{Proof of Proposition \ref{restrictedA2}.} 
It suffices to show that the two compositions in the diagram
$$
\begin{CD}
\I\U/BF@>\Gamma>> \I\U/BF @> T>>\Sp^{\Sigma} \\
@VV U V @. @VV U V\\
\U/BF_{h\I}@>\zeta_*>> \U/BF_{\mathcal N} @> T\Gamma >> \Sp
\end{CD}
$$
are related by a chain of stable equivalences.
Composing the chain of equivalences in Proposition \ref{RUprop} with the levelwise fibrant replacement functor $\Gamma$ gives a chain of natural equivalences relating the functors 
$\Gamma RU$  and $\Gamma$ on $\I\U/BF$. Therefore, applying the symmetric Thom spectrum functor to this chain, we get a chain of stable model equivalences
$$
T\Gamma R U\simeq T\Gamma\co \I\mathcal U/BF\to\Sp^{\Sigma}.
$$
Combining this with Proposition \ref{UTRprop} ,we finally get the required chain of stable equivalences
$$
UT\Gamma\simeq UT\Gamma R U\simeq T\Gamma\zeta_* U.
\eqno\qed
$$

\subsection{The proof of Theorem \ref{maintheorem} in the general case}
\label{generalsymmetric}
In this section we show that the weaker forms of the axioms \textbf{A2} and \textbf{A3} suffice for the proof of Theorem \ref{maintheorem}. The main technical difficulty is that the symmetric Thom spectrum functor only preserves weak equivalences on the full subcategory of $T$-good objects in 
$\I\mathcal U/BF$. In order to maintain homotopical control we must
therefore be careful only to apply the Thom spectrum functor to
$T$-good objects. We say that a map $\alpha_n\co A(n)\to
BF(n)$ \emph{classifies a well-based quasifibration} if 
\begin{enumerate}
\item the pullback $\alpha^*EF(n)\to A(n)$ is a quasifibration, and 
\item the induced section $A(n)\to \alpha^*EF(n)$ is an $h$-cofibration.
\end{enumerate}
An object $\alpha$ in $\I\mathcal U/BF$ is said to classify well-based
quasifibrations if the level maps $\alpha_n$ do. 
This condition implies that $\alpha$ is $T$-good and is sometimes technically convenient as in the following lemma from \cite{schlichtkrull-thom}.  
\begin{lemma}[\cite{schlichtkrull-thom}]\label{hocolimlemma}
Let $\Lambda$ be a small category and let $f_\lambda\co X_\lambda\to BF(n)$ be a $\Lambda$-diagram in $\mathcal U/BF(n)$ such that each $f_\lambda$ classifies a well-based quasifibration. Then the induced map
$$
f\co\hocolim_{\Lambda}X_\lambda\to BF(n)
$$ 
also classifies a well-based quasifibration.
\end{lemma}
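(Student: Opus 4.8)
The plan is to verify the two conditions defining ``classifies a well-based quasifibration'' for the map $f\co\hocolim_\Lambda X_\lambda\to BF(n)$ separately, reducing each to the corresponding property of the individual $f_\lambda$ together with standard facts about homotopy colimits. Recall that the homotopy colimit of a $\Lambda$-diagram is the realization of a simplicial space whose space of $k$-simplices is $\coprod_{\lambda_0\leftarrow\dots\leftarrow\lambda_k}X_{\lambda_k}$, so the first step is to understand the pullback $f^*EF(n)$ and the induced section degreewise in this simplicial direction.

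First I would treat condition (1). The key point is that pulling back $EF(n)\to BF(n)$ along $f$ commutes with the homotopy colimit: since $EF(n)\to BF(n)$ is a fixed map (not varying with $\lambda$), we have $f^*EF(n)\cong\hocolim_\Lambda f_\lambda^*EF(n)$, because pullback along $EF(n)\to BF(n)$ preserves coproducts and commutes with geometric realization (the latter using that the relevant maps are well-behaved, e.g.\ the section is an $h$-cofibration in each degree by hypothesis). Now each $f_\lambda^*EF(n)\to X_\lambda$ is a quasifibration, and quasifibrations are preserved under the formation of homotopy colimits over a small category; this is a classical gluing/realization statement for quasifibrations, which one can invoke from \cite{may-class} (Dold--Thom type arguments) once one knows the relevant cofibration conditions hold. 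This gives that $f^*EF(n)\to\hocolim_\Lambda X_\lambda$ is a quasifibration, which is condition (1).

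Next I would treat condition (2). The induced section $\hocolim_\Lambda X_\lambda\to f^*EF(n)$ is, under the identification above, the map of homotopy colimits induced by the maps of sections $X_\lambda\to f_\lambda^*EF(n)$. Each of these is an $h$-cofibration by hypothesis. Since a map of homotopy colimits induced by a levelwise $h$-cofibration of $\Lambda$-diagrams is again an $h$-cofibration --- the mapping cylinder construction is computed degreewise in the simplicial direction, and realization preserves the retraction that witnesses the homotopy extension property --- we conclude that the section $\hocolim_\Lambda X_\lambda\to f^*EF(n)$ is an $h$-cofibration. This establishes condition (2) and hence the lemma.

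The main obstacle I anticipate is the careful bookkeeping in showing that pullback of the fixed quasifibration $EF(n)\to BF(n)$ genuinely commutes with $\hocolim_\Lambda$ and that quasifibrations are preserved: quasifibrations are notoriously not preserved under arbitrary pullbacks or colimits, so one must use precisely the hypotheses --- that each $f_\lambda^*EF(n)\to X_\lambda$ is already a quasifibration with section an $h$-cofibration --- to run a Dold--Thom style argument over the simplicial filtration of the homotopy colimit, handling one simplicial degree at a time and then passing to the realization. The argument is essentially the one used in \cite{may-class} for realizations of simplicial quasifibrations, adapted to the nerve of $\Lambda$; since this is cited elsewhere in the paper as ``standard results on realizations of simplicial quasifibrations,'' the proof in \cite{schlichtkrull-thom} presumably refers back to that, and I would do the same rather than reproduce the Dold--Thom machinery here.
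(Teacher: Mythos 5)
The paper states this lemma with a citation to \cite{schlichtkrull-thom} and does not give a proof of its own, so there is no internal argument for me to compare against; your outline is the natural one and presumably matches the cited source in spirit. However, one step as you phrase it is too quick and, taken literally, is false: you write that ``quasifibrations are preserved under the formation of homotopy colimits over a small category'' modulo cofibration conditions, but the Dold--Thom/May realization lemma for simplicial quasifibrations (see \cite{may-class}) requires, in addition to properness of the simplicial spaces, that every face map of the simplicial base induce a weak equivalence on fibers. Properness alone is not enough, and without the fiber condition the realization of a levelwise quasifibration need not be a quasifibration.

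The reason the extra hypothesis is satisfied here --- and it is worth saying explicitly rather than leaving it implicit --- is that all the structure maps of the $\Lambda$-diagram, and hence all the face maps of the bar-construction model $\bigl[k\bigr]\mapsto \coprod_{\lambda_0\leftarrow\cdots\leftarrow\lambda_k}X_{\lambda_k}$ of the homotopy colimit, are maps over $BF(n)$. Consequently each face map carries the fiber of $f_{\lambda_k}^*EF(n)$ over a point $x$ homeomorphically onto the fiber of $f_{\lambda_j}^*EF(n)$ over the image of $x$, both being the fiber of $EF(n)$ over the common image in $BF(n)$; so the maps on fibers are identities, not merely weak equivalences. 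Once that is noted, your remaining steps are fine: pullback along the fixed map $EF(n)\to BF(n)$ commutes with coproducts and with geometric realization, so $f^*EF(n)\cong\hocolim_\Lambda f_\lambda^*EF(n)$; the degeneracies of the nerve are coproduct inclusions, giving properness; and geometric realization carries the levelwise $h$-cofibration of sections to an $h$-cofibration, establishing condition (2).
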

Now let $f\co X\to BF_{h\I}$ be a map of topological monoids and let
 $\alpha\co A\to BF$ be the object in $\I\mathcal U[\bT]/BF$ obtained by applying the functor 
 $R$. The first step is to ensure that the symmetric Thom spectra $T(\alpha)$ and $T(\alpha^c)$ have the correct homotopy types. 
 
\begin{lemma}
The objects $\alpha\co A\to BF$ and $\alpha^c\co A^c\to BF$ classify well-based quasifibrations. 
\end{lemma}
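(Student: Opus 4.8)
The plan is to unwind the construction of the $\I$-space lifting functor $R$ from \cite{schlichtkrull-thom} and then invoke Lemma~\ref{hocolimlemma}. Observe first that the underlying $\I$-space of $R_fX$, and hence the object $\alpha\co A\to BF$ of $\I\mathcal U/BF$, depends only on the underlying map $f\co X\to BF_{h\I}$ of spaces; the monoid structure is irrelevant for the present statement. Recall that $BF_{h\I}$ carries its Bousfield-Kan model as the realization of the simplicial space $[k]\mapsto\coprod_{\mathbf n_0\leftarrow\dots\leftarrow\mathbf n_k}BF(n_k)$, and that the lifting functor is built so that $R_fX(\mathbf n)$ is a homotopy colimit, over an appropriate small category, of a diagram $\lambda\mapsto X_\lambda$ of spaces over $BF(n)$ in which each structure map $X_\lambda\to BF(n)$ is the composite of a pullback of $f$ along one of the building blocks $BF(n_k)\to BF_{h\I}$ of the bar model with a structure map $BF(n_k)\to BF(n)$ of the $\I$-space $BF$.

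The heart of the argument is to show that each such $X_\lambda\to BF(n)$ classifies a well-based quasifibration over $BF(n)$. For the structure-map part this is clean: pulling the universal quasifibration $EF(n)\to BF(n)$ back along $\sigma\co BF(m)\to BF(n)$, which is induced by an inclusion $\mathbf m\hookrightarrow\mathbf n$ in $\I$, that is, by smashing equivalences with the identity on $S^{n-m}$, yields the two-sided bar construction $B(*,F(m),S^n)$ with $F(m)$ acting on $S^n\iso S^m\wedge S^{n-m}$ through the first factor. This is again a quasifibration over $BF(m)=B(*,F(m),*)$ by the standard fact that the bar construction of a topological monoid acting on a space is a quasifibration over the associated classifying space (\cite{may-class}), and its canonical section is a Hurewicz cofibration by the LEC argument recalled in Section~\ref{Lewissection}. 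That the remaining pullback along the $f$-part still produces a quasifibration with Hurewicz-cofibrant section is exactly the homotopical input engineered into the construction of $R$ in \cite{schlichtkrull-thom}, precisely so as to circumvent the non-preservation of quasifibrations and of cofibrant sections under pullback flagged in Section~\ref{secBF}. Granting this, and using that the simplicial objects computing the relevant homotopy colimits are good, Lemma~\ref{hocolimlemma} gives that $\alpha_n\co R_fX(n)\to BF(n)$ classifies a well-based quasifibration, for every $\mathbf n$.

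The case of $\alpha^c$ then reduces to that of $\alpha$. By definition $A^c=(\bar A)'$, so $A^c(\mathbf n)=\bar A(\mathbf n)\vee I$ with $\bar A(\mathbf n)=\hocolim_{\gamma\co\mathbf n_1\to\mathbf n}A(n_1)$, and $\alpha^c_n$ sends the wedge summand $I$ to the basepoint of $BF(n)$ and, on the piece indexed by $\gamma$, equals the composite $A(n_1)\xrightarrow{\alpha_{n_1}}BF(n_1)\xrightarrow{\gamma_*}BF(n)$. Since $\alpha$ has just been shown to classify a well-based quasifibration at every level, the base-change argument above (first along $\gamma_*$, then along $\alpha_{n_1}$) shows that each such composite does too; over the basepoint of $BF(n)$ the fibre of $EF(n)$ is a single point, so the summand $I$ contributes only a trivial piece, incorporated via the gluing lemma for $h$-cofibrations (or Lillig's union theorem \cite{lillig}). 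A final application of Lemma~\ref{hocolimlemma} to the $\I/\mathbf n$-diagram $\gamma\mapsto A(n_1)$ gives that $\alpha^c_n$ classifies a well-based quasifibration. The main obstacle throughout is the first step: one needs the construction of $R$ from \cite{schlichtkrull-thom} in a form that displays $R_fX(\mathbf n)$ as a good homotopy colimit whose pieces are pullbacks of the bar-construction quasifibrations $EF(m)\to BF(m)$ along structure maps and along $f$, together with the point-set verification that those pieces retain Hurewicz-cofibrant sections; once the building blocks are controlled, the rest is formal.
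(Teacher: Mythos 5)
Your proposal follows the same strategy as the paper: defer to the construction of $R$ in \cite{schlichtkrull-thom} for the key point-set properties, and then feed the resulting homotopy colimit diagrams into Lemma~\ref{hocolimlemma}, handling the wedge with $I$ as a further (trivial) homotopy colimit. The paper's proof is shorter because it cites \cite{schlichtkrull-thom} directly for \emph{both} facts it needs: that the level maps $\alpha_n\co A(n)\to BF(n)$ classify well-based quasifibrations, \emph{and} that the composites $A(m)\to BF(m)\xrightarrow{\gamma} BF(n)$ do as well; it then applies Lemma~\ref{hocolimlemma} once to $\bar\alpha$ and once to the two-object diagram $*\to\bar A$ to cover $A^c$. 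You instead try to derive the composite property from the level map property by ``base change,'' and that phrasing is not quite right: pullback does not preserve quasifibrations or cofibrant sections in general (which is the whole reason $\Gamma$ exists), so knowing $\alpha_{n_1}^*EF(n_1)\to A(n_1)$ is a well-based quasifibration does not formally imply the same for $\alpha_{n_1}^*(\gamma_*^*EF(n))$. What actually saves you is the specific identification you observe, $\gamma_*^*EF(n)\cong EF(n_1)\,\bar\wedge\, S^{n-n_1}$, so that $\alpha_{n_1}^*(\gamma_*^*EF(n))$ is the fiberwise suspension of $\alpha_{n_1}^*EF(n_1)$; fiberwise smash with a fixed sphere does preserve quasifibrations with $h$-cofibrant section (this is exactly what makes the Thom spectrum's structure maps work, cf.\ the pullback squares in Section~\ref{Lewissection}). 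If you make that step explicit instead of attributing it to ``base change,'' your argument lines up with the paper's; alternatively, simply cite \cite{schlichtkrull-thom} for the composite maps as the paper does and skip the derivation.
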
 
\begin{proof}
It follows from the definition of the functor $R$ in \cite{schlichtkrull-thom} that the level maps 
$\alpha_n$ classify well-based quasifibrations and that the same holds for the composite maps
\[
A(m)\xr{\alpha_m}BF(m)\xr{\gamma}BF(n)
\]
for each morphism $\gamma\co \mathbf m\to\mathbf n$ in 
$\I$. Thus, $\alpha$ classifies well-based quasifibrations and by Lemma \ref{hocolimlemma} the same holds for $\bar \alpha$. Since $A^c$ is the homotopy colimit of the diagram $*\to \bar A$, the result follow.
\end{proof} 
It now follows from Lemma \ref{flatcompatibility} that $T(\alpha^c)$
is a flat symmetric ring spectrum and therefore that the cyclic bar
construction $B^{\cy}(T(\alpha^c))$ represents the topological
Hochschild homology of $T(\alpha)$.  It remains to analyze the map
$B^{\cy}(\alpha^c)$.  
 
\begin{lemma} \label{Bcyalpha}
The object $B^{\cy}(\alpha^c)\co B^{\cy}(A^c)\to BF$ 
in $\I\mathcal U/BF$ is $T$-good. 
\end{lemma}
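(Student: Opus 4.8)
The plan is to reduce the statement to a levelwise assertion in each simplicial degree and then pass to realizations. The symmetric Thom spectrum functor $T$ and the levelwise fibrant replacement functor $\Gamma$ both preserve colimits and tensors with unbased spaces, hence commute with geometric realization, so since $B^{\cy}(\alpha^c)=|B^{\cy}_\bullet(\alpha^c)|$ the canonical map $T(B^{\cy}(\alpha^c))\to T\Gamma(B^{\cy}(\alpha^c))$ is the realization of the simplicial map $T(B^{\cy}_\bullet(\alpha^c))\to T\Gamma(B^{\cy}_\bullet(\alpha^c))$, and at each spectrum level it is the realization of a map of simplicial spaces. Since $A^c$ is well-based, $B^{\cy}_\bullet(A^c)$ is a good simplicial object (Lemma~\ref{goodlemma}), so these are good simplicial spaces; thus it will suffice to prove that for every $k$ the object $B^{\cy}_k(\alpha^c)\co(A^c)^{\boxtimes(k+1)}\to BF$ classifies well-based quasifibrations in the sense of Section~\ref{generalsymmetric}. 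Indeed, if it does, then comparing the pullbacks of $EF(n)\to BF(n)$ along $B^{\cy}_k(\alpha^c)$ and along $\Gamma B^{\cy}_k(\alpha^c)$ by means of the homotopy equivalence $\delta$ shows that $T(B^{\cy}_k(\alpha^c))\to T\Gamma(B^{\cy}_k(\alpha^c))$ is a levelwise weak equivalence of symmetric spectra; realization of a degreewise weak equivalence of good simplicial spaces is a weak equivalence, so the realized map is again a levelwise weak equivalence of symmetric spectra, hence a stable model equivalence, which is precisely the assertion that $B^{\cy}(\alpha^c)$ is $T$-good.

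To establish that each $B^{\cy}_k(\alpha^c)$ classifies well-based quasifibrations I would mimic the proof of the preceding lemma. First, each finite $\boxtimes$-power $(\bar A)^{\boxtimes j}$ is identified by Proposition~\ref{replacementboxtimes} with the $j$-fold homotopy-invariant product of $A$, whose value at $\mathbf n$ is the homotopy colimit over the comma category $\sqcup^{j}/\mathbf n$ of the building-block maps
\[
A(n_0)\times\dots\times A(n_{j-1})\to BF(n_0)\times\dots\times BF(n_{j-1})\to BF(n_0\sqcup\dots\sqcup n_{j-1})\to BF(n).
\]
Each such building-block map classifies a well-based quasifibration: $\alpha$ does so by the preceding lemma, and pulling $EF(n)\to BF(n)$ back along the displayed composite yields a fiberwise smash product $(\alpha_{n_0})^*EF(n_0)\,\bar\wedge\dots\bar\wedge\,(\alpha_{n_{j-1}})^*EF(n_{j-1})$, further smashed fiberwise with the sphere $S^{n-\gamma}$, and both of these operations preserve the class of well-based quasifibrations. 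Hence Lemma~\ref{hocolimlemma} gives that $(\bar A)^{\boxtimes j}\to BF$ classifies well-based quasifibrations. Finally, $A^c=(\bar A)'=\bar A\vee I$ is the homotopy colimit of $*\to\bar A$ built along an $h$-cofibration, and $*$ is a strict unit for $\boxtimes$, so $(A^c)^{\boxtimes(k+1)}$ is a homotopy colimit over a $(k+1)$-dimensional cube whose vertices are, up to projecting away contractible tensor factors, the powers $(\bar A)^{\boxtimes j}$ with $0\le j\le k+1$, and whose structure maps to $BF$ restrict on each vertex to the corresponding $\bar\alpha^{\boxtimes j}$; a further application of Lemma~\ref{hocolimlemma} therefore shows that $B^{\cy}_k(\alpha^c)$ classifies well-based quasifibrations.

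The step I expect to be the main obstacle is the assertion that the multiplicative structure of the universal quasifibrations $EF(V)\to BF(V)$ is compatible with the well-based quasifibration condition — concretely, that the fiberwise smash product over a product of base spaces of well-based quasifibrations is again a well-based quasifibration over the product, and similarly for fiberwise smashing with a sphere. For genuine Hurewicz fibrations this is routine, but since $EF(V)\to BF(V)$ is only a quasifibration one needs the Dold--Thom-style gluing arguments that underlie the multiplicative construction of Thom spectra; I would extract this from the discussion of the multiplicative structure in \cite{lewis-may-steinberger} and from the properties of the $\I$-space lifting functor $R$ established in \cite{schlichtkrull-thom}, in the same spirit as the proof of the preceding lemma. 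Once that point is granted, the remaining ingredients — the cube decomposition of $(A^c)^{\boxtimes(k+1)}$, Lemma~\ref{hocolimlemma}, the comparison along $\delta$, and the behaviour of realization of good simplicial spaces — are routine.
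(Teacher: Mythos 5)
Your overall strategy matches the paper's: reduce to a degreewise statement at each spectrum level, establish that each simplicial degree classifies well-based quasifibrations via Lemma~\ref{hocolimlemma} and Proposition~\ref{replacementboxtimes}, and then realize using goodness. Your identification of the crux --- that one must know the pointwise maps $A(n_0)\times\dots\times A(n_k)\to BF(n)$ classify well-based quasifibrations, which ultimately rests on the multiplicative properties of the universal quasifibrations and of the lifting functor $R$ --- is accurate; the paper dispatches this by saying ``by construction'' and deferring to \cite{schlichtkrull-thom}. However, there are two places where your argument has real trouble.

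First, you gloss over the goodness of $T\Gamma(B^{\cy}_{\bullet}(\alpha^c))(n)$. For the realization of a degreewise weak equivalence to be a weak equivalence, you need \emph{both} $T(B^{\cy}_{\bullet}(\alpha^c))(n)$ and $T\Gamma(B^{\cy}_{\bullet}(\alpha^c))(n)$ to be good simplicial spaces, and goodness of the latter does not follow formally from goodness of $B^{\cy}_{\bullet}(A^c)$: the Thom space functor sends fibrewise $h$-cofibrations over $BF(n)$ to $h$-cofibrations, not arbitrary $h$-cofibrations over $BF(n)$, and $\Gamma$ converts $h$-cofibrations into fibrewise $h$-cofibrations by \cite[IX.1.11]{lewis-may-steinberger}. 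The paper isolates exactly this point in an unnamed preliminary lemma (``Suppose $f_\bullet$ is degree-wise $T$-good...''), using that citation to guarantee the target simplicial space is good. Your proof as written simply asserts ``these are good simplicial spaces'' and does not make this argument.

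Second, your reduction from $A^c$ to $\bar A$ via a cube-shaped homotopy colimit decomposition of $(A^c)^{\boxtimes(k+1)}$ is both more complicated than needed and not correct as stated: $\boxtimes$ does not distribute over wedges, so expressing $(A^c)^{\boxtimes(k+1)}=(\bar A\vee I)^{\boxtimes(k+1)}$ as a cube with vertices the powers $(\bar A)^{\boxtimes j}$ (up to contractible factors) requires a careful argument you have not given, and the claim that the structure maps to $BF$ restrict on the vertices to $\bar\alpha^{\boxtimes j}$ is imprecise because of how the unit interval factors interact with the augmentation. The paper sidesteps all of this: the map $(A^c)^{\boxtimes(k+1)}(n)\to(\bar A)^{\boxtimes(k+1)}(n)$ is a fibrewise homotopy equivalence over $BF(n)$, and fibrewise homotopy equivalences preserve $T$-goodness, so one may immediately replace $A^c$ by $\bar A$ and then apply Proposition~\ref{replacementboxtimes} and Lemma~\ref{hocolimlemma}.
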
 

In preparation for the proof, consider in general a simplicial object
$f_{\bullet}\co X_{\bullet}\to BF(n)$ in $\mathcal U/BF(n)$ with topological realization $f\co X\to BF(n)$. Evaluating the Thom spaces degree-wise we get a simplicial based space $T(f_{\bullet})$ whose realization is isomorphic to $T(f)$ as follows from Lemma \ref{realizationlemma}. 

\begin{lemma}
Suppose that $f_{\bullet}\co X_{\bullet}\to BF(n)$ is degree-wise $T$-good and that the simplicial spaces $X_{\bullet}$ and $T(f_{\bullet})$ are good. Then the realization $f\co X\to BF(n)$ is also $T$-good.  
\end{lemma}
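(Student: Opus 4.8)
The plan is to reduce the assertion to the standard fact that geometric realization carries a degree-wise weak homotopy equivalence of \emph{good} simplicial spaces to a weak homotopy equivalence. The two ingredients are that both functors in sight commute with realization. First, as recalled just before the lemma, the Thom space functor $T$ preserves colimits and tensors with unbased spaces, hence commutes with geometric realization; thus $T(f)\cong|T(f_{\bullet})|$ and, more generally, $T(|g_{\bullet}|)\cong|T(g_{\bullet})|$ for any simplicial object $g_{\bullet}$ in $\mathcal U/BF(n)$. Second, as recalled in Section~\ref{secBF}, the fibrant replacement functor $\Gamma$ commutes with colimits and with unbased tensors, hence also commutes with geometric realization; thus $\Gamma f\cong|\Gamma f_{\bullet}|$ in $\mathcal U/BF(n)$, where $\Gamma f_{\bullet}$ is the simplicial object $[k]\mapsto(\Gamma f_k\co\Gamma X_k\to BF(n))$, and under this identification the natural transformation $\delta\co\id\to\Gamma$ is the realization of its levels, i.e.\ $\delta_X=|\delta_{X_{\bullet}}|$. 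Combining the two, $T$ applied to the morphism $\delta\co f\to\Gamma f$ of $\mathcal U/BF(n)$ --- the map whose being a weak equivalence is precisely the assertion that $f$ is $T$-good --- is isomorphic to the realization $|T(\delta_{f_{\bullet}})|\co|T(f_{\bullet})|\to|T(\Gamma f_{\bullet})|$ of the simplicial map $[k]\mapsto\big(T(\delta_{f_k})\co T(f_k)\to T(\Gamma f_k)\big)$.

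It then remains to verify the hypotheses of the realization lemma. Each $T(\delta_{f_k})\co T(f_k)\to T(\Gamma f_k)$ is a weak homotopy equivalence, since $f_k$ is $T$-good by assumption; the simplicial space $T(f_{\bullet})$ is good by assumption; and $T(\Gamma f_{\bullet})$ is good because the degeneracies of $X_{\bullet}$ are $h$-cofibrations (as $X_{\bullet}$ is good), $\Gamma$ carries Hurewicz cofibrations to fiberwise, in particular ordinary, Hurewicz cofibrations (Section~\ref{secBF}), and the Thom space functor preserves Hurewicz cofibrations (cf.\ the remarks following Theorem~\ref{thmthomomni} and \cite[IX]{lewis-may-steinberger}). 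Since realization of a degree-wise weak homotopy equivalence of good simplicial spaces is a weak homotopy equivalence (see e.g.\ \cite{segal}, Appendix~A, or \cite{may-geom}), the map $T(f)\to T(\Gamma f)$ is a weak homotopy equivalence, which is exactly the statement that $f$ is $T$-good.

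The step requiring genuine care is the compatibility claimed in the first paragraph: that $\Gamma$, which is built from a pullback along the fixed evaluation map $e_0\co\Pi BF(n)\to BF(n)$, really does commute with geometric realization over $BF(n)$, and that $\delta$ is compatible with the resulting identification, so that $T(f)\to T(\Gamma f)$ is indeed the realization of the levelwise maps $T(\delta_{f_k})$. Once this is pinned down, the rest is a formal assembly of the cited properties of $T$, of $\Gamma$, and of good simplicial spaces.
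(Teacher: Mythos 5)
Your proof is correct and follows essentially the same route as the paper's: identify $T(f)\to T(\Gamma f)$ with the realization of the levelwise maps $T(f_k)\to T(\Gamma f_k)$ using that both $T$ and $\Gamma$ preserve colimits and tensors, invoke degree-wise $T$-goodness for the levelwise weak equivalences, check goodness of $T(f_\bullet)$ (assumed) and of $T(\Gamma f_\bullet)$ (from $\Gamma$ turning the degeneracies into fiberwise cofibrations, whence cofibrations of Thom spaces), and conclude with the realization lemma for good simplicial spaces. The only cosmetic difference is that the paper cites \cite[IX.1.11]{lewis-may-steinberger} directly for the fiberwise cofibration property of $\Gamma$ and phrases the argument in terms of fiberwise cofibrations throughout, whereas you pass through ``ordinary'' Hurewicz cofibrations as an intermediate step; what the Thom space functor actually uses is the \emph{fiberwise} cofibration structure (so that the sections and quotients interact correctly), and the paper's phrasing is slightly more direct on this point, but your conclusion is the same.
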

\begin{proof}
We must show that the canonical map $X\to \Gamma_f(X)$ induces a weak homotopy equivalence of Thom spaces $T(f)\to T\Gamma(f)$. Using that $\Gamma$ preserves tensors and colimits, we identify the latter map with the realization of the simplicial map $T(f_{\bullet})\to T\Gamma(f_{\bullet})$. The assumption that $f$ be degree-wise $T$-good implies that this is a degree-wise weak homotopy equivalence.  Since the simplicial space $T(f_{\bullet})$ is good by assumption, it remains to show that the goodness assumption on $X_{\bullet}$ implies that $T\Gamma(f_{\bullet})$ is also good. For this we use \cite[IX.1.11]{lewis-may-steinberger}, which implies that the degeneracy maps $\Gamma(X_k)\to \Gamma(X_{k+1})$ are fibrewise 
$h$-cofibrations over $BF(n)$. The induced maps of Thom spaces are therefore also 
$h$-cofibrations. The conclusion now follows from the fact that a levelwise weak equivalence of good simplicial spaces induces a weak equivalence after topological realization.   
\end{proof}

\bigskip
\noindent\textit{Proof of Lemma \ref{Bcyalpha}.}
Notice first that $B^{\cy}_{\bullet}(A)$ and $T(B^{\cy}_{\bullet}(\alpha^c))$ are good simplicial objects since $A^c$ and $T(\alpha^c)$ are well-based . 
 By the lemma just proved it therefore suffices to show that the simplicial map 
 $B^{\cy}_{\bullet}(A^c)(n)\to BF(n)$
 is degree-wise $T$-good for each $n$. In simplicial degree $k$ this is the composition
$$
A^c\boxtimes\dots\boxtimes A^c(n)\to \bar A\boxtimes\dots\boxtimes\bar A(n)\to BF(n) 
$$   
where the first map is a fibrewise homotopy equivalence over $BF(n)$. It therefore suffices to show that the second map is $T$-good and using Proposition \ref{replacementboxtimes} we write the latter as a map of homotopy colimits
$$
\hocolim_{\mathbf n_0\sqcup\dots\sqcup \mathbf n_k\to \mathbf n}
A(n_0)\times\dots\times A(n_k)\to BF(n).
$$  
The maps in the underlying diagram classify well-based quasifibrations by construction and the result now follows from Lemma \ref{hocolimlemma}. \qed

\medskip
\noindent\textit{Proof of Theorem \ref{maintheorem}.}
Let $\mathcal C$ be the associativity operad. As explained in Appendix \ref{loopappendix} we may assume that our loop map is a map of $\mathcal C$-spaces $f\co X\to BF_{h\I}$ and we again write 
$\alpha\co A\to BF$ for the associated monoid morphism. It follows from the above discussion that the topological Hochschild homology of $T(f)$ is represented by $B^{\cy}(T(\alpha^c))$ which in turn is isomorphic to the symmetric Thom spectrum $T(B^{\cy}(\alpha^c))$. Since $B^{\cy}(\alpha^c)$ is $T$-good, the weaker version of \textbf{A2} suffices to give a stable equivalence
$$
UT(B^{\cy}(\alpha^c))\simeq T\Gamma\zeta_*(UB^{\cy}(\alpha^c)).
$$
From here the argument proceeds as in Section \ref{mainproofsection} and we get a stable equivalence
$$
UT(B^{\cy}(\alpha^c))\simeq T\Gamma(L^{\eta}(Bf))
$$
which is the content of Theorem \ref{maintheorem}.\qed

\medskip
Proceeding as in Section \ref{mainproofsection} one finally deduces
the general case of Theorem \ref{2foldtheorem} and Theorem
\ref{3foldtheorem} from this result.

\appendix
\section{Loop maps and $A_{\infty}$ maps}\label{loopappendix}
In order to prove our main results we need to pass from loop map data to the more rigid kind of data specified by a map of $A_{\infty}$ spaces. This can be done using the machinery from 
\cite{may-geom} as we now explain. Let $\mathcal C_1$ be the little 1-cubes operad. Given an 
$A_{\infty}$ operad $\mathcal C$ we let $\mathcal D$ be the fibred product 
$\mathcal C\triangledown \mathcal C_1$ such that there is a diagram of $A_{\infty}$ operads 
$\mathcal C\leftarrow \mathcal D\to\mathcal C_1$. Let $C$, $C_1$ and $D$ be the associated monads on the category $\mathcal T$ of based spaces. With the notation from 
\cite[13.1]{may-geom},  we have for each 
$\mathcal C$-space $X$ a diagram of $\mathcal D$-spaces 
$$
X\xl{\epsilon} B(D,D,X)\xr{\gamma}\Omega B(\Sigma, D,X)
$$    
where $\epsilon$ is a homotopy equivalence. We write $B_1X$ for the space $B(\Sigma,D,X)$. If $X$ is grouplike,  then this is a delooping in the sense that the map $\gamma$ also is a weak equivalence, see \cite{may-geom}, \cite{segal-conf}, \cite{Thomason}. 
It is known by \cite{Thomason} that the functor 
$X\mapsto B_1X$ is equivalent to the functor obtained by first replacing $X$ by a topological monoid and then applying the usual classifying space construction (this is the functor denoted $B'X$ in Section \ref{mainproofsection}). Here and in the following we tacitly assume that all base points are non-degenerate. This is not a serious restriction since by \cite{may-geom} any $A_{\infty}$ space may be functorially replaced by one with a non-degenerate base point. 
The following result shows that we can always rectify loop maps over a grouplike $A_{\infty}$ space to $A_{\infty}$ maps.

\begin{proposition}\label{rectifyAinfty}
Let $\mathcal C$ be an $A_{\infty}$ operad and let $Z$ be a grouplike $\mathcal C$-space. Then there exists a ``loop functor''
$$
\bar\Omega\co \mathcal T/B_1Z\to \mathcal U[\mathcal C]/Z,\quad (Y\xr{g}B_1Z)\mapsto 
(\bar\Omega_g(Y)\xr{\bar\Omega(g)}Z),
$$ 
and a chain of weak equivalences of $\mathcal D$-spaces 
$\bar\Omega_g(Y)\simeq\Omega(Y)$ such that the diagram
$$
\begin{CD}
\bar\Omega_g(Y)@>\bar \Omega(g)>> Z\\
@VV\sim V @VV \sim V\\
\Omega(Y)@>\Omega(g)>> \Omega(B_1Z)
\end{CD}
$$
is commutative in the homotopy category.
\end{proposition}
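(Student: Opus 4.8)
The plan is to produce $\bar\Omega$ by pulling back a universal path-space fibration over $B_1Z$ through the operadic bar construction, in a way that is strictly compatible with the $\mathcal C$-actions. First I would recall that for a grouplike $\mathcal C$-space $Z$ the map $\gamma\co B(D,D,Z)\to \Omega B_1Z$ is a weak equivalence of $\mathcal D$-spaces, so $Z$ is weakly equivalent as a $\mathcal D$-space to the loop space of its own delooping; by \cite[13.1]{may-geom} this is natural in $Z$. The idea is then: given $g\co Y\to B_1Z$, form the pullback of the path-loop fibration $PB_1Z\to B_1Z$ along $g$, obtaining a space mapping to $Y$ with fibre $\Omega B_1Z$, and use the $\mathcal D$-action (equivalently the $\mathcal C_1$-action from the little $1$-cubes operad acting on all loop spaces, together with the $\mathcal C$-action transported along $\gamma$) to equip the relevant space with a $\mathcal C$-action. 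Concretely I would set $\bar\Omega_g(Y)$ to be a functorial model of the homotopy fibre of $g$ over the basepoint of $B_1Z$, built so that it carries a natural $\mathcal C$-action and comes with a $\mathcal C$-map $\bar\Omega(g)$ to the $\mathcal C$-space $B(D,D,Z)$ (or to $Z$ after composing with $\epsilon$), matching $\Omega(g)\co \Omega Y\to \Omega B_1Z$ up to the equivalences $\Omega B_1 Z \simeq B(D,D,Z)\simeq Z$.

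The key steps, in order, would be: (i) Fix a functorial Hurewicz fibration replacement of $g\co Y\to B_1Z$ and let $\Omega_g Y$ denote the strict fibre over the basepoint; this is naturally weakly equivalent to the honest homotopy fibre, which for $g$ a based map is $\Omega Y$ up to the usual comparison when $Y\to B_1Z$ is replaced suitably — here I would instead work directly with the based loop space $\Omega Y$ and the map $\Omega(g)\co \Omega Y\to \Omega B_1 Z$. (ii) Use the little $1$-cubes operad: both $\Omega Y$ and $\Omega B_1Z$ are $\mathcal C_1$-spaces and $\Omega(g)$ is a $\mathcal C_1$-map, hence by pulling back along the augmentation $\mathcal D\to\mathcal C_1$ they are $\mathcal D$-spaces and $\Omega(g)$ a $\mathcal D$-map. (iii) Apply the two-sided monadic bar construction $B(C,D,-)$ (using the operad map $\mathcal D\to\mathcal C$) to the $\mathcal D$-map $\Omega(g)$, producing a $\mathcal C$-map $B(C,D,\Omega Y)\to B(C,D,\Omega B_1Z)$; since $\epsilon\co B(C,D,W)\to W$ is a natural weak equivalence of $\mathcal D$-spaces (it is a map of $\mathcal C$-spaces when $W$ is already a $\mathcal C$-space), the target is related by a chain of $\mathcal D$-equivalences to $\Omega B_1Z\simeq Z$. (iv) Compose with the natural $\mathcal D$-equivalence $\Omega B_1Z\xr{\sim} Z$ (built from $\gamma$ and the fact that $Z$ is grouplike) to define $\bar\Omega(g)\co \bar\Omega_g(Y):=B(C,D,\Omega Y)\to Z$, landing in $\mathcal U[\mathcal C]/Z$. (v) Check naturality in $g$, that $\bar\Omega_g(Y)\simeq \Omega Y$ as $\mathcal D$-spaces (this is just $\epsilon$), and that the square commutes in the homotopy category — which reduces to the commutativity, up to the chain of equivalences, of the bar-construction comparison diagrams, all of which is supplied by the naturality statements in \cite[\S13]{may-geom}.

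The main obstacle I expect is step (iii)–(iv): making the passage from the $\mathcal D$-space $\Omega B_1Z$ back to the original $\mathcal C$-space $Z$ genuinely natural and compatible with the $\mathcal C$-structure, rather than merely a zig-zag of abstract weak equivalences. The subtlety is that $\gamma\co B(D,D,Z)\to\Omega B_1Z$ goes the ``wrong'' way for producing a map \emph{to} $Z$, so one must either invert it up to homotopy (losing strict naturality) or, better, define $\bar\Omega_g(Y)$ using $B(D,D,\Omega Y)$ together with a carefully chosen comparison map $B(D,D,\Omega Y)\to B(D,D,Z)$ induced by $\Omega(g)$ and the adjoint structure maps, so that composing with $\epsilon\co B(D,D,Z)\to Z$ gives an honest $\mathcal C$-map. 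One has to verify that this comparison map is a weak equivalence (it is, by a colimit/realization argument since $\epsilon$ and $\gamma$ are, and loop spaces of connected $B_1Z$ behave well), and that the resulting homotopy-commutative square agrees with the one in the statement. Once the correct functorial model is pinned down, the verifications are routine applications of the formal properties of $B(\text{-},\text{-},\text{-})$ from \cite{may-geom}, \cite{Thomason}, together with the observation that $B_1Z$ is connected so that $\Omega$ detects the relevant homotopy type.
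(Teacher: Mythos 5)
You correctly identify the crux of the difficulty — that $\gamma\co B(D,D,Z)\to\Omega B_1Z$ points the ``wrong'' way, so there is no obvious map from $\Omega Y$ or from $B(C,D,\Omega Y)$ to $Z$ — but your proposed resolution does not close the gap. Defining $\bar\Omega_g(Y):=B(C,D,\Omega Y)$ leaves you with no natural map to $Z$ at all: what you have is $\Omega(g)\co\Omega Y\to \Omega B_1Z$ and the equivalence $\gamma$ going backwards, and the ``carefully chosen comparison map $B(D,D,\Omega Y)\to B(D,D,Z)$ induced by $\Omega(g)$ and the adjoint structure maps'' simply does not exist without further input; to apply $B(D,D,-)$ and land in $B(D,D,Z)$ you would first need a $\mathcal D$-map $\Omega Y\to Z$, which is exactly the thing you are trying to produce.

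The paper resolves this with a small but essential extra step that is missing from your argument: it defines $\Omega'_g(Y)$ as the homotopy pullback of the cospan of $\mathcal D$-spaces
$$
\Omega(Y)\xr{\Omega(g)}\Omega(B_1Z)\xl{\gamma} B(D,D,Z),
$$
and then sets $\bar\Omega_g(Y)=B(C,D,\Omega'_g(Y))$. Because $Z$ is grouplike, $\gamma$ is a weak equivalence, so the projection $\Omega'_g(Y)\to\Omega(Y)$ is a weak equivalence of $\mathcal D$-spaces, giving the required identification; and the \emph{other} projection gives an honest $\mathcal D$-map $\Omega'_g(Y)\to B(D,D,Z)\xr{\epsilon} Z$, which after applying $B(C,D,-)$ and composing with the $\mathcal C$-algebra structure map $B(C,D,Z)\to Z$ yields a strict $\mathcal C$-map $\bar\Omega(g)\co\bar\Omega_g(Y)\to Z$. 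Your mention of ``the pullback of the path-loop fibration'' in the preamble is close in spirit, but you drop it in step (i) and never replace it with a construction that provides a forward map to $Z$; that homotopy pullback against $\gamma$ is the key device you need and do not supply.
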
   
\begin{proof}
Given a based map $g\co Y\to B_1Z$, let 
$\Omega'_g(Y)$ be the homotopy pullback of the diagram of $\mathcal D$-spaces
$$
\Omega(Y)\xr{\Omega(g)}\Omega(B_1Z)\xl{\gamma} B(D,D,Z).
$$ 
Then $\Omega_g'(Y)$ is a $\mathcal D$-space and we have an induced map of 
$\mathcal D$-spaces
$$
\Omega'(g)\co \Omega'_g(Y)\to B(D,D,Z)\xr{\epsilon} Z.
$$
Consider now the functor $B(C,D,-)$ from $\mathcal D$-spaces to $\mathcal C$-spaces and notice that there are natural equivalences of $\mathcal D$-spaces
$$
X\xl{\epsilon} B(D,D,X)\to B(C,D,X)
$$  for any $\mathcal D$-space $X$. We define $\bar\Omega(g)$ to be the map of $\mathcal C$-spaces induced by $\Omega'(g)$,
$$
\bar\Omega(g)\co \bar\Omega_g(Y)=B(C,D,\Omega'_g(Y))\to B(C,D,Z)\to Z.
$$
The last map is defined since $Z$ is already a $\mathcal C$-space. It is clear from the construction that $\bar\Omega(g)$ is related to $\Omega(g)$ by a homotopy commutative diagram as stated. 
\end{proof}

\begin{proposition}
Let $\bar \Omega$ be the functor from Proposition \ref{rectifyAinfty} and suppose that $Z$ is connected. Restricted to $g\co Y\to B_1Z$ with $Y$ connected, there is a chain of weak equivalences $Y\simeq B_1\bar\Omega_g(Y)$ such that the diagram
$$
\xymatrix{
Y\ar[rr]^-{\sim}\ar[dr]_{g} & & B_1\bar\Omega_g(Y)\ar[dl]^{B_1\bar\Omega(g)}\\
& B_1Z & }
$$
is commutative in the homotopy category. 
\end{proposition}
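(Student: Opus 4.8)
The plan is to show that, after passing to homotopy categories, applying $B_1$ to the object $\bar\Omega(g)\co\bar\Omega_g(Y)\to Z$ simply recovers $g\co Y\to B_1Z$; in other words, that $\bar\Omega$ and $B_1$ are mutually inverse equivalences on the relevant slice categories. This rests on the recognition principle of \cite{may-geom} (see also \cite{Thomason}, \cite{segal-conf}): on homotopy categories the functors $\Omega$ and $B_1=B(\Sigma,D,-)$ restrict to inverse equivalences between connected based spaces and grouplike $\mathcal D$-spaces, with unit the group-completion map $\beta_X\co X\to\Omega B_1X$ (an equivalence exactly when $X$ is grouplike) and counit a natural weak equivalence $\mu_W\co B_1\Omega(W)\xr{\sim}W$ for connected $W$. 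The homotopy-commutative square in Proposition~\ref{rectifyAinfty}, together with the fact that $\bar\Omega_g(Y)$ is grouplike when $Y$ is connected, says precisely that $\bar\Omega$ models the equivalence, from connected based spaces over $B_1Z$ to grouplike $\mathcal D$-spaces over $Z$, that corresponds under this dictionary to $\Omega(-)$; applying the inverse functor $B_1$ then yields the assertion.

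In detail I would argue as follows. Since $Z$ is connected it is grouplike, so $B_1Z$ is a connected (indeed simply connected) space and the right-hand vertical $\beta_Z\co Z\xr{\sim}\Omega(B_1Z)$ of the square in Proposition~\ref{rectifyAinfty} is a weak equivalence. Restricting to connected $Y$, the chain of $\mathcal D$-space weak equivalences $\bar\Omega_g(Y)\htp\Omega(Y)$ furnished by Proposition~\ref{rectifyAinfty} shows that $\bar\Omega_g(Y)$ is grouplike, since $\Omega Y$ is grouplike and grouplikeness of $\pi_0$ is homotopy invariant. Next I would apply $B_1=B(\Sigma,D,-)$ --- which is a homotopy functor on well-based $\mathcal D$-spaces (recall the standing assumption that all basepoints are non-degenerate) --- to the homotopy-commutative square of Proposition~\ref{rectifyAinfty}, obtaining a homotopy-commutative square with weak-equivalence verticals relating $B_1\bar\Omega(g)\co B_1\bar\Omega_g(Y)\to B_1Z$ to $B_1\Omega(g)\co B_1\Omega(Y)\to B_1\Omega(B_1Z)$. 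I would then invoke the recognition equivalence $\mu_W$ at $W=Y$ and at $W=B_1Z$: naturality of $\mu$ with respect to $g$ supplies a homotopy-commutative square with $\mu_{B_1Z}\circ B_1\Omega(g)\htp g\circ\mu_Y$. Splicing the two squares, and using the triangle identity $\mu_{B_1Z}\circ B_1(\beta_Z)\htp\id_{B_1Z}$ to see that the composite weak equivalence $B_1Z\xr{B_1(\beta_Z)}B_1\Omega(B_1Z)\xr{\mu_{B_1Z}}B_1Z$ on the target side is homotopic to the identity, one finds that under the chain of weak equivalences $B_1\bar\Omega_g(Y)\xr{\sim}B_1\Omega(Y)\xr{\mu_Y}Y$ the map $B_1\bar\Omega(g)$ corresponds to $g$. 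Inverting this chain gives the asserted equivalence $Y\htp B_1\bar\Omega_g(Y)$ together with the homotopy-commutative triangle over $B_1Z$.

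The main obstacle is this last bookkeeping step, namely keeping faithful track of the maps down to $B_1Z$: it forces one to pin down $\beta$ and $\mu$ as the unit and counit of the adjunction $B_1\dashv\Omega$ (on the connected, resp.\ grouplike, subcategories) and to use the triangle identities up to homotopy, all of which is part of, though somewhat buried in, the recognition machinery of \cite{may-geom}. One must also be careful about the connectivity hypotheses: $Y$ and $B_1Z$ are connected, so the recognition equivalence $\mu$ applies to them, whereas $\Omega Y$ and $\bar\Omega_g(Y)$ are in general only grouplike, so at those objects one relies on the group-completion form $\Omega B_1X\htp X$ valid for grouplike $\mathcal D$-spaces. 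Assembling these standard ingredients coherently over $B_1Z$ is where the real content lies.
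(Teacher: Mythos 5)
You take essentially the same route as the paper: apply $B_1 = B(\Sigma,D,-)$ to the homotopy-commutative square of Proposition~\ref{rectifyAinfty}, use naturality of the recognition map $\rho\co B_1\Omega(W)\to W$ (which you call $\mu_W$) to compare $B_1\Omega(g)$ with $g$, and reduce the whole claim to showing that the composite self-map of $B_1Z$ given by $\rho\circ B_1(\beta_Z)$, where $\beta_Z=\gamma\circ\epsilon^{-1}\co Z\to\Omega B_1Z$ is the group-completion weak equivalence, is homotopic to $\id_{B_1Z}$. That reduction is correct and is exactly what the paper does.

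The gap is precisely the ``triangle identity'' $\rho\circ B_1(\beta_Z)\simeq\id_{B_1Z}$, which you invoke as if it were a citable consequence of the recognition principle, while conceding it is ``somewhat buried'' and ``where the real content lies''. It is not a formal consequence: there is no adjunction $B_1\dashv\Omega$ of which $\beta$ and $\rho$ are the unit and counit. They are one particular choice of mutually inverse natural weak equivalences, and a pair of inverse natural equivalences need not satisfy the triangle identities (one can always modify one of the two transformations to arrange them, but that produces a different map, not $\rho$). So one must actually check it for the specific $\rho$, $\gamma$, $\epsilon$ in play. This is what the paper's explicit argument supplies: it identifies $B(\Sigma,D,B(D,D,Z))$ with the realization of the bisimplicial space $(p,q)\mapsto\Sigma D^pDD^qZ$, notes that $B(\Sigma,D,\epsilon)$ and $\rho\circ B(\Sigma,D,\gamma)$ are the two collapse maps to $B_1Z$ given by multiplication in the $p$- and $q$-directions, passes to the diagonal simplicial space $\Sigma D^pDD^pZ$, and writes down the explicit homotopy $H([b,u],t)=[b,tu,(1-t)u]$ between them. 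Without that (or an equivalent verification) your proof is incomplete; the remaining bookkeeping in your argument — naturality of $\rho$, the connectivity/grouplike observations, the splicing — all matches the paper and is fine.
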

\noindent\textit{Proof.}
Given a based space $Y$ there is a canonical map $\rho\co B_1\Omega(Y)\to Y$ which by 
\cite[13.1]{may-geom} and \cite{Thomason} is a weak equivalence if $Y$ is connected. Applying the functor $B_1$ to the diagram of $\mathcal D$-spaces in Proposition 
\ref{rectifyAinfty} we therefore get a homotopy commutative diagram
$$
\begin{CD}
B_1\bar\Omega_g(Y)@>B_1\bar\Omega(g)>> B_1Z\\
@VV\simeq V @VV\simeq V \\
Y@>g>>B_1Z
\end{CD}
$$ 
where the vertical arrow on the right represents the chain of maps
$$
B_1Z\xl{B(\Sigma,D,\epsilon)} B(\Sigma,D,B(D,D,Z))\xr{B(\Sigma,D,\gamma)} 
B(\Sigma,D,\Omega(B_1Z))\xr{\rho}B_1Z.
$$
It remain to show that this represents the identity on $B_1Z$. For this we observe that 
$B(\Sigma,D,B(D,D,Z))$ is homeomorphic to the realization of the bisimplicial space with $(p,q)$-simplices $\Sigma D^pDD^qZ$ and that the two maps correspond respectively to multiplication in the $p$ and $q$ direction. We may also view this space as the realization of the diagonal simplicial space $\Sigma D^pDD^pZ$ and the result now follows from the explicit homotopy
$$
H\co B(\Sigma,D,B(D,D,Z))\times I\to B_1Z,\quad H([b,u],t)=[b,tu,(1-t)u].
$$
Here $I$ denotes the unit interval, $b$ is an element in $\Sigma D^pDD^pZ$, and $u$ is an element in the standard $p$-simplex
$$
\Delta^p=\{(u_0,\dots,u_p)\in I^{p+1}: u_0+\dots+ u_p=1\}.
\eqno\qed
$$

For completeness we finally show that one may rectify $n$-fold loop maps over a grouplike 
$E_{\infty}$ space to maps of $E_n$-spaces for all $n$. Let $\mathcal C_n$ be the little $n$-cubes operad. Given an $E_{\infty}$ operad $\mathcal C$ we let $\mathcal D_n$ be the product operad $\mathcal C\times \mathcal C_n$. This is an $E_n$ operad in the sense that that the projection $\mathcal D_n\to\mathcal C_n$ is an equivalence. We again write $C$, $C_n$ and $D_n$ for the associated monads. With the notation from \cite[13.1]{may-geom}, we have a diagram of $D_n$-maps
$$
X\xl{\epsilon}B(D_n,D_n,X)\xr{\gamma}\Omega^n B(\Sigma^n,D_n,X)
$$
and we write $B_nX$ for the space $B(\Sigma^n,D_n,X)$. When $X$ is grouplike this is an $n$-fold delooping in the sense that $\gamma$ is a weak equivalences. An argument similar to that used in the proof of Proposition \ref{rectifyAinfty} then gives the following.

\begin{proposition}
Let $\mathcal C$ be an $E_{\infty}$ operad and let $Z$ be a grouplike $\mathcal C$-space. Then there exists an ``$n$-fold loop functor''
$$
\bar\Omega^n\co \mathcal T/B_nZ\to\mathcal U[\mathcal D_n]/Z,\quad (Y\xr{g}B_nZ)\mapsto
(\bar\Omega^n_g(Y)\xr{\bar\Omega^n(g)} Z), 
$$
and a chain of weak equivalences of $\mathcal D_n$-spaces $\bar\Omega_g^n(Y)\simeq \Omega^n(Y)$ such that the diagram
$$
\begin{CD}
\bar\Omega_g^n(Y)@>\bar\Omega^n(g)>> Z\\
@VV\sim V @VV\sim V\\
\Omega^n(Y)@>\Omega^n(g)>> \Omega^n(B_nZ)
\end{CD}
$$
is commutative in the homotopy category.\qed
\end{proposition}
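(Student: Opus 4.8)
The plan is to closely follow the proof of Proposition~\ref{rectifyAinfty}, replacing the little $1$-cubes operad $\mathcal C_1$ and the operad $\mathcal D$ by the little $n$-cubes operad $\mathcal C_n$ and the product operad $\mathcal D_n=\mathcal C\times\mathcal C_n$, and replacing $\Omega$, $B_1$, $B(\Sigma,D,-)$ by $\Omega^n$, $B_n$, $B(\Sigma^n,D_n,-)$ throughout. Concretely, given a based map $g\co Y\to B_nZ$ I would form the $\mathcal D_n$-space $\Omega'^n_g(Y)$ obtained as the homotopy pullback of the diagram of $\mathcal D_n$-spaces
\[
\Omega^n(Y)\xr{\Omega^n(g)}\Omega^n(B_nZ)\xl{\gamma} B(D_n,D_n,Z),
\]
where $\Omega^n(Y)$ carries its little $n$-cubes action (hence a $\mathcal D_n$-action via $\mathcal D_n\to\mathcal C_n$), $Z$ is viewed as a $\mathcal D_n$-space via $\mathcal D_n\to\mathcal C$, and $\gamma$ is taken from the diagram of $\mathcal D_n$-maps $X\xl{\epsilon}B(D_n,D_n,X)\xr{\gamma}\Omega^n B(\Sigma^n,D_n,X)$ recalled just above the statement. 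Since limits of $\mathcal D_n$-spaces are created in $\mathcal U$ and a functorial fibrant replacement of one leg can be performed $\mathcal D_n$-equivariantly (exactly as in the one-fold case), $\Omega'^n_g(Y)$ is again a $\mathcal D_n$-space, functorial in $g$; and since $Z$ is grouplike the map $\gamma$ is a weak equivalence — this is precisely the assertion that $B_nZ=B(\Sigma^n,D_n,Z)$ is an $n$-fold delooping of $Z$, cf.\ \cite[13.1]{may-geom} — so the projection $\Omega'^n_g(Y)\to\Omega^n(Y)$ is a weak equivalence of $\mathcal D_n$-spaces.

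Next I would equip $\Omega'^n_g(Y)$ with a map to $Z$, namely the composite $\Omega'^n_g(Y)\to B(D_n,D_n,Z)\xr{\epsilon}Z$ of the other projection with the bar-construction augmentation; both are $\mathcal D_n$-maps, so this is a morphism in $\mathcal U[\mathcal D_n]$. Because the target of the functor in the statement is $\mathcal U[\mathcal D_n]/Z$ rather than $\mathcal U[\mathcal C]/Z$, there is no need for the extra step $B(C,D,-)$ used in the one-fold case to pass from $\mathcal D$-spaces back to $\mathcal C$-spaces: one simply sets $\bar\Omega^n_g(Y)=\Omega'^n_g(Y)$ and lets $\bar\Omega^n(g)$ be the composite above. (If one wants the output to land in a more rigid subcategory of $\mathcal D_n$-spaces one may instead take $\bar\Omega^n_g(Y)=B(D_n,D_n,\Omega'^n_g(Y))$, which only lengthens the chain of equivalences by the natural map $B(D_n,D_n,\Omega'^n_g(Y))\xr{\epsilon}\Omega'^n_g(Y)$.) Either way the chain of weak equivalences $\bar\Omega^n_g(Y)\htp\Omega^n(Y)$ of $\mathcal D_n$-spaces is the one just produced.

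Finally I would verify the homotopy-commutative square. The left vertical equivalence $\bar\Omega^n_g(Y)\htp\Omega^n(Y)$ is the projection $\mathrm{pr}_1$; the right vertical equivalence $Z\htp\Omega^n(B_nZ)$ is the zig-zag $Z\xl{\epsilon}B(D_n,D_n,Z)\xr{\gamma}\Omega^n(B_nZ)$, in which $\epsilon$ is always a weak equivalence and $\gamma$ is one because $Z$ is grouplike. Commutativity in the homotopy category is then immediate from the defining homotopy of the homotopy pullback: writing $\mathrm{pr}_2$ for the projection onto $B(D_n,D_n,Z)$ one has $\Omega^n(g)\circ\mathrm{pr}_1\htp\gamma\circ\mathrm{pr}_2$, while $\bar\Omega^n(g)=\epsilon\circ\mathrm{pr}_2$, so composing with $\gamma\circ\epsilon^{-1}$ identifies the two ways around the square. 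I do not expect a genuine obstacle: the only points requiring care are that $\mathcal D_n=\mathcal C\times\mathcal C_n$ really is an $E_n$ operad, so that the bar-construction delooping machinery of \cite[13.1]{may-geom} applies with $\Sigma^n$, $D_n$ in place of $\Sigma$, $D$, and that the augmentation $\epsilon$, the map $\gamma$, and the fibrant replacement used in the homotopy pullback are all $\mathcal D_n$-equivariant — both of which are furnished by May's machinery and by the diagram of $\mathcal D_n$-maps already recorded in the excerpt.
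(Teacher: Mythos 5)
Your proposal is correct and is exactly the argument the paper has in mind, since its own proof is the one-sentence remark that ``an argument similar to that used in the proof of Proposition \ref{rectifyAinfty} then gives the following.'' You also rightly identify the one genuine deviation from the $1$-fold proof: because the target category here is $\mathcal U[\mathcal D_n]/Z$ rather than $\mathcal U[\mathcal C]/Z$, the final $B(C,D,-)$ step used in Proposition \ref{rectifyAinfty} to convert a $\mathcal D$-space into a $\mathcal C$-space is unnecessary, and the homotopy pullback $\Omega'^n_g(Y)$ can serve directly as $\bar\Omega^n_g(Y)$.
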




\end{document}